\numberwithin{equation}{section}
\theoremstyle{plain}
\newtheorem{theorem}{Theorem}[section]
\newtheorem{lemma}[theorem]{Lemma}
\newtheorem{proposition}[theorem]{Proposition}
\newtheorem{corollary}[theorem]{Corollary}
\newtheorem{conjecture}[theorem]{Conjecture}
\theoremstyle{definition}
\newtheorem*{definition*}{Definition}
\newtheorem{definition}[theorem]{Definition}
\newtheorem{remark}[theorem]{Remark}
\newcommand{\Db}{\mathrm{D^b}}
\newcommand{\Perf}{\mathrm{Perf}}
\newcommand{\enh}{\mathrm{enh}}
\newcommand{\op}{\mathrm{op}}
\DeclareMathOperator{\G}{Gr}
\DeclareMathOperator{\Fl}{Fl}
\DeclareMathOperator{\Proj}{Proj}
\DeclareMathOperator{\Sym}{Sym}
\newcommand{\wtilde}{\widetilde}
\DeclareMathOperator{\Bl}{Bl}
\newcommand{\oQ}{\bar{Q}}
\newcommand{\Qv}{Q^{\vee}}
\newcommand{\bGv}{\mathbf{G}^{\vee}}
\newcommand{\deh}{\widehat\Delta}
\newcommand{\hg}{{\widehat{g}}}
\newcommand{\hX}{{\widehat{X}}}
\newcommand{\hY}{{\widehat{Y}}}
\newcommand{\hcE}{{\widehat{\cE}}}
\newcommand{\hPhi}{{\widehat{\Phi}}}
\DeclareMathOperator{\RCHom}{{\mathrm{R}\mathcal{H}\mathit{om}}}
\DeclareMathOperator{\PGL}{\mathrm{PGL}}
\DeclareMathOperator{\LGr}{\mathrm{LG}}
\DeclareMathOperator{\HH}{HH}
\DeclareMathOperator{\Ext}{Ext}
\DeclareMathOperator{\Spec}{Spec}
\DeclareMathOperator{\Hom}{Hom}
\DeclareMathOperator{\Tor}{Tor}
\DeclareMathOperator{\Aut}{Aut}
\newcommand{\rank}{\mathrm{rank}}
\DeclareMathOperator{\Cone}{Cone}
\DeclareMathOperator{\Pic}{Pic}
\newcommand{\gdim}{\mathrm{gdim}}
\newcommand{\num}{\mathrm{num}}
\newcommand{\CH}{\mathrm{CH}}
\newcommand{\ch}{\mathrm{ch}}
\newcommand{\van}{\mathrm{van}}
\newcommand{\opp}{\mathrm{op}}
\newcommand{\Tnsr}[1]{(-\otimes{#1})}
\newcommand{\Tr}{\mathrm{Tr}}
\newcommand{\coker}{\mathrm{coker}}
\newcommand{\cO}{\mathcal{O}}
\newcommand{\cA}{\mathcal{A}}
\newcommand{\cB}{\mathcal{B}}
\newcommand{\cE}{\mathcal{E}}
\newcommand{\cF}{\mathcal{F}}
\newcommand{\cG}{\mathcal{G}}
\newcommand{\cH}{\mathcal{H}}
\newcommand{\cI}{\mathcal{I}}
\newcommand{\cK}{\mathcal{K}}
\newcommand{\cL}{\mathcal{L}}
\newcommand{\cM}{{\mathcal M}}
\newcommand{\cS}{\mathcal{S}}
\newcommand{\cT}{\mathcal{T}}
\newcommand{\cU}{\mathcal{U}}
\newcommand{\cV}{\mathcal{V}}
\newcommand{\rF}{\mathrm{F}}
\newcommand{\rH}{\mathrm{H}}
\newcommand{\rK}{\mathrm{K}}
\newcommand{\rS}{\mathrm{S}}
\newcommand{\rL}{\mathrm{L}}
\newcommand{\rR}{\mathrm{R}}
\newcommand{\rT}{\mathrm{T}}
\newcommand{\bC}{\mathbf{k}}
\newcommand{\bG}{\mathbf{G}}
\newcommand{\bH}{\mathbf{H}}
\newcommand{\bZ}{\mathbf{Z}}
\newcommand{\bP}{\mathbf{P}}
\newcommand{\bQ}{\mathbf{Q}}
\newcommand{\bp}{\mathbf{p}}
\newcommand{\bq}{\mathbf{q}}
\newcommand{\sA}{\mathsf{A}}
\newcommand{\sY}{\mathsf{Y}}
\newcommand{\fA}{{\mathfrak{A}}}
\begin{document}

\title{Derived categories of Gushel--Mukai varieties}

\author{Alexander Kuznetsov}
\address{{\sloppy
\parbox{0.9\textwidth}{
Steklov Mathematical Institute of Russian Academy of Sciences,\\
8 Gubkin str., Moscow 119991 Russia
\\[5pt]
The Poncelet Laboratory, Independent University of Moscow
\hfill\\[5pt]
National Research University Higher School of Economics, Russian Federation
}\bigskip}}
\email{akuznet@mi.ras.ru \medskip}

\author{Alexander Perry}
\address{Department of Mathematics, Columbia University, New York, NY 10027, USA \medskip}
\email{aperry@math.columbia.edu}

\thanks{A.K. was partially supported by the Russian Academic Excellence Project ``5-100'', by RFBR grant 15-01-02164,
and by the Simons Foundation.
A.P. was partially supported by NSF GRFP grant DGE1144152 and NSF MSPRF grant DMS-1606460, 
and thanks the Laboratory of Algebraic Geometry NRU-HSE for its hospitality in December 2015, 
when part of this work was carried out.}

\date{\today}

\begin{abstract}
We study the derived categories of coherent sheaves on Gushel--Mukai varieties. 
In the derived category of such a variety, we isolate a special semiorthogonal 
component, which is a K3 or Enriques category according 
to whether the dimension of the variety is even or odd.
We analyze the basic properties of this category using Hochschild homology, 
Hochschild cohomology, and the Grothendieck group. 

We study the K3 category of a Gushel--Mukai fourfold in more detail. 
Namely, we show this category is equivalent to the derived category of 
a K3 surface for a certain codimension~1 family of rational Gushel--Mukai fourfolds, and to the K3 category 
of a birational cubic fourfold for a certain codimension 3 family. 
The first of these results verifies a special case of a duality 
conjecture which we formulate. We discuss our results in the context 
of the rationality problem for Gushel--Mukai varieties, which was one of the 
main motivations for this work.
\end{abstract}

\maketitle

\tableofcontents


\section{Introduction}
This paper studies the derived categories of coherent sheaves on smooth 
Gushel--Mukai varieties, with a special focus on the relation to birational 
geometry and the case of fourfolds. 

\subsection{Background}
For the purpose of this paper, we use the following definition. 

\begin{definition}
\label{definition-GM}
A \emph{Gushel--Mukai \textup{(}GM\textup{)} variety} is a smooth $n$-dimensional intersection
\begin{equation*}
X = \Cone(\G(2,5)) \cap \bP^{n+4} \cap Q,
\qquad 
2 \le n \le 6,
\end{equation*}
where $\Cone(\G(2,5)) \subset \bP^{10}$ is the cone over the Grassmannian $\G(2,5) \subset \bP^9$ 
in its Pl\"ucker embedding, $\bP^{n+4} \subset \bP^{10}$ is a linear subspace, and 
$Q \subset \bP^{n+4}$ is a quadric hypersurface.
\end{definition}

We note that a more general definition of GM varieties, which includes 
singular varieties and curves, is given in~\cite[Definition~2.1]{debarre-kuznetsov}. 
However, the definition there agrees with ours after imposing the condition that 
a GM variety is smooth of dimension at least $2$, see \cite[Proposition~2.28]{debarre-kuznetsov}.
The classification results of Gushel \cite{gushel1983fano} and Mukai~\cite{mukai}, 
generalized and simplified in~\cite[Theorem~2.16]{debarre-kuznetsov}, show that this class of varieties coincides with the class of all 
smooth Fano varieties of Picard number 1, coindex 3, and degree 10, 
together with Brill--Noether general polarized K3 surfaces of degree 10.

In the Fano--Iskovskikh--Mori--Mukai classification of Fano threefolds, 
GM threefolds occupy an intermediate position between complete intersections 
in weighted projective spaces and linear sections of homogeneous varieties, and possess a particularly rich birational geometry. 
The case of GM fourfolds is even more interesting, and was our original source of motivation. 
These fourfolds are similar to cubic fourfolds from several points of view: 
birational geometry, Hodge theory, and as we will see, derived categories.

In terms of birational geometry, both types of fourfolds are unirational and rational 
examples are known. On the other hand, a very general fourfold of either type is expected to be irrational,
but to date irrationality has not been shown for a single example.

At the level of Hodge theory, a fourfold of either type has middle cohomology of K3 type, i.e.~ $h^{0,4} = 0$ and $h^{1,3} = 1$. 
Moreover, there is a classification of Noether--Lefschetz loci where the ``non-special cohomology'' is isomorphic to (a Tate twist of) the primitive cohomology of a polarized K3 surface.
This is due to Hassett for cubics~\cite{hassett}, and Debarre--Iliev--Manivel~\cite{DIM4fold} for GM fourfolds. 

Finally, the first author studied the derived categories of cubic fourfolds in~\cite{kuznetsov2010derived}. 
For any cubic fourfold $X'$, a ``K3 category'' $\cA_{X'}$ is constructed as a semiorthogonal component 
of the derived category $\Db(X')$, and it is shown for many rational $X'$ that $\cA_{X'}$ is equivalent 
to the derived category of an actual K3 surface. 
Since their introduction, the categories $\cA_{X'}$ have attracted a great deal of attention, see for instance \cite{addington-thomas}, \cite{huybrechts-cubic}, \cite{macri}, \cite{calabrese-thomas}.

\subsection{GM categories}
We show in this paper that the parallel between GM and cubic fourfolds persists at the level 
of derived categories. 
In fact, for any GM variety $X$ 
--- not necessarily of dimension $4$~--- 
we define a semiorthogonal component $\cA_X$ 
of its derived category as the orthogonal to an exceptional sequence of vector bundles.  
Namely, projection from the vertex of~$\Cone(\G(2,5))$ gives a morphism $f\colon X \to \G(2,5)$, which corresponds to a rank $2$ bundle~$\cU_X$ on $X$. 
If $n = \dim X$, we show in Proposition~\ref{proposition-dbx} that there is a semiorthogonal decomposition 
\begin{equation*}
\Db(X) = \langle \cA_X, \cO_X, \cU_X^{\vee}, \cO_X(1), \cU_X^{\vee}(1), \dots,  \cO_X(n-3), \cU_X^{\vee}(n-3) \rangle.
\end{equation*}

The \emph{GM category} $\cA_X$ is the main object of study of this paper.
Its properties depend on the parity of the dimension $n$. 
For instance, we show that in terms of Serre functors, $\cA_X$ is a ``K3 category'' 
or ``Enriques category'' according to whether $n$ is even or odd (Proposition~\ref{proposition-serre-functor}).
We support the K3-Enriques analogy by showing that each GM category has a canonical involution such that 
the corresponding equivariant category is equivalent to a GM category of opposite parity (Proposition~\ref{proposition-GM-category-quotient}).

We also compute the Hochschild homology (Proposition~\ref{proposition-HH_*}), 
Hochschild cohomology (Corollary~\ref{corollary-even-GM-HC} and Proposition~\ref{proposition-odd-GM-HC}), 
and (in the very general case) the numerical Grothendieck group (Proposition~\ref{proposition-gg-AX} and Lemma~\ref{lemma-euler-form}) of GM categories. 
Our computation of Hochschild homology and Grothendieck groups is based on their 
additivity, while for Hochschild cohomology we rely on 
results on equivariant Hochschild cohomology from~\cite{equivariant-HC}.

We deduce from our computations structural properties of GM categories. 
Notably, we show that for any GM variety of odd dimension or
for a very general GM variety of even dimension greater than $2$, 
the category $\cA_X$ is not equivalent to the derived category 
of any variety (Proposition~\ref{proposition-AX-geometric}). 

\subsection{Conjectures on duality and rationality} 
We formulate two conjectures about GM categories. 
First we introduce a notion of ``generalized duality'' between GM varieties. 
The precise definition of this notion is somewhat involved (see \S\ref{subsection:duality}), 
but its salient features are as follows. 
Generalized duals have the same parity of dimension, and when 
they have the same dimension they are dual in the sense of \cite[Definition~3.26]{debarre-kuznetsov}. 
The space of generalized duals of $X$ is parameterized by the quotient of the projective space $\bP^5$ by a finite group. 
We also formulate a similar notion of ``generalized GM partners'', 
which reduces to \cite[Definition~3.22]{debarre-kuznetsov} when the 
varieties have the same dimension. 
We conjecture  
that generalized dual GM varieties and generalized GM partners have 
equivalent GM categories (Conjecture~\ref{conjecture:duality}). 

Our second conjecture concerns the rationality of GM fourfolds, and 
is directly inspired by an analogous conjecture for cubic fourfolds from~\cite{kuznetsov2010derived}. 
Namely, we conjecture that the GM category of a rational GM fourfold is equivalent to the derived category of a~K3 surface (Conjecture~\ref{conjecture-rational-fourfold}).
Together with Proposition~\ref{proposition-AX-geometric}, this conjecture implies  
that a very general GM fourfold is not rational. 

\subsection{Main results} 
Our first main result gives evidence for the above two conjectures. 
A GM variety as in Definition~\ref{definition-GM} is called ordinary if $\bP^{n+4}$ does not intersect the vertex of~$\Cone(\G(2,5))$. 

\begin{theorem}
\label{theorem-intro-K3}
Let $X$ be an ordinary GM fourfold containing a quintic del Pezzo surface. 
Then there is a $K3$ surface $Y$ such that $\cA_X \simeq \Db(Y)$. 
\end{theorem}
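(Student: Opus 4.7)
The plan is to mimic the strategy of the analogous theorem for cubic fourfolds (those containing a quartic scroll, equivalently a quintic del Pezzo surface) in \cite{kuznetsov2010derived}. For an ordinary GM fourfold $X = \G(2,5) \cap Q \subset \bP^8$, the quintic del Pezzo $S \subset X$ is realized as a linear section $S = \G(2,5) \cap \bP^5$, and its linear span $\langle S \rangle = \bP^5$ meets $\G(2,5)$ precisely in $S$. We first resolve the linear projection $X \dashrightarrow \bP^2$ from $\langle S \rangle$ by blowing up $S$ in $X$, producing $\pi \colon \tilde X \to X$ and $p \colon \tilde X \to \bP^2$, and then show that $p$ realizes $\tilde X$ as a flat quadric surface fibration whose degeneration locus is a plane sextic curve. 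The K3 surface $Y$ is defined as the associated double cover of $\bP^2$ branched along this sextic, equivalently as the relative moduli space of rulings of the quadric fibers.

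Next we combine Orlov's blowup formula
\[
\Db(\tilde X) = \langle \Db(S),\, \pi^* \Db(X) \rangle
\]
with Kuznetsov's semiorthogonal decomposition for a quadric surface fibration,
\[
\Db(\tilde X) = \langle \Db(\bP^2, \cB_0),\, p^*\Db(\bP^2),\, p^*\Db(\bP^2) \rangle,
\]
where $\cB_0$ denotes the sheaf of even parts of the Clifford algebra of $p$. The crucial algebraic input is that $\cB_0$ should be Morita-equivalent to $\cO_Y$; equivalently, the Brauer class classifying the family of rulings of the quadric fibers over $Y$ must vanish. We would deduce this vanishing from a rational section provided by the intersection geometry of $S$ with the generic fiber of $p$.

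Finally, we perform a sequence of mutations to align the two decompositions of $\Db(\tilde X)$. Matching the exceptional collection $\pi^*\langle \cO_X, \cU_X^\vee, \cO_X(1), \cU_X^\vee(1)\rangle$ from Proposition~\ref{proposition-dbx} together with the exceptional pieces coming from $\Db(S)$ against the Beilinson collections inside the two copies of $p^*\Db(\bP^2)$, the only components left unmatched should be $\pi^*\cA_X$ on the blowup side and $\Db(\bP^2, \cB_0) \simeq \Db(Y)$ on the fibration side, which yields the desired equivalence. The principal obstacle I anticipate is the vanishing of the Brauer class: pinning down the geometric origin of the rational section coming from $S$ is delicate, and may require refining the birational model or a careful analysis of $p$ over degenerate fibers. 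The mutation bookkeeping in the final step should be essentially mechanical once the algebraic equivalence $\cB_0 \simeq \cO_Y$ is in place.
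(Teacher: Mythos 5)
Your overall template --- blow up a surface in $X$, reinterpret the blowup as a fibration over a projective space, and compare the two resulting semiorthogonal decompositions by mutations --- is in the right spirit, but the central geometric claim fails: the resolved projection of $X$ from $\langle S\rangle=\bP^5$ is \emph{not} a quadric surface fibration over $\bP^2$. For $t\in\bP^2$ the corresponding $\bP^6_t\supset\langle S\rangle$ cuts $\bG$ in a quintic del Pezzo threefold $M_t$, and since $S=M_t\cap\{\ell=0\}$ with $\cI_{S/M_t}\cong\cO_{M_t}(-1)$, any quadric through $X$ restricts to $M_t$ as $\ell\cdot\ell'_t$ for some linear form $\ell'_t$. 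Hence $X\cap\bP^6_t=S\cup S'_t$ where the residual fiber $S'_t=M_t\cap\{\ell'_t=0\}$ is another hyperplane section of $M_t$, i.e.\ a surface of degree $5$ (generically a quintic del Pezzo), not a quadric. So there is no Clifford algebra $\cB_0$, no sextic discriminant curve, and no double-plane K3 in this picture. One can also see the contradiction numerically: Orlov's blowup formula gives $\Db(\wtilde{X})=\langle \Db(S),p^*\Db(X)\rangle$, i.e.\ $\cA_X$ together with $7+4=11$ exceptional objects, so $\HH_0(\wtilde{X})=22+11=33$; your proposed quadric-fibration decomposition would give a (possibly twisted) K3 category plus only $2\cdot 3=6$ exceptional objects, i.e.\ $\HH_0=28$. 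The two decompositions you want to align cannot both be decompositions of $\Db(\wtilde{X})$. (You have also transposed the cubic fourfold analogues: the quadric-fibration/Clifford method applies to cubics containing a \emph{plane}, whereas cubics containing a quartic scroll are the Pfaffian ones, treated via homological projective duality --- and it is the latter that is the correct model here.)

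What the paper does instead: by Lemma~\ref{lemma-GM-dP5}, containing a quintic del Pezzo is equivalent to $X=\bG\cap Q$ for a rank~$6$ quadric $Q$, and the K3 surface is the degree-$10$ dual GM surface $Y=\bGv\cap Q^\vee$, not a degree-$2$ double plane. The two $\bP^3$-families of maximal isotropic subspaces of $Q$ and $Q^\vee$ produce $\bP^1$-bundles $\hX\to X$ and $\hY\to Y$ realizing mutually orthogonal families of linear sections of $\bG$ and $\bGv$ over $\bP(S)\cong\bP^3$ (your $\bP^2$ of residual del Pezzo surfaces $S'_t$ is essentially a hyperplane slice of the family $\pi_X\colon\hX\to\bP^3$, used in the paper only to prove rationality of $X$). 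Homological projective duality for $\G(2,V_5)$ then embeds $\Db(\hY)$ into $\Db(\hX)$, and a chain of mutations identifies its orthogonal with two twists of $\cA_X$, yielding $\cA_X\simeq\Db(Y)$. If you insist on a fibration-over-$\bP^2$ route, you would need a semiorthogonal decomposition theory for quintic del Pezzo fibrations in place of the quadric one, which is a substantially harder and currently unavailable input.
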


For a more precise statement, see Theorem~\ref{theorem-associated-K3}. 
The K3 surface $Y$ is in fact a GM surface which is generalized 
dual to $X$, and the GM fourfold $X$ is rational (Lemma~\ref{lemma-rational-GM}). 
Thus Theorem~\ref{theorem-intro-K3} verifies special cases 
of our duality and rationality conjectures. 
We note that GM fourfolds as in the theorem form a $23$-dimensional (codimension 1 in moduli) family.

By Theorem~\ref{theorem-intro-K3}, the categories $\cA_X$ of GM fourfolds 
are deformations of the derived category of a K3 surface. 
Yet, as mentioned above, for very general $X$ these categories are not 
equivalent to the derived category of a K3 surface. 
There even exist $X$ such that $\cA_X$ is not equivalent to the \emph{twisted} 
derived category of a K3 surface
(see Remark~\ref{remark-not-K3-type}). 
Families of categories with these properties appear to be quite rare; 
this is the first example since~\cite{kuznetsov2010derived}.

Our second main result shows that the K3 categories attached to GM and cubic fourfolds are 
not only analogous, but in some cases even coincide.

\begin{theorem}
\label{theorem-intro-cubic}
Let $X$ be a generic ordinary GM fourfold containing a plane of type $\G(2,3)$. 
Then there is a cubic fourfold $X'$ such that $\cA_X \simeq \cA_{X'}$. 
\end{theorem}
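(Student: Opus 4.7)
The plan is to exhibit a birational correspondence between $X$ and a cubic fourfold $X'$ through a common blow-up, and then to match their K3 categories using Orlov's blow-up formula together with a sequence of mutations, in the spirit of \cite{kuznetsov2010derived} and of Theorem~\ref{theorem-intro-K3}.

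First, I would study the plane $\Pi \cong \G(2,V_3) \subset X \subset \G(2,V_5)$. Its linear span is a $\bP^2 \subset \bP^8$, and projection from this $\bP^2$ yields a rational map $X \dashrightarrow \bP^5$. The two Pl\"ucker quadrics defining $\G(2,5)$ and the defining quadric $Q$ of $X$ all vanish on $\Pi$, so on projection they drop in degree, and the image of $X$ is expected to be a cubic hypersurface $X' \subset \bP^5$. For a generic $X$ containing such a $\Pi$, I would hope to show that the blow-up $\widetilde{X} = \Bl_\Pi X$ resolves this map to a morphism $\widetilde{X} \to X'$ which realizes $\widetilde{X}$ as the blow-up of $X'$ along a surface $S \subset X'$. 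This would yield a common resolution
\begin{equation*}
X \xleftarrow{\ \Bl_{\Pi}\ } \widetilde{X} \xrightarrow{\ \Bl_{S}\ } X'.
\end{equation*}
Verifying this geometric picture---smoothness of $X'$, identification of $S$, and the behavior of the two contractions---is the first main task, and this is where the ordinariness of $X$ and the genericity assumption are essential.

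Next, I would apply Orlov's blow-up formula to both contractions, obtaining two semiorthogonal decompositions of $\Db(\widetilde{X})$: one built from $\Db(X)$ and a copy of $\Db(\Pi)$, the other from $\Db(X')$ and a copy of $\Db(S)$. Combining these with the semiorthogonal decomposition of $\Db(X)$ from Proposition~\ref{proposition-dbx} and with the analogous Kuznetsov decomposition of $\Db(X')$ for cubic fourfolds, each presentation displays a long chain of exceptional objects together with a residual copy of $\cA_X$, respectively $\cA_{X'}$. A carefully chosen sequence of mutations on $\Db(\widetilde{X})$ should then transform one presentation into the other, identifying $\cA_X$ with $\cA_{X'}$ as the common orthogonal to the resulting exceptional collection.

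The main obstacle is the mutation computation. One must follow each exceptional object through a sequence of mutations and recognize the outputs as known bundles on $\widetilde{X}$, which requires careful control of the restriction of the tautological bundle $\cU_X$ to $\Pi$ and to the exceptional divisor of $\Bl_\Pi X \to X$, together with matching descriptions on the cubic side. A secondary difficulty is to pin down the surface $S \subset X'$: its precise geometry dictates how many exceptional objects it contributes and how intricate the mutation sequence must be, and it is here that the genericity hypothesis on $X$ is expected to play its role.
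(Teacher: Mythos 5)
Your proposal follows essentially the same route as the paper: projection from the plane $P=\G(2,V_3)$ gives a common blowup $X \leftarrow \wtilde{X} \rightarrow X'$ (the paper identifies your surface $S$ as a smooth cubic surface scroll $T$ and checks smoothness of $X'$ under the genericity hypothesis), and the equivalence $\cA_X\simeq\cA_{X'}$ is then obtained by comparing the two Orlov blowup decompositions of $\Db(\wtilde{X})$ via an explicit sequence of mutations, controlled by an expression for $p^*\cU^\vee$ in terms of the second blowup. The remaining work you flag (identifying $S$, tracking $\cU_X$ through the mutations) is exactly where the paper's effort goes.
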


For a more precise statement, see Theorem~\ref{theorem-associated-cubic}. 
The cubic fourfold $X'$ 
is given explicitly by a construction 
of Debarre--Iliev--Manivel~\cite{DIM4fold}.
In fact, $X'$ is birational to $X$ and we use the structure of this birational isomorphism
to establish the result. 
We note that GM fourfolds as in the theorem form a $21$-dimensional 
(codimension $3$ in moduli) family.
Theorem~\ref{theorem-intro-cubic} can be considered as a  
step toward a $4$-dimensional analogue of~\cite{kuznetsov-fano-3-folds}, which 
exhibits mysterious coincidences among the derived categories of Fano threefolds.

\subsection{Further directions}
\label{subsection-further-directions}
The above results relate the K3 categories attached to three 
different types of varieties: GM fourfolds, cubic fourfolds, and K3 surfaces 
(in the last case the K3 category is the whole derived category). 
We call two such varieties $X_1$ and $X_2$ \emph{derived partners} if their K3 categories are equivalent. 
There is also a notion of $X_1$ and $X_2$ being \emph{Hodge-theoretic partners}. 
Roughly, this means that there is an ``extra'' integral middle-degree 
Hodge class $\alpha_i$ on $X_i$, such that if $K_i \subset \rH^{\dim(X_i)}(X_i, \bZ)$ denotes the 
lattice generated by $\alpha_i$ and certain tautological algebraic cycles on $X_i$, 
then the orthogonals $K_1^{\perp}$ and $K_2^{\perp}$ are isomorphic as polarized 
Hodge structures (up to a Tate twist). 
This notion was studied in \cite{hassett}, \cite{DIM4fold}, under the 
terminology that ``$X_2$ is associated to $X_1$''. 
Using lattice theoretic techniques, countably many families of GM fourfolds with Hodge-theoretic K3 and cubic fourfold partners 
are produced in~\cite{DIM4fold}. 

We expect that a GM fourfold has a derived partner of a given type 
if and only if it has a Hodge-theoretic partner of the same type. 
Theorems~\ref{theorem-intro-K3} and~\ref{theorem-intro-cubic} can be thought of 
as evidence for this expectation, since by \cite[\S7.5 and \S7.2]{DIM4fold} a GM 
fourfold as in Theorem~\ref{theorem-intro-K3} or Theorem~\ref{theorem-intro-cubic} 
has a Hodge-theoretic K3 or cubic fourfold partner, respectively.    
Addington and Thomas~\cite{addington-thomas} proved (generically) the 
analogous expectation for K3 partners of cubic fourfolds. 
Their method is deformation theoretic, and requires as a starting point 
an analogue of Theorem~\ref{theorem-intro-K3} for cubic fourfolds. 

Finally, we note that there are some other Fano varieties which 
fit into the above story, i.e. whose derived category contains a K3 category. 
One example is provided by a hyperplane section of the Grassmannian $\G(3,10)$, see~\cite{debarre-voisin} for a discussion of related geometric questions and~\cite[Corollary~4.4]{kuznetsov2015calabi} for the construction of a K3 category.
To find other examples, one can use available classification results for Fano fourfolds.
In~\cite{kuchle} K\"uchle classified 
Fano fourfolds of index $1$ which can be represented as zero 
loci of equivariant vector bundles on Grassmannians. 
Among these, three types --- labeled (c5), (c7), and (d3) in~\cite{kuchle} --- 
have middle Hodge structure of K3 type. In~\cite{kuznetsov2015kuchle} 
it was shown that fourfolds of type (d3) are isomorphic to the blowup of~$\bP^1\times\bP^1\times\bP^1\times\bP^1$ in a K3 surface, 
and those of type (c7) are isomorphic to the blowup of a cubic fourfold 
in a Veronese surface. 
In particular, these fourfolds do indeed have a K3 category in their 
derived category, but they reduce to known examples. 
Fourfolds of type (c5), however, conjecturally give rise to genuinely new 
K3 categories (see~\cite{kuznetsov2016c5} for a discussion of the geometry of these 
fourfolds). 

\subsection{Organization of the paper} 
In \S\ref{section-GM-varieties}, we define GM categories and study their basic properties. 
After recalling some facts about GM varieties in \S\ref{subsection-classification}, 
we define GM categories in~\S\ref{section-AX}. 
In~\S\ref{subsection-serre-functor}--\S\ref{subsection-geometricity}, we study 
some basic invariants of GM categories (Serre functors, Hochschild homology 
and cohomology, and Grothendieck groups) and as an application show that 
GM categories are usually not equivalent to the derived category of a variety. 
In~\S\ref{subsection:self-duality} we show that GM categories are self-dual, i.e. 
admit an equivalence with the opposite category.

In \S\ref{section-conjectures}, we formulate our conjectures about the duality and 
rationality of GM varieties. 
The preliminary~\S\ref{subsection-EPW} recalls from~\cite[\S3]{debarre-kuznetsov} a description 
of the set of isomorphism classes of GM varieties in terms of Lagrangian data. 
In~\S\ref{subsection:duality} we state the duality conjecture and discuss its consequences, 
and in~\S\ref{subsection-rationality-conjecture} we discuss the rationality conjecture.
This section is independent of the material in \S\ref{subsection-serre-functor}--\S\ref{subsection:self-duality}. 

The purpose of \S\ref{section-associated-K3} is to prove Theorem~\ref{theorem-associated-K3}, 
a more precise version of Theorem~\ref{theorem-intro-K3} from above. 
The statement of Theorem~\ref{theorem-associated-K3} requires the 
duality terminology introduced in \S\ref{subsection:duality}. 
However, in \S\ref{subsection-setup-associated-K3} we translate Theorem~\ref{theorem-associated-K3} 
into a statement that does not involve this terminology. 
From then on, \S\ref{section-associated-K3} can be read independently from the rest of the paper.  

The goal of \S\ref{section-associated-cubic} is to prove Theorem~\ref{theorem-associated-cubic}, 
a more precise version of Theorem~\ref{theorem-intro-cubic} from above. 
This section can also be read independently from the rest of the paper. 

Finally, in Appendix~\ref{appendix-moduli}, we prove that GM varieties of a fixed dimension form 
a smooth and irreducible Deligne--Mumford stack, whose dimension we compute. 
This result is not used in an essential way in the body of the paper, but it is 
psychologically useful.

\subsection{Notation and conventions} 
\label{subsection:notation}
We work over an algebraically closed field $\bC$ of characteristic~$0$.
A variety is an integral, separated scheme of finite type over~$\bC$. 
A vector bundle on a variety $X$ is a finite locally free $\cO_{X}$-module. 
The projective bundle of a vector bundle $\cE$ on a variety $X$ is
\begin{equation*}
\xymatrix{
\bP(\cE) = \Proj(\Sym^{\bullet}(\cE^{\vee})) \ar[r]^-{\pi} & X,
}
\end{equation*}
with $\cO_{\bP(\cE)}(1)$ normalized so that $\pi_*\cO_{\bP(\cE)}(1) = \cE^{\vee}$. 
We often commit the following convenient abuse of notation: 
given a divisor class $D$ on a variety $X$, we denote still by $D$ its pullback to any variety mapping to $X$. 
Throughout the paper, we use $V_n$ to denote an $n$-dimensional vector space.
We denote by \mbox{$\bG = \G(2,V_5)$} the Grassmannian of $2$-dimensional subspaces of~$V_5$. 

In this paper, triangulated categories are $\bC$-linear and functors between 
them are $\bC$-linear and exact. 
For a variety $X$, by the \emph{derived category} $\Db(X)$ we mean 
the bounded derived category of coherent sheaves on $X$, regarded as a triangulated category. 
For a morphism of varieties $f\colon X \to Y$, we write  
$f_*\colon \Db(X) \to \Db(Y)$
for the derived pushforward (provided~$f$ is proper), and 
$f^*\colon \Db(Y) \to \Db(X)$
for the derived pullback (provided $f$ has finite $\Tor$-dimension). 
For $\cF, \cG \in \Db(X)$, we write $\cF \otimes \cG$ for the derived tensor product.

We write 
$\cT = \langle \cA_1, \dots, \cA_n \rangle$ 
for a semiorthogonal decomposition of a triangulated category~$\cT$ with components $\cA_1, \dots, \cA_n$.
For an admissible subcategory $\cA \subset \cT$ we write
\begin{align*}
\cA^{\perp} & = \left \{ \cF \in \cT ~ | ~ \Hom(\cG, \cF) = 0 \text{ for all } \cG \in \cA \right \},  \\
{}^{\perp}\cA & =  \left \{ \cF \in \cT ~ | ~ \Hom(\cF, \cG) = 0 \text{ for all } \cG \in \cA \right \}, 
\end{align*} 
for its right and the left orthogonals, so that we have $\cT = \langle \cA^\perp, \cA \rangle = \langle \cA, {}^\perp\cA \rangle$.

We regard graded vector spaces as complexes with trivial differential, 
so that any such vector space can be written as 
$W_\bullet = \bigoplus_{n} W_n[-n]$, where $W_n$ denotes the degree $n$ piece. 
We often suppress the degree $0$ shift $[0]$ from our notation.

\subsection{Acknowledgements}
We would like to thank Olivier Debarre, Joe Harris, and Johan de Jong for 
many useful discussions. 
We are also grateful to Daniel Huybrechts, Richard Thomas, and Ravi Vakil for comments and questions. 
Finally, we thank the referee for suggestions about the presentation of the paper.


\section{GM categories} 
\label{section-GM-varieties}

In this section, we define GM categories and study their basic properties. 
We start with a quick review of the key features of GM varieties.

\subsection{GM varieties}
\label{subsection-classification}

Let $V_5$ be a $5$-dimensional vector space and $\bG = \G(2,V_5)$ the Grassmannian of $2$-dimensional subspaces. 
Consider the Pl\"{u}cker embedding $\bG \hookrightarrow \bP(\wedge^2V_5)$ and
let $\Cone(\bG) \subset \bP(\bC \oplus \wedge^2V_5)$ be the cone over $\bG$.
Further, let 
\begin{equation*}
W \subset \bC \oplus \wedge^2V_5
\end{equation*}
be a linear subspace of dimension $n+5$ with $2 \le n \le 6$, and $Q \subset \bP(W)$ a quadric hypersurface.
By Definition~\ref{definition-GM}, if the intersection 
\begin{equation}
\label{x:gm:general}
X = \Cone(\bG) \cap Q
\end{equation}
is smooth and transverse, then $X$ is a GM variety of dimension $n$, 
and every GM variety can be written in this form.

There is a natural polarization $H$ on a GM variety $X$, 
given by the restriction of the hyperplane class on $\bP(\bC \oplus \wedge^2V_5)$; 
we denote by $\cO_X(1)$ the corresponding line bundle on $X$. 
It is straightforward to check that  
\begin{equation} \label{eq:omega-x}
H^n = 10 \quad \text{and} \quad -K_X = (n-2)H. 
\end{equation}
Moreover, we have
\begin{equation}
\label{eq:pic-x}
\begin{aligned}
&\text{if $\dim(X) \ge 3$}, 	&& \text{then $\Pic(X) = \bZ H$, and}\\
&\text{if $\dim(X) = 2$},   		&& \text{then $(X,H)$ is a Brill--Noether general K3 surface.}
\end{aligned}
\end{equation} 
Conversely, by~\cite[Theorem~2.16]{debarre-kuznetsov} any smooth projective polarized variety of dimension $\ge 2$ satisfying~\eqref{eq:pic-x} and~\eqref{eq:omega-x} is a GM variety.

The intersection $\Cone(\bG) \cap Q$ does not contain 
the vertex of the cone, since $X$ is smooth. 
Hence projection from the vertex defines a regular map 
\begin{equation*}
f \colon X \to \bG,
\end{equation*}
called the \emph{Gushel map}. 
Let $\cU$ be the rank $2$ tautological subbundle on $\bG$. 
Then $\cU_X = f^* \cU$ is a rank 2 vector bundle on $X$, called the \emph{Gushel bundle}. 
By~\cite[\S2.1]{debarre-kuznetsov}, the Gushel map and the Gushel bundle are 
canonically associated to $X$, i.e. only depend on the abstract 
polarized variety $(X,H)$ and not on the particular realization \eqref{x:gm:general}.
In particular, so is the space $V_5$ (being the dual of the space of sections of~$\cU_X^{\vee}$), 
and we will sometimes write it as $V_5(X)$ to emphasize this. 
The space $W$ is also canonically associated to $X$, since its dual is the 
space of global sections of $\cO_X(1)$. 
The quadric $Q$, however, is not canonically associated to $X$, see~\S\ref{subsection-EPW}.  

The intersection 
\begin{equation*}
M_X = \Cone(\bG) \cap \bP(W)
\end{equation*}
is called the \emph{Grassmannian hull} of $X$. 
Note that $X = M_X \cap Q$ is a quadric section of $M_X$. 
Let $W'$ be the projection of $W$ to $\wedge^2V_5$. 
The intersection
\begin{equation}
\label{pghull} 
M'_X = \bG \cap \bP(W')
\end{equation}
is called the \emph{projected Grassmannian hull} of $X$. 
Again by~\cite{debarre-kuznetsov}, both $M_X$ and $M'_X$ are canonically 
associated to $X$. 

The Gushel map is either an embedding or a double covering of $M'_X$, 
according to whether the projection map $W \to W'$ is an isomorphism or has $1$-dimensional kernel.
In the first case, $W \cong W'$ and $M_X \cong M'_X$. 
Then considering $Q$ as a subvariety of 
$\bP(W')$, we have
\begin{equation}\label{eq:gm:ordinary}
X \cong M'_X \cap Q. 
\end{equation}
That is, $X$ is a quadric section of a linear section of the Grassmannian $\bG$. 
A GM variety of this type is called \emph{ordinary}.

If the map $W \to W'$ has $1$-dimensional kernel, then we have $\bP(W) = \Cone(\bP(W'))$ and~$M_X = \Cone(M'_X)$. 
As $Q$ does not contain the vertex of the cone (by smoothness of~$X$), projection from the vertex gives a double cover 
\begin{equation}\label{eq:gm:special}
X  \xrightarrow{\ 2:1\ } M'_X. 
\end{equation}
That is, $X$ is a double cover of a linear section of the Grassmannian $\bG$. 
A GM variety of this type is called \emph{special}. 

\begin{lemma}[\protect{\cite[Proposition~2.22]{debarre-kuznetsov}}]
\label{lemma:gr-hull}
Let $X$ be a GM variety of dimension $n$. Then the intersection~\eqref{pghull} 
defining $M'_X$ is dimensionally transverse. Moreover: 
\begin{enumerate}
\item If $n \geq 3$, or if $n=2$ and $X$ is special, then $M'_X$ is smooth.  
\item If $n = 2$ and $X$ is ordinary, then $M'_X$ has at worst rational double point singularities.  
\end{enumerate}
\end{lemma}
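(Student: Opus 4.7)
The plan is to split the proof into dimensional transversality and then the smoothness analysis, with the special and ordinary cases treated separately. For dimensional transversality I would observe that every irreducible component of $\bG \cap \bP(W')$ has dimension at least $\dim W' - 4$ by the standard intersection bound inside $\bP(\wedge^2 V_5) = \bP^9$. For the matching upper bound I would use smoothness of the $n$-dimensional $X$: in the ordinary case $X = M'_X \cap Q$ is a quadric section, giving $\dim M'_X \leq n+1 = \dim W' - 4$; in the special case the Gushel map $X \to M'_X$ is $2{:}1$, giving $\dim M'_X \leq n = \dim W' - 4$. This establishes transversality in both cases.

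In the special case I would finish the smoothness claim by exploiting the double-cover structure. On an affine chart of $M'_X$, the equation cutting out $X$ inside $\Cone(M'_X)$ has the form $at^2 + L(x)t + \hat Q(x) = 0$ with $a \neq 0$ a constant (since $Q$ avoids the vertex of the cone), realizing $X$ as a cyclic double cover of $M'_X$. A standard local computation for such covers shows that smoothness of the total space forces smoothness of both the base and the branch divisor, so $M'_X$ is smooth.

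In the ordinary case, smoothness of $M'_X$ along $X$ is immediate from the Jacobian criterion: at $p \in X$, the relation $T_p X = T_p M'_X \cap T_p Q$ together with $\dim T_p X = n$ forces $\dim T_p M'_X = n+1$. The remaining task is to analyze the singular locus $\Sigma \subset M'_X \setminus X$. I would compute the tangent space to $\bG$ at $p = [v_1 \wedge v_2]$ explicitly and deduce that $p \in \Sigma$ iff some nonzero $\omega \in W'^\perp \subset \wedge^2 V_5^*$ satisfies $\langle v_1, v_2 \rangle \subset \ker \omega$. Since $\omega$ is a skew form on the odd-dimensional $V_5$, it has even rank, so rank $\omega = 4$ gives $\dim \ker \omega = 1$ (incompatible with a linearly independent pair), and the only possibility is $\mathrm{rank}\,\omega = 2$; the corresponding tangency locus in $\bG$ is the Pl\"ucker-embedded $\G(2, \ker \omega) \cong \bP^2$.

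The key observation is that $\wedge^2 \ker \omega \subset \omega^\perp$ (since $\omega$ vanishes on any decomposable form of two vectors in $\ker \omega$), so both $\wedge^2 \ker \omega$ (of dimension $3$) and $W'$ lie inside the $9$-dimensional space $\omega^\perp$, forcing $\dim(\wedge^2 \ker \omega \cap W') \geq 3 + \dim W' - 9 = n - 1$. For $n \geq 3$ ordinary this bound is at least $2$, so any nonempty singular contribution $\bP(\wedge^2 \ker \omega \cap W')$ has positive projective dimension in $\bP(W')$ and therefore meets the quadric $Q$, producing a singular point of $X$; this contradicts smoothness of $X$, so $\Sigma$ is empty. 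For $n = 2$ ordinary the bound degenerates to isolated singular points, which are classified as rational double points via the classical description of Gorenstein singular quintic del Pezzo threefolds. The main obstacle is the tangent-space calculation leading to the criterion $\langle v_1, v_2 \rangle \subset \ker \omega$ together with the ambient containment $\wedge^2 \ker \omega \subset \omega^\perp$, from which the dimension estimates that drive the rest of the argument follow.
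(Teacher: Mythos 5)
The paper itself gives no proof of this lemma -- it is quoted verbatim from \cite[Proposition~2.22]{debarre-kuznetsov} -- so your argument can only be judged on its own merits. Most of it holds up. The dimensional transversality argument is correct (lower bound from the intersection inequality in $\bP(\wedge^2V_5)$, upper bound from smoothness of $X$ via the quadric section, resp.\ the finite surjective double cover). The special-case smoothness via the local form $at^2+L(x)t+\hat Q(x)$ with $a\ne 0$ is fine: at a branch point the Zariski tangent space of the total space is $\bC\oplus(T_xM'_X\cap\ker d\tilde f)$, so smoothness upstairs forces $\dim T_xM'_X\le n$. The tangent-space criterion for singular points of $\bG\cap\bP(W')$ (existence of $0\ne\omega\in W'^{\perp}$ with $\langle v_1,v_2\rangle\subset\ker\omega$, hence $\mathrm{rank}\,\omega=2$) is correct, and the observation that both $\wedge^2\ker\omega$ and $W'$ sit inside the $9$-dimensional hyperplane $\omega^{\perp}$, giving $\dim(\wedge^2\ker\omega\cap W')\ge n-1$, is exactly the right leverage: for $n\ge 3$ a positive-dimensional locus of singular points of $M'_X$ must meet $Q$ and produce a singular point of $X$. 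So transversality and part (1) are established.

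Part (2) is where there is a genuine gap. Your dimension count only shows that the singular locus is finite (and even that step should be phrased for the whole closed set $\mathrm{Sing}(M'_X)$, which is disjoint from $Q$ and hence finite -- the union over $\omega$ of the single points could a priori be infinite if rank-$2$ forms fill up $\bP(W'^{\perp})$). The assertion that these isolated singularities are rational double points is then delegated to ``the classical description of Gorenstein singular quintic del Pezzo threefolds,'' which is not a proof: to invoke any such classification you would first need to know that the singularities are canonical (or at least hypersurface double points), which is essentially what is being claimed. Concretely, two things remain to be shown. First, that the embedding dimension at a singular point is exactly $4$, i.e.\ that $\dim(\wedge^2U_p^{\perp}\cap W'^{\perp})=1$; this can be extracted from a refinement of your own bound (if two independent $\omega_1,\omega_2$ annihilate $\hat T_p\bG$, then $\wedge^2\ker\omega_1$ and $W'$ both lie in the $8$-dimensional space $\omega_1^{\perp}\cap\omega_2^{\perp}$, forcing $\dim(\wedge^2\ker\omega_1\cap W')\ge 2$ and hence a positive-dimensional singular locus, contradiction), but you do not carry it out. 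Second, even granting embedding dimension $4$, the local equation of $M'_X$ at $p$ has quadratic part equal to the restriction of the rank-$4$ form $(a,b)\mapsto\omega(a,b)$ on $\mathrm{Hom}(U_p,V_5/U_p)$ to a $4$-dimensional subspace, which can in principle degenerate (a rank-$4$ form on a $6$-dimensional space has $4$-dimensional isotropic subspaces); one must rule out multiplicity $\ge 3$ and identify the singularity as du Val type, again using smoothness of $X$. Neither point is addressed, so part (2) is not proved as written.
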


By Lemma~\ref{lemma:gr-hull}, 
if $X$ is special then $M'_X$ is smooth. 
Further, by~\cite[\S2.5]{debarre-kuznetsov} the branch divisor of the double cover~\eqref{eq:gm:special} is
the smooth intersection $X' = \bG \cap Q'$, 
where $Q' = Q \cap \bP(W')$ is a quadric hypersurface in $\bP(W')$. 
Hence, as long as $n \geq 3$, the branch divisor 
$X'$ of~\eqref{eq:gm:special} is an ordinary GM variety of dimension $n-1$. 
This gives rise to an operation taking a GM variety of one type to the opposite type, 
by defining in this situation
\begin{equation} 
\label{opp-variety}
X^{\opp} = X' \qquad \text{and} \qquad
(X')^{\opp} = X. 
\end{equation}
Note that we have $\dim X^{\opp} = \dim X \pm 1$.
The opposite GM variety is not defined for special GM surfaces. 

\subsection{Definition of GM categories} 
\label{section-AX}

By the discussion in \S\ref{subsection-classification},
any GM variety $X$ of dimension $n \ge 3$ is obtained from a smooth linear section $M'_X$ 
of $\bG$ by taking a quadric section or a branched double cover. 
To describe a natural semiorthogonal decomposition of $\Db(X)$, we first recall that 
$\bG$ and its smooth linear sections of dimension at least~$3$ admit rectangular Lefschetz decompositions 
(in the sense of~\cite[\S4]{kuznetsov2007HPD})
of their derived categories. 
Note that the bundles $\cO_{\bG}, \cU^\vee$ form an exceptional pair in $\Db(\bG)$, 
where recall $\cU$ denotes the tautological rank $2$ bundle. Let 
\begin{equation}
\label{equation-definition-B}
\cB = \langle \cO_{\bG}, \cU^\vee \rangle \subset \Db(\bG)
\end{equation}
be the triangulated subcategory they generate. 
The following result holds by~\cite[\S6.1]{kuznetsov2006hyperplane}.

\begin{lemma}
\label{lemma-sod-linear-section}
Let $M$ be a smooth linear section of $\bG \subset \bP(\wedge^2V_5)$ of dimension $N \geq 3$. 
Let $i\colon M \hookrightarrow \bG$ be the inclusion. 
\begin{enumerate}
\item The functor $i^*\colon \Db(\bG) \rightarrow \Db(M)$ is fully faithful on ${\cB \subset \Db(\bG)}$.
\item Denoting the essential image of $\cB$ by $\cB_M$, there is a semiorthogonal
decomposition
\begin{equation}
\label{dbm}
\Db(M) = \langle \cB_M, \cB_M(1), \dots, \cB_M(N-2) \rangle.
\end{equation}
\end{enumerate}
\end{lemma}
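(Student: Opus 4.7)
The plan is to combine the rectangular Lefschetz decomposition
\[\Db(\bG) = \langle \cB, \cB(1), \cB(2), \cB(3), \cB(4) \rangle\]
(a consequence of Kapranov's full exceptional collection on $\bG = \G(2,V_5)$ together with Borel--Bott--Weil) with the Koszul resolution of $\cO_M$ inside $\bG$. Write $c = 6 - N$ for the codimension of $M$ in $\bG$, and let $K$ denote the $c$-dimensional space of linear forms on $\bP(\wedge^2 V_5)$ vanishing on the linear span $\bP(W')$ of $M$. Since $M = \bG \cap \bP(W')$ is dimensionally transverse, there is a locally free Koszul resolution
\[0 \to \wedge^c K \otimes \cO_\bG(-c) \to \cdots \to K \otimes \cO_\bG(-1) \to \cO_\bG \to i_*\cO_M \to 0.\]
For $E, F \in \cB$ and any integer $m$, the projection formula identifies $\RHom_M(i^*E, i^*F(m))$ with $\RHom_\bG(E, F(m) \otimes^{\rL} i_*\cO_M)$, and unrolling the Koszul resolution produces a spectral sequence whose $E_1$-page is $\bigoplus_{k=0}^{c} \RHom_\bG(E, F(m-k)) \otimes \wedge^k K$.

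For part (1), take $m = 0$: all terms with $1 \le k \le c \le 3$ vanish by the semiorthogonality of the Lefschetz decomposition of $\Db(\bG)$, so only the $k = 0$ summand survives and one obtains $\RHom_M(i^*E, i^*F) \cong \RHom_\bG(E, F)$, which gives full faithfulness on $\cB$. For the semiorthogonality in part (2), the same spectral sequence with $m = -j$ for $1 \le j \le N-2$ reduces everything to the vanishing of $\RHom_\bG(E, F(-j-k))$ for $E, F \in \cB$ and $1 \le j + k \le N - 2 + c = 4$, which is again part of the semiorthogonality of the Lefschetz decomposition on $\bG$.

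The main obstacle is generation in part (2), as semiorthogonality is formal given the Koszul spectral sequence argument. The cleanest route is to invoke Kuznetsov's hyperplane section theorem from~\cite{kuznetsov2006hyperplane}, which, in the situation at hand, transfers a rectangular Lefschetz decomposition from the ambient variety to a smooth hyperplane section by removing one block; iterating this $c = 6 - N$ times (or applying a multi-hyperplane version directly) produces the asserted decomposition of $\Db(M)$. Alternatively, one can verify generation by hand: the pullbacks of Kapranov's generating collection, namely $\cO_M(j)$ and $\cU_M^\vee(j)$ for $0 \le j \le 4$, generate $\Db(M)$, and then an inductive argument using twists of the Koszul resolution above (restricted to $M$) rewrites each such object with $j > N - 3$ as a complex of objects already lying in $\langle \cB_M, \cB_M(1), \dots, \cB_M(N-2) \rangle$.
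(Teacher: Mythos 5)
Your argument is correct and ultimately rests on the same source as the paper, which proves this lemma simply by citing \cite[\S6.1]{kuznetsov2006hyperplane}: the full faithfulness and semiorthogonality checks you carry out via the Koszul resolution of $\cO_M$ and the rectangular Lefschetz decomposition of $\Db(\bG)$ are the standard (and correct) formal verifications, while the only genuinely nontrivial point --- generation in part (2) --- is exactly what you, like the authors, defer to Kuznetsov's hyperplane section theorem. Your sketched ``by hand'' alternative for generation is too vague to count as a proof on its own (the restricted Koszul complex alone does not rewrite $\cO_M(j)$ for $j\geq N-1$ in lower twists; one also needs the staircase-type complexes on $\bG$), but since you offer it only as a fallback this does not affect the validity of the proposal.
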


The next result gives a semiorthogonal decomposition of the derived category of a GM variety. 

\begin{proposition}
\label{proposition-dbx}
Let $X$ be a GM variety of dimension $n \geq 3$. Let $f\colon X \to \bG$ be the Gushel map. 
\begin{enumerate}
\item The functor $f^*\colon \Db(\bG) \to \Db(X)$ is fully faithful on ${\cB \subset \Db(\bG)}$.
\item Denoting the essential image of $\cB$ by $\cB_X$, so that $\cB_X = \langle \cO_X, \cU^\vee_X \rangle$, there is a semiorthogonal decomposition 
\begin{equation}
\label{dbx}
\Db(X) = \langle \cA_X, \cB_X, \cB_X(1), \dots, \cB_X(n-3) \rangle,
\end{equation}
where $\cA_X$ is the right orthogonal category to 
$\langle \cB_X, \dots, \cB_X(n-3) \rangle \subset \Db(X)$.
\end{enumerate}
Thus $\Db(X)$ has a semiorthogonal decomposition with the category $\cA_X$ and $2(n-2)$ exceptional objects as components. 
\end{proposition}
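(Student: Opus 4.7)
The strategy is to reduce both parts to the Lefschetz decomposition of $\Db(M'_X)$ provided by Lemma~\ref{lemma-sod-linear-section}, by factoring the Gushel map through $M'_X$. Write $f = i_M \circ g$, where $i_M \colon M'_X \hookrightarrow \bG$ is the inclusion of the projected Grassmannian hull and $g$ is either the closed embedding $\iota \colon X \hookrightarrow M'_X$ in the ordinary case (so that $\dim M'_X = n+1$ and $X \in |\cO_{M'_X}(2)|$), or the double cover $\pi \colon X \to M'_X$ in the special case (so that $\dim M'_X = n$, branched over a divisor in $|\cO_{M'_X}(2)|$). By Lemma~\ref{lemma-sod-linear-section}(1), $i_M^*$ is already fully faithful on $\cB$ with image $\cB_{M'_X}$, so both parts of the proposition reduce to analysing $g^*$ on the twists of $\cB_{M'_X}$.

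In the ordinary case, the Koszul resolution $\iota_*\cO_X \simeq [\cO_{M'_X}(-2) \to \cO_{M'_X}]$, together with projection formula and adjunction, gives for any $\cF, \cG \in \Db(M'_X)$ a distinguished triangle
\begin{equation*}
\RHom_{M'_X}(\cF, \cG(-2)) \to \RHom_{M'_X}(\cF, \cG) \to \RHom_X(\iota^*\cF, \iota^*\cG).
\end{equation*}
Taking $\cF$ and $\cG$ in blocks $\cB_{M'_X}(j)$ and $\cB_{M'_X}(i)$ with $0 \le i \le j \le n-3$, the vanishings required for full faithfulness of $\iota^*$ on $\cB_{M'_X}$ (the case $i=j=0$) and for semiorthogonality of the pulled-back blocks (the case $j>i$) all reduce to statements of the form $\RHom_{M'_X}(\cB_{M'_X}(k), \cB_{M'_X}) = 0$ with $1 \le k \le n-1$. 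Since $\dim M'_X = n+1$, Lemma~\ref{lemma-sod-linear-section}(2) supplies a Lefschetz SOD with $n$ blocks indexed $0, \dots, n-1$, which covers all these.

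In the special case, $\pi_*\cO_X \simeq \cO_{M'_X} \oplus \cO_{M'_X}(-1)$ since the branch divisor lies in $|\cO_{M'_X}(2)|$, so the projection formula gives
\begin{equation*}
\RHom_X(\pi^*\cF, \pi^*\cG) \simeq \RHom_{M'_X}(\cF, \cG) \oplus \RHom_{M'_X}(\cF, \cG(-1)).
\end{equation*}
The required vanishings now take the form $\RHom_{M'_X}(\cB_{M'_X}(k), \cB_{M'_X}) = 0$ with $1 \le k \le n-2$, and are again provided by the Lefschetz SOD (which has $n-1$ blocks in this case since $\dim M'_X = n$). Combined with the ordinary case, this establishes full faithfulness of $f^*$ on $\cB$ and shows that $\cO_X(i), \cU_X^\vee(i)$ for $0 \le i \le n-3$ form an exceptional collection of $2(n-2)$ objects generating an admissible subcategory of $\Db(X)$; defining $\cA_X$ as its right orthogonal then produces the asserted decomposition~\eqref{dbx}.

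The only real obstacle is the bookkeeping: one must verify in each case that every index shift arising in the triangle or projection-formula calculation remains within the Lefschetz window of Lemma~\ref{lemma-sod-linear-section}, and this is precisely where the hypothesis $n \ge 3$ and the dimension counts of $M'_X$ enter. Once the correct window is identified, no vanishing outside those supplied by Lemma~\ref{lemma-sod-linear-section} is required.
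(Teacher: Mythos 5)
Your proposal is correct and follows the same route as the paper: factor the Gushel map through the projected Grassmannian hull $M'_X$, invoke Lemma~\ref{lemma-sod-linear-section} for the rectangular Lefschetz decomposition of $\Db(M'_X)$, and then transfer it to the quadric section (ordinary case) or the double cover (special case). The only difference is that the paper outsources this last transfer step to \cite[Lemmas 5.1 and 5.5]{cyclic-covers}, whereas you prove it inline via the Koszul resolution and $\pi_*\cO_X \cong \cO \oplus \cO(-1)$; your index bookkeeping ($1 \le k \le n-1$ with $n$ Lefschetz blocks in the ordinary case, $1 \le k \le n-2$ with $n-1$ blocks in the special case) checks out.
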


\begin{remark}
If $n = 2$ we set $\cA_X = \Db(X)$, so that~\eqref{dbx} still holds. 
\end{remark}

\begin{proof}
The Gushel map factors through the map 
$X \to M'_X$ to the projected Grassmannian hull~$M'_X$ defined by~\eqref{pghull}. 
By Lemma~\ref{lemma:gr-hull}, $M'_X$ is smooth and has dimension $n+1$ if 
$X$ is ordinary, or dimension $n$ if $X$ is special. 
In particular, $\Db(M'_X)$ has a semiorthogonal decomposition of the form~\eqref{dbm}. 
Further, $X \to M'_X$ realizes $X$ as a quadric section~\eqref{eq:gm:ordinary} 
if $X$ is ordinary, or as a double cover~\eqref{eq:gm:special} if $X$ is special. 
Now applying~\cite[Lemmas 5.1 and 5.5]{cyclic-covers} gives the result. 
\end{proof}

\begin{definition}
Let $X$ be a GM variety. The \emph{GM category} of $X$ is the 
category $\cA_X$ defined by the semiorthogonal decomposition~\eqref{dbx}.  
\end{definition}

More explicitly, using the definition~\eqref{equation-definition-B} of $\cB$, 
the defining semiorthogonal decomposition of a GM category $\cA_X$ can 
be written as 
\begin{equation}
\label{eq:dbx-detailed}
\Db(X) = \langle \cA_X, \cO_X, \cU_X^{\vee}, \dots, \cO_X(n-3), \cU_X^{\vee}(n-3) \rangle.
\end{equation}

The GM category $\cA_X$ is the main object of study of this paper. 
As we will see below, its properties depend strongly on the parity of $\dim(X)$. 
For this reason, we sometimes emphasize the parity of $\dim(X)$ by calling 
$\cA_X$ an \emph{even} or \emph{odd GM category} according to 
whether $\dim(X)$ is even or odd.

\subsection{Serre functors of GM categories} 
\label{subsection-serre-functor}
Recall from \cite{bondal-kapranov} that a \emph{Serre functor} for a 
triangulated category $\cT$ is an autoequivalence
$\rS_\cT\colon\cT \to \cT$ with bifunctorial isomorphisms 
\begin{equation*}
 \Hom(\cF,\rS_\cT(\cG)) \cong \Hom(\cG,\cF)^\vee
\end{equation*}
for $\cF,\cG \in \cT$. If a Serre functor exists, it is unique.
If $X$ is a smooth proper variety, then $\Db(X)$ has a Serre functor given by the formula
\begin{equation}
\label{eq:serre-x}
\rS_{\Db(X)}(\cF) = \cF \otimes \omega_X[\dim X].
\end{equation}
Moreover, given an admissible subcategory $\cA \subset \cT$, if 
$\cT$ admits a Serre functor then so does~$\cA$. 
Using~\cite{kuznetsov2015calabi}, we can describe the Serre functor of a GM category.  

\begin{proposition}
\label{proposition-serre-functor}
Let $X$ be a GM variety of dimension $n$.
\begin{enumerate}
\item If $n$ is even, the Serre functor of the GM category $\cA_X$ satisfies $\rS_{\cA_X} \cong [2]$.
\item If $n$ is odd, the Serre functor of the GM category $\cA_X$ satisfies $\rS_{\cA_X} \cong \sigma \circ [2]$ 
for a nontrivial involutive autoequivalence $\sigma$ of $\cA_X$. If in addition $X$ is special, 
then $\sigma$ is induced by the involution of the double cover~\eqref{eq:gm:special}.
\end{enumerate}
\end{proposition}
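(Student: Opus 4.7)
The plan is to deduce the proposition from the general formalism of \cite{kuznetsov2015calabi}, which describes Serre functors of residual components of rectangular Lefschetz decompositions of Fano varieties, and then (in the special case) to identify the resulting involution geometrically.

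The starting point is the formula $\rS_{\cA_X} \cong \gamma \circ \rS_{\Db(X)}|_{\cA_X}$, where $\gamma\colon \Db(X) \to \cA_X$ is the right adjoint of the inclusion, realized concretely as the projection obtained by iterated left mutations through the subcategory $\cD = \langle \cB_X, \cB_X(1), \ldots, \cB_X(n-3)\rangle$. Combining \eqref{eq:omega-x} with \eqref{eq:serre-x} gives $\rS_{\Db(X)} \cong (-)\otimes \cO_X(-(n-2))[n]$, so the task reduces to computing the left mutation of $A \otimes \cO_X(-(n-2))$ through the exceptional sequence $\{\cO_X(k), \cU_X^\vee(k)\}_{k=0,\dots,n-3}$ for $A \in \cA_X$.

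The key observation is that \eqref{dbx} expresses $\Db(X)$ as a rectangular Lefschetz decomposition of length $n-2$ with respect to $\cO_X(1)$, while the Fano index of $X$ is also $n-2$ by \eqref{eq:omega-x}. This is precisely the setting of \cite{kuznetsov2015calabi}, whose main result furnishes a rotation/mutation identity for $\rS_{\cA_X}$. Unwinding this identity collapses $\rS_{\cA_X}$ to the pure shift $[2]$ when $n$ is even, yielding (1). When $n$ is odd, the same machinery produces an extra involutive autoequivalence $\sigma$ of $\cA_X$ so that $\rS_{\cA_X} \cong \sigma \circ [2]$; the fact that $\sigma^2 \cong \id$ is forced by consistency with the identity $\rS_{\cA_X}^2 \cong [4]$ that the framework imposes via the interaction of the Lefschetz blocks.

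It remains to verify the last assertion of (2). For special $X$, the deck transformation $\tau\colon X \to X$ of the double cover \eqref{eq:gm:special} preserves both $\cO_X(1)$ and the Gushel bundle $\cU_X = f^*\cU$, since both are pulled back from $M'_X$; hence $\tau^*$ preserves $\cD$ and restricts to an autoequivalence of $\cA_X$. Matching $\tau^*|_{\cA_X}$ against the mutation computation of $\rS_{\cA_X}$ identifies it with $\sigma$.

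The principal obstacle is the mutation computation inside the rectangular Lefschetz decomposition: carefully tracking the shifts and twists through the $n-2$ iterated mutations and extracting the parity-dependent involution $\sigma$ in the odd-dimensional case is the technical heart of the argument, and this is exactly what the machinery of \cite{kuznetsov2015calabi} is designed to handle.
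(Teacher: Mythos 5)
You have located the right reference, but you apply it in the wrong setting, and this is a genuine gap rather than a stylistic difference. The results of \cite{kuznetsov2015calabi} that are actually relevant (Corollaries 3.7 and 3.8 there) concern the residual category of a degree-$d$ hypersurface in, or a cyclic cover of, an ambient variety $M$ carrying a rectangular Lefschetz decomposition of length $m$ with $K_M = -mH$. The paper applies them with $M = M'_X$: the essential input is the realization \eqref{eq:gm:ordinary} of an ordinary $X$ as a \emph{quadric section} of $M'_X$, or \eqref{eq:gm:special} of a special $X$ as a \emph{double cover} of $M'_X$ branched in a quadric section, together with the decomposition \eqref{dbm} of $\Db(M'_X)$. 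Your proposal instead invokes only the Lefschetz decomposition \eqref{dbx} of $\Db(X)$ itself and the coincidence of its length with the Fano index $n-2$. That coincidence is not enough: a Fano variety of index $k$ with a Lefschetz decomposition of length $k$ need not have a fractional Calabi--Yau residual category (e.g.\ a del Pezzo surface of degree $1$ has index $1$, $\Db(X) = \langle \cO_X^\perp, \cO_X\rangle$, and $\cO_X^\perp$ is generated by an exceptional collection, hence is very far from Calabi--Yau). The general mutation identity you describe only expresses $\rS_{\cA_X}$ as the $(n-2)$-nd power of a rotation functor; the collapse of that power to $[2]$ (resp.\ $\sigma\circ[2]$) rests on the identity relating $\rO^2$ to a shift, which is supplied precisely by the quadric-section/double-cover structure over $M'_X$ and does not follow from \eqref{dbx} alone. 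Relatedly, your appeal to ``the identity $\rS_{\cA_X}^2 \cong [4]$ that the framework imposes'' to get $\sigma^2 \cong \id$ is circular, since that identity is part of what is being proved.

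A second, independent omission: the proposition asserts that $\sigma$ is \emph{nontrivial} when $n$ is odd, and nothing in your argument addresses this. The machinery produces an involutive autoequivalence, but one must still rule out that it is isomorphic to the identity on $\cA_X$. The paper does this by observing that if $\sigma$ were trivial then $\cA_X$ would be $2$-Calabi--Yau, so by Proposition~\ref{proposition-HH-CY} one would have $\HH_{-2}(\cA_X) \cong \HH^0(\cA_X) \neq 0$, contradicting the computation $\HH_\bullet(\cA_X) \cong \bC^{10}[1]\oplus\bC^2\oplus\bC^{10}[-1]$ of Proposition~\ref{proposition-HH_*}. Some argument of this kind is needed. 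Your geometric identification of $\sigma$ with the deck transformation in the special case is the right idea (and is what Corollary 3.8 of \cite{kuznetsov2015calabi} delivers), but ``matching against the mutation computation'' should be replaced by an actual citation of that corollary or a genuine comparison of functors.
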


\begin{proof}
If $n = 2$, then $\cA_X = \Db(X)$ and $X$ is a K3 surface, so the result holds by~\eqref{eq:serre-x}. 
If $n \geq 3$, then as in the proof of Proposition~\ref{proposition-dbx} we may express 
$X$ as a quadric section or double cover of the smooth variety $M'_X$. 
It is easy to see the length $m$ of the semiorthogonal
decomposition of $\Db(M'_X)$ 
given by Lemma~\ref{lemma-sod-linear-section} satisfies $K_{M'_X} = -mH$, where $H$ 
is the restriction of the ample generator of $\Pic(\bG)$. 
Hence we may apply~\cite[Corollaries 3.7 and 3.8]{kuznetsov2015calabi} to see that the Serre 
functors have the desired form. 

If $\sigma$ were trivial, then the Hochschild homology $\HH_{-2}(\cA_X)$ would 
be nontrivial (see Proposition~\ref{proposition-HH-CY}), 
which contradicts the computation of Proposition~\ref{proposition-HH_*} below.
\end{proof}

Proposition~\ref{proposition-serre-functor} shows that even GM categories can be regarded 
as ``noncommutative K3 surfaces'', and odd GM categories can be regarded 
as ``noncommutative Enriques surfaces''.  
This analogy goes further than the relation between Serre functors. 
For instance, any Enriques surface (in characteristic $0$) is the quotient 
of a K3 surface by an involution.
Similarly, the results of~\cite{cyclic-covers} show that 
odd GM categories can be described as ``quotients'' of even GM categories by involutions.
To state this precisely, recall from~\S\ref{subsection-classification} that unless $X$ is a special GM surface,
there is an associated GM variety 
$X^{\opp}$ of the opposite type and parity of dimension. 
The following result is proved in \cite[\S8.2]{cyclic-covers}.

\begin{proposition}
\label{proposition-GM-category-quotient}
Let $X$ be a GM variety which is not a special GM surface.
Then there is a~$\bZ/2$-action on the GM category~$\cA_{X}$ such 
that if $\cA_{X}^{\bZ/2}$ denotes the equivariant category, 
then there is an equivalence 
\begin{equation*}
\cA_{X}^{\bZ/2} \simeq \cA_{X^{\opp}} . 
\end{equation*}
If $\sigma$ is the autoequivalence generating the $\bZ/2$-action on $\cA_X$, then 
$\sigma$ is induced by the involution of the double covering $X \to M'_X$ if $X$ is special, 
and $\sigma = \rS_{\cA_X} \circ [-2]$ if $\dim(X)$ is odd.
\end{proposition}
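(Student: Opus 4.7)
The plan is to deduce the proposition from the general theory of derived categories of double covers developed in \cite{cyclic-covers}, applied to the geometric relationship between $X$ and $X^{\opp}$ recalled in \S\ref{subsection-classification}.

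First, I would unify the two situations of the proposition into a single geometric picture. In the setup, exactly one of $X$ and $X^{\opp}$ is a special GM variety; call this one $Z$ and the other $D$, and set $M = M'_Z$. Then from \S\ref{subsection-classification}, $Z \to M$ is a smooth double cover branched along $D$, with $M$ a smooth linear section of $\bG$ of dimension $\dim Z$ carrying the rectangular Lefschetz decomposition of Lemma~\ref{lemma-sod-linear-section}, and $D \subset M$ a smooth quadric section, i.e.\ an ordinary GM variety of dimension $\dim Z - 1$. Under this translation, proving the equivalence $\cA_X^{\bZ/2} \simeq \cA_{X^{\opp}}$ in all cases of the proposition is the same as proving a reciprocity statement between $\cA_Z$ and $\cA_D$.

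Second, I would invoke the general reciprocity for semiorthogonal decompositions of double covers established in \cite[\S8]{cyclic-covers}. The covering involution of $Z \to M$ acts on $\Db(Z)$, preserves the pullback of the Lefschetz decomposition of $M$, and therefore descends to an involutive autoequivalence of $\cA_Z$. A parallel canonical involutive autoequivalence of $\cA_D$ is produced from the divisorial structure of $D \subset M$ together with the square-root line bundle on $M$ defining the cover. The content of \cite{cyclic-covers} is that these two $\bZ/2$-actions are dual in the sense that $\cA_Z^{\bZ/2} \simeq \cA_D$ and $\cA_D^{\bZ/2} \simeq \cA_Z$. Specializing to $\{Z,D\} = \{X, X^{\opp}\}$ gives the asserted equivalence $\cA_X^{\bZ/2} \simeq \cA_{X^{\opp}}$ in both the ordinary and special cases.

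Third, I would identify the generating autoequivalence $\sigma$ in the cases singled out by the proposition. When $X$ is special, $X$ plays the role of $Z$, and by construction $\sigma$ is induced by the covering involution of $X \to M'_X$. When $\dim X$ is odd, I would compare $\sigma$ with Proposition~\ref{proposition-serre-functor}: the nontrivial involutive factor $\sigma'$ in the formula $\rS_{\cA_X} = \sigma' \circ [2]$ was produced there via the same cyclic-cover mechanism, through \cite[Corollaries~3.7 and~3.8]{kuznetsov2015calabi}, so $\sigma$ and $\sigma'$ coincide and yield $\sigma = \rS_{\cA_X} \circ [-2]$.

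The substantive technical input is the reciprocity used in step two; in practice its proof requires a careful matching of the rectangular Lefschetz decomposition on $M$ with the equivariant structures on $\Db(Z)$ and with the embedding of $D$ into $M$, so that the two Kuznetsov components become identified after taking equivariant categories. All remaining steps are direct bookkeeping using the structure already set up in \S\ref{section-AX}.
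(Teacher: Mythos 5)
Your proposal is correct and follows the same route as the paper, which simply cites \cite[\S8.2]{cyclic-covers} for this result: the whole content is the reciprocity between the Kuznetsov component of a double cover and that of its branch divisor, applied to the pair $\{X, X^{\opp}\}$ with the rectangular Lefschetz decomposition of $M'_X$ from Lemma~\ref{lemma-sod-linear-section}. Your identification of $\sigma$ in the odd-dimensional case via the construction underlying Proposition~\ref{proposition-serre-functor} is also the intended one.
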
 

\subsection{Hochschild homology of GM categories}
\label{section-hochschild-homology}
Given a suitably enhanced triangulated category $\cA$, there is 
an invariant $\HH_\bullet(\cA)$ called its \emph{Hochschild homology}, 
which is a graded $\bC$-vector space. 
We will exclusively be interested in admissible subcategories of the 
derived category of a smooth projective variety. 
For a definition of Hochschild homology in this context, see~\cite{kuznetsov2009hochschild}. 

If $\cA = \Db(X)$, we write $\HH_{\bullet}(X)$ for $\HH_{\bullet}(\cA)$. 
The Hochschild--Kostant--Rosenberg (HKR) isomorphism gives the 
following explicit description of Hochschild homology in this case \cite{markarian}:
\begin{equation}
\label{eq:HKR}
\HH_{i}(X) \cong \bigoplus_{q - p = i} \rH^{q}(X, \Omega^{p}_{X}).
\end{equation} 

An important property of Hochschild homology is that it is additive under 
semiorthogonal decompositions. 
\begin{theorem}[\protect{\cite[Theorem~7.3]{kuznetsov2009hochschild}}]
\label{theorem:additivity}
Let $X$ be a smooth projective variety. 
Given a semiorthogonal decomposition $\Db(X) = \langle \cA_1,\cA_2,\dots,\cA_m \rangle$, 
there is an isomorphism 
\begin{equation*}
 \HH_\bullet(X) \cong \bigoplus_{i=1}^m \HH_\bullet(\cA_i).
\end{equation*}
\end{theorem}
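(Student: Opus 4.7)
The plan is to lift the problem from the level of categories to Fourier--Mukai kernels on $X \times X$. Each admissible subcategory $\cA \subset \Db(X)$ is the essential image of a projection functor which, by the representability theorem of Bondal--Van den Bergh, is itself a Fourier--Mukai transform with kernel $P_{\cA} \in \Db(X \times X)$; the identity functor of $\Db(X)$ has kernel $\Delta_*\cO_X$. In this language, the Hochschild homology of $\cA$ can be expressed as a graded $\Hom$-space on $X \times X$: one sets
\[
\HH_i(\cA) = \Hom_{\Db(X \times X)}\bigl(\Delta_*\omega_X^{-1}[\dim X - i],\, P_{\cA}\bigr),
\]
the first argument being the shifted inverse Serre kernel. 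For $\cA = \Db(X)$, one has $P_{\cA} = \Delta_*\cO_X$, and a Serre duality computation combined with the HKR decomposition of $\Delta^*\Delta_*\cO_X$ recovers the classical formula~\eqref{eq:HKR}.

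Next, I would construct from the semiorthogonal decomposition $\Db(X) = \langle \cA_1,\dots,\cA_m\rangle$ a canonical filtration
\[
0 = K_0 \to K_1 \to \cdots \to K_m = \Delta_*\cO_X
\]
in $\Db(X \times X)$ whose successive cones are the projection kernels $P_{\cA_i}$. This is the kernel-level shadow of the fact that every object of $\Db(X)$ sits in an iterated tower of distinguished triangles whose subquotients are its projections onto the various components; functoriality and uniqueness of these triangles let one lift the construction from functors to kernels on $X \times X$, yielding the desired filtration.

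Applying $\Hom_{\Db(X \times X)}(\Delta_*\omega_X^{-1}[\dim X - i], -)$ to this filtration produces a spectral sequence converging to $\HH_\bullet(X)$ with $E_1$-page $\bigoplus_i \HH_\bullet(\cA_i)$, so the theorem reduces to degeneration at $E_1$. Equivalently, one must verify that every connecting map in the long exact sequences attached to the successive triangles vanishes. This vanishing is the main obstacle and the technical heart of the argument. By adjunction, each such connecting map factors through a convolution $P_{\cA_i} \circ P_{\cA_j}$ of projection kernels for two distinct components; by semiorthogonality, one direction of this composition vanishes directly, while the other is handled by a Serre duality maneuver that swaps the two arguments and once again reduces to a composition forced to be zero by semiorthogonality. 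Once every connecting map is shown to vanish, the filtration splits at the level of $\HH_\bullet$, and the desired direct-sum decomposition follows.
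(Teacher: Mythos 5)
This theorem is quoted in the paper from \cite{kuznetsov2009hochschild} without proof, and your proposal is essentially a faithful reconstruction of the argument given there: projection kernels $P_{\cA_i}$ on $X \times X$, a filtration of the diagonal kernel with these as successive cones, and degeneration of the resulting spectral sequence via vanishing of connecting maps proved by semiorthogonality in one direction and a Serre-duality swap in the other. Two minor caveats: the representability of the projection functor onto an admissible subcategory by a kernel on $X \times X$ does not follow from the Bondal--Van den Bergh representability theorem but is itself a nontrivial result established in the same reference (via gluing of kernels over a dg-enhancement), and with the paper's grading convention $\HH_i(X) \cong \bigoplus_{q-p=i}\rH^q(X,\Omega^p_X)$ the shift in your defining formula should be $[-\dim X - i]$ rather than $[\dim X - i]$.
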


By combining this additivity property with the HKR isomorphism for GM varieties, we can 
compute the Hochschild homology of GM categories. 

\begin{proposition}
\label{proposition-HH_*}
Let $X$ be a GM variety of dimension $n$. Then 
\begin{equation*}
\HH_\bullet(\cA_X) \cong  
\left \{
\begin{array}{c@{\hspace{4pt}}c@{\hspace{4pt}}c@{\hspace{4pt}}c@{\hspace{4pt}}cl}
\bC[2] & \oplus & \bC^{22} & \oplus & \bC[-2] & \text{if $n$ is even,} \\
\bC^{10}[1] & \oplus & \bC^2 & \oplus & \bC^{10}[-1] & \text{if $n$ is odd.}
\end{array}
\right.
\end{equation*}
\end{proposition}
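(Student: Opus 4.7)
The strategy is to combine the additivity of Hochschild homology under semiorthogonal decompositions (Theorem~\ref{theorem:additivity}) with the HKR isomorphism~\eqref{eq:HKR} applied to the semiorthogonal decomposition~\eqref{eq:dbx-detailed}. Since each exceptional object in~\eqref{eq:dbx-detailed} generates a subcategory equivalent to $\Db(\mathrm{pt})$, whose Hochschild homology is $\bC$ concentrated in degree $0$, Theorem~\ref{theorem:additivity} yields
\begin{equation*}
\HH_\bullet(X) \;\cong\; \HH_\bullet(\cA_X) \,\oplus\, \bC^{2(n-2)},
\end{equation*}
where the second summand is in degree $0$. Hence $\HH_i(\cA_X) = \HH_i(X)$ for $i \neq 0$, and $\HH_0(\cA_X)$ is obtained from $\HH_0(X)$ by subtracting $2(n-2)$. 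The problem is therefore reduced to computing the full Hodge diamond of a GM variety of dimension $n$ for each $2 \le n \le 6$.

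For $n=2$ the claim is immediate: $X$ is a K3 surface, $\cA_X = \Db(X)$, and the Hodge diamond of a K3 gives $\HH_\bullet(X) = \bC[2] \oplus \bC^{22} \oplus \bC[-2]$. For $n \ge 3$, I would compute the Hodge numbers $h^{p,q}(X)$ directly using the realization of $X$ either as a quadric section of the smooth projected Grassmannian hull $M'_X$ (in the ordinary case) or as a double cover of $M'_X$ branched along $X^{\opp}$ (in the special case), as recalled in~\S\ref{subsection-classification}. In the ordinary case, one can use the Koszul resolution on $\bG \cap \bP(W')$ to inductively relate the Hodge numbers of $X$ to those of $\bG$ and its linear sections; in the special case, one can split the pushforward to $M'_X$ into invariant and anti-invariant parts under the covering involution, the anti-invariant part being computable from the branch divisor $X^{\opp}$.

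Alternatively, and perhaps more efficiently, one can avoid a case-by-case computation by observing that all the relevant numbers are encoded in the Lefschetz hyperplane and weak Lefschetz theorems applied to the Fano structure of $X$ together with the known middle cohomology. For $n$ odd, $X$ is a Fano variety of coindex $3$ with $\Pic(X) = \bZ H$ by~\eqref{eq:pic-x}, so all $h^{p,q}$ vanish for $p \ne q$ outside the middle degree, each $h^{p,p} = 1$ for $0 \le p \le n$, and the middle cohomology has $h^{(n-1)/2,(n+1)/2} = h^{(n+1)/2,(n-1)/2} = 10$. This yields $\HH_{\pm 1}(X) = \bC^{10}$ and $\HH_0(X) = \bC^{n+1}$. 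For $n$ even, the same argument together with the K3-type middle Hodge structure (which follows from $-K_X = (n-2)H$ and the vanishing $h^{p,0}(X) = 0$ for $p>0$ by Kodaira vanishing on the Fano $X$, together with the computation that the primitive middle cohomology has Hodge numbers $(0,1,20,1,0)$ for $n=4$ and analogous shapes for $n=6$) gives $\HH_{\pm 2}(X) = \bC$ and $\HH_0(X) = \bC^{2n+18}$. Subtracting $\bC^{2(n-2)}$ in degree $0$ gives the stated answer in each case.

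The main obstacle is the explicit computation of the middle Hodge numbers of $X$ for $n = 4, 5, 6$; for $n=4$ the key input $h^{1,3} = 1$ and $h^{2,2} = 22$ (primitive part of dimension $20$) is well known from the work of Debarre--Iliev--Manivel, while for $n=5,6$ one may either invoke classification results for Fano varieties of this type or carry out a direct Koszul/Borel--Weil calculation on $\bG$. A useful consistency check, which I would perform at the end, is that the computed $\HH_\bullet(\cA_X)$ is concentrated symmetrically around $0$, in accordance with the Serre functor description of Proposition~\ref{proposition-serre-functor}: in the even case $\rS_{\cA_X} = [2]$ forces $\HH_i = \HH_{2-i}^\vee$ giving the $[2] \oplus \cdot \oplus [-2]$ shape, and in the odd case the non-trivial involution $\sigma$ in $\rS_{\cA_X}$ explains why $\HH_{-2}$ vanishes (as already invoked in the proof of Proposition~\ref{proposition-serre-functor}).
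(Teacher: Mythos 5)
Your overall strategy coincides with the paper's: additivity of Hochschild homology over the decomposition~\eqref{eq:dbx-detailed} plus the HKR isomorphism reduces everything to the Hodge diamond of $X$, and the paper simply records these diamonds for $2 \le n \le 6$ with references to the literature. The first route you sketch for obtaining the diamonds (Koszul resolution on the Grassmannian hull in the ordinary case, invariant/anti-invariant splitting of the double cover in the special case) is sound and is essentially what those references do.

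However, your ``more efficient'' Lefschetz-type shortcut contains a genuine error. The assertion that $\Pic(X) = \bZ H$ forces $h^{p,p}(X) = 1$ for all $p$ outside the middle degree is false: the Picard group controls only $h^{1,1}$, and the Lefschetz theorems give an injection of the rank-two group $\rH^4(\bG,\bZ)$ (spanned by the Schubert classes $\sigma_2$ and $\sigma_{1,1}$) into $\rH^4(X,\bZ)$ for $\dim X \ge 5$, so that $h^{2,2}(X) = 2$ for $n = 5,6$ (and $h^{4,4}(X) = 2$ for $n=6$ by duality). For $n=5$ this makes your formula $\HH_0(X) = \bC^{n+1} = \bC^6$ incorrect --- the true value is $\bC^8$ --- and subtracting the contribution $\bC^{2(n-2)} = \bC^6$ of the exceptional objects from your value would give $\HH_0(\cA_X) = 0$ rather than the required $\bC^2$; taken at face value, your shortcut contradicts the proposition for fivefolds. (For $n=6$ your stated total $2n+18 = 30$ happens to be correct, but it does not follow from the reasoning you give, which would yield $n+22 = 28$.) The fix is to use the correct diagonal Hodge numbers, which are governed by the restriction of the full cohomology ring of $\bG$ and not just of its Picard group; with the correct diamonds in hand the computation closes exactly as you describe, including the consistency check against Proposition~\ref{proposition-serre-functor}.
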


\begin{proof}
By~\eqref{eq:dbx-detailed} there is a semiorthogonal decomposition of 
$\Db(X)$ with $\cA_X$ and~$2(n-2)$ exceptional objects as components. 
Since the category generated by an exceptional object is equivalent 
to the derived category of a point, its Hochschild homology is just $\bC$. 
Hence by additivity, 
\begin{equation*}
\HH_{\bullet}(X) \cong \HH_{\bullet}(\cA_X) \oplus \bC^{2(n-2)}. 
\end{equation*}
By~{\eqref{eq:HKR}} the graded dimension of $\HH_{\bullet}(X)$ 
can be computed by summing the columns of the Hodge diamond of $X$, which looks as follows 
(see~\cite{logachev2012fano}, \cite{iliev2011fano}, \cite{nagel1998generalized}, and~\cite{debarre-kuznetsov-periods}): 
\begin{equation*}
{\tabulinesep=1.2mm
\begin{tabu}{|c|c|c|c|c|c}
\hline
\dim(X) = 2 & \dim(X) = 3 & \dim(X) = 4 & \dim(X) = 5 & \dim(X) = 6 \\
\hline 
\begin{smallmatrix}
&& 1 \\
& 0 && 0 \\
1 && 20 && 1 \\ 
& 0 && 0 \\
&& 1 
\end{smallmatrix}
& 
\begin{smallmatrix}
&&& 1 \\
&& 0 && 0 \\
& 0 && 1 && 0 \\
0 && 10 && 10 && 0 \\
& 0 && 1 && 0 \\
&& 0 && 0 \\
&&& 1 
\end{smallmatrix}
&
\begin{smallmatrix}
&&&& 1 \\
&&& 0 && 0 \\
&& 0 && 1 && 0 \\
& 0 && 0 && 0 && 0 \\
0 && 1 && 22 && 1 && 0 \\ 
& 0 && 0 && 0 && 0 \\
&& 0 && 1 && 0 \\
&&& 0 && 0 \\
&&&& 1 
\end{smallmatrix}
&
\begin{smallmatrix}
&&&&& 1 \\
&&&& 0 && 0 \\
&&& 0 && 1 && 0 \\
&& 0 && 0 && 0 && 0 \\
& 0 && 0 && 2 && 0 && 0 \\
0 && 0 && 10 && 10 && 0 && 0 \\
& 0 && 0 && 2 && 0 && 0 \\
&& 0 && 0 && 0 && 0 \\
&&& 0 && 1 && 0 \\
&&&& 0 && 0 \\
&&&&& 1 
\end{smallmatrix}
&
\begin{smallmatrix}
&&&&&& 1 \\
&&&&& 0 && 0 \\
&&&& 0 && 1 && 0 \\
&&& 0 && 0 && 0 && 0 \\
&& 0 && 0 && 2 && 0 && 0 \\
& 0 && 0 && 0 && 0 && 0 && 0 \\
0 && 0 && 1 && 22 && 1 && 0 && 0 \\ 
& 0 && 0 && 0 && 0 && 0 && 0 \\
&& 0 && 0 && 2 && 0 && 0 \\
&&& 0 && 0 && 0 && 0 \\
&&&& 0 && 1 && 0 \\
&&&&& 0 && 0 \\
&&&&&& 1 
\end{smallmatrix} \\
\hline
\end{tabu}}
\end{equation*}
Now the lemma follows by inspection. 
\end{proof}

\subsection{Hochschild cohomology of GM categories}
\label{section-hochschild-cohomology}
Given a suitably enhanced triangulated category $\cA$,  
there is also an invariant $\HH^\bullet(\cA)$ called its \emph{Hochschild cohomology}, 
which has the structure of a graded $\bC$-algebra.  
Again, for a definition in the case where $\cA$ is an admissible subcategory 
of the derived category of a smooth projective variety, see~\cite{kuznetsov2009hochschild}. 

If $\cA = \Db(X)$, we write $\HH^{\bullet}(X)$ for $\HH^{\bullet}(\cA)$. 
There is the following version of the HKR isomorphism 
for Hochschild cohomology~\cite{markarian}:
\begin{equation}
\label{eq:HKR-cohomology}
\HH^i(X) \cong \bigoplus_{p + q = i} \rH^q(X, \wedge^p\rT_X).
\end{equation} 

Hochschild cohomology is not additive under semiorthogonal decompositions, 
and so it is generally much harder to compute than Hochschild homology. 
There is, however, a case when the computation simplifies considerably.
Recall that a triangulated category $\cA$ is called 
\emph{$n$-Calabi--Yau} if the shift functor $[n]$ is a Serre functor for $\cA$. 

\begin{proposition}[\protect{\cite[Proposition~5.2]{kuznetsov2015calabi}}]
\label{proposition-HH-CY}
Let $\cA$ be an admissible subcategory of $\Db(X)$ for a smooth projective variety $X$. 
If $\cA$ is an $n$-Calabi--Yau category, 
then for each $i$ there is an isomorphism of vector spaces
\begin{equation*}
\HH^i(\cA) \cong \HH_{i-n}(\cA).
\end{equation*}
\end{proposition}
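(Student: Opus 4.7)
The plan is to express both Hochschild cohomology and Hochschild homology as $\Hom$-groups in a common ambient category --- the (dg-)category of endofunctors of $\cA$, equivalently the category of Fourier--Mukai kernels on $X \times X$ --- where the identity endofunctor $\id_\cA$ and the Serre functor $\rS_\cA$ appear as distinguished objects. Given such uniform descriptions, the hypothesis $\rS_\cA \simeq [n]$ reduces the proposition to a purely formal index shift.

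To set this up, I would invoke the kernel-theoretic descriptions from \cite[\S7]{kuznetsov2009hochschild}. For an admissible subcategory $\cA \subset \Db(X)$, let $P_\cA \in \Db(X \times X)$ denote the Fourier--Mukai kernel of the projection functor $\Db(X) \to \cA \hookrightarrow \Db(X)$; this kernel represents $\id_\cA$. Then
\begin{equation*}
\HH^i(\cA) \;\cong\; \Ext^i_{X \times X}(P_\cA, P_\cA) \;\cong\; \Hom^i(\id_\cA, \id_\cA),
\end{equation*}
while Hochschild homology admits a parallel description
\begin{equation*}
\HH_i(\cA) \;\cong\; \Hom^i(\id_\cA, \rS_\cA),
\end{equation*}
obtained by convolving $P_\cA$ with the Serre kernel $\Delta_* \omega_X[\dim X]$ of $\Db(X)$ on one side, then applying Serre duality on $X \times X$.

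With these two identifications in hand, the conclusion is almost immediate. Under the hypothesis $\rS_\cA \simeq [n]$, we compute
\begin{equation*}
\HH_{i-n}(\cA) \;\cong\; \Hom^{i-n}(\id_\cA, \rS_\cA) \;\cong\; \Hom^{i-n}(\id_\cA, \id_\cA[n]) \;\cong\; \Hom^i(\id_\cA, \id_\cA) \;\cong\; \HH^i(\cA),
\end{equation*}
which is the desired isomorphism.

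The substantive step is establishing the formula $\HH_i(\cA) \cong \Hom^i(\id_\cA, \rS_\cA)$ for an admissible subcategory. This is not a tautology: one must unwind the definition of $\HH_\bullet(\cA)$ in terms of the projection kernel $P_\cA$, identify the trace-like construction with a suitable $\Ext$ on $X \times X$, and then invoke Serre duality on $X \times X$ (whose Serre functor is $\omega_{X \times X}[2\dim X] \otimes -$) to convert one entry of the $\Hom$ into the twist by $\rS_\cA$. This is precisely the machinery of \cite{kuznetsov2009hochschild}; once it is in place, the Calabi--Yau hypothesis is used only at the very last step, in the trivial replacement of $\rS_\cA$ by $[n]$.
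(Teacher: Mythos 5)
Your proposal is correct and is essentially the argument of the cited reference \cite[Proposition~5.2]{kuznetsov2015calabi} (the paper itself only quotes the statement): one identifies $\HH_\bullet(\cA)$ with morphisms from the projection kernel $P_\cA$ to its convolution with the Serre kernel of $X$, checks that this convolution represents $\rS_\cA \circ (-)$ on $\cA$, and then reads off the index shift from $\rS_\cA \cong [n]$. The step you rightly single out as substantive --- the identification $\HH_i(\cA) \cong \Hom^i(\id_\cA, \rS_\cA)$ --- is exactly what the kernel formalism of \cite{kuznetsov2009hochschild} supplies, so no gap remains.
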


This immediately applies to even GM categories, as by Proposition~\ref{proposition-serre-functor} 
they are $2$-Calabi--Yau. 

\begin{corollary}
\label{corollary-even-GM-HC}
Let $X$ be a GM variety of even dimension. Then 
\begin{equation*}
\HH^{\bullet}(\cA_X) \cong \bC \oplus \bC^{22}[-2] \oplus \bC[-4]. 
\end{equation*}
\end{corollary}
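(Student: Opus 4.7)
The plan is a direct deduction from the three results just proved. Step one: invoke Proposition~\ref{proposition-serre-functor}(1), which tells us that for even $\dim(X)$ the Serre functor of $\cA_X$ is the shift $[2]$. In particular $\cA_X$ is a $2$-Calabi--Yau category, so Proposition~\ref{proposition-HH-CY} (applied with $n = 2$ to the admissible subcategory $\cA_X \subset \Db(X)$) yields graded-vector-space isomorphisms
\begin{equation*}
\HH^i(\cA_X) \cong \HH_{i-2}(\cA_X) \quad \text{for every } i \in \bZ.
\end{equation*}
Step two: substitute Proposition~\ref{proposition-HH_*}, which in the even case describes $\HH_\bullet(\cA_X)$ as $\bC$ in homological degree $-2$, $\bC^{22}$ in degree $0$, and $\bC$ in degree $2$. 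Shifting each piece up by $2$ places these summands in cohomological degrees $0$, $2$, and $4$ respectively, which is precisely $\bC \oplus \bC^{22}[-2] \oplus \bC[-4]$ in the shift notation of \S\ref{subsection:notation}.

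There is essentially no obstacle; the only substantive input is the $2$-Calabi--Yau identification, which is already in place. Two small caveats are worth flagging. First, Proposition~\ref{proposition-HH-CY} is an isomorphism of graded vector spaces and not of graded algebras, so the present argument computes $\HH^\bullet(\cA_X)$ only additively, leaving the cup-product structure untouched. Second, this route does not apply in the odd-dimensional case: by Proposition~\ref{proposition-serre-functor}(2) the Serre functor of an odd GM category is $\sigma \circ [2]$ with $\sigma$ a nontrivial involution, so $\cA_X$ is not Calabi--Yau, and Hochschild cohomology must instead be accessed by the equivariant methods cited for Proposition~\ref{proposition-odd-GM-HC}.
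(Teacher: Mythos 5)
Your argument is exactly the paper's: Proposition~\ref{proposition-serre-functor}(1) makes $\cA_X$ a $2$-Calabi--Yau category, Proposition~\ref{proposition-HH-CY} then gives $\HH^i(\cA_X) \cong \HH_{i-2}(\cA_X)$, and substituting the even case of Proposition~\ref{proposition-HH_*} yields the stated answer, with the degree bookkeeping handled correctly. The caveat that this is only an additive (not multiplicative) computation is accurate and consistent with what the paper claims.
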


The Hochschild cohomology of odd GM categories is significantly harder 
to compute. 
Our strategy is to exploit the fact that there is a $\bZ/2$-action on such 
a category, with invariants an even GM category. 
By the results of~\cite{equivariant-HC}, this reduces us to a problem 
involving the Hochschild cohomology of an even GM category and 
the Hochschild \emph{homology} of an odd GM category.

\begin{proposition}
\label{proposition-odd-GM-HC}
Let $X$ be a GM variety of odd dimension. Then 
\begin{equation*}
\HH^{\bullet}(\cA_X) \cong \bC \oplus \bC^{20}[-2] \oplus \bC[-4]. 
\end{equation*}
\end{proposition}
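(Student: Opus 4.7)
The strategy, as indicated in the preamble of this section, is to exploit the $\bZ/2$-action on $\cA_X$ furnished by Proposition~\ref{proposition-GM-category-quotient}, whose equivariant category is $\cA_{X^{\opp}}$ for an even-dimensional GM variety $X^{\opp}$. Since $\HH^\bullet(\cA_{X^{\opp}})$ is known by Corollary~\ref{corollary-even-GM-HC}, and $\HH_\bullet(\cA_X)$ by Proposition~\ref{proposition-HH_*}, the plan is to extract $\HH^\bullet(\cA_X)$ via the equivariant Hochschild cohomology machinery of~\cite{equivariant-HC}.

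The key input is the general decomposition that, for any $\bZ/2$-action on a suitable category $\cA$ with generator $\sigma$, takes the form
\[
\HH^\bullet(\cA^{\bZ/2}) \cong \HH^\bullet(\cA)^{\bZ/2} \oplus \HH^\bullet(\cA, \sigma)^{\bZ/2},
\]
where $\HH^\bullet(\cA, \sigma) = \bigoplus_i \Hom(\id_{\cA}, \sigma[i])[-i]$ denotes the twisted Hochschild cohomology. By Proposition~\ref{proposition-GM-category-quotient} the generator acting on $\cA_X$ satisfies $\sigma = \rS_{\cA_X} \circ [-2]$, and combined with the tautological identification $\HH_i(\cA) \cong \Hom(\id_{\cA}, \rS_{\cA}[i])$ (which underlies the HKR formula for derived categories of varieties) this yields $\HH^i(\cA_X, \sigma) \cong \HH_{i-2}(\cA_X)$. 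By Proposition~\ref{proposition-HH_*}, the twisted piece is concentrated in degrees $1, 2, 3$ with graded dimensions $(10, 2, 10)$, while $\HH^\bullet(\cA_{X^{\opp}})$ is concentrated in degrees $0, 2, 4$ with dimensions $(1, 22, 1)$.

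A direct degree-by-degree comparison now pins down most of the answer: the vanishing of the left-hand side in odd degrees forces the $\bZ/2$-action on $\HH_{\pm 1}(\cA_X) = \bC^{10}$ to be by the sign character, and matching in even degrees determines the graded invariants of $\HH^\bullet(\cA_X)$. The \textbf{main obstacle} is that one only directly recovers $\HH^\bullet(\cA_X)^{\bZ/2}$, not $\HH^\bullet(\cA_X)$ itself, so a further argument is required to rule out nonzero anti-invariant contributions. The cleanest route is to apply the companion statement in~\cite{equivariant-HC} for the Morita-dual smash product $\cA_{X^{\opp}} \rtimes \bZ/2 \simeq \cA_X$ (valid in characteristic zero), which recovers the full $\HH^\bullet(\cA_X)$ as a sum of twisted $\bZ/2$-invariants of Hochschild data on the even category $\cA_{X^{\opp}}$. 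Alternatively, a Mukai-type pairing argument constrains $\dim \HH^\bullet(\cA_X) = \dim \HH_\bullet(\cA_X) = 22$, forcing the $\bZ/2$-action on both $\HH^\bullet(\cA_X)$ and on $\HH_0(\cA_X) = \bC^2$ to be trivial. Either route delivers the claimed answer $\bC \oplus \bC^{20}[-2] \oplus \bC[-4]$, concentrated in degrees $0, 2, 4$ with dimensions $(1, 20, 1)$.
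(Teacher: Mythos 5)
Your overall strategy is the paper's: use the $\bZ/2$-action of Proposition~\ref{proposition-GM-category-quotient}, feed the known $\HH^\bullet(\cA_{X^{\opp}})$ and $\HH_\bullet(\cA_X)$ into the equivariant formula of~\cite{equivariant-HC}, and compare degree by degree. The identification $\HH^i(\cA_X,\sigma)\cong\HH_{i-2}(\cA_X)$ and the conclusion that $\sigma$ acts by $-1$ on $\HH_{\pm1}(\cA_X)$ are exactly as in the paper. But the endgame has a genuine gap, and in fact two separate unknowns are left unresolved: (i) whether the untwisted summand is all of $\HH^\bullet(\cA_X)$ rather than just its invariants, and (ii) the value of $d=\dim\HH_0(\cA_X)^{\bZ/2}\in\{0,1,2\}$, which your ``matching in even degrees'' does not determine because degree $2$ receives the two unknown contributions $\HH^2(\cA_X)^{\bZ/2}$ and $\bC^d$ simultaneously. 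Point (i) is actually a non-issue: the statement cited in the paper (\cite[Corollary~1.3]{equivariant-HC}) gives $\HH^\bullet(\cA_{X^{\opp}})\cong\HH^\bullet(\cA_X)\oplus\HH_\bullet(\cA_X)^{\bZ/2}[-2]$ with the \emph{full} untwisted term, precisely because $\sigma=\rS_{\cA_X}\circ[-2]$ and conjugation by the Serre functor acts trivially on $\Ext^\bullet(\id,\id)$. So the ``main obstacle'' you isolate is already absorbed by the reference, whereas the obstacle that remains, namely (ii), is the one your argument does not close.

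Neither of your two proposed fixes works as stated. Running the equivariant formula for the dual action on $\cA_{X^{\opp}}$ expresses $\HH^\bullet(\cA_X)$ as invariants of $\HH^\bullet(\cA_{X^{\opp}})$ plus invariants of the \emph{twisted} cohomology $\bigoplus_i\Hom(\id,\tau[i])$, where $\tau$ is the residual involution on the even category; since $\rS_{\cA_{X^{\opp}}}\cong[2]$, this $\tau$ is not a Serre twist, its twisted Hochschild cohomology is not computed anywhere, and the formula only yields a lower bound, not the needed upper bound. The ``Mukai-type pairing'' identity $\dim\HH^\bullet(\cA)=\dim\HH_\bullet(\cA)$ is not a theorem and is false in general (already $\dim\HH^\bullet(\bP^2)=19$ while $\dim\HH_\bullet(\bP^2)=3$); the Mukai pairing is a pairing $\HH_i\times\HH_{-i}\to\bC$ on homology and says nothing about $\HH^\bullet$. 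What the paper actually uses is the Lefschetz-type formula of Polishchuk,
\begin{equation*}
\sum_i(-1)^i\dim\HH^i(\cA_X)=\sum_i(-1)^i\,\Tr\bigl((\rS_{\cA_X}^{-1})_*\mid\HH_i(\cA_X)\bigr),
\end{equation*}
whose right side is bounded above by $\sum_i\dim\HH_i(\cA_X)=22$ because $(\rS_{\cA_X}^{-1})_*$ is an involution on each $\HH_i$, while the left side equals $24-d$; this forces $d=2$ and finishes the proof. (An alternative, recorded as a remark in the paper, is deformation invariance of the Euler characteristic plus the observation that for special $X$ the covering involution acts trivially on $\HH_0$.) Your write-up would be correct if you replaced the dimension identity by this Euler-characteristic/trace identity.
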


\begin{proof}
Recall that by Proposition~\ref{proposition-GM-category-quotient} there is a $\bZ/2$-action on $\cA_X$ such that if 
$\sigma\colon \cA_X \to \cA_X$ denotes the corresponding 
involutive autoequivalence, then: 
\begin{enumerate}
\item
\label{serre-recap} $\rS_{\cA_X} = \sigma \circ [2]$ is a Serre functor for $\cA_X$.
\item
\label{involution-recap} 
$\cA_X^{\bZ/2} \simeq \cA_{X^\op}$, where $X^{\op}$ is the opposite variety to $X$.
\end{enumerate}
As stated, these are results at the level of triangulated categories, 
but they also hold at the enhanced level. 
Namely, in the terminology of~\cite{equivariant-HC}, there is a $\bC$-linear 
stable $\infty$-category $\Db(X)^{\enh}$ (denoted $\Perf(X)$ in~\cite{equivariant-HC}) 
with homotopy category $\Db(X)$. 
The category $\Db(X)^{\enh}$ admits a semiorthogonal decomposition of the same 
form as~\eqref{dbx}, which defines a $\bC$-linear stable $\infty$-category $\cA_X^{\enh}$ 
whose homotopy category is $\cA_X$. 
If $\sigma^{\enh}\colon \cA_X^{\enh} \to \cA_X^{\enh}$ denotes the corresponding involutive autoequivalence, then \eqref{serre-recap} 
and \eqref{involution-recap} above hold with $\cA_X$, $\rS_{\cA_X}$, $\sigma$, and~$\cA_{X^\op}$ replaced by their enhanced versions, 
and \eqref{serre-recap} and \eqref{involution-recap} are recovered by 
passing to homotopy categories. 
The Hochschild (co)homology of $\cA_X$ and $\cA_{X^\opp}$ agree with 
the corresponding invariants of their enhancements. 
Hence~\cite[Corollary~1.3]{equivariant-HC} gives 
\begin{equation}
\label{equation-odd-GM-HC}
\HH^\bullet(\cA_{X^\opp}) \cong 
\HH^\bullet(\cA_X) \oplus (\HH_\bullet(\cA_X)^{\bZ/2}[-2])
\end{equation}
where the $\bZ/2$-action on $\HH_\bullet(\cA_X)$ is induced by $\sigma$. 

Since $X$ has odd dimension (and hence $X^{\opp}$ has even dimension), 
by Corollary~\ref{corollary-even-GM-HC} we have 
\begin{equation*}
\HH^{\bullet}(\cA_{X^\opp}) \cong \bC \oplus \bC^{22}[-2] \oplus \bC[-4],  
\end{equation*}
and by Proposition~\ref{proposition-HH_*} we have 
\begin{equation*}
\HH_\bullet(\cA_X) \cong \bC^{10}[1] \oplus \bC^2 \oplus \bC^{10}[-1]. 
\end{equation*}
Combined with~\eqref{equation-odd-GM-HC}, this immediately shows 
$\HH_{\bullet}(\cA_X)^{\bZ/2}$ is concentrated in degree $0$, i.e. 
$\HH_{\bullet}(\cA_X)^{\bZ/2} \cong \bC^d$ for some $0 \leq d \leq 2$, 
and  
\begin{equation*}
\HH^{\bullet}(\cA_X) \cong \bC \oplus \bC^{22-d}[-2] \oplus \bC[-4]. 
\end{equation*} 

To prove $d = 2$, we apply~\cite[Corollary 3.11]{polishchuk2014}, 
which gives an equality 
\begin{equation}
\label{equation-LFP}
\sum_i (-1)^i \dim \HH^i(\cA_X) = 
\sum_i (-1)^i \, \Tr((\rS_{\cA_X}^{-1})_*\colon \HH_i(\cA_X) \to \HH_i(\cA_X)). 
\end{equation}
Note that since $\rS_{\cA_X} = \sigma \circ [2]$,  
the map $(\rS^{-1}_{\cA_X})_* \colon \HH_i(\cA_X) \to \HH_i(\cA_X)$ induced 
by $\rS^{-1}_{\cA_X}$ on Hochschild homology coincides with the map 
induced by $\sigma$, and in particular squares to the identity. 
It follows that the right side of~\eqref{equation-LFP} is bounded above 
by \mbox{$\sum_i \dim \HH_i(\cA_X) = 22$}. 
But the left side of~\eqref{equation-LFP} equals $24-d$ where 
$0 \leq d \leq 2$, so $d = 2$. 
\end{proof}

\begin{remark}
As a byproduct, the above proof shows that $\rS_{\cA_X}$ acts on $\HH_i(\cA_X)$ by $(-1)^i$ for any GM category $\cA_X$. 
\end{remark}

\begin{remark}
It is possible to show $d = 2$ in the above proof without appealing to the equality~\eqref{equation-LFP}, as follows. 
Note that the statement is deformation invariant, since it is equivalent to the Euler characteristic $\sum_i (-1)^i \dim \HH^i(\cA_X)$ being $22$. 
So we may assume $X$ is special. 
Then the $\bZ/2$-action on $\cA_X$ is induced by the involution of the double cover $X \to M'_X$. 
We want to show that $\bZ/2$ acts trivially on $\HH_0(\cA_X)$.  
But $\HH_\bullet(\cA_X)$ is canonically a summand of $\HH_{\bullet}(X)$, and we claim that the involution of the double cover acts trivially on~$\HH_0(X)$. 
Indeed, since $X$ is odd-dimensional, pullback under $X \to M'_X$ induces a surjection on even-degree cohomology and hence on $\HH_0$. 
The claim follows. 
\end{remark}

\begin{remark}
Proposition~\ref{proposition-odd-GM-HC} can also be deduced from Conjecture~\ref{conjecture:duality} stated below.
Indeed, the conjecture implies that the GM category of any GM variety of odd dimension 
is equivalent to that of an ordinary GM threefold, whose Hochschild cohomology 
can be computed using~\cite[Theorem~8.8]{kuznetsov2009hochschild}.
Yet another method for computing the Hochschild cohomology of GM categories is via the normal Hochschild cohomology spectral sequence of~\cite{kuznetsov2015height}, 
but this method becomes long and complicated for GM varieties of dimension bigger than $3$. 
\end{remark}

As an application, we discuss the indecomposability of GM categories. 
Recall that a triangulated category $\cT$ is called \emph{indecomposable} if it admits no nontrivial 
semiorthogonal decompositions, i.e. if $\cT = \langle \cA_1, \cA_2 \rangle$ implies either 
$\cA_1 \simeq 0$ or $\cA_2 \simeq 0$.
In general, there are very few techniques for proving indecomposability of a triangulated category.
However, for Calabi--Yau categories, we recall a simple criterion below. 

If $\cA$ is an admissible subcategory of the derived category of a smooth projective variety, 
we say $\cA$ is \emph{connected} if $\HH^0(\cA) = \bC$ (see~\cite[\S5.2]{kuznetsov2015calabi}).  
By Corollary~\ref{corollary-even-GM-HC} and Proposition~\ref{proposition-odd-GM-HC}, 
all GM categories are connected. 

\begin{proposition}[\protect{\cite[Proposition 5.5]{kuznetsov2015calabi}}] 
\label{proposition-indecomposable}
Let $\cA$ be a connected admissible subcategory of the derived category of a smooth 
projective variety.
Then $\cA$ admits no nontrivial completely orthogonal decompositions. 
If furthermore $\cA$ is Calabi--Yau, then $\cA$ is indecomposable.
\end{proposition}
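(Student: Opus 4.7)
The plan is to exploit the ring structure on Hochschild cohomology and the behavior of Serre duality, following the strategy behind such indecomposability results.

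First, I would establish that Hochschild cohomology is multiplicative under completely orthogonal decompositions. More precisely, if $\cA = \cA_1 \perp \cA_2$ is a completely orthogonal (semi)orthogonal decomposition of an admissible subcategory, then the projection functors onto the summands are mutually orthogonal idempotents in the identity, and this yields a ring isomorphism $\HH^\bullet(\cA) \cong \HH^\bullet(\cA_1) \times \HH^\bullet(\cA_2)$ (this is essentially built into the Hochschild cohomology formalism via bimodule endomorphisms of the identity). Since for any nonzero admissible subcategory $\cA_i$ the identity functor $\id_{\cA_i}$ defines a nonzero element of $\HH^0(\cA_i)$, both factors $\HH^0(\cA_i)$ are nonzero $\bC$-algebras. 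Taking degree zero in the ring isomorphism gives
\begin{equation*}
\bC = \HH^0(\cA) \cong \HH^0(\cA_1) \times \HH^0(\cA_2),
\end{equation*}
which forces one of the factors to vanish since $\bC$ is a field. Thus one of the $\cA_i$ is zero, proving the first assertion.

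Next, assuming $\cA$ is Calabi--Yau with Serre functor $\rS_\cA \cong [n]$, I would upgrade any semiorthogonal decomposition $\cA = \langle \cA_1, \cA_2 \rangle$ to a completely orthogonal one. For $F \in \cA_2$ and $G \in \cA_1$, semiorthogonality gives $\Hom(F, G[k]) = 0$ for every $k \in \bZ$. Applying Serre duality inside $\cA$,
\begin{equation*}
\Hom(G, F[k]) \cong \Hom(F[k], \rS_\cA(G))^{\vee} \cong \Hom(F, G[n-k])^{\vee} = 0
\end{equation*}
for all $k$, so the decomposition is completely orthogonal. By the first part (applied to the connected category $\cA$) one summand is trivial, hence $\cA$ is indecomposable.

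The main conceptual point, and the step where one needs to be careful, is the multiplicativity of $\HH^\bullet$ under completely orthogonal decompositions and the identification of $\id_{\cA_i}$ with a nonzero class in $\HH^0(\cA_i)$; once this is in hand, the Calabi--Yau hypothesis reduces indecomposability to the completely orthogonal case via Serre duality, and the rest is formal. No deep input beyond the definitions and Serre duality in $\cA$ is required, so this is a clean two-step argument rather than a computation.
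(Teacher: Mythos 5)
Your argument is correct and is essentially the proof given in the cited source \cite[Proposition~5.5]{kuznetsov2015calabi} (the paper itself only quotes the result): a completely orthogonal decomposition splits the kernel of the projection functor, hence splits $\HH^0$ into a product of nonzero unital algebras, contradicting $\HH^0(\cA)=\bC$; and in the Calabi--Yau case Serre duality upgrades any semiorthogonal decomposition to a completely orthogonal one. No issues.
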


\begin{corollary}
\label{corollary-AX-indecomposable}
Let $X$ be a GM variety of dimension $n$. 
\begin{enumerate}
\item If $n$ is even, then $\cA_X$ is indecomposable. 
\item If $n$ is odd, then $\cA_X$ admits no nontrivial completely orthogonal decompositions.
\end{enumerate}
\end{corollary}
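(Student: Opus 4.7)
The plan is to deduce both parts directly from Proposition~\ref{proposition-indecomposable}, using the Hochschild (co)homology computations already in hand as the necessary inputs. The key observation is that both Corollary~\ref{corollary-even-GM-HC} and Proposition~\ref{proposition-odd-GM-HC} give $\HH^0(\cA_X) = \bC$, so that every GM category is connected in the sense recalled just before Proposition~\ref{proposition-indecomposable}. Thus the hypothesis of the first half of that proposition is immediately verified, which takes care of part (2): an odd GM category is connected, hence admits no nontrivial completely orthogonal decomposition. The same observation handles the first half of part (1) as well.

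For the stronger conclusion in part (1), namely genuine indecomposability (not just the absence of completely orthogonal decompositions), I would invoke the Calabi--Yau clause of Proposition~\ref{proposition-indecomposable}. By Proposition~\ref{proposition-serre-functor}(1), an even GM category satisfies $\rS_{\cA_X} \cong [2]$, so it is $2$-Calabi--Yau. Combined with connectedness, Proposition~\ref{proposition-indecomposable} then yields indecomposability of $\cA_X$.

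I do not anticipate a serious obstacle: the substantive work has already been done in establishing the Serre functor in Proposition~\ref{proposition-serre-functor} and the Hochschild cohomology in Corollary~\ref{corollary-even-GM-HC} and Proposition~\ref{proposition-odd-GM-HC}. The corollary is essentially a one-line application of Proposition~\ref{proposition-indecomposable} to these inputs, with the only subtlety being to note that for odd $n$ the category $\cA_X$ is not Calabi--Yau (since $\sigma$ is nontrivial), so only the weaker conclusion of Proposition~\ref{proposition-indecomposable} is available in that case.
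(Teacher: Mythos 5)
Your proposal is correct and follows exactly the paper's argument: connectedness of $\cA_X$ (from the Hochschild cohomology computations in Corollary~\ref{corollary-even-GM-HC} and Proposition~\ref{proposition-odd-GM-HC}) plus the $2$-Calabi--Yau property in the even case (Proposition~\ref{proposition-serre-functor}) feed directly into Proposition~\ref{proposition-indecomposable}. Nothing to add.
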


\begin{proof}
This follows from Proposition \ref{proposition-indecomposable}, the connectivity of $\cA_{X}$, 
and the fact that $\cA_{X}$ is Calabi--Yau if $n$ is even.
\end{proof}

\begin{remark}
It is plausible that $\cA_X$ is indecomposable if $X$ is an odd-dimensional GM variety, 
but we do not know how to prove this. 
\end{remark}

\subsection{Grothendieck groups of GM categories}
\label{section-grothendieck-group}
The \emph{Grothendieck group} 
$\rK_{0}(\cT)$ of a triangulated category $\cT$ 
is the free group on isomorphism classes $[\cF]$ of 
objects $\cF \in \cT$, modulo the relations $[\cG] = [\cF] + [\cH]$ for every distinguished triangle
$\cF \rightarrow \cG \rightarrow \cH$. 

Assume $\cT$ is \emph{proper}, i.e. that $\bigoplus_i \Hom(\cF, \cG[i])$ is finite 
dimensional for all $\cF, \cG \in \cT$. 
For instance, this holds if $\cT$ is admissible in the derived category of a smooth projective variety. 
Then for $\cF, \cG \in \cT$, we set  
\begin{equation*}
\chi(\cF,\cG) = \sum_{i} (-1)^i \dim \Hom(\cF,\cG[i]).
\end{equation*}
This descends to a bilinear form $\chi\colon \rK_{0}(\cT) \times \rK_{0}(\cT) \rightarrow \bZ$, 
called the \emph{Euler form}. 
In general this form is neither symmetric nor antisymmetric. 
However, if $\cT$ admits a Serre functor 
(e.g. if $\cT$ is admissible in the derived category of a smooth projective variety), 
then the left and right kernels of the form $\chi$ agree, and we denote this common subgroup of $\rK_{0}(\cT)$ 
by $\ker(\chi)$. In this situation, the \emph{numerical Grothendieck group} is the quotient
\begin{equation*}
\rK_{0}(\cT)_{\num} = \rK_{0}(\cT)/\ker(\chi).
\end{equation*}
Note that $\rK_{0}(\cT)_{\num}$ is torsion free, since 
$\ker(\chi)$ is evidently saturated. 

If $X$ is a smooth projective variety, we write 
\begin{equation*}
\rK_{0}(X) = \rK_{0}(\Db(X)) 
\quad \text{and} \quad
\rK_{0}(X)_{\num} = \rK_{0}(\Db(X))_{\num}. 
\end{equation*}
Further, let $\CH(X)$ and $\CH(X)_{\num}$ denote the Chow rings of cycles 
modulo rational and numerical equivalence. 
The following well-known consequence of Hirzebruch--Riemann--Roch 
relates the (numerical) Grothendieck group of $X$ to its (numerical) Chow group. 

\begin{lemma}
\label{lemma-GRR}
Let $X$ be a smooth projective variety. 
Then there are isomorphisms 
\begin{equation*}
\rK_{0}(X) \otimes \bQ  \cong  \CH(X) \otimes \bQ \qquad \text{and} \qquad
\rK_{0}(X)_{\num} \otimes \bQ  \cong \CH(X)_{\num}\otimes \bQ. 
\end{equation*}
\end{lemma}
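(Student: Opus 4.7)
The plan is to use the Chern character as the bridge between K-theory and Chow theory, and then identify the Euler form in terms of intersection theory via Hirzebruch--Riemann--Roch.

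For the first isomorphism, I would invoke the classical Grothendieck--Hirzebruch--Riemann--Roch theorem: the Chern character defines a ring homomorphism $\ch \colon \rK_0(X) \to \CH(X) \otimes \bQ$, and it is a fundamental fact (see, e.g., Fulton's \emph{Intersection Theory}, \S15.2) that after tensoring the source with $\bQ$ this becomes an isomorphism. Granting this, there is nothing more to prove for the rational Chow/$\rK_0$ identification.

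For the second isomorphism, I would use Hirzebruch--Riemann--Roch to write the Euler form as
\begin{equation*}
\chi(\cF, \cG) = \int_X \ch(\cF^{\vee}) \cdot \ch(\cG) \cdot \td(X).
\end{equation*}
Under the Chern character, the kernel of $\chi$ corresponds to the set of classes $\alpha \in \CH(X) \otimes \bQ$ such that $\int_X \alpha^{\vee} \cdot \beta \cdot \td(X) = 0$ for all $\beta \in \CH(X) \otimes \bQ$, where $\alpha \mapsto \alpha^{\vee}$ denotes the involution on Chern characters induced by dualization. Since $\td(X)$ is a unit in $\CH(X) \otimes \bQ$ (its degree zero component is $1$) and the dualization involution is an automorphism of $\CH(X) \otimes \bQ$, the maps $\beta \mapsto \beta \cdot \td(X)$ and $\beta \mapsto \beta^{\vee}$ are both isomorphisms of $\CH(X) \otimes \bQ$. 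Therefore $\alpha$ lies in the image of $\ker(\chi)$ if and only if $\int_X \alpha \cdot \gamma = 0$ for all $\gamma \in \CH(X) \otimes \bQ$, i.e., if and only if $\alpha$ is numerically trivial. Passing to the quotient, this yields the desired isomorphism $\rK_0(X)_{\num} \otimes \bQ \cong \CH(X)_{\num} \otimes \bQ$.

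I do not anticipate a genuine obstacle here, as both ingredients (GRR and HRR) are standard. The only step that requires a little care is verifying that the kernel of the Euler form on the K-theoretic side is indeed matched under $\ch$ with the kernel of intersection pairing on the Chow side, which reduces to the observation that $\td(X)$ and the dualization involution act invertibly on $\CH(X) \otimes \bQ$.
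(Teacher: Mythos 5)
Your proof is correct and follows the same route as the paper: the Chern character together with Grothendieck--Riemann--Roch (citing Fulton, \S15.2) for the first isomorphism, and Hirzebruch--Riemann--Roch to identify $\ker(\chi)$ with the preimage of the numerically trivial cycles for the second. The paper states this more tersely, but the content is identical; your extra remarks about the Todd class being a unit and dualization being an automorphism are exactly the details implicit in the paper's "observation."
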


\begin{proof}
The isomorphisms are induced by the Chern character $\ch \colon \rK_{0}(X) \rightarrow \CH(X) \otimes \bQ$.  
For the first, see~\cite[Example 15.2.16(b)]{fulton}. 
The second then follows from the observation that, by Riemann--Roch, 
the kernel of the Euler form is precisely the preimage under the Chern character
of the subring of numerically trivial cycles. 
\end{proof}

The following well-known lemma says that Grothendieck groups are additive.  

\begin{lemma}
Let $X$ be a smooth projective variety. 
Given a semiorthogonal decomposition $\Db(X) = \langle \cA_1,\cA_2,\dots,\cA_m \rangle$, 
there are isomorphisms 
\begin{equation*}
\rK_0(X)  \cong  \bigoplus_{i = 1}^{m} \rK_0(\cA_i) \qquad \text{and} \qquad 
\rK_{0}(X)_{\num} \cong  \bigoplus_{i = 1}^{m} \rK_0(\cA_i)_{\num} . 
\end{equation*}
\end{lemma}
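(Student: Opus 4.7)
The plan is to establish the additivity of $\rK_0$ first, then deduce the numerical version by analyzing the Euler form in the resulting decomposition.

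For the first isomorphism, each $\cA_i \subset \Db(X)$ is admissible, so the inclusion $\iota_i : \cA_i \hookrightarrow \Db(X)$ admits both a left and a right adjoint, furnishing projection functors $\Db(X) \to \cA_i$. Every $\cF \in \Db(X)$ then admits a canonical filtration whose subquotients $\cF_i \in \cA_i$ are obtained by iterated application of these projections, so in $\rK_0(X)$ one has $[\cF] = \sum_i [\cF_i]$. This shows the inclusion-induced map $\bigoplus_i \rK_0(\cA_i) \to \rK_0(X)$ is surjective; the projections give a one-sided inverse, since by semiorthogonality the composition $\rK_0(\cA_j) \to \rK_0(X) \to \rK_0(\cA_i)$ is the identity for $i = j$ and vanishes otherwise.

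For the numerical version, under $\rK_0(X) = \bigoplus_i \rK_0(\cA_i)$, semiorthogonality makes the Euler form block upper-triangular with $\chi(\xi_j, \eta_i) = 0$ for $j > i$, and with diagonal blocks equal to $\chi|_{\cA_i}$. The key claim is $\ker(\chi_X) = \bigoplus_i \ker(\chi|_{\cA_i})$. The inclusion $\supseteq$ follows from the adjunctions between $\iota_i$ and its projection functors, which imply that $\chi_X(\iota_i \xi, \eta)$ and $\chi_X(\eta, \iota_i \xi)$ reduce to Euler pairings on $\cA_i$ with $\xi$; both vanish when $\xi \in \ker(\chi|_{\cA_i})$, using that the left and right kernels of $\chi|_{\cA_i}$ coincide (each $\cA_i$ inherits a Serre functor from $\Db(X)$ by admissibility). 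For the reverse inclusion, given $\xi = \sum \xi_i \in \ker(\chi_X)$, I would argue by descending induction on $j$: pairing $\chi_X(\eta_j, \xi) = 0$ against $\eta_j \in \cA_j$ and invoking semiorthogonality yields $\chi_X(\eta_j, \xi_j) + \sum_{i > j} \chi_X(\eta_j, \xi_i) = 0$, and by the inductive hypothesis each $\xi_i$ with $i > j$ already lies in $\ker(\chi|_{\cA_i}) \subseteq \ker(\chi_X)$, so the cross-terms vanish and $\xi_j \in \ker(\chi|_{\cA_j})$.

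The main obstacle is this reverse inclusion, because the upper-triangular Euler form has generally nonzero cross-terms that must be neutralized. The resolution rests on the descending induction combined with the coincidence of left and right kernels on each admissible component, which itself depends on the Serre functor inherited from $\Db(X)$. Once the kernel identification is in place, quotienting immediately gives $\rK_0(X)_{\num} \cong \bigoplus_i \rK_0(\cA_i)_{\num}$.
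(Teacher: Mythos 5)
Your proposal is correct and follows the same route as the paper's (very terse) proof: the maps induced by the embedding and projection functors are mutually inverse, and the isomorphism descends to numerical Grothendieck groups. The paper simply asserts the descent, whereas you correctly supply the missing verification that $\ker(\chi_X)$ decomposes as $\bigoplus_i \ker(\chi|_{\cA_i})$, using the adjunctions for one inclusion and descending induction against the block-triangular Euler form for the other.
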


\begin{proof}
The embedding functors $\cA_i \hookrightarrow \Db(X)$ induce a map 
$\bigoplus_{i} \rK_0(\cA_i) \to \rK_0(X)$, whose inverse is the map induced 
by the projection functors $\Db(X) \to \cA_i$. This isomorphism also 
descends to numerical Grothendieck groups. 
\end{proof}

Now let $X$ be a GM variety. If $X$ is a surface then $\cA_X = \Db(X)$, so
the Grothendieck group of $\cA_X$ coincides with that of $X$. 
Below we describe $\rK_0(\cA_X)_{\num}$ if $X$ is odd dimensional, or if~$X$ is a fourfold or sixfold which is not ``Hodge-theoretically special'' in the following sense. 

First, we note that if $n$ denotes the dimension of $X$, 
then by Lefschetz theorems (see~\cite[Proposition~3.4(b)]{debarre-kuznetsov-periods}) 
the Gushel map $f\colon X \to \bG$ induces an injection 
\begin{equation*}
\rH^{n}(\bG, \bZ) \hookrightarrow \rH^{n}(X, \bZ). 
\end{equation*} 
If $n$ is odd, then $\rH^n(\bG, \bZ)$ simply vanishes.
But if $n=4$ or $6$, then $\rH^n(\bG, \bZ) = \bZ^2$ is generated 
by Schubert cycles, and the \emph{vanishing cohomology} 
$\rH^n_{\van}(X, \bZ)$ is defined as the orthogonal to 
$\rH^{n}(\bG, \bZ) \subset \rH^{n}(X, \bZ)$ with respect to the 
intersection form. 

\begin{definition}[\cite{DIM4fold}]
\label{definition-Hodge-special}
Let $X$ be a GM variety of dimension $n = 4$ or $6$. 
Then $X$ is \emph{Hodge-special} if
\begin{equation*}
\rH^{\frac{n}{2}, \frac{n}{2}}(X) \cap \rH^n_{\mathrm{van}}(X,\bQ) \ne 0. 
\end{equation*}
\end{definition}

\begin{lemma}[\cite{DIM4fold}]
\label{lemma-not-Hodge-special}
If $X$ is a very general GM fourfold or sixfold, then 
$X$ is not \mbox{Hodge-special}. 
\end{lemma}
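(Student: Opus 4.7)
The plan is to prove the statement by a standard Noether--Lefschetz density argument applied to the variation of Hodge structures on the vanishing cohomology. Let $\cS$ denote the moduli stack of smooth GM varieties of dimension $n\in\{4,6\}$; by Appendix~\ref{appendix-moduli} (or by the results of~\cite{debarre-kuznetsov}) this stack is smooth and irreducible. Over a simply connected analytic cover of $\cS$, the local system $X \mapsto \rH^n_{\van}(X,\bQ)$ underlies a polarized variation of Hodge structure, and the Hodge-special locus is the countable union, over rational classes $\gamma$ in the fibers of this variation, of Noether--Lefschetz loci
\[
\mathrm{NL}_\gamma = \{ X \in \cS : \gamma \in \rH^{n/2,n/2}(X) \},
\]
each of which is a closed analytic subvariety. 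It therefore suffices to show that every $\mathrm{NL}_\gamma$ is a \emph{proper} subvariety of $\cS$, so that the non-Hodge-special locus is Baire-generic.

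First I would record that the Hodge structure on $\rH^n_{\van}(X,\bQ)$ is not of pure Hodge type $(n/2,n/2)$. Indeed, the Hodge diamonds exhibited in the proof of Proposition~\ref{proposition-HH_*} give $h^{n/2+1,\,n/2-1}_{\van}(X)=1$ in both cases $n=4$ and $n=6$, since the extra Schubert classes from $\bG$ live in $\rH^{n/2,n/2}$ and do not affect the outer Hodge numbers. Granted this, the argument reduces to the statement that the monodromy representation of $\pi_1(\cS)$ on $\rH^n_{\van}(X,\bQ)$ is irreducible. For if so, any nonzero monodromy-invariant class $\gamma$ would generate the whole of $\rH^n_{\van}(X,\bQ)$ as a $\bQ$-sub-Hodge structure (using that polarized Hodge structures are semisimple and that the monodromy preserves the Hodge filtration of a fiber only at Hodge-special points), forcing $\rH^n_{\van}(X,\bQ)$ to be of pure type $(n/2,n/2)$, a contradiction. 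Consequently, for very general $X$ no nonzero Hodge class can appear in $\rH^n_{\van}(X,\bQ)$.

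The main obstacle is therefore establishing the irreducibility of the monodromy representation on $\rH^n_{\van}(X,\bQ)$. The standard approach I would follow is to degenerate $X$ through the moduli stack to a nearby GM variety acquiring a single ordinary double point; the corresponding local monodromy is a Picard--Lefschetz reflection in the vanishing cycle associated to the node. Varying the location of the node (and the direction of the degeneration) using the connectedness of the nodal boundary of $\cS$ one produces a $\pi_1(\cS)$-orbit of vanishing cycles, and then one shows this orbit spans $\rH^n_{\van}(X,\bQ)$, whence any nonzero monodromy-invariant subspace must equal the whole. This degeneration analysis, which is the substantive content of the lemma, is carried out in~\cite{DIM4fold}, from which the statement is quoted.
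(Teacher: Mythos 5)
Your overall strategy --- expressing the Hodge-special locus as a countable union of Noether--Lefschetz loci and showing each is proper --- is a legitimate alternative to what the paper does, but the two arguments diverge at the decisive step, and it is precisely there that your proposal has a gap. The paper's proof (following \cite[Corollary~4.6]{DIM4fold}) establishes properness of the Noether--Lefschetz loci \emph{infinitesimally}: the key computation is that the local period map for GM fourfolds is a submersion, so that no rational class in $\rH^n_{\van}$ can remain of type $(\tfrac{n}{2},\tfrac{n}{2})$ on an open subset of moduli; the sixfold case is then handled by running the same cohomological computation. You instead reduce to the \emph{global} statement that the monodromy representation of the fundamental group of the moduli stack on $\rH^n_{\van}(X,\bQ)$ is irreducible, and then assert that the required Picard--Lefschetz/degeneration analysis ``is carried out in \cite{DIM4fold}.'' That attribution is not correct: \cite{DIM4fold} proves the infinitesimal statement, not irreducibility of the monodromy, and it treats only fourfolds. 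So the substantive content of your argument --- irreducibility of monodromy on $\rH^n_{\van}$, including connectedness of the nodal discriminant and the spanning of $\rH^n_{\van}(X,\bQ)$ by a single monodromy orbit of vanishing cycles, for \emph{both} $n=4$ and $n=6$ --- is neither proved nor correctly located in the literature. Without it the proof is incomplete.

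Two smaller points. First, your justification that a nonzero monodromy-invariant rational class of type $(\tfrac{n}{2},\tfrac{n}{2})$ forces $\rH^n_{\van}$ to be pure of that type is garbled as written (``the monodromy preserves the Hodge filtration of a fiber only at Hodge-special points'' is not the right statement); the correct argument is that if some $\mathrm{NL}_\gamma$ is all of the moduli stack, then by analyticity the multivalued flat section $\gamma$ lies in $F^{n/2}\cap\overline{F^{n/2}}$ everywhere, so the $\bQ$-span of its monodromy orbit is a sub-local system contained in $\cH^{n/2,n/2}$, which by irreducibility is everything --- contradicting $h^{\frac{n}{2}+1,\frac{n}{2}-1}_{\van}=1$. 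Second, if you want a route that genuinely covers both dimensions without redoing a degeneration analysis, the paper's Remark~\ref{remark:periods-isomorphism} indicates one: combine the EPW description of the moduli of GM varieties with \cite[Theorem~5.1]{debarre-kuznetsov-periods}.
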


\begin{remark}
\label{remark:very-general}
\emph{Very general} here means that the moduli point $[X] \in \cM_n(\bC)$ 
lies in the complement of countably many proper closed substacks of $\cM_n$, 
where $n = \dim(X)$ and $\cM_n$ is the moduli stack of $n$-dimensional GM varieties  
discussed in Appendix~\ref{appendix-moduli}. 
\end{remark}

\begin{proof}
In the fourfold case, this is \cite[Corollary~4.6]{DIM4fold}. 
The main point of the proof is the computation that the local period 
map for GM fourfolds is a submersion. 
The sixfold case can be proved by the same argument.
\end{proof}

\begin{remark}
\label{remark:periods-isomorphism}
Lemma~\ref{lemma-not-Hodge-special} can also be proved by combining the the description of the moduli of GM varieties in terms of EPW sextics 
(see Remark~\ref{remark:moduli-period}) with \cite[Theorem 5.1]{debarre-kuznetsov-periods}. 
\end{remark}

\begin{proposition}
\label{proposition-gg-AX}
Let $X$ be a GM variety of dimension $n \ge 3$. 
If $n$ is even assume also that~$X$ is not Hodge-special.
Then $\rK_{0}(\cA_{X})_{\num} \simeq \bZ^2$.
\end{proposition}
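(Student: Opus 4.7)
The plan is to pass from $\rK_0(\cA_X)_{\num}$ to $\rK_0(X)_{\num}$ by semiorthogonal additivity, then to $\CH(X)_{\num}$ by Hirzebruch--Riemann--Roch, and finally to count algebraic Hodge classes on $X$ using the Hodge diamonds already exhibited and the non-Hodge-special hypothesis.

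First, applying additivity of numerical Grothendieck groups to~\eqref{eq:dbx-detailed}, and noting that each of the $2(n-2)$ exceptional objects generates a subcategory with numerical Grothendieck group~$\bZ$, I get
\begin{equation*}
\rK_0(X)_{\num} \cong \rK_0(\cA_X)_{\num} \oplus \bZ^{2(n-2)}.
\end{equation*}
Numerical Grothendieck groups are torsion-free, so it suffices to show $\rank \rK_0(X)_{\num} = 2n - 2$, which then forces $\rK_0(\cA_X)_{\num} \simeq \bZ^2$.

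Second, Lemma~\ref{lemma-GRR} identifies this rank with $\rank \CH(X)_{\num}$. The cycle class map factors as an injection
\begin{equation*}
\CH^k(X)_{\num} \otimes \bQ \hookrightarrow \rH^{k,k}(X) \cap \rH^{2k}(X, \bQ),
\end{equation*}
whose image is the algebraic part of the Hodge structure. Summing dimensions over $k$ and reading off the Hodge diamonds from the proof of Proposition~\ref{proposition-HH_*}: for $2k \ne n$, contribute the full $h^{k,k}(X)$ (equal to $1$ or $2$ in each relevant degree); in the middle, contribute $0$ for odd~$n$ (no $(p,p)$-classes in the diamond) and $2$ for even~$n$ (the rank-$2$ Schubert summand $\rH^n(\bG, \bQ) \subset \rH^n(X, \bQ)$, the only algebraic middle classes by the non-Hodge-special hypothesis). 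A direct check in each of the four allowed dimensions $n = 3, 4, 5, 6$ gives the total $2n - 2$.

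For this to be the correct count, I would verify that all the $(k,k)$-classes counted above are actually algebraic. In non-middle codimensions, I would combine Lefschetz-type theorems along the factorization $X \to M'_X \hookrightarrow \bG$ (quadric section of a linear section in the ordinary case~\eqref{eq:gm:ordinary}; double cover of a linear section in the special case~\eqref{eq:gm:special}, cf.~\cite{cyclic-covers}) with hard Lefschetz on~$X$, to show that the pulled-back Schubert classes of~$\bG$ and their cup products with powers of~$H$ span $\rH^{k,k}(X) \cap \rH^{2k}(X, \bQ)$ for $2k \ne n$; these are evidently algebraic. In the middle degree, the Schubert image $\rH^n(\bG, \bQ) \hookrightarrow \rH^n(X, \bQ)$ already supplies the claimed algebraic rank of $2$ for $n$ even, and there is nothing to check for $n$ odd.

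The main obstacle is the last step of verifying non-middle algebraicity uniformly. While each ingredient (Lefschetz for linear and quadric sections of $\bG$, its analogue for the cyclic covers of~\cite{cyclic-covers}, and hard Lefschetz on~$X$) is standard, one must assemble them across both the ordinary and special cases to ensure that every Hodge class on~$X$ outside the middle degree arises as a Schubert pullback possibly multiplied by a power of~$H$.
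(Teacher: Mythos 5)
Your argument is essentially the paper's proof: additivity of numerical Grothendieck groups over~\eqref{eq:dbx-detailed}, Lemma~\ref{lemma-GRR}, and a count of algebraic Hodge classes via the Hodge diamonds together with the non-Hodge-special hypothesis in the middle degree. One imprecision: the asserted injection $\CH^k(X)_{\num} \otimes \bQ \hookrightarrow \rH^{k,k}(X) \cap \rH^{2k}(X,\bQ)$ is not automatic --- it is exactly the statement that numerical and homological equivalence agree on $X$, which the paper deduces from the Hodge conjecture for $X$, itself a consequence of the claim that all rational Hodge classes on $X$ are spanned by restricted Schubert cycles (Poincar\'e duality then produces an algebraic class pairing nontrivially with any homologically nontrivial algebraic class). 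So the ``main obstacle'' you flag at the end is not a side verification but the crux: once you know every Hodge class outside the middle degree is a Schubert pullback times a power of $H$ (weak Lefschetz for $2k<n$, hard Lefschetz for $2k>n$), both the factorization through numerical equivalence and the dimension count follow; the paper asserts this spanning statement in one line without further proof.
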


\begin{proof}
The proof is similar to that of Proposition~\ref{proposition-HH_*}. 
First, note that by Proposition~\ref{proposition-dbx} there is a semiorthogonal decomposition of 
$\Db(X)$ with $\cA_X$ and $2(n-2)$ exceptional objects as components. 
Since the category generated by an exceptional object is equivalent 
to the derived category of a point, both its usual and numerical Grothendieck group is $\bZ$. 
Hence by additivity, 
\begin{equation*}
\rK_0(X)_{\num} \cong \rK_0(\cA_X)_{\num} \oplus 
\bZ^{2(n-2)}. 
\end{equation*}
On the other hand, 
$\rK_0(X)_{\num} \otimes \bQ \cong \CH(X)_{\num} \otimes \bQ$. 
But under our assumptions on $X$, the rational Hodge classes on $X$
are spanned by the restrictions of Schubert cycles on $\bG$. 
In particular, the Hodge conjecture holds for $X$. So numerical equivalence coincides with homological equivalence, and
\begin{equation*}
\textstyle{\CH(X)_{\num}\otimes \bQ 
\cong \bigoplus_{k} \rH^{k,k}(X, \bQ) }
\end{equation*} 
where $\rH^{k,k}(X, \bQ) = \rH^{k,k}(X) \cap \rH^{2k}(X,\bQ)$. 
Thus using the Hodge diamond of $X$ (recorded in the proof of 
Proposition~\ref{proposition-HH_*}) 
and the assumption that $X$ is not Hodge-special if 
$n$ is even, we find  
\begin{equation*}
\dim (\rK_0(X)_{\num} \otimes \bQ) = 2n - 2.
\end{equation*} 
Combined with the above, this shows the rank of $\rK_0(\cA_X)_{\num}$ is $2$. 
Since $\rK_0(\cA_X)_{\num}$ is torsion free, we conclude $\rK_0(\cA_X)_{\num} \cong \bZ^2$. 
\end{proof}

\begin{remark}
\label{remark-K0-hodge}
Let $X$ be a GM variety of dimension $n=4$ or $6$. 
The proof of the proposition shows that
\begin{equation*}
\rank(\rK_{0}(\cA_{X})_{\num})= \dim_{\bQ} \rH^{\frac{n}{2}, \frac{n}{2}}(X, \bQ)
\end{equation*}
if the Hodge conjecture holds for $X$. 
The Hodge conjecture holds for any uniruled smooth projective fourfold~\cite{conte-murre}, so for $n=4$ the above equality is unconditional. 
If $n=6$ the Hodge conjecture can be proved using the correspondences studied in \cite{debarre-kuznetsov-periods}, but we do not discuss the details here. 
\end{remark}

\begin{lemma}
\label{lemma-euler-form}
Let $X$ be a GM variety as in Proposition~\textup{\ref{proposition-gg-AX}}. 
Then in a suitable basis, the Euler form on 
$\rK_0(\cA_X)_{\num} = \bZ^2$ is given by
\begin{equation*}
\begin{pmatrix}
-1 & 0 \\
0 & -1
\end{pmatrix} \hspace{2mm} \text{if $n = 3$}, \qquad
\begin{pmatrix}
-2 & 0 \\
0 & -2
\end{pmatrix}  \hspace{2mm} \text{if $n = 4$}. 
\end{equation*} 
\end{lemma}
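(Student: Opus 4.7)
The plan is to realize $\rK_0(\cA_X)_{\num}$ as an explicit sublattice of $\rK_0(X)_{\num}$ cut out by integral linear conditions, and then compute the Euler form directly via Hirzebruch--Riemann--Roch.

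First, by the semiorthogonal decomposition of Proposition~\ref{proposition-dbx}, there is a decomposition of abelian groups
\begin{equation*}
\rK_0(X)_{\num} = \rK_0(\cA_X)_{\num} \oplus \rK_0^{\cB}(X),
\end{equation*}
where $\rK_0^{\cB}(X)$ is the sublattice generated by the classes of $\cO_X(k)$ and $\cU_X^{\vee}(k)$ for $0 \le k \le n-3$. Because the Euler form is unimodular on this exceptional summand (the Gram matrix is upper triangular with $1$'s on the diagonal), the factor $\rK_0(\cA_X)_{\num}$ is recovered as the subgroup
\begin{equation*}
\{x \in \rK_0(X)_{\num} \mid \chi(b,x) = 0 \text{ for all } b \in \rK_0^{\cB}(X)\}.
\end{equation*}

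Next, via Lemma~\ref{lemma-GRR} one identifies $\rK_0(X)_{\num} \otimes \bQ$ with $\CH(X)_{\num} \otimes \bQ$. Under the non-Hodge-special hypothesis for $n=4$, the numerical Chow ring of $X$ is spanned by restrictions of Schubert classes from $\bG$, giving a $\bQ$-basis of dimension $4$ for $n=3$ and $6$ for $n=4$, in agreement with the ranks established in the proof of Proposition~\ref{proposition-gg-AX}. One then expands $\ch(\cO_X(k))$ and $\ch(\cU_X^{\vee}(k))$ in this basis using Schubert calculus together with $c_1(\cU^{\vee}) = \sigma_1$, $c_2(\cU^{\vee}) = \sigma_{1,1}$, and computes $\mathrm{td}(X)$ from $-K_X = (n-2)H$ via the normal bundle sequence $0 \to \rT_X \to \rT_{M'_X}|_X \to \cO_X(2H) \to 0$ in the ordinary case, or the analogous double cover formula in the special case. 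Hirzebruch--Riemann--Roch then yields an explicit matrix for $\chi$ on $\rK_0(X)_{\num}$.

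With these formulas in hand, the vanishing conditions defining $\rK_0(\cA_X)_{\num}$ form a system of $2(n-2)$ integral linear equations whose solution lattice is the sought sublattice. The main obstacle, such as it is, lies in producing a genuinely \emph{primitive} pair of integral generators rather than merely a rational basis: a Smith normal form reduction on the coefficient matrix gives the abstract answer, but it is cleaner to exhibit concrete integral representatives by projecting natural coherent sheaves on $X$ (for instance ideal sheaves of lines, conics, or small del Pezzo surfaces contained in $X$) onto $\cA_X$ via the right adjoint to the inclusion $\cA_X \hookrightarrow \Db(X)$, and then verify primitivity of the resulting classes in $\rK_0(X)_{\num}/\rK_0^{\cB}(X)$. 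Evaluating $\chi$ on such a basis and matching the output against the asserted matrices $\mathrm{diag}(-1,-1)$ and $\mathrm{diag}(-2,-2)$ concludes the argument; in the $n=4$ case the symmetry of the Euler form is moreover predicted in advance by the $2$-Calabi--Yau property of $\cA_X$ established in Proposition~\ref{proposition-serre-functor}, which provides a useful consistency check on the computation.
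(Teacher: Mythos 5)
Your overall strategy --- identify $\rK_0(\cA_X)_{\num}$ with the orthogonal complement of the exceptional lattice inside $\rK_0(X)_{\num}$ and evaluate the Euler form by Riemann--Roch --- is the same as the paper's, and the Schubert-calculus computation you describe is exactly the ``tedious computation'' the paper carries out (for $n=3$ it simply cites the analogous computation in \cite[Proposition~3.9]{kuznetsov-fano-3-folds}, and for $n=4$ it omits the arithmetic). The difficulty is that you have correctly located the essential obstacle --- pinning down the \emph{integral} structure of the lattice, not just its rational span --- but neither of the two devices you offer actually resolves it. A Smith normal form reduction ``on the coefficient matrix'' presupposes that you already have an integral basis of $\rK_0(X)_{\num}$ in which to write that matrix; Lemma~\ref{lemma-GRR} only identifies $\rK_0(X)_{\num}\otimes\bQ$ with $\CH(X)_{\num}\otimes\bQ$, and the Schubert classes give a $\bQ$-basis with no a priori control on the integral lattice sitting inside. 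Likewise, ``verify primitivity of the resulting classes'' is precisely the step that needs an argument, and none is supplied: producing classes of the right rational slope does not rule out that they generate only a finite-index sublattice, which would change the Gram matrix by a square factor.

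The paper closes this gap by proving that $\rK_0(X)_{\num}$ has the explicit integral basis $[\cO_P], [\cO_L], [\cO_\Sigma], [\cO_S], [\cO_H], [\cO_X]$ (point, line, zero locus of a section of $\cU_X^\vee$, codimension-$2$ linear section, hyperplane section, and $X$ itself). This is reduced, via the AK-compatibility mechanism of \cite{kuznetsov-fano-3-folds}, to the statement that $\CH^2(X)_{\num}$ is generated over $\bZ$ by $[\Sigma]$ and $[S]$, which in turn follows from the torsion-freeness of the cokernel of $\rH^4(\bG,\bZ)\hookrightarrow \rH^4(X,\bZ)$ --- a Lefschetz-type input (and it is here that the non-Hodge-special hypothesis guarantees there are no extra algebraic classes). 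Some argument of this kind, controlling integral middle cohomology, is unavoidable: without it the diagonal entries $-1$ and $-2$ cannot be certified, since the discriminant of the form is exactly what a finite-index error would corrupt. So your proposal should be read as a correct outline whose decisive step is asserted rather than proved.
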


\begin{remark}
The duality conjecture (Conjecture~\ref{conjecture:duality}) 
implies that if $X$ is as in Proposition~\ref{proposition-gg-AX}, then 
for $n = 5$ or $6$ the lattice $\rK_0(\cA_X)_{\num} = \bZ^2$ is isomorphic 
to the lattice described in Lemma~\ref{lemma-euler-form} for $n = 3$ or $4$, 
respectively.
\end{remark}

\begin{proof}
For $n = 3$, this is shown in the proof of \cite[Proposition~3.9]{kuznetsov-fano-3-folds}. 

For $n = 4$, we sketch the proof. 
First, note that any GM variety contains a line, since by taking {a hyperplane section}
we reduce to the case of dimension $3$, where the result is well-known. 
Let $P \in X$ be a point, 
$L \subset X$ be a line, 
$\Sigma$ be the zero locus of a regular section of~$\cU_X^{\vee}$,  
$S$ be a complete intersection of two hyperplanes in $X$, and 
$H$ be a hyperplane section of $X$.  
The key claim is that 
\begin{equation}
\label{basis-gg}
\rK_0(X)_{\num} = \bZ \langle [\cO_P], [\cO_{L}] , [\cO_{\Sigma}], [\cO_{S}], [\cO_{H}], [\cO_X] \rangle , 
\end{equation}
i.e. the structure sheaves of these subvarieties give an integral basis of $\rK_0(X)_{\num}$. 
Once this is known, as in the proof of~\cite[Proposition~3.9]{kuznetsov-fano-3-folds}, 
the lemma reduces to a (tedious) computation, which we omit. 

Using~\cite[Remark~5.9]{kuznetsov-fano-3-folds} 
it is easy to see $X$ is AK-compatible in the sense 
of~\cite[Definition~5.1]{kuznetsov-fano-3-folds}, 
hence to prove the claim it is enough to show that  
\begin{equation*}
\CH(X)_{\num} = \bZ \langle [P], [L] , [\Sigma], [S], [H], 1 \rangle.
\end{equation*}
Clearly, this is equivalent to  
$\CH^2(X)_{\num} = \bZ \langle [\Sigma], [S] \rangle$. 
But $\CH^2(X)_{\num}$ coincides with the group 
$\CH^2(X)_{\hom} \subset \rH^4(X, \bZ)$ of $2$-cycles 
modulo homological equivalence (see the proof of Proposition~\ref{proposition-gg-AX}), 
and $\bZ\langle [\Sigma], [S] \rangle$ is the image of the inclusion 
$\rH^4(\bG, \bZ) \hookrightarrow \CH^2(X)_{\hom}$. Hence 
it suffices to show the cokernel of this inclusion is torsion free. 
Even better, the cokernel of
\begin{equation*}
\rH^4(\bG,\bZ) \hookrightarrow \rH^4(X, \bZ)
\end{equation*}
is torsion free. Indeed, we may assume $X$ is ordinary, 
and then the statement holds by the proof of the Lefschetz 
hyperplane theorem, see~\cite[Example~3.1.18]{lazarsfeld}.  
\end{proof}

\subsection{Geometricity of GM categories} 
\label{subsection-geometricity}
Now we consider the question of whether $\cA_X$ is 
equivalent to the derived category of a variety. 
The following two results show that in almost all cases, 
the answer is negative. 
In \S\ref{subsection-rationality-conjecture} we will discuss a related 
conjecture about the rationality of GM fourfolds. 

\begin{proposition}
\label{proposition-AX-geometric}
Let $X$ be a GM variety of dimension $n$. 
\begin{enumerate}
\item \label{AX-geometric-even}
If $n$ is even and $S$ is a variety such that $\cA_X \simeq \Db(S)$, then $S$ is a K$3$ surface. 
\item  \label{AX-geometric-odd} 
If $n$ is odd, then $\cA_X$ is not equivalent to the derived category of any variety. 
\item \label{AX-geometric-even-very-general}
If $n = 4$ or $n = 6$ and $X$ is not Hodge-special \textup{(}in particular, if $X$ is very general\textup{)}, 
then $\cA_X$ is not equivalent to the derived category of any variety.
\end{enumerate}
\end{proposition}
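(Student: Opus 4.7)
The plan is to combine the Serre functor description of Proposition~\ref{proposition-serre-functor}, the Hochschild homology computation of Proposition~\ref{proposition-HH_*}, and the Grothendieck group computation of Proposition~\ref{proposition-gg-AX} with the Enriques--Kodaira classification of surfaces. Suppose $\cA_X \simeq \Db(S)$ for some variety $S$. Since $\cA_X$ is an admissible subcategory of $\Db(X)$ with $X$ smooth projective, it is smooth and proper as a dg category, which forces $S$ to be smooth and projective by standard reconstruction results; in particular, $\Db(S)$ admits a Serre functor given by $\cF \mapsto \cF\otimes\omega_S[\dim S]$.

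For part~\eqref{AX-geometric-even}, with $n$ even, the category $\cA_X$ is $2$-Calabi--Yau by Proposition~\ref{proposition-serre-functor}. Equating $[2]$ with the Serre functor of $\Db(S)$ forces $\dim S = 2$ and $\omega_S \cong \cO_S$, so $S$ is either a K3 surface or an abelian surface. But $\HH_0(\cA_X) = \bC^{22}$ by Proposition~\ref{proposition-HH_*}, while $\HH_0$ of an abelian surface is $6$-dimensional; hence $S$ must be a K3 surface.

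For part~\eqref{AX-geometric-odd}, with $n$ odd, Proposition~\ref{proposition-serre-functor} gives $\rS_{\cA_X}^2 \cong [4]$, so the same argument forces $\dim S = 2$ and $\omega_S^{\otimes 2} \cong \cO_S$. The Enriques--Kodaira classification then leaves $S$ as a K3, Enriques, abelian, or bielliptic surface. However Proposition~\ref{proposition-HH_*} gives $\dim \HH_0(\cA_X) = 2$, while reading off the Hodge diamonds of these four types of surfaces gives $\HH_0$ of dimension $22$, $12$, $6$, and $4$ respectively, a contradiction.

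For part~\eqref{AX-geometric-even-very-general}, part~\eqref{AX-geometric-even} would force $S$ to be a K3 surface, in which case $\rK_0(S)_{\num}$ has rank $\rho(S) + 2 \geq 3$ (since a projective K3 has Picard rank at least $1$). But Proposition~\ref{proposition-gg-AX} gives $\rK_0(\cA_X)_{\num} \cong \bZ^2$, of rank $2$, a contradiction. The main subtlety throughout is the reduction to $S$ smooth and projective; once that is in hand, each part is a direct application of the invariants computed in the preceding subsections.
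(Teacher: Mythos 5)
Your proposal is correct and follows essentially the same route as the paper: reduce to $S$ smooth and projective, use the Serre functor of Proposition~\ref{proposition-serre-functor} to force $S$ to be a surface with trivial (resp.\ $2$-torsion) canonical class, rule out the resulting surface types by comparing Hochschild homology via Proposition~\ref{proposition-HH_*}, and for part~(3) derive a contradiction between $\rank \rK_0(\cA_X)_{\num}=2$ from Proposition~\ref{proposition-gg-AX} and the rank $\geq 3$ of $\rK_0(S)_{\num}$ for a projective surface. The only cosmetic difference is that the paper cites specific references for smoothness and properness of $S$ and compares full Hochschild homology rather than just $\HH_0$, but the substance is identical.
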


\begin{proof}
Suppose $S$ is a variety such that $\cA_X \simeq \Db(S)$. 
Then $S$ is smooth by \cite[Lemma~D.22]{kuznetsov2006hyperplane}, 
and proper by \cite[Proposition~3.30]{orlov-ncschemes}. 
In particular, $\Db(S)$ has a Serre functor given by 
\begin{equation*}
\rS_{\Db(S)}(\cF) = \cF \otimes \omega_S [\dim(S)] , 
\end{equation*}
which is unique up to isomorphism. 
Thus by Proposition~\ref{proposition-serre-functor}, $S$ is a surface with trivial (if~$n$ is even) or $2$-torsion (if $n$ is odd) canonical class. 
Hence $S$ is a K3, Enriques, abelian, or bielliptic surface. 
Using the HKR isomorphism and the Hodge diamonds of such surfaces, we find  
\begin{equation*}
\HH_{\bullet}(S) = 
\arraycolsep=1.4pt
\left\{\begin{array}{cccccccccl}
&& \bC[2] & \oplus & \bC^{22} & \oplus & \bC[-2] && & \qquad\text{if $S$ is K3}, \\
&&&& \bC^{12} &&&& & \qquad\text{if $S$ is Enriques}, \\
\bC[2] & \oplus & \bC^4[1] & \oplus & \bC^{6} & \oplus & \bC^4[-1] & \oplus & \bC[-2] & \qquad\text{if $S$ is abelian},  \\
&& \bC^2[1] & \oplus & \bC^{4} & \oplus & \bC^2[-1] && & \qquad\text{if $S$ is bielliptic}. 
\end{array}
\right.
\end{equation*} 
Now parts \eqref{AX-geometric-even} and \eqref{AX-geometric-odd} follow by comparing with $\HH_{\bullet}(\cA_X)$ as given by Proposition~\ref{proposition-HH_*}. 
For \eqref{AX-geometric-even-very-general} note that if $\cA_X \simeq \Db(S)$, then $\rK_{0}(\cA_{X})_{\num} \cong \rK_{0}(S)_{\num}$. 
But on a projective surface powers of the hyperplane class give $3$ independent elements in~$\CH(S)_{\num} \otimes \bQ \cong \rK_{0}(S)_{\num} \otimes \bQ$. 
Hence by Proposition \ref{proposition-gg-AX}, $X$ is Hodge-special. 
\end{proof}

\subsection{Self-duality of GM categories}
\label{subsection:self-duality}

The derived category of a smooth proper variety~$X$ is \emph{self-dual}: 
if $\Db(X)^{\opp}$ denotes the opposite category of $\Db(X)$ (note that this has nothing to do with the opposite GM variety), 
there is an equivalence $\Db(X) \simeq \Db(X)^{\opp}$ given by 
the dualization functor $\cF \mapsto \RCHom(\cF,\cO_X)$.
In general, this self-duality property is not inherited by semiorthogonal components of~$\Db(X)$. 
Nonetheless, we show below that all GM categories are self-dual,    
which can be thought of as a weak geometricity property. 

For the proof, we recall some facts about mutation functors 
(see \cite{bondal}, \cite{bondal-kapranov} for more details). 
For any admissible subcategory 
$\cA \subset \cT$ of a triangulated category, there are associated 
\emph{left} and \emph{right mutation functors} 
$\rL_{\cA} \colon \cT \to \cT$ and $\rR_{\cA} \colon \cT \to \cT$. 
These functors annihilate~$\cA$, and their restrictions
$\rL_{\cA}|_{{}^{\perp}\cA} \colon {}^{\perp}\cA \to \cA^{\perp}$ and
$\rR_{\cA}|_{\cA^{\perp}}  \colon \cA^{\perp} \to {}^{\perp}\cA$
are mutually inverse equivalences \cite[Lemma~1.9]{bondal-kapranov}. 
If $\cT = \langle \cA_1, \dots, \cA_n \rangle$ is a semiorthogonal 
decomposition with admissible components, then for 
$1 \leq i \leq n-1$ there are semiorthogonal decompositions 
\begin{align*}
\cT &= \langle \cA_1,\dots,\cA_{i-1},\rL_{\cA_i}(\cA_{i+1}),\cA_i,\cA_{i+2},\dots,\cA_n \rangle , \\
\cT &= \langle \cA_1,\dots,\cA_{i-1},\cA_{i+1},\rR_{\cA_{i+1}}(\cA_i),\cA_{i+2},\dots,\cA_n \rangle, 
\end{align*}
and equivalences 
\begin{equation}
\label{eq:mutation-components-equivalences}
\rL_{\cA_i}(\cA_{i+1}) \simeq \cA_{i+1}
\qquad\text{and}\qquad 
\rR_{\cA_{i+1}}(\cA_i) \simeq \cA_i
\end{equation}
induced by the mutation functors 
$\rL_{\cA_i} \colon \cT \to \cT$ and $\rR_{\cA_{i+1}} \colon \cT \to \cT$. 
When $\cT$ admits a Serre functor $\rS_\cT$, the effect of mutating $\cA_n$ or $\cA_1$ 
to the opposite side of the semiorthogonal decomposition of $\cT$ can be described 
as follows \cite[Proposition~3.6]{bondal-kapranov}: 
\begin{equation}
\label{eq:mutation-serre-functor}
\cT = \langle \rS_\cT(\cA_n), \cA_1, \dots, \cA_{n-1} \rangle
\qquad\text{and}\qquad 
\cT = \langle \cA_2, \dots, \cA_{n}, \rS_\cT^{-1}(\cA_1) \rangle . 
\end{equation}
That is, $\rL_{\langle \cA_1, \dots, \cA_{n-1} \rangle}(\cA_n) = \rS_\cT(\cA_n)$ and $\rR_{\langle \cA_2, \dots, \cA_n \rangle}(\cA_1) = \rS_\cT^{-1}(\cA_1)$. 

\begin{lemma}
\label{lemma:ax-selfdual}
For any GM variety $X$ the corresponding GM category $\cA_X$ is self-dual, i.e.
\begin{equation*}
\cA_X \simeq \cA_X^\opp.
\end{equation*}
\end{lemma}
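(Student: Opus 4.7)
The plan is to use the self-duality of $\Db(X)$ itself, given by the dualization functor $\cF \mapsto \RCHom(\cF,\cO_X)$, to derive from the defining SOD~\eqref{eq:dbx-detailed} a second semiorthogonal decomposition of $\Db(X)$ whose noncommutative component is abstractly equivalent to $\cA_X^\opp$. Rearranging both SODs to have a common tail of exceptional bundles and then comparing via a single mutation will yield $\cA_X \simeq \cA_X^\opp$.

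Concretely, dualization applied to~\eqref{eq:dbx-detailed} reverses the order of components and replaces each object by its dual. Tensoring the result by $\cO_X(n-3)$ and invoking the identity $\cU_X = \cU_X^\vee(-1)$---which holds because $\cU_X$ has rank two and $\det \cU_X = \cO_X(-1)$---the resulting exceptional collection aligns, up to a one-place shift, with the one in~\eqref{eq:dbx-detailed}. Moving the noncommutative component from the rightmost to the leftmost position via~\eqref{eq:mutation-serre-functor} only twists and shifts it, so it remains equivalent to $\cA_X^\opp$. This produces an SOD
\begin{equation*}
\Db(X) = \langle \cA'', \cU_X, \cO_X, \cU_X^\vee, \cO_X(1), \cU_X^\vee(1), \dots, \cU_X^\vee(n-4), \cO_X(n-3) \rangle
\end{equation*}
with $\cA'' \simeq \cA_X^\opp$. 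In parallel, applying the same Serre-functor move to~\eqref{eq:dbx-detailed}, and noting that $\rS_{\Db(X)}(\cU_X^\vee(n-3)) \cong \cU_X[n]$ generates the same subcategory as $\cU_X$, produces
\begin{equation*}
\Db(X) = \langle \cU_X, \cA_X, \cO_X, \cU_X^\vee, \cO_X(1), \cU_X^\vee(1), \dots, \cU_X^\vee(n-4), \cO_X(n-3) \rangle.
\end{equation*}

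Both SODs share an identical tail beginning with $\cO_X$, so their common left orthogonal in $\Db(X)$ is a subcategory $\cC$ carrying two semiorthogonal decompositions $\cC = \langle \cU_X, \cA_X \rangle = \langle \cA'', \cU_X \rangle$. The left mutation $\rL_{\cU_X}$ then restricts to an equivalence $\cA_X \xrightarrow{\sim} \cA''$ by~\eqref{eq:mutation-components-equivalences}, and combining with $\cA'' \simeq \cA_X^\opp$ yields $\cA_X \simeq \cA_X^\opp$. The only real content of the argument is arranging the twists in the dualized SOD so that it shares a tail with~\eqref{eq:dbx-detailed}; this hinges entirely on the identity $\cU_X = \cU_X^\vee(-1)$, which is what makes the dualized exceptional collection line up---after a one-place shift and a single Serre-functor move---with the original one.
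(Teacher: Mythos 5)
Your argument is correct and follows essentially the same route as the paper's proof: dualize the decomposition \eqref{eq:dbx-detailed} to exhibit $\cA_X^\opp$ as a semiorthogonal component, realign the exceptional collection using $\cU_X \cong \cU_X^\vee(-1)$ together with a Serre-functor mutation \eqref{eq:mutation-serre-functor}, and conclude via a single mutation through one exceptional object (the paper uses $\rL_{\cO_X}$ after sliding the whole exceptional block to the right, whereas you twist and use $\rL_{\cU_X}$ — a purely cosmetic difference). The only omission is the trivial case $\dim(X)=2$, where $\cA_X=\Db(X)$ and self-duality is immediate.
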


\begin{proof}
If $\dim(X) = 2$ then $\cA_X = \Db(X)$, so the result holds by self-duality of $\Db(X)$. 
Now assume $\dim(X) \geq 3$. 
Applying the dualization functor $\cF \mapsto \RCHom(\cF,\cO_X)$ 
to the semiorthogonal decomposition~\eqref{eq:dbx-detailed},    
we obtain a new semiorthogonal decomposition
\begin{equation}
\label{eq:dbx-dualized}
\Db(X) = \langle \cU_X(-(n-3)), \cO_X(-(n-3)), \dots, \cU_X, \cO_X, \cA'_X \rangle
\end{equation} 
and an equivalence $\cA'_X \simeq \cA_X^\opp$.
It remains to show 
\begin{equation}
\label{eq:dual-gm-category}
\cA'_X \simeq \cA_X.
\end{equation} 
We mutate the subcategory $\langle \cU_X(-(n-3)), \cO_X(-(n-3)), \dots, \cU_X \rangle$ to the 
far right side of~\eqref{eq:dbx-dualized}. 
By \eqref{eq:mutation-serre-functor}, the formula~\eqref{eq:serre-x} 
for the Serre functor of $\Db(X)$, and the formula~\eqref{eq:omega-x} for 
$-K_X$, the result is 
\begin{equation*}
\Db(X) = \langle \cO_X, \cA'_X, \cU_X(1), \cO_X(1), \dots, \cU_X(n-2) \rangle.
\end{equation*}
Using the isomorphism $\cU_X(1) \cong \cU_X^\vee$ and 
comparing this decomposition with~\eqref{eq:dbx-detailed}, 
we deduce that 
$\cA_X = \rL_{\cO_X}(\cA'_X)$. 
Hence $\cA_X \simeq \cA'_X$ by~\eqref{eq:mutation-components-equivalences}. 
\end{proof}

\begin{remark}
A similar argument shows that the K3 category associated to a cubic fourfold (as defined by~\eqref{sod-cubic} below) is self-dual. 
\end{remark}

\section{Conjectures on duality and rationality} 
\label{section-conjectures} 

In this section, we propose two conjectures related to the variation of GM categories $\cA_X$ as~$X$ varies in moduli. 
We begin by briefly recalling a description of the moduli of GM varieties in terms of 
EPW sextics from~\cite[\S3]{debarre-kuznetsov} (see Appendix~\ref{appendix-moduli} for 
some basic results about the moduli stack of GM varieties). 
Using this, we formulate a duality conjecture (Conjecture~\ref{conjecture:duality}), which 
in particular implies that $\cA_X$ is constant in families of GM varieties with the same 
assoicated EPW sextic. 
Next we discuss the rationality problem for GM varieties in terms of GM categories. 
This problem is most interesting for GM fourfolds, where by analogy with cubic fourfolds 
we conjecture that the GM category of a rational GM fourfold is equivalent to the 
derived category of a K3 surface (Conjecture~\ref{conjecture-rational-fourfold}). 

\subsection{EPW sextics and moduli of GM varieties} 
\label{subsection-EPW}

Let $V_6$ be a $6$-dimensional vector space. Its exterior power $\wedge^3V_6$ has a natural 
$\det(V_6)$-valued symplectic form, given by wedge product.
For any Lagrangian subspace $\sA \subset \wedge^3 V_6$, we consider the following 
stratification of $\bP(V_6)$: 
\begin{equation*}
\sY_\sA^{\geq k}  = 
\{ v \in \bP(V_6) \mid \dim(\sA \cap (v \wedge (\wedge^2 V_6))) \geq k \} \subset \bP(V_6).
\end{equation*}
We write $\sY_{\sA}^k$ for the complement of $\sY_{\sA}^{\geq k+1}$ in $\sY_{\sA}^{\geq k}$, and $\sY_\sA$ for $\sY_\sA^{\ge 1}$. 
The variety $\sY_{\sA}$ is called an \emph{EPW sextic} 
(for Eisenbud, Popescu, and Walter, who first defined it), 
and the sequence~$\sY_{\sA}^k$ is called the \emph{EPW stratification}. 

We say $\sA$ has \emph{no decomposable vectors} if $\bP(\sA)$ does not intersect
$\G(3,V_6) \subset \bP(\wedge^3V_6)$. 
O'Grady \cite{ogrady2006irreducible,ogrady2008dual,ogrady2011moduli,ogrady2012taxonomy,ogrady2013double,ogrady2015periods}
extensively investigated the geometry of EPW sextics, and proved in particular  
that (see also~\cite[Theorem B.2]{debarre-kuznetsov}) if $\sA$ has no decomposable vectors, then:
\begin{itemize}
\item 
$\sY_\sA = \sY_\sA^{\ge 1}$ is a normal irreducible sextic hypersurface, smooth along $\sY_\sA^1$;
\item 
$\sY_\sA^{\ge 2} = \operatorname{Sing}(\sY_\sA)$ is a normal irreducible surface of degree 40, smooth along $\sY_\sA^2$;
\item 
$\sY_\sA^3 = \operatorname{Sing}(\sY_\sA^{\ge 2})$ is finite and reduced, 
and for general $\sA$ is empty;
\item 
$\sY_\sA^{\ge 4} = \varnothing$.
\end{itemize}

For any Lagrangian subspace $\sA \subset \wedge^3V_6$, its orthogonal 
$\sA^\perp = \ker(\wedge^3V_6^\vee \to \sA^{\vee}) \subset \wedge^3 V_6^{\vee}$ is also Lagrangian, 
and $\sA$ has no decomposable vectors if and only if the same is true for $\sA^\perp$.
In particular, $\sA^{\perp}$ gives rise to an EPW sequence of subvarieties of $\bP(V_6^{\vee})$, 
which can be written in terms of $\sA$ as follows: 
\begin{equation*}
\sY_{\sA^\perp}^{\ge k} = \{ V_5 \in \bP(V_6^\vee) \mid \dim(\sA \cap \wedge^3V_5) {}\ge{} k \} \subset \bP(V_6^\vee).
\end{equation*}
This stratification has the same properties as the stratification $\sY_\sA^{\ge k}$.
By O'Grady's work $\sY_{\sA^\perp}$ is projectively dual to $\sY_{\sA}$, 
and for this reason is called the \emph{dual EPW sextic} to $\sY_{\sA}$. 
We note that $\sY_{\sA^\perp}$ is not isomorphic to 
$\sY_\sA$ for general $\sA$ (see \cite[Theorem~1.1]{ogrady2008dual}).

One of the main results of~\cite{debarre-kuznetsov} is the following description of the set 
of all isomorphism classes of smooth ordinary GM varieties.
If $X \subset \bP(W)$ is a GM variety, then the space of quadrics in $\bP(W)$ containing 
$X$ is $6$-dimensional vector space \cite[Theorem~2.3]{debarre-kuznetsov}, which we denote by $V_6(X)$. 
The space of Pl\"ucker quadrics defining the Grassmannian $\bG = \G(2,V_5(X))$ is 
canonically identified with $V_5(X)$, so since $X \subset \Cone(\bG)$ we have an embedding 
\begin{equation*}
V_5(X) \subset V_6(X).
\end{equation*}
The hyperplane $V_5(X)$ is called \emph{the Pl\"ucker hyperplane of $X$} and the corresponding point
\begin{equation*}
\bp_X \in \bP(V_6(X)^\vee)
\end{equation*}
is called \emph{the Pl\"ucker point of $X$}.
Furthermore, in~\cite[Theorem~3.10]{debarre-kuznetsov} it is shown that there is a natural Lagrangian subspace
\begin{equation*}
\sA(X) \subset \wedge^3V_6(X) 
\end{equation*}
associated to $X$. 
If $X^\op$ is the opposite variety of $X$ as defined by~\eqref{opp-variety}, then $\sA(X^\opp) = \sA(X)$ and $\bp_{X^\op} = \bp_X$.

\begin{theorem}[\protect{\cite[Theorem~3.10]{debarre-kuznetsov}}]
\label{theorem:dk:moduli}
{For any $n \ge 2$ the maps $X \to X^\opp$ and $X \mapsto (\sA(X),\bp_X)$ define bijections between}
\begin{enumerate}
\item 
the set of ordinary GM varieties $X$ of dimension $n \geq 2$ 
whose Grassmannian hull $M_X$ is smooth, up to isomorphism, 
\item 
the set of special GM varieties of dimension ${n+1} \ge 3$, up to isomorphism, and 
\item 
the set of pairs $(\sA,\bp)$, where $\sA \subset \wedge^3V_6$ is a Lagrangian subspace with 
no decomposable vectors and $\bp \in \sY_{\sA^\perp}^{5-n}$, 
up to the action of $\PGL(V_6)$.
\end{enumerate}
\end{theorem}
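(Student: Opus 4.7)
The plan is to establish the bijections by defining maps in both directions and verifying they are mutually inverse. The correspondence $X \leftrightarrow X^\opp$ between ordinary varieties of dimension $n$ (with smooth Grassmannian hull) and special varieties of dimension $n+1$ already follows from the opposite-variety construction recalled in~\S\ref{subsection-classification}; since both $\sA(X)$ and $\bp_X$ are preserved under this involution by the last sentence before the theorem, it suffices to establish the bijection between the set in (1) (equivalently (2)) and the set of $\PGL(V_6)$-orbits of pairs $(\sA,\bp)$ in (3).

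For the forward map $X \mapsto (\sA(X), \bp_X)$, I take for granted the constructions of $V_6(X)$, $V_5(X)$, and the Lagrangian $\sA(X) \subset \wedge^3 V_6(X)$ recalled just before the theorem. The main verification is that the Pl\"ucker point lies in the correct EPW stratum, i.e.\ $\dim(\sA(X) \cap \wedge^3 V_5(X)) = 5-n$. The intersection $\sA(X) \cap \wedge^3 V_5(X)$ should be interpreted as the ``Grassmannian part'' of $\sA(X)$, and I would compute its dimension by identifying it with a cohomology group on the Grassmannian hull $M_X$ (or equivalently on its projection $M'_X$), using the Lefschetz-type results for smooth linear sections of $\bG$. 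Naturality of all these constructions in $V_6(X)$ yields $\PGL(V_6)$-equivariance for free.

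For the inverse map, given $(\sA,\bp)$ with $\bp \in \sY_{\sA^\perp}^{5-n}$, let $V_5 \subset V_6$ be the hyperplane corresponding to~$\bp$. The stratum condition gives $\dim(\sA \cap \wedge^3 V_5) = 5-n$, so the projection of $\sA$ into $\wedge^3 V_6/\wedge^3 V_5 \cong \wedge^2 V_5$ has image $W'$ of dimension $n+5$. This $W'$ is the Pl\"ucker hyperplane section cutting out the Grassmannian hull $M' = \bG \cap \bP(W')$. The symplectic form on $\wedge^3 V_6$, restricted to $\sA$ and passed to the quotient, should yield a canonical quadric $Q' \subset \bP(W')$ via the nondegenerate pairing between $\wedge^3 V_5$ and $\wedge^2 V_5$ it induces. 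The ordinary GM variety of dimension $n$ is then defined as $X = M' \cap Q'$, and the associated special variety of dimension $n+1$ is its double cover of $M'$ branched along~$X$. The hypothesis that $\sA$ has no decomposable vectors should translate into smoothness of both varieties (and of $M'$ in the ordinary case).

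The main obstacle will be verifying the two maps are mutually inverse. Starting from a GM variety $X$, passing to $(\sA(X),\bp_X)$, and running the inverse construction, one must recover $X$ up to isomorphism; this amounts to identifying the ambient $V_6$ with $V_6(X)$ and checking that the reconstructed quadric $Q'$ agrees with the restriction of $Q$. In the opposite direction, starting from $(\sA,\bp)$ and computing the Lagrangian of the constructed $X$, one must recover $\sA$; this is an explicit exterior-algebra computation tracing through how $Q$ determines $\sA(X)$. A further delicate point is matching the smoothness of $X$ (and of $M_X$ in the ordinary case) with the conditions that $\sA$ has no decomposable vectors and $\bp$ lies in the open subset $\sY_{\sA^\perp}^{5-n}$ of $\sY_{\sA^\perp}^{\geq 5-n}$; this calls for a local analysis of the universal family along each EPW stratum, using the structural properties of the strata $\sY_{\sA^\perp}^{\geq k}$ listed after the introduction of EPW sextics in~\S\ref{subsection-EPW}.
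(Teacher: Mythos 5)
There is nothing in the paper to compare your proposal against: Theorem~\ref{theorem:dk:moduli} is imported wholesale from \cite[Theorem~3.10]{debarre-kuznetsov}, and the present paper offers no proof of it (nor of the underlying constructions of $V_6(X)$, $\bp_X$, and $\sA(X)$, which are likewise only recalled). So the only meaningful question is whether your sketch is a viable reconstruction of the cited argument. In outline it is: the reduction to a single bijection via $X \mapsto X^{\opp}$, the dimension count identifying the image of the projection $\sA \to \wedge^3V_6/\wedge^3V_5 \cong \wedge^2V_5 \otimes (V_6/V_5)$ with the $(n+5)$-dimensional space $W'$, and the recognition that smoothness of $X$ must be matched against both the no-decomposable-vectors condition and the requirement that $\bp$ lie in the open stratum $\sY_{\sA^\perp}^{5-n}$ rather than in $\sY_{\sA^\perp}^{\geq 5-n}$ --- all of this is the correct skeleton.

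Two substantive problems remain. First, your construction of the quadric is wrong as stated: the symplectic form on $\wedge^3V_6$ restricted to the Lagrangian $\sA$ vanishes identically, so it cannot ``pass to the quotient'' to give a quadric on $W'$. What the inverse construction actually produces is not one canonical quadric but a $6$-dimensional family: for each $v \in V_6$ one defines a quadratic form $\mathbf{q}(v)$ on $W'$ by lifting $w \in W'$ to $\tilde w \in \sA$ and pairing $\tilde w$ against $v \wedge w$ via the wedge product into $\wedge^6 V_6$; the Pl\"ucker quadrics are exactly those with $v \in V_5$, and $X$ is cut out by $M'$ together with any one $\mathbf{q}(v)$ for $v \notin V_5$ (this is the content of Proposition~\ref{proposition:dk:quadrics}, and it is why well-definedness of $X$ requires checking independence of the choice of $v$ --- a step absent from your sketch). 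Second, everything after the constructions is deferred: the computation $\dim(\sA(X) \cap \wedge^3V_5(X)) = 5-n$ is attributed to unspecified ``Lefschetz-type results,'' the verification that the two maps are mutually inverse is acknowledged as ``the main obstacle'' but not carried out, and the equivalence between smoothness and the Lagrangian-theoretic conditions is only gestured at. As a plan of attack this is sound; as a proof it is not one, and for the purposes of this paper the correct move is simply to cite \cite[Theorem~3.10]{debarre-kuznetsov}, as the authors do.
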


Note that by Lemma~\ref{lemma:gr-hull}, $M_X$ is automatically smooth for ordinary GM varieties of dimension $n \geq 3$.

\begin{remark}
\label{remark-surface-lagrangian}
To include all GM surfaces into the above bijection, we must allow a more general 
class of Lagrangian subspaces in Theorem~\ref{theorem:dk:moduli}, 
namely those that contain finitely many decomposable vectors, 
cf.~\cite[Theorem~3.16 and Remark~3.17]{debarre-kuznetsov}. 
\end{remark}

\begin{remark}
\label{remark:moduli-period}
Theorem~\ref{theorem:dk:moduli} 
suggests there is a morphism from the moduli stack $\cM_n$ of \mbox{$n$-dimensional} GM varieties (see Appendix~\ref{appendix-moduli})
to the quotient stack $\LGr(\wedge^3V_6)/\PGL(V_6)$ (where $\LGr(\wedge^3V_6)$ is the Lagrangian Grassmannian) 
given by $X \mapsto \sA(X)$ at the level of points, whose fiber over a point $\sA$ is the union of two EPW strata $\sY_{\sA^\perp}^{5-n} \sqcup \sY_{\sA^\perp}^{6-n}$,  
modulo the action of the stabilizer of $\sA$ in $\PGL(V_6)$. 
This morphism will be discussed in detail in~\cite{debarre-kuznetsov-moduli}. 
Let us simply note that it gives a geometric way 
to compute $\dim \cM_n$ (cf.~Proposition~\ref{proposition-moduli}). 
Namely, the quotient stack $\LGr(\wedge^3V_6)/\PGL(V_6)$ has dimension $20$, and the fibers 
of the supposed morphism have dimension $5$, $5$, $4$, or $2$ 
for $n = 6, 5, 4$, or $3$, respectively. 
Finally, for~$n = 2$ the morphism is no longer dominant, as its 
image is the divisor of those $\sA$ such that~$\sY_{\sA^\perp}^3 \ne \varnothing$, and its fibers are finite.
\end{remark}

The above discussion shows the utility of the EPW stratification of $\bP(V_6^{\vee})$ from the point of view of moduli. 
The following proposition gives a geometric interpretation of the EPW stratification of $\bP(V_6)$, 
which will be essential later.  

As mentioned before, the quadric $Q$ defining $X$ in~\eqref{x:gm:general} is not unique; 
such quadrics are parameterized by the affine space $\bP(V_6(X)) \setminus \bP(V_5(X))$ of non-Pl\"ucker quadrics.
In other words, a quadric $Q$ defining $X$ in~\eqref{x:gm:general} corresponds to a \emph{quadric point} 
\begin{equation*}
 \bq \in \bP(V_6(X))
\end{equation*}
such that $(\bq,\bp_X)$ does not lie on the incidence divisor in $\bP(V_6(X)) \times \bP(V_6(X)^\vee)$. 

\begin{proposition}[\protect{\cite[Proposition~3.13(b)]{debarre-kuznetsov}}]
\label{proposition:dk:quadrics} 
Let $X$ be a GM variety. 
Under the identification of the affine space $\bP(V_6(X)) \setminus \bP(V_5(X))$ with the 
space of non-Pl\"ucker quadrics containing~$X$, 
the stratum 
\begin{equation*}
\sY_{\sA(X)}^k \cap (\bP(V_6(X)) \setminus \bP(V_5(X)))
\end{equation*}
corresponds to the quadrics $Q$ such that $\dim(\ker(Q)) = k$.
\end{proposition}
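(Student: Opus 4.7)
The plan is to unpack the explicit construction of the Lagrangian $\sA(X) \subset \wedge^3 V_6(X)$ given in \cite[\S3]{debarre-kuznetsov} and to build a natural isomorphism between the intersection $\sA(X) \cap (\bq \wedge \wedge^2 V_6(X))$ and the kernel of the quadric~$Q_\bq$ defined by $\bq$.

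First, I would fix a non-Pl\"ucker quadric point $\bq \in \bP(V_6(X)) \setminus \bP(V_5(X))$ and note that multiplication by $\bq$ gives a natural identification
\begin{equation*}
\wedge^2(V_6(X)/\langle \bq \rangle) \xrightarrow{\ \sim\ } \bq \wedge \wedge^2 V_6(X) \subset \wedge^3 V_6(X),
\end{equation*}
so the right-hand side is a $10$-dimensional subspace. Since $\bq \notin V_5(X)$, we may identify $V_5(X)$ with $V_6(X)/\langle \bq \rangle$, which puts the target above in natural bijection with $\wedge^2 V_5(X)$, the ambient space for the Pl\"ucker embedding of $\bG = \G(2,V_5(X))$. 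This is the crucial observation: the slice $\bq \wedge \wedge^2 V_6(X)$ of $\wedge^3 V_6(X)$ is precisely (a twist of) $\wedge^2 V_5(X)$, the space containing~$\bG$.

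Next, I would recall that $\sA(X)$ fits into a short exact sequence built out of the multiplication maps $V_6(X) \otimes W \to \Sym^2 W^\vee$ encoding the quadrics through $X$. Unwinding the construction, the slice of $\sA(X)$ along $\bq \wedge \wedge^2 V_6(X) \cong \wedge^2 V_5(X)$ is the subspace of Pl\"ucker bivectors $\omega \in \wedge^2 V_5(X)$ whose image in $W$ (via the inclusion $W \hookrightarrow \bC \oplus \wedge^2 V_5(X)$ given by the ambient projective space of $X$) is annihilated by $Q_\bq$. Concretely, there is a natural linear map
\begin{equation*}
\psi_\bq \colon \bq \wedge \wedge^2 V_6(X) \longrightarrow W
\end{equation*}
coming from the projection $\wedge^2 V_5(X) \to \wedge^2 V_5(X)/(W')^\perp \hookrightarrow W$ after dualization, and the Lagrangian/isotropy condition defining $\sA(X)$ translates precisely to the requirement that $\psi_\bq(\alpha) \in \ker(Q_\bq)$ for $\alpha \in \sA(X) \cap (\bq \wedge \wedge^2 V_6(X))$.

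Finally, I would show that $\psi_\bq$ restricts to an isomorphism
\begin{equation*}
\sA(X) \cap (\bq \wedge \wedge^2 V_6(X)) \xrightarrow{\ \sim\ } \ker(Q_\bq),
\end{equation*}
using the Lagrangian property of $\sA(X)$ (so that the symplectic orthogonal inside $\bq \wedge \wedge^2 V_6(X)$ equals the intersection itself, giving the right dimension count) together with the fact that every element of $\ker(Q_\bq)$ lifts to a Pl\"ucker-type element in the intersection because $Q_\bq$ contains $X$. The main obstacle is this last step: carrying out the bookkeeping between the abstract symplectic/Lagrangian description of~$\sA(X)$ and the concrete symmetric form defining $Q_\bq$ on $W$, so that the two kernel dimensions can be matched dimension-by-dimension. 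Once this identification is in place, the statement $\bq \in \sY_{\sA(X)}^k$ $\Longleftrightarrow$ $\dim \ker(Q_\bq) = k$ follows immediately, and the remaining EPW strata $\sY_{\sA(X)}^{\geq k}$ are obtained by semicontinuity.
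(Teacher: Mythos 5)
First, a remark on the comparison itself: the paper does not prove this proposition --- it is imported verbatim from Debarre--Kuznetsov (their Proposition~3.13(b)) --- so there is no internal proof to measure your attempt against. Your overall strategy does match the proof in that reference: one uses the explicit linear-algebraic construction of $\sA(X)$ to identify $\sA(X) \cap (\bq \wedge \wedge^2 V_6(X))$ with $\ker(Q_\bq) \subset W$, after which the statement about the strata is immediate from the definition of $\sY^k_{\sA(X)}$. (Your closing appeal to semicontinuity is unnecessary: once the pointwise dimension identity is known for every non-Pl\"ucker $\bq$, the description of each stratum is a tautology.)

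That said, the proposal has a genuine gap exactly where you flag ``the main obstacle'': the isomorphism $\sA(X)\cap(\bq\wedge\wedge^2 V_6(X)) \cong \ker(Q_\bq)$ is asserted rather than established, and the maps you write down to realize it do not typecheck. The ``multiplication map $V_6(X)\otimes W \to \Sym^2 W^\vee$'' should be either $V_6(X)\to\Sym^2 W^\vee$ (a quadric point gives a quadratic form on $W$) or $V_6(X)\otimes W\to W^\vee$; and ``the projection $\wedge^2V_5 \to \wedge^2V_5/(W')^{\perp}\hookrightarrow W$'' is not a well-defined map, since $(W')^{\perp}$ lives in the dual space. The map that actually does the job is the restriction to $\sA(X)\cap(\bq\wedge\wedge^2V_6)$ of the canonical projection $\wedge^3V_6 \to \wedge^3V_6/\wedge^3V_5 \cong \wedge^2V_5\otimes(V_6/V_5)$, under which $\sA(X)$ maps onto $W'$; this is automatically injective on $\bq\wedge\wedge^2V_6$ because $\bq\notin V_5$ forces $(\bq\wedge\wedge^2V_6)\cap\wedge^3V_5=0$. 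The substantive content --- that the image of $\sA(X)\cap(\bq\wedge\wedge^2V_6)$ is exactly $\ker(Q_\bq)$, via the formula expressing $Q(v)$ through the symplectic form $\xi\wedge\eta$ on $\wedge^3V_6$ and the Lagrangian property of $\sA(X)$ --- is precisely the computation carried out in the cited reference, and it is absent here. Note also that identifying $W$ with a subspace of $\wedge^2V_5$ silently assumes $X$ is ordinary; the proposition is stated for all GM varieties, and in the special case the projection $W\to W'$ has a one-dimensional kernel, so the argument must track the extra $\bC$ summand.
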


The symmetry between the Pl\"ucker point $\bp_X$
and the quadric point $\bq$ is the basis 
for the duality of GM varieties, discussed below.

\subsection{The duality conjecture}
\label{subsection:duality}
The following definition extends~\cite[Definitions~3.22 and~3.26]{debarre-kuznetsov}.

\begin{definition}\label{definition:duality}
Let $X_1$ and $X_2$ be GM varieties. 
\begin{enumerate}
\item If there exists an isomorphism $V_6(X_1) \cong V_6(X_2)$ identifying $\sA(X_1) \subset \wedge^3V_6(X_1)$ with~$\sA(X_2) \subset \wedge^3V_6(X_2)$, then we say:  
\begin{itemize}
 \item $X_1$ and $X_2$ are \emph{period partners} if $\dim(X_1) = \dim(X_2)$, and 
 \item $X_1$ and $X_2$ are \emph{generalized partners} if $\dim(X_1) \equiv \dim(X_2) \pmod 2$.
\end{itemize}

\item 
If there exists an isomorphism $V_6(X_1) \cong V_6(X_2)^{\vee}$ identifying $\sA(X_1) \subset \wedge^3V_6(X_1)$ with~$\sA(X_2)^\perp \subset \wedge^3V_6(X_2)^\vee$, then we say:  
\begin{itemize}
 \item $X_1$ and $X_2$ are \emph{dual} if $\dim(X_1) = \dim(X_2)$, and 
 \item $X_1$ and $X_2$ are \emph{generalized dual} if $\dim(X_1) \equiv \dim(X_2) \pmod 2$.
\end{itemize}
\end{enumerate}
\end{definition}

\begin{remark}
\label{remark-duality}
If $X$ is a GM variety, then either $\sA(X)$ does or does 
not contain decomposable vectors, and these two cases are preserved by generalized partnership and duality. 
The first case happens only when $X$ is an ordinary surface with singular Grassmannian hull 
or $X$ is a special surface, see~\cite[Theorem~3.16 and Remark~3.17]{debarre-kuznetsov}. 
In this paper, we focus on the case where $\sA(X)$ does not contain decomposable vectors.
\end{remark}

One of the main results of~\cite[\S4]{debarre-kuznetsov} is 
that period partners or dual GM varieties of dimension at least $3$ are birational. 
Our motivation for defining generalized partners and duals is the following conjecture.  

\begin{conjecture}\label{conjecture:duality}
Let $X_1$ and $X_2$ be GM varieties such that the subspaces $\sA(X_1)$ and $\sA(X_2)$ do not contain decomposable vectors,
and let $\cA_{X_1}$ and $\cA_{X_2}$ be their GM categories.
\begin{enumerate}
\item \label{conjecture-partners} 
If $X_1$ and $X_2$ are generalized partners, there is an equivalence $\cA_{X_1} \simeq \cA_{X_2}$.
\item \label{conjecture-duality}
If $X_1$ and $X_2$ are generalized duals, 
there is an equivalence $\cA_{X_1} \simeq \cA_{X_2}$.
\end{enumerate}
\end{conjecture}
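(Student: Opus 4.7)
The plan is to approach the conjecture by constructing Fourier--Mukai kernels realizing the claimed equivalences, in two stages: first reducing to the case where $X_1$ and $X_2$ have the same dimension, and then treating same-dimensional generalized partners (resp.\ duals) by separate geometric arguments. For the reduction, note that for generalized partners of different dimensions sharing the same Lagrangian $\sA$, the opposite variety construction~\eqref{opp-variety} does not directly help, since $\sA(X^\opp)=\sA(X)$ and the parity of the dimension switches. Instead I would exploit the moduli description of Remark~\ref{remark:moduli-period}: for fixed $\sA$, GM varieties of varying dimension are parametrized by the EPW strata $\sY_{\sA^\perp}^k$, and the plan is to build a universal GM variety over $\sY_{\sA^\perp}$ so that integral transforms between its fibers exchange GM categories. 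A key technical ingredient is to show that the exceptional components $\cO_X, \cU_X^\vee, \dots, \cU_X^\vee(n-3)$ in~\eqref{eq:dbx-detailed} extend to a relative semiorthogonal decomposition whose GM complement is a flat family across adjacent strata; this would propagate an equivalence in one dimension to equivalences in higher dimensions.

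For same-dimensional period partners in dimension $\ge 3$, the work of Debarre--Kuznetsov provides an explicit birational isomorphism between $X_1$ and $X_2$. I would resolve this birational map and analyze the resolution through its exceptional divisors and relative canonical bundle; together with appropriate mutations of the exceptional pairs in~\eqref{eq:dbx-detailed}, this should yield an equivalence $\cA_{X_1} \simeq \cA_{X_2}$ along the pattern of Bondal--Orlov type constructions. For same-dimensional generalized duals, no analogous birational map is known, so the natural strategy is to produce a homological projective duality style kernel exchanging $\sA$ and $\sA^\perp$. Theorem~\ref{theorem-intro-K3} can be read as a geometric realization of such a kernel in one special case (between a GM fourfold and a generalized dual GM surface), and the idea would be to upgrade the underlying construction---which passes through the geometry of a quintic del Pezzo surface contained in $X$---to a universal kernel on a suitable resolution of the incidence variety inside $\sY_\sA \times \sY_{\sA^\perp}$.

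The hard part will be the duality case: producing a Fourier--Mukai kernel between varieties with no apparent birational relation requires a substantial extension of homological projective duality to the quadric-section and double-cover constructions that distinguish GM varieties from linear sections of $\bG$. A secondary obstacle, already present in the partners case, is ensuring that the relative semiorthogonal decomposition behaves uniformly as the dimension of the fiber varies across different EPW strata, since existing HPD-type tools handle linear sections well but not the quadric or double-cover perturbations. Rigidity results from \S\ref{section-hochschild-cohomology}---in particular Corollary~\ref{corollary-even-GM-HC} and Proposition~\ref{proposition-odd-GM-HC}, which control first-order deformations of $\cA_X$ via $\HH^2$---should play an important auxiliary role, constraining the ways in which the category can vary as $\bp$ moves within a fixed stratum and thereby forcing the conjectured invariance once it is verified on a dense open subset.
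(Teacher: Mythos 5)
The statement you are addressing is Conjecture~\ref{conjecture:duality}, and the paper does not prove it: the authors establish only the special case recorded as Theorem~\ref{theorem-associated-K3} (an ordinary GM fourfold versus a generalized dual GM surface attached to a point of $\sY^{3}_{\sA(X)}$ away from the Pl\"ucker hyperplane) and explicitly defer the general case to forthcoming work on categorical joins. Your proposal likewise is not a proof; every step is conditional, and you yourself identify the duality case as the unresolved hard part. So the fundamental gap is simply that no argument is completed, which is the honest state of the art --- but several specific points in your plan deserve comment.

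First, you miss the reduction the paper makes immediately after stating the conjecture: part~(1) follows from part~(2), because by Definition~\ref{definition:duality} and Theorem~\ref{theorem:dk:moduli} any two generalized partners share a common generalized dual. This renders your separate treatment of same-dimensional period partners (resolving the Debarre--Kuznetsov birational map and tracking exceptional divisors) unnecessary, and that route is in any case not known to be compatible with the decomposition~\eqref{eq:dbx-detailed}. Second, your ``universal GM variety over $\sY_{\sA^{\perp}}$'' reduction is the weakest link: the GM categories attached to points of different strata $\sY_{\sA^{\perp}}^{k}$ live inside derived categories of varieties of different dimensions with different numbers of exceptional objects, so there is no evident flat family of semiorthogonal decompositions across strata; and the Hochschild cohomology computations of Corollary~\ref{corollary-even-GM-HC} and Proposition~\ref{proposition-odd-GM-HC} control first-order deformations of a single category, not the existence of an equivalence between categories attached to distinct points, so they cannot ``force the conjectured invariance'' from a dense open subset. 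The part of your plan that does track the paper's actual evidence is the HPD idea for duals: the proof of Theorem~\ref{theorem-associated-K3} in \S\ref{section-associated-K3} combines HPD for $\bG \subset \bP(\wedge^2 V_5)$ with the classical projective duality between the rank-$6$ quadric $Q$ and $Q^{\vee}$, implemented by the $\bP^1$-bundles $\hX \to X$ and $\hY \to Y$ over $\bP(S)$ followed by a chain of mutations; the announced general proof upgrades exactly this, replacing classical duality of quadrics by homological projective duality within the formalism of categorical joins.
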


By Proposition~\ref{proposition-serre-functor}, GM varieties with equivalent GM categories must have dimensions of the same parity, 
which explains the parity condition in Definition~\ref{definition:duality}.
We note that part \eqref{conjecture-partners} of the conjecture follows from part \eqref{conjecture-duality}, 
since by Definition~\ref{definition:duality} and Theorem~\ref{theorem:dk:moduli}
generalized period partners have a common generalized dual GM variety.
For this reason, we refer to Conjecture~\ref{conjecture:duality} 
as the \emph{duality conjecture}.

As evidence for the duality conjecture, we prove in \S\ref{section-associated-K3} 
the special case where $X_1$ is an ordinary GM fourfold and $X_2$ is a (suitably generic) generalized dual GM surface. 
In fact, the approach of \S\ref{section-associated-K3} can be used to attack 
the full conjecture, but is quite unwieldy to carry out in the general case.
In forthcoming work, we establish the general case as a consequence of a theory of ``categorical joins'' \cite{joins}.
This approach is based on the observation from~\cite[Proposition~3.28]{debarre-kuznetsov} that 
duality of ordinary GM varieties can be interpreted in terms of projective duality of quadrics 
(see also~\S\ref{subsection-setup-associated-K3}). 
We show that this extends to generalized duality by replacing classical projective duality with 
homological projective duality. 

In the rest of this subsection we discuss some consequences of the duality conjecture.
We start by describing all generalized duals and partners of a given GM variety. 

\begin{lemma}
\label{lemma:moduli-gm-duals-partners}
Let $X$ be an $n$-dimensional GM variety, and assume $\sA(X)$ has no decomposable vectors.
Then any quadric point $\bq \in \bP(V_6(X))$ corresponds to a generalized dual~$X_{\bq}^{\vee}$ of $X$. 
If~$\bq$ lies in the stratum $\sY_{\sA(X)}^k$ for some $k$, we have:
\begin{itemize}
\item 
If \mbox{$5-k \equiv n \pmod 2$}, then $X_{\bq}^{\vee}$ is an ordinary GM variety of dimension $5-k$.
\item 
If $6-k \equiv n \pmod 2$, then $X_{\bq}^{\vee}$ is a special GM variety of dimension $6-k$.
\end{itemize}
Similarly, any point $\bp \in \bP(V_6(X)^\vee)$ corresponds to a generalized partner $X_\bp$ of $X$.

Conversely, any generalized dual of~$X$ arises as~$X_{\bq}^{\vee}$ for some $\bq \in \bP(V_6(X))$
and any generalized partner of~$X$ arises as~$X_{\bp}$ for some $\bp \in \bP(V_6(X)^\vee)$.
\end{lemma}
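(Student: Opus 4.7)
The plan is to deduce everything from Theorem~\ref{theorem:dk:moduli} by a suitable choice of Lagrangian data. For the generalized dual construction, I would apply the theorem to the Lagrangian $\sA = \sA(X)^\perp \subset \wedge^3 V_6(X)^\vee$, which has no decomposable vectors by the observation preceding Theorem~\ref{theorem:dk:moduli}. Given $\bq \in \bP(V_6(X)) = \bP((V_6(X)^\vee)^\vee)$, the identity $\sY_{\sA^\perp} = \sY_{(\sA(X)^\perp)^\perp} = \sY_{\sA(X)}$ shows that the EPW stratum of $\bq$ is the same in both descriptions. If $\bq \in \sY_{\sA(X)}^k$, then setting $n' = 5-k$ gives $\bq \in \sY_{\sA^\perp}^{5-n'}$, so Theorem~\ref{theorem:dk:moduli} associates to the pair $(\sA,\bq)$ both an ordinary GM variety of dimension $5-k$ and its special opposite of dimension $6-k$. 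Exactly one of these two candidates has dimension of the same parity as $n$, and I would define $X_\bq^\vee$ to be that one. The identifications $V_6(X_\bq^\vee) = V_6(X)^\vee$ and $\sA(X_\bq^\vee) = \sA(X)^\perp$ built into the construction are precisely the defining condition for a generalized dual in Definition~\ref{definition:duality}.

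The generalized partner construction runs in parallel: apply Theorem~\ref{theorem:dk:moduli} directly to the pair $(\sA(X), \bp)$ for $\bp \in \bP(V_6(X)^\vee)$, and among the two GM varieties produced select the one of matching parity with $n$ to be $X_\bp$. For the converse, given an arbitrary generalized dual $X'$ of $X$, the defining isomorphism $V_6(X') \cong V_6(X)^\vee$ identifies the Pl\"ucker point $\bp_{X'} \in \bP(V_6(X')^\vee)$ with a point $\bq \in \bP(V_6(X))$, and the injectivity of the map $X \mapsto (\sA(X),\bp_X)$ in Theorem~\ref{theorem:dk:moduli} forces $X' \cong X_\bq^\vee$. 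The partner case is entirely analogous, using the identification $V_6(X') \cong V_6(X)$ in place of the duality isomorphism.

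The only real obstacle is the bookkeeping of parities and of which EPW stratum yields which output dimension; this is the hard part only in the sense that one must carefully track the $5-k$ versus $6-k$ dichotomy and confirm that exactly one of them matches the parity of $n$. The range of valid output dimensions is automatic: by the O'Grady results recalled in \S\ref{subsection-EPW}, the absence of decomposable vectors forces $\sY_{\sA(X)}^{\geq 4} = \varnothing$, so $k \in \{0,1,2,3\}$ and both $5-k$ and $6-k$ lie in $\{2,\dots,6\}$, where GM varieties of the corresponding dimension and type exist.
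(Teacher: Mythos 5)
Your proposal is correct and follows essentially the same route as the paper: the paper's proof simply applies Theorem~\ref{theorem:dk:moduli} to the pair $(\sA(X)^\perp,\bq)$ with $V_6 = V_6(X)^\vee$, reads off the ordinary variety of dimension $5-k$ or its special opposite of dimension $6-k$, and invokes the same theorem for the converse and for partners. Your additional remarks on the identity $\sY_{(\sA(X)^\perp)^\perp} = \sY_{\sA(X)}$ and the parity/dimension bookkeeping are exactly the details the paper leaves implicit.
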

\begin{proof}
The variety $X_{\bq}^{\vee}$ corresponding to a quadric point $\bq \in \bP(V_6)$ is just the ordinary GM variety of dimension $5-k$ or the special GM variety of dimension $6-k$ 
associated by Theorem~\ref{theorem:dk:moduli} to the pair $(\sA(X)^\perp,\bq)$ (with $V_6 = V_6(X)^\vee$). 
It also follows from Theorem~\ref{theorem:dk:moduli} that any generalized dual of $X$ arises in this way.

The same argument also works for generalized partners. 
\end{proof}

The argument of Lemma~\ref{lemma:moduli-gm-duals-partners} shows that 
the set of isomorphism classes of generalized duals of~$X$ can be identified with the quotient of $\bP(V_6(X))$
by the action of the stabilizer of~$\sA(X)$ in $\PGL(V_6(X))$. 
Analogously, the isomorphism classes of generalized partners of $X$ are parameterized by a quotient of $\bP(V_6(X)^\vee)$ by the same group. 

Let us list more explicitly the type of $X_{\bq}^{\vee}$ according to the stratum 
$\sY_{\sA(X)}^k$ of $\bq$ and the parity of $n$: 
\begin{center}
{\tabulinesep=1.2mm
\begin{tabu}{|c|c|c|}
\hline 
$k$ & $X_{\bq}^{\vee}$ for $n$ even & $X_{\bq}^{\vee}$ for $n$ odd \\ 
\hline 
$0$ & special sixfold  & ordinary fivefold \\
\hline 
$1$ & ordinary fourfold & special fivefold \\
\hline 
$2$ & special fourfold & ordinary threefold \\ 
\hline 
$3$ & ordinary surface & special threefold \\
\hline 
\end{tabu}}
\end{center}
Recall that the stratum $\sY_{\sA(X)}^k$ is always nonempty for 
$k = 0,1,2$, generically empty for $k = 3$, and always empty for $k \geq 4$ 
(under our assumption that $\sA(X)$ contains no decomposable vectors). 
In fact, the condition that $\sY_{\sA(X)}^3$ is nonempty is divisorial 
in $\cM_n$ (see Remark~\ref{remark-EWP3-divisor}). 
In the same way, one can describe the types of generalized partners $X_\bp$ of $X$ depending on the stratum $\sY_{\sA(X)^\perp}^k$ of $\bp$ and the parity of $n$.

Conjecture~\ref{conjecture:duality} says there are equivalences 
\begin{equation*}
\cA_{X} \simeq \cA_{X_{\bp}} \simeq \cA_{X_\bq^\vee}
\end{equation*}
for every $\bp \in \bP(V_6(X)^\vee)$ and every $\bq \in \bP(V_6(X))$. 
In particular, it predicts that often GM categories are equivalent to 
those of lower-dimensional GM varieties, namely that:
\begin{enumerate}
\item \label{duality-ex-1} If $X$ is a sixfold, then its GM category is equivalent to a fourfold's GM category.  
\item \label{duality-ex-2} If $X$ is a fivefold, then its GM category is equivalent to a threefold's GM category. 
\item \label{duality-ex-3} If $X$ is a fourfold such that $\sY_{\sA(X)^\perp}^{3} \neq \varnothing$ or $\sY_{\sA(X)}^{3} \neq \varnothing$, then 
its GM category is equivalent to the derived category of a GM surface. 
\end{enumerate}

As mentioned above, in \S\ref{section-associated-K3} we prove  \eqref{duality-ex-3} in case $\sY_{\sA(X)}^{3} \neq \varnothing$ and 
an additional genericity assumption holds, namely $\sY_{\sA(X)}^3 \not\subset \bP(V_5(X))$.

\begin{remark}
Using Theorem~\ref{theorem:dk:moduli},
it is easy to see that to prove the duality conjecture in full generality, 
it is enough to prove $\cA_{X} \simeq \cA_{X_{\bq}^{\vee}}$ for all $X$ and $\bq \in \bP(V_6(X)) \setminus \bP(V_5(X))$. 
A similar reduction was used in~\cite[\S4]{debarre-kuznetsov} to prove birationality of period partners and 
of dual GM varieties.
\end{remark}

\begin{remark}
A GM variety $X$ as in \eqref{duality-ex-1}--\eqref{duality-ex-3} above 
is rational (see the discussion below and Lemma~\ref{lemma-rational-GM}). 
It seems likely that for such an $X$ there is a rationality construction that involves 
a blowup of a generalized partner or dual variety of dimension $2$ less, and gives rise to an 
equivalence of GM categories. Our approach to \eqref{duality-ex-3} in 
\S\ref{section-associated-K3} takes a completely different route. 
\end{remark}

\subsection{Rationality conjectures} 
\label{subsection-rationality-conjecture}
Let us recall what is known about rationality of GM varieties. 
A general GM threefold is irrational by~\cite[Theorem~5.6]{beauville1977}, while 
every GM fivefold or sixfold is rational by~\cite[Proposition~4.2]{debarre-kuznetsov} 
(for a \emph{general} GM fivefold or sixfold this was already known to Roth). 
The intermediate case of GM fourfolds is more mysterious, and closely parallels the 
situation for cubic fourfolds: some rational examples are known~\cite{DIM4fold}, but 
while a very general GM fourfold is expected to be irrational, it has not been proven 
that a single GM fourfold is irrational. 
Below, we analyze this state of affairs from the point of view of derived categories.  

Following \cite[\S3.3]{kuznetsov2015rationality}, we use the following terminology: 
\begin{itemize}  
\item For a triangulated category $\cA$, the \emph{geometric dimension} $\gdim(\cA)$ is defined as the minimal 
integer $m$ such that there exists an $m$-dimensional connected smooth
projective variety $M$ and an admissible embedding $\cA \hookrightarrow \Db(M)$. 
\item If $Y$ is a smooth projective variety and $\Db(Y) =  \langle \cA_1, \dots, \cA_m \rangle$ is a maximal semiorthogonal decomposition (i.e. the components are indecomposable), then $\cA_i$ is 
called a \emph{Griffiths component} if $\gdim(\cA_i) \ge \dim Y - 1$. 
\end{itemize}
If the set of Griffiths components of $Y$ did not depend on the choice of maximal semiorthogonal decomposition, 
then it would be a birational invariant~\cite[Lemma 3.10]{kuznetsov2015rationality}; 
in particular, it would be empty if $Y$ is rational of dimension at least $2$. 
Unfortunately, there are examples showing this is not true (see \cite[\S3.4]{kuznetsov2015rationality}, \cite{JH1}). 
It may be possible to salvage the situation by modifying the definition 
of a Griffiths component (some possibilities are discussed in \cite[\S3.4]{kuznetsov2015rationality}), 
but this remains an important question. 

Nonetheless, the existence of a Griffiths component appears to be related to 
irrationality in several examples. 
For instance, if $X' \subset \bP^5$ is a smooth cubic fourfold, there is a semiorthogonal decomposition 
\begin{equation}
\label{sod-cubic}
\Db({X'}) = \langle \cA_{X'}, \cO_{X'}, \cO_{X'}(1), \cO_{X'}(2) \rangle, 
\end{equation}
where $\cA_{X'}$ is a K3 category 
(see \cite[Corollary 4.3]{kuznetsov2004derived} or~\cite[Corollary~4.1]{kuznetsov2015calabi}). 
If $\cA_{X'}$ is equivalent to the derived category of a K3 surface, then $\gdim(\cA_{X'}) = 2$ 
and hence~\eqref{sod-cubic} contains no Griffiths components.
If $\cA_{X'}$ is not geometric (which holds for a very general cubic 
fourfold by an argument similar to Proposition~\ref{proposition-AX-geometric}), 
then we expect $\cA_{X'}$ to be a Griffiths component, although this remains 
an interesting open problem, cf. \cite[Conjecture~5.8]{kuznetsov2015calabi}.
These considerations motivated the following conjecture. 

\begin{conjecture}[\cite{kuznetsov2010derived}]
\label{conjecture-rationality-cubic}
If $X'$ is a rational cubic fourfold, then $\cA_{X'}$ is 
equivalent to the derived category of a K3 surface. 
\end{conjecture}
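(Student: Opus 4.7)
The plan is to reduce the conjecture to a systematic understanding of how the K3 category $\cA_{X'}$ behaves under birational transformations, which aligns with the ``Griffiths component'' philosophy discussed in \S\ref{subsection-rationality-conjecture}. Given a birational map $X' \dashrightarrow \bP^4$, the first step is to apply weak factorization to decompose it into a chain of blowups and blowdowns along smooth centers. Under Orlov's blowup formula, each step modifies $\Db$ by adding or removing a twisted copy of the derived category of the center. The objective is to track the K3 component $\cA_{X'}$ through this chain and show that at the terminal end (where $\Db(\bP^4)$ has the fully exceptional Beilinson decomposition), the category $\cA_{X'}$ must identify with $\Db(Y)$ for some K3 surface $Y$ appearing among the blowup centers.

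More concretely, I would proceed in the following order. First, observe that the statement is invariant under choosing any maximal semiorthogonal decomposition compatible with $\cA_{X'}$, so one should fix the standard decomposition~\eqref{sod-cubic}. Second, match both decompositions of the birationally equivalent varieties by combining Orlov's blowup formula with mutation functors, producing a correspondence between the indecomposable summands on both sides. Third, identify the image of $\cA_{X'}$ inside the decomposition of $\Db(\bP^4)$ induced by the factorization; since~$\bP^4$ only supports exceptional summands and summands coming from its blowup centers (products of K3 surfaces, Fanos of lower dimension, etc.), and $\cA_{X'}$ is a connected $2$-Calabi--Yau category of the correct Hochschild homology, the only possible source for $\cA_{X'}$ is a K3 surface center (or a twisted variant thereof).

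A complementary and more tractable route is the deformation approach, analogous to Addington--Thomas~\cite{addington-thomas}: prove the conjecture first on a dense family of rational cubic fourfolds (e.g.~Pfaffian cubics, where the equivalence $\cA_{X'} \simeq \Db(Y)$ is known), then show that the locus of rational cubic fourfolds is a countable union of closed subvarieties of moduli, each of which either lies in the Hassett discriminant divisor $\cC_d$ of K3 type or otherwise can be shown to admit a K3 equivalence by direct construction. The nontrivial input here would be a precise description of the rationality locus, which is currently unavailable.

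The main obstacle in both approaches is the absence of a birational invariance theorem for Griffiths components. As noted after Conjecture~\ref{conjecture-rationality-cubic} and in~\cite[\S3.4]{kuznetsov2015rationality}, there exist examples where the collection of Griffiths components in a maximal semiorthogonal decomposition genuinely depends on the decomposition chosen; thus mutations and Orlov blowup formulas alone do not force $\cA_{X'}$ to become geometric. Overcoming this would presumably require either a refined notion of Griffiths component that is preserved by weak factorization (perhaps built from Hochschild invariants and the Serre structure, using the Calabi--Yau property of $\cA_{X'}$ as a rigidity mechanism), or a proof that rationality forces $X'$ to satisfy a Hodge-theoretic condition of K3 type, which could then be fed into Addington--Thomas. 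Either route would represent substantial progress on a central open problem, so it is unsurprising that the statement remains conjectural.
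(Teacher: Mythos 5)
This statement is a conjecture, not a theorem: the paper offers no proof of it, and it remains an open problem (it is Conjecture~\ref{conjecture-rationality-cubic}, originating in~\cite{kuznetsov2010derived}). Your text is therefore not a proof but a survey of possible strategies, and to your credit you say so explicitly in the final paragraph. The honest assessment is that nothing in your proposal closes the gap.

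Concretely, both routes you sketch fail at the point you yourself identify. The weak-factorization route breaks down because semiorthogonal decompositions are not known to be compatible with weak factorization: Orlov's formula tells you how $\Db$ changes under a single smooth blowup, but tracking a fixed admissible subcategory $\cA_{X'}$ through an alternating chain of blowups and blowdowns requires exactly the kind of Jordan--H\"older or birational-invariance statement for Griffiths components that is known to be \emph{false} in general (\cite[\S3.4]{kuznetsov2015rationality}, \cite{JH1}). Your appeal to connectedness, the $2$-Calabi--Yau property, and Hochschild homology of $\cA_{X'}$ does not rescue this, because there is no classification of which connected $2$-Calabi--Yau categories with the Hochschild homology of a K3 surface can appear as components of $\Db$ of a rational fourfold; indeed, ruling out ``non-geometric'' such categories is essentially equivalent to the conjecture itself. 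The deformation route via Addington--Thomas likewise presupposes a description of the rationality locus in moduli, which, as you note, is unavailable; without it the argument is circular (one would be assuming that rational cubics lie in divisors of K3 type, which is the Hodge-theoretic form of the very statement to be proved). In short: your proposal correctly diagnoses why the conjecture is hard, but it should not be presented as a proof, and the paper itself does not claim one.
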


As evidence, this conjecture was proved in~\cite{kuznetsov2010derived} 
for all rational $X'$ known at the time. 
Since then, a nearly complete answer to when $\cA_{X'}$ is equivalent to the 
derived category of a K3 surface has been given~\cite{addington-thomas}, and some new 
families of rational cubic fourfolds have been produced~\cite{dp6-cubics}. 

The same philosophy can be applied to GM fourfolds. 
If the GM category $\cA_X$ of a GM fourfold $X$ is geometric, then~\eqref{eq:dbx-detailed} contains no Griffiths components,
and otherwise we expect~$\cA_X$ to be a Griffiths component.
This suggests the following analogue of Conjecture~\ref{conjecture-rationality-cubic}. 

\begin{conjecture}
\label{conjecture-rational-fourfold}
If $X$ is a rational GM fourfold, then the GM category $\cA_X$ is 
equivalent to the derived category of a $K3$ surface.
\end{conjecture}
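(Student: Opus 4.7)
The plan is to adapt the strategy that has guided work on the analogous cubic fourfold conjecture (Conjecture~\ref{conjecture-rationality-cubic}), combining explicit constructions on known rational families with a deformation-theoretic extension. I view this as at best a program rather than a complete attack, since the corresponding problem for cubics is itself open.

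First, I would verify the conjecture on every family of rational GM fourfolds that can currently be exhibited. The paper already handles the codimension~$1$ family of ordinary GM fourfolds containing a quintic del Pezzo surface (Theorem~\ref{theorem-intro-K3}), which, together with its minor modifications from~\cite{DIM4fold}, accounts for essentially all presently known rational examples. Theorem~\ref{theorem-intro-cubic} supplies an additional codimension~$3$ family on which $\cA_X$ coincides with the K3 category of a cubic fourfold; on this locus the conjecture reduces to its cubic analogue and can be checked whenever that analogue is. For any rational family discovered later, the approach would be the same: use the birational model to realize $\Db(X)$ as a semiorthogonal decomposition tied to a K3 surface, and extract the equivalence via mutations.

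Second, I would try to bootstrap from these explicit cases using deformation theory in the spirit of Addington--Thomas~\cite{addington-thomas}. Combining the period-theoretic analysis of~\cite{DIM4fold} with the EPW picture of~\S\ref{subsection-EPW}, one obtains countably many Noether--Lefschetz-type substacks of the moduli stack of GM fourfolds parametrizing those $X$ that admit a Hodge-theoretic K3 partner (in the sense of~\S\ref{subsection-further-directions}). The intermediate target is the enhanced statement that $\cA_X \simeq \Db(Y)$ for some K3 surface $Y$ if and only if $X$ has such a Hodge-theoretic partner. Theorem~\ref{theorem-intro-K3} provides seed points on the discriminant~$10$ locus from which a deformation argument can propagate geometricity; the duality conjecture (Conjecture~\ref{conjecture:duality}) should transport equivalences between generalized partners and duals, stitching together different substacks and different parities of dimension. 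With this enhanced statement in hand, Conjecture~\ref{conjecture-rational-fourfold} reduces to showing that every rational GM fourfold lies on one of these Noether--Lefschetz substacks.

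The hardest step will be precisely this last reduction: forcing a Hodge-theoretic K3 partner out of the hypothesis of rationality alone. This is the GM analogue of the Hassett--Kuznetsov expectation for cubics, and at present there is no mechanism extracting such a Hodge-theoretic condition from rationality. One would need either a genuinely birationally invariant categorical obstruction --- a sharpening of the Griffiths component heuristic of~\S\ref{subsection-rationality-conjecture} robust enough to avoid the counterexamples of~\cite{JH1} --- or a structural classification of rational GM fourfolds fine enough to verify the conjecture family by family. I expect progress on the GM side to be tethered to, and unlikely to substantially outpace, progress on its cubic counterpart.
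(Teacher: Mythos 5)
The statement you are addressing is stated in the paper as a \emph{conjecture} (Conjecture~\ref{conjecture-rational-fourfold}), not a theorem: the paper offers no proof, only evidence, namely the verification on the codimension~$1$ family of Theorem~\ref{theorem-associated-K3}, the bridge to the cubic fourfold conjecture via Theorem~\ref{theorem-associated-cubic}, and the consistency check that very general GM fourfolds have non-geometric $\cA_X$ (Proposition~\ref{proposition-AX-geometric}). Your proposal is therefore not in competition with a proof in the paper, and to your credit you say explicitly that what you have is a program rather than an argument. That program accurately reflects the paper's own outlook in \S\ref{subsection-further-directions} and \S\ref{subsection-rationality-conjecture}: verify the conjecture on explicit rational families, propagate equivalences by an Addington--Thomas-style deformation argument across the Noether--Lefschetz loci of~\cite{DIM4fold}, and use the duality conjecture to move between generalized partners.

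The genuine gap is the one you name yourself, and it is worth being precise about why it is not merely ``hard'' but currently inaccessible: nothing in the hypothesis of rationality produces an extra algebraic cycle (equivalently, a Hodge-theoretic K3 partner, equivalently a rank-$3$ sublattice of $\rK_0(\cA_X)_{\num}$ in the sense of Proposition~\ref{proposition-gg-AX}). The only known mechanism linking rationality to the structure of $\Db(X)$ is the Griffiths-component heuristic, which the paper itself notes is not a birational invariant because of the Jordan--H\"older failures of~\cite{JH1}; so even granting the full deformation-theoretic package and the duality conjecture, the final reduction has no proof strategy at present. One further caution: your intermediate step of checking the conjecture ``family by family'' on all known rational examples silently assumes that rational GM fourfolds are confined to countably many Noether--Lefschetz divisors, which is itself an open (and stronger) assertion --- a priori the rational locus could be Zariski dense without lying in any such divisor, in which case the family-by-family strategy never terminates. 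Your proposal is a fair map of the territory, but it does not and cannot at present close the conjecture.
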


One of the main results of this paper, Theorem~\ref{theorem-intro-K3} (or rather Theorem~\ref{theorem-associated-K3}), 
verifies Conjecture~\ref{conjecture-rational-fourfold} for a certain family of rational GM fourfolds. 
Another result, Theorem~\ref{theorem-intro-cubic} (or rather Theorem~\ref{theorem-associated-cubic}), builds a bridge between 
Conjectures~\ref{conjecture-rational-fourfold} and~\ref{conjecture-rationality-cubic}. 
Finally, recall that we proved the GM category of a very general GM fourfold 
is not equivalent to the derived category of a K3 surface 
(Proposition~\ref{proposition-AX-geometric}). 
Hence Conjecture~\ref{conjecture-rational-fourfold} is consistent with 
the expectation that a very general GM fourfold is irrational.

Now we consider GM varieties of other dimensions from the perspective of derived categories.
The next result shows that for a GM threefold $X$, any maximal semiorthogonal 
decomposition of $\Db(X)$ obtained by refining~\eqref{eq:dbx-detailed} contains a Griffiths component. 
We view this as evidence that \emph{any} smooth GM threefold is irrational.

\begin{lemma}[{cf.~\cite[Proposition~3.12]{kuznetsov2015rationality}}]
\label{lemma-threefold-AX}
Let $X$ be a GM threefold. Then $\cA_X$ does not admit a semiorthogonal decomposition 
with all components of geometric dimension at most $1$.  
\end{lemma}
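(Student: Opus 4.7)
The proof is by contradiction, following the strategy used for cubic threefolds in \cite[Proposition~3.12]{kuznetsov2015rationality}. Suppose $\cA_X = \langle \cA_1, \dots, \cA_m \rangle$ is a semiorthogonal decomposition with $\gdim(\cA_i) \le 1$ for every $i$. The plan is to first classify the possible components $\cA_i$, then apply additivity of Hochschild homology to obtain strong numerical constraints which force $\cA_X \simeq \Db(M)$ for some smooth projective curve $M$, and finally to invoke Proposition~\ref{proposition-AX-geometric}(\ref{AX-geometric-odd}) to reach a contradiction.

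For the classification, each $\cA_i$ embeds admissibly into $\Db(M_i)$ for some smooth connected projective variety $M_i$ of dimension at most $1$. If $\dim M_i = 0$ then $\cA_i \simeq \Db(\mathrm{pt})$ (or is zero). If $M_i$ is a curve of genus $g \ge 1$, then $\Db(M_i)$ is indecomposable (by Bondal--Orlov for $g \ge 2$ and by a direct argument for $g = 1$), so the only nontrivial option is $\cA_i \simeq \Db(M_i)$. If $M_i \simeq \bP^1$, the admissible subcategories of $\Db(\bP^1) = \langle \cO, \cO(1) \rangle$ are either trivial, exceptional and equivalent to $\Db(\mathrm{pt})$, or all of $\Db(\bP^1)$, which we may refine further into two exceptional components. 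After refining, we may therefore assume each $\cA_i$ is equivalent either to $\Db(\mathrm{pt})$ or to $\Db(M_i)$ for a smooth connected projective curve $M_i$ of genus $g_i \ge 1$. Let $a$ be the number of point components and $g_1, \dots, g_b$ the genera of the curve components; by the HKR isomorphism~\eqref{eq:HKR} we have $\HH_\bullet(\mathrm{pt}) = \bC$ and $\HH_\bullet(M_j) = \bC^{g_j}[1] \oplus \bC^2 \oplus \bC^{g_j}[-1]$.

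Combining additivity of Hochschild homology (Theorem~\ref{theorem:additivity}) with the computation of $\HH_\bullet(\cA_X)$ from Proposition~\ref{proposition-HH_*} then gives
\[
\bC^{10}[1] \oplus \bC^{2} \oplus \bC^{10}[-1] \;\cong\; \bC^{a} \oplus \bigoplus_{j=1}^{b}\bigl(\bC^{g_j}[1] \oplus \bC^{2} \oplus \bC^{g_j}[-1]\bigr),
\]
so comparison in degrees $\pm 1$ yields $\sum_{j=1}^b g_j = 10$ while comparison in degree $0$ yields $a + 2b = 2$. These force $b = 1$, $a = 0$, and $g_1 = 10$, i.e.\ $\cA_X \simeq \Db(M)$ for a smooth projective curve $M$ of genus~$10$. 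But this contradicts Proposition~\ref{proposition-AX-geometric}(\ref{AX-geometric-odd}), since $\dim X = 3$ is odd. The main subtlety is the classification step, which relies on indecomposability of $\Db(M)$ for curves of positive genus and requires care in handling possible $\bP^1$-components; once the classification is in place, the Hochschild homology computation leaves essentially no room, and the contradiction is delivered by the earlier non-geometricity result for odd-dimensional GM categories.
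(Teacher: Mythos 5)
Your proof is correct and follows essentially the same route as the paper's: classify the possible components as derived categories of points or curves (the paper cites Okawa's indecomposability theorem for this step), use additivity of Hochschild homology together with Proposition~\ref{proposition-HH_*} to force $\cA_X \simeq \Db(C)$ for a genus $10$ curve, and conclude by the non-geometricity of odd GM categories from Proposition~\ref{proposition-AX-geometric}.
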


\begin{proof}
It is easy to see that any category of geometric dimension $0$ is equivalent to 
$\Db(\Spec(\bC))$. 
Further, by~\cite{okawa} any category of geometric dimension $1$ 
is equivalent to the derived category of a curve. 
Note that $\HH_{\bullet}(\Spec(\bC)) = \bC$, and if $C$ is a curve of 
genus $g$ then 
\begin{equation*}
\HH_{\bullet}(C) = \bC^g[1] \oplus \bC^{2} \oplus \bC^g[-1]. 
\end{equation*} 
Thus if $\cA_X$ has a semiorthogonal decomposition with all components of geometric 
dimension at most $1$, Proposition~\ref{proposition-HH_*} and Theorem~\ref{theorem:additivity}  
imply $\cA_X \simeq \Db(C)$ for a genus $10$ curve $C$. 
This cannot happen by Proposition~\ref{proposition-AX-geometric}.
\end{proof}

If $X$ is a GM fivefold or sixfold, then by the discussion in \S\ref{subsection:duality}, 
$X$ has a generalized dual~$X^\vee$ with $\dim(X^\vee) \le \dim(X) - 2$. 
The duality conjecture (Conjecture~\ref{conjecture:duality}\eqref{conjecture-duality}) 
predicts that~$\cA_X \simeq \cA_{X^\vee}$, and hence $\gdim(\cA_X) \le \dim(X) - 2$.
So assuming the duality conjecture, we see that~\eqref{eq:dbx-detailed} has no Griffiths components, which 
is consistent with the rationality of $X$.



\section{Fourfold-to-surface duality}
\label{section-associated-K3}

In this section we prove Conjecture~\ref{conjecture:duality} 
for ordinary fourfolds with a generalized dual surface 
corresponding to a non-Pl\"{u}cker quadric point. 

\subsection{Statement of the result}
\label{subsection:statement-4-2-duality}

Recall that for any GM fourfold~$X$ and a quadric point $\bq \in \bP(V_6(X))$, 
we associated in \S\ref{subsection:duality} a generalized dual variety~$X_{\bq}^{\vee}$, 
which is an ordinary GM surface if~$\bq \in \sY_{\sA(X)}^3$.  

\begin{theorem}
\label{theorem-associated-K3}
Let $X$ be an ordinary GM fourfold such that 
\begin{equation*}
\sY_{\sA(X)}^{3} \cap (\bP(V_6(X)) \setminus \bP(V_5(X))) \neq \varnothing.   
\end{equation*} 
Then for any point $\bq \in \sY_{\sA(X)}^{3} \cap (\bP(V_6(X)) \setminus \bP(V_5(X)))$, 
there is an equivalence 
\begin{equation*}
\cA_X \simeq \Db(X_{\bq}^{\vee}). 
\end{equation*}
\end{theorem}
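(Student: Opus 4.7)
The plan is to exploit the special geometry forced by the hypothesis $\bq \in \sY_{\sA(X)}^{3}$. By Proposition~\ref{proposition:dk:quadrics}, the non-Pl\"ucker quadric $Q \subset \bP(W)$ corresponding to $\bq$ has a $3$-dimensional kernel, so $Q$ is a cone with vertex a plane $\Pi \cong \bP^2$ over a smooth $4$-dimensional quadric $Q_0 \subset \bP^5$. Smoothness of $X = M'_X \cap Q$ forces $\Pi \cap X = \varnothing$, while a dimension count suggests that $\Sigma := \Pi \cap M'_X$ is a smooth surface which, under the translation carried out in~\S\ref{subsection-setup-associated-K3}, should be identified with a quintic del Pezzo surface naturally contained in $X$.

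Next I would analyse the rational map $\pi \colon X \dashrightarrow Q_0$ obtained by projection from $\Pi$, resolving it by the blow-up $\widetilde{X} \to X$ along $\Sigma$ so as to produce a morphism $\widetilde{\pi} \colon \widetilde{X} \to Q_0$. I expect $\widetilde{\pi}$ to endow $\widetilde{X}$ with the structure of a family of linear sections of $\bG$ parameterised by $Q_0$. Applying Kuznetsov's techniques for semiorthogonal decompositions of such fibrations (in the spirit of~\cite{kuznetsov2007HPD} and~\cite{cyclic-covers}) should yield a decomposition of $\Db(\widetilde{X})$ whose essential non-trivial component is the derived category of a distinguished auxiliary base surface, together with an Azumaya-type twist.

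The crucial geometric input is to identify this auxiliary surface with $X_{\bq}^{\vee}$, using the intrinsic description of the latter via the Lagrangian data $(\sA(X)^\perp, \bq)$ supplied by Theorem~\ref{theorem:dk:moduli}: since $\bq \in \sY_{\sA(X)}^{3}$, this variety is an ordinary GM surface, i.e.\ a Brill--Noether general $K3$ surface of degree $10$. Granting the identification, pushing the decomposition of $\Db(\widetilde{X})$ back down along the blow-up map and mutating the result into the standard form~\eqref{eq:dbx-detailed} should produce the desired equivalence $\cA_X \simeq \Db(X_{\bq}^{\vee})$. As a consistency check, Proposition~\ref{proposition-HH_*} gives $\dim \HH_\bullet(\cA_X) = \dim \HH_\bullet(\Db(X_{\bq}^{\vee})) = 24$, and both categories are connected and $2$-Calabi--Yau by Proposition~\ref{proposition-serre-functor} and Corollary~\ref{corollary-AX-indecomposable}.

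The main obstacle is constructing and controlling the fibration $\widetilde{\pi}$ and matching its derived-categorical output with $X_{\bq}^{\vee}$. The crux is to show that the Azumaya algebra produced on the auxiliary base is Morita-trivial, so that one obtains an honest (rather than twisted) derived equivalence, and that this base agrees precisely with the generalized dual GM surface. Reconciling a construction built from the projective geometry of the corank-$3$ quadric $Q$ with the Lagrangian/EPW description of $X_{\bq}^{\vee}$ will require a delicate compatibility argument, and the vanishing of the relevant Brauer class should reflect the genericity built into the hypothesis $\sY_{\sA(X)}^{3} \not\subset \bP(V_5(X))$.
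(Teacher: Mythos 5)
Your geometric setup breaks down at the first step. Since $\Pi=\bP(K)$ is the vertex of the cone $Q$, it is contained in $Q$, so $\Pi\cap M'_X\subset M'_X\cap Q=X$; as you yourself note, smoothness forces $X\cap\Pi=\varnothing$, hence your $\Sigma=\Pi\cap M'_X$ is \emph{empty} (consistent with the naive dimension count $2+5-8<0$, which you misstate). It is in particular not a quintic del Pezzo surface: those arise in Lemma~\ref{lemma-GM-dP5} as $\bG\cap\bP(I)$ for $I\subset W$ a $6$-dimensional isotropic subspace of $Q$, i.e.\ a maximal isotropic containing $K$, not as $\Pi\cap M'_X$. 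Consequently there is nothing to blow up, the projection from $\Pi$ is already a morphism on $X$, and this morphism $X\to \oQ$ is generically finite of degree $5$ onto the $4$-dimensional quadric $\oQ$ rather than a fibration of $X$ into linear sections of $\bG$. The ``family of linear sections of $\bG$ parameterised by $Q_0$'' that your whole argument rests on does not exist, so the HPD machinery has nothing to act on, and the two genuinely hard points --- identifying the output with $X_{\bq}^{\vee}$ and controlling the alleged Azumaya twist --- are explicitly deferred rather than addressed.

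For comparison, the paper's proof takes the parameter space to be not $\oQ$ but the $\bP^3=\bP(S)$ of \emph{maximal isotropic subspaces} of $Q$ (equivalently, of planes in $\oQ\cong\G(2,S)$). Each $s\in\bP(S)$ gives mutually orthogonal maximal isotropics $\cI_s\subset W$ and $\cI_s^{\perp}\subset K^{\perp}$ for $Q$ and for the projectively dual quadric $Q^{\vee}$, producing $\bP^1$-bundles $\hX\to X$ and $\hY\to Y$ which are simultaneously families over $\bP(S)$ of mutually orthogonal linear sections of $\bG$ and $\bGv$. The identification $Y=X_{\bq}^{\vee}=\bGv\cap Q^{\vee}$ --- the compatibility between the EPW/Lagrangian description and the projective geometry of the corank-$3$ quadric that you flag as delicate --- is supplied by \cite[Proposition~3.28]{debarre-kuznetsov} together with Theorem~\ref{theorem:dk:moduli}. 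One then applies HPD for the pair $(\bG,\bGv)$ relative to $\bP(S)$ and a chain of mutations comparing the resulting decomposition of $\Db(\hX)$ with the $\bP^1$-bundle decompositions over $X$ and over $Y$; no Clifford algebra or Brauer class ever appears, so the Morita-triviality issue you raise is an artifact of a quadric-fibration approach that is not the one needed here and for which you give no argument.
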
 

The proof of this theorem takes the rest of this section.
We start by noting an immediate consequence for period partners. 

\begin{corollary}
Assume $X$ and $\bq$ are as in Theorem~\textup{\ref{theorem-associated-K3}}, 
and let $X_\bp$ be a period partner of~$X$ such that $(\bq, \bp)$ does not 
lie on the incidence divisor in $\bP(V_6(X)) \times \bP(V_6(X)^\vee)$. 
Then there is an equivalence of GM categories $\cA_{X_\bp} \simeq \cA_X$.
\end{corollary}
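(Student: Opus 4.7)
The plan is to apply Theorem~\ref{theorem-associated-K3} both to $X$ and to $X_\bp$ and to identify the two GM surfaces that result. Period partnership furnishes an isomorphism $\iota\colon V_6(X)\xrightarrow{\sim}V_6(X_\bp)$ carrying $\sA(X)$ to $\sA(X_\bp)$, so that the EPW stratifications on the two projective spaces are identified. Under $\iota$, the point $\bq\in\sY^{3}_{\sA(X)}$ corresponds to $\iota(\bq)\in\sY^{3}_{\sA(X_\bp)}$, and the Pl\"ucker hyperplane $\bP(V_5(X_\bp))\subset\bP(V_6(X_\bp))$ is identified with the hyperplane in $\bP(V_6(X))$ cut out by $\bp\in\bP(V_6(X)^{\vee})$. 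The assumption that $(\bq,\bp)$ does not lie on the incidence divisor of $\bP(V_6(X))\times\bP(V_6(X)^{\vee})$ then translates precisely into $\iota(\bq)\notin\bP(V_5(X_\bp))$, verifying the hypothesis of Theorem~\ref{theorem-associated-K3} for $X_\bp$ at the quadric point $\iota(\bq)$.

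Applying Theorem~\ref{theorem-associated-K3} to both $X$ and $X_\bp$ yields equivalences
\begin{equation*}
\cA_X\simeq\Db(X^{\vee}_\bq)\qquad\text{and}\qquad \cA_{X_\bp}\simeq\Db\bigl((X_\bp)^{\vee}_{\iota(\bq)}\bigr).
\end{equation*}
By the bijection of Theorem~\ref{theorem:dk:moduli}, the generalized dual $X^{\vee}_\bq$ is determined up to isomorphism by the pair $(\sA(X)^\perp,\bq)$ and $(X_\bp)^{\vee}_{\iota(\bq)}$ by $(\sA(X_\bp)^\perp,\iota(\bq))$; but $\iota$ matches these two pairs, so the two GM surfaces are isomorphic. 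Combining the equivalences above gives the desired $\cA_X\simeq\cA_{X_\bp}$.

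The only real point to check is that $X_\bp$ is ordinary, so that Theorem~\ref{theorem-associated-K3} applies to it. Since $X$ and $X_\bp$ are period partners with $\dim X_\bp = 4$, this amounts to the condition that $\bp$ lie in the stratum $\sY^1_{\sA(X)^\perp}$ that parametrizes Pl\"ucker points of ordinary GM fourfolds (as opposed to $\sY^2_{\sA(X)^\perp}$, which would render $X_\bp$ a special fourfold). This is the main obstacle: in the ordinary case the argument above is complete, while the special case requires a separate step, for example passing through the opposite ordinary GM threefold $X_\bp^{\op}$ and using Proposition~\ref{proposition-GM-category-quotient} to recover $\cA_{X_\bp}$ from $\cA_{X_\bp^{\op}}$ via a $\bZ/2$-equivariant construction compatible with the equivalence $\cA_X\simeq\Db(X^{\vee}_\bq)$.
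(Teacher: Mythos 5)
Your argument is exactly the paper's: apply Theorem~\ref{theorem-associated-K3} to both $X$ and $X_\bp$, note that the two resulting generalized dual GM surfaces coincide (both are determined, via Theorem~\ref{theorem:dk:moduli}, by the pair $(\sA(X)^\perp,\bq)$), and compose the two equivalences with $\Db(X_\bq^\vee)$. The paper's proof consists of just those two sentences, so your verification that the incidence-divisor hypothesis translates into $\iota(\bq)\notin\bP(V_5(X_\bp))$ is a correct and welcome amplification rather than a deviation. Your closing worry about $X_\bp$ possibly being special (which happens when $\bp\in\sY^2_{\sA(X)^\perp}$) identifies a point the paper passes over in silence; the corollary is implicitly restricted to the case where Theorem~\ref{theorem-associated-K3} applies to $X_\bp$, i.e.\ to ordinary period partners, so you need not carry out the $\bZ/2$-equivariant detour you sketch in order to match the paper's argument.
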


\begin{proof}
By Theorem~\ref{theorem-associated-K3} applied to $X$ and $X_\bp$ we have 
a pair of equivalences $\cA_X \simeq \Db(X_\bq^\vee)$ and $\cA_{X_\bp} \simeq \Db(X_\bq^\vee)$. 
Combining them we obtain an equivalence $\cA_{X_\bp} \simeq \cA_X$.
\end{proof}

A key ingredient in the proof of Theorem~\ref{theorem-associated-K3} is the 
theory of homological projective duality~\cite{kuznetsov2007HPD}. 
Very roughly, this theory relates the derived categories of linear sections 
of an ambient variety to those of orthogonal linear sections of a ``dual'' variety.   
As we explain below, the varieties $X$ and $X_\bq^{\vee}$ from Theorem~\ref{theorem-associated-K3}
can be thought of as intersections of $\bG \subset \bP(\wedge^2V_5)$ and 
its dual $\bGv = \G(2,V_5^\vee) \subset \bP(\wedge^2V_5^\vee)$ with projectively dual quadric subvarieties. 
To prove Theorem~\ref{theorem-intro-K3}, we thus establish a ``quadratic'' version of 
homological projective duality, in the case where the ambient variety is $\bG$. 
Much of our argument is not special to $\bG$, however, and should have interesting applications 
in other settings.

\begin{remark}
\label{remark-EWP3-divisor}
GM fourfolds $X$ as in the theorem form a \mbox{$23$-dimensional} (codimension $1$ in moduli) family. 
This can be seen using Theorem~\ref{theorem:dk:moduli}.
Indeed, by~\cite[Proposition~2.2]{ogrady2013double} Lagrangian subspaces $\sA \subset \wedge^3V_6$ 
with no decomposable vectors such that $\sY_\sA^3 \ne \varnothing$ form a divisor in the moduli space of all $\sA$, 
and hence form a $19$-dimensional family. 
Having fixed such an $\sA$ there are finitely many $\bq \in \sY_{\sA}^3$, 
and in order for $\bq \in \bP(V_6(X)) \setminus \bP(V_5(X))$ 
the Pl\"ucker point $\bp$ of $X$ can be any point of 
$\sY_{\sA^\perp}^1$ such that $(\bq,\bp)$ is not on the incidence divisor.
In other words, $\bp \in \sY_{\sA^\perp}^1 \setminus \bq^\perp$, so we have a 4-dimensional family of choices.
\end{remark}

Recall from \S\ref{subsection-classification} that if $X$ is an ordinary GM fourfold, 
there is a (canonical) hyperplane $W \subset \wedge^2V_5(X)$ and a 
(non-canonical) quadric $Q \subset \bP(W)$ such that $X = \bG \cap Q$. 
The fourfolds satisfying the assumption of Theorem~\ref{theorem-associated-K3} admit 
several different characterizations. 

\begin{lemma}
\label{lemma-GM-dP5}
Let $X$ be an ordinary GM fourfold. The following are equivalent:
\begin{enumerate}
\item $\sY^3_{\sA(X)} \cap ( \bP(V_6(X)) \setminus \bP(V_5(X)) ) \ne \varnothing$.
\item There is a rank $6$ quadric $Q \subset \bP(W)$ such that $X = \bG \cap Q$. 
\item $X$ contains a quintic del Pezzo surface, i.e. a smooth codimension $4$ linear 
section of the Grassmannian $\bG \subset \bP(\wedge^2V_5(X))$. 
\end{enumerate}
\end{lemma}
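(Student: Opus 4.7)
The plan is to establish the three implications (1) $\iff$ (2), (3) $\Rightarrow$ (2), and (2) $\Rightarrow$ (3). Throughout, note that $W$ has dimension $n+5 = 9$ so $\bP(W) \cong \bP^8$, and by Remark~\ref{remark-duality} the Lagrangian $\sA(X)$ has no decomposable vectors.

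For (1) $\iff$ (2), apply Proposition~\ref{proposition:dk:quadrics}: the non-Pl\"ucker quadric $Q \subset \bP(W)$ defining $X$ corresponding to a point $\bq \in \sY^3_{\sA(X)} \cap (\bP(V_6(X)) \setminus \bP(V_5(X)))$ is exactly one with $\dim \ker Q = 3$, i.e.\ $\rank Q = 6$.

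For (3) $\Rightarrow$ (2), let $\Sigma = \bG \cap \bP^5 \subset X$ be a quintic del Pezzo surface. Since $\Sigma$ is linearly normal, its linear span is the given $\bP^5$, and as $\Sigma \subset X \subset \bP(W)$ this forces $\bP^5 \subset \bP(W)$. I would then consider the restriction of quadrics $\rho \colon V_6(X) \to H^0(\bP^5, \cO(2))$. Every element of $V_6(X)$ contains $X \supset \Sigma$, so $\rho$ factors through the $5$-dimensional ideal $I_\Sigma(2)$ (a quintic del Pezzo in $\bP^5$ is cut out by $5$ Pfaffian quadrics). The $5$ Pl\"ucker quadrics cut out $\bG$, hence $\Sigma$ upon restriction to $\bP^5$, so $V_5(X) \to I_\Sigma(2)$ is a surjection of $5$-dimensional spaces, hence an isomorphism. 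Therefore $\ker \rho$ is $1$-dimensional and transverse to $V_5(X)$ in $V_6(X)$, producing a non-Pl\"ucker quadric $Q$ with $\bP^5 \subset Q$. The containment $\bP^5 \subset Q \subset \bP^8$ forces $\rank Q \leq 6$, i.e.\ the associated point $\bq$ lies in $\sY^{\geq 3}_{\sA(X)}$; since $\sY^{\geq 4}_{\sA(X)} = \varnothing$ by O'Grady's work, $\rank Q = 6$.

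For (2) $\Rightarrow$ (3), given $Q$ of rank $6$ defining $X$, let $\Pi = \bP(\ker Q) \cong \bP^2$ and let $\bar Q \subset \bP^5 = \bP(W/\ker Q)$ denote the smooth quotient $4$-dimensional quadric. Smoothness of $X$ forces $\Pi \cap X = \varnothing$: at any $x \in \Pi \cap X$ one would have $T_x Q = T_x \bP(W)$, so $T_x X = T_x M'_X$ would have dimension $5 \neq \dim X$. Hence $\Pi \cap \bG = \varnothing$. The $5$-dimensional linear subspaces of $\bP(W)$ contained in $Q$ are exactly the spans of $\Pi$ together with a maximal isotropic plane $\bar P \subset \bar Q$, and they form two $3$-dimensional families $\cL_1, \cL_2$ indexed by the rulings of $\bar Q$. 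For $\bP^5$ in either family, $\bP^5 \cap \bG = \bP^5 \cap X$ since $\bP^5 \subset Q \cap \bP(W)$. I would then analyze the universal family $\cU_i = \{(P, x) : P \in \cL_i,\ x \in P \cap X\}$; the fibre of its second projection $\cU_i \to X$ over $x \in X$ is the $\bP^1$ of planes of ruling $i$ through the image of $x$ in $\bar Q$. By miracle flatness $\cU_i \to X$ is thus a smooth $\bP^1$-bundle and $\cU_i$ is smooth. Generic smoothness of $\cU_i \to \cL_i$ then gives that the generic fibre $\bP^5 \cap \bG$ is smooth of dimension $2$; being a codimension $4$ linear section of $\bG$, it is a quintic del Pezzo surface in $X$.

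The main obstacle is the (2) $\Rightarrow$ (3) direction: producing a single $\bP^5 \in \cL_i$ whose intersection with $\bG$ is smooth. The crucial inputs are the emptiness of $\Pi \cap \bG$ (from smoothness of $X$) and the $\bP^1$-bundle structure of $\cU_i \to X$, which smooths out the universal family and allows generic smoothness to be applied. The other implications are essentially bookkeeping using Proposition~\ref{proposition:dk:quadrics} and the Pl\"ucker ideal of $\bG$.
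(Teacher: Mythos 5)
Your proposal is correct and follows essentially the same route as the paper: (1)$\iff$(2) via Proposition~\ref{proposition:dk:quadrics} together with $\sY^{\ge 4}_{\sA(X)}=\varnothing$ forcing $\rank(Q)\ge 6$; (3)$\Rightarrow$(2) by taking the kernel of the restriction map from $V_6(X)$ to quadrics through the del Pezzo, which contains a $\bP^5$ and hence has rank $\le 6$; and (2)$\Rightarrow$(3) by showing a general maximal isotropic $\bP^5$ meets $\bG$ transversally. For the last implication the paper simply defers to the argument of Lemma~\ref{lemma:pix:flat} (the $\bP^1$-bundle $\hX\to X$ and generic smoothness), which is exactly the family $\cU_i$ you construct by hand; the only point you leave implicit is that the general fiber has dimension $2$ rather than $3$, which in the paper follows from the fact that every divisor on $X$ has degree divisible by $10$.
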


\begin{proof}
The equivalence of (1) and (2) follows from Proposition~\ref{proposition:dk:quadrics} since $\dim W = 9$.
Note that since $\sY_{\sA(X)}^4 = \varnothing$, the same proposition also shows that if $X = \bG \cap Q$ 
then $\rank(Q) \geq 6$. 

We show (2) is equivalent to (3). 
First assume (2) holds. 
Then a maximal isotropic space $I \subset W$ for $Q$ has 
dimension $6$, so $\bG \cap \bP(I)$ is a quintic del Pezzo contained in $X$, provided 
this intersection is transverse. By the argument of \cite[Lemma~4.1]{debarre-kuznetsov} 
(or by Lemma~\ref{lemma:pix:flat} below), this is true for a general $I$. 

Conversely, assume~(3) holds, i.e. assume there is a $6$-dimensional subspace 
$I \subset W$ such that $Z = \bG \cap \bP(I) \subset X$ is a quintic del Pezzo. 
The restriction map $V_6(X) \to \rH^0(\CMcal{I}_{Z/\bP(I)}(2))$ from quadrics 
in $\bP(W)$ containing $X$ 
to those in $\bP(I)$ containing $Z$ is surjective with one-dimensional kernel. 
If $Q \subset \bP(W)$ is the quadric corresponding to this kernel, 
then $X = \bG \cap Q$ and $\bP(I) \subset Q$. 
It follows that $\rank(Q) \leq 6$. But as we noted above, 
the reverse inequality also holds. 
\end{proof}

For the rest of the section, we fix an ordinary GM fourfold $X$ satisfying the equivalent conditions of Lemma~\ref{lemma-GM-dP5} 
and a point $\bq \in \sY_{\sA(X)}^{3} \cap (\bP(V_6(X)) \setminus \bP(V_5(X)))$. 
Further, to ease notation, we denote the generalized dual of $X$ corresponding to the quadric point $\bq$ (see Lemma~\ref{lemma:moduli-gm-duals-partners}) by 
\begin{equation*}
Y = X_{\bq}^{\vee}.
\end{equation*}
Note that $Y$ is a GM surface. 

\subsection{Setup and outline of the proof} 
\label{subsection-setup-associated-K3}
We outline here the strategy for proving Theorem~\ref{theorem-associated-K3}. 

The starting point 
is the following explicit geometric relation between $X$ and~$Y$. 
By Proposition~\ref{proposition:dk:quadrics},
the point $\bq$ corresponds to a rank $6$ quadric $Q$ cutting out~$X$, and 
the Pl\"ucker point~$\bp_X \in \bP(V_6(X)^\vee) \cong \bP(V_6(Y))$ of $X$ 
corresponds to a quadric $Q'$ cutting out~$Y$. 
Because $X$ and $Y$ are ordinary, we may regard $Q$ as a subvariety 
of $\bP(\wedge^2V_5(X))$ and $Q'$ as a subvariety of $\bP(\wedge^2V_5(Y))$.
Then~\cite[Proposition~3.28]{debarre-kuznetsov} (which is stated for 
dual varieties but works just as well for generalized duals) 
says that there is an isomorphism 
$V_5(X) \cong V_5(Y)^{\vee}$ identifying $Q' \subset \bP(\wedge^2V_5(Y))$ 
with the projective dual to \mbox{$Q \subset \bP(\wedge^2V_5(X))$}. 
Hence, fixing $V_5 = V_5(X)$, our setup is as follows: there is a hyperplane $W \subset \wedge^2V_5$ 
and a rank~$6$ quadric $Q \subset \bP(W)$ such that 
\begin{equation*}
X = \bG \cap Q
\qquad\text{and}\qquad
Y = \bG^\vee \cap Q^\vee,
\end{equation*}
where $Q^{\vee} \subset \bP(\wedge^2V_5^{\vee})$ is the projectively dual quadric to 
$Q \subset \bP(\wedge^2V_5)$, 
and 
\begin{equation*}
\bGv = \G(2,V_5^\vee) \subset \bP(\wedge^2V_5^{\vee})
\end{equation*}
is the dual Grassmannian. 

From this starting point, the main steps of the proof are as follows. 
First, by considering families of maximal linear subspaces of $Q$ and $\Qv$, 
we find $\bP^1$-bundles $\hX \to X$ and $\hY \to Y$, together with morphisms 
$\hX \to \bP^3$ and $\hY \to \bP^3$ realizing $\hX$ and $\hY$ as families of mutually 
orthogonal linear sections of $\bG$ and $\bGv$. 
This allows us to apply homological projective duality to obtain a semiorthogonal 
decomposition of $\Db(\hX)$ with $\Db(\hY)$ as a component. 
By comparing this (via mutation functors) with the decomposition of $\Db(\hX)$ coming from its~$\bP^1$-bundle structure over $X$, we show $\Db(\hY)$ has 
a decomposition into two copies of $\cA_X$. 
On the other hand, as $\hY \to Y$ is a \mbox{$\bP^1$-bundle}, 
$\Db(\hY)$ also decomposes into two copies of $\Db(Y)$. 
We show these two decompositions of $\Db(\hY)$ coincide, and 
hence $\cA_X \simeq \Db(Y)$. 
Our proof gives an explicit functor inducing this equivalence, 
see~\eqref{eq:equivalence:ax:y}.

\subsection{Maximal linear subspaces of the quadrics}
We start by discussing a geometric relation between $Q$ and $Q^\vee$.
Let $K \subset W$ be the kernel of $Q$, regarded as a symmetric linear map $W \to W^\vee$. 
Since $\dim W = 9$ and $\rank(Q)= 6$, we have $\dim K = 3$.
The filtration 
\begin{equation*}
0 \subset K \subset W \subset \wedge^2V_5
\end{equation*}
induces a filtration 
\begin{equation*}
0 \subset W^\perp \subset K^\perp \subset \wedge^2V_5^\vee
\end{equation*} 
where $K^\perp$ and $W^\perp$ are the annihilators of $K$ and $W$, 
so that $\dim K^\perp = 7$ and \mbox{$\dim W^\perp = 1$}.  
The pairing between the dual spaces $\wedge^2V_5$ and $\wedge^2V_5^\vee$ induces a nondegenerate pairing between~$W/K$ and $K^\perp/W^\perp$, and hence an isomorphism 
\begin{equation*}
 K^\perp/W^\perp \cong (W/K)^\vee.
\end{equation*}

The quadric $Q$ induces a smooth quadric $\oQ$ in the 
5-dimensional projective space $\bP(W/K)$.
The quadric $\oQ$ can be identified with the Grassmannian $\G(2,4)$; 
more precisely, we can find an isomorphism
\begin{equation*}
 W/K \cong \wedge^2 S
\end{equation*}
for a 4-dimensional vector space $S$, with an identification
\begin{equation*}
 \oQ = \G(2,S) \subset \bP(\wedge^2S). 
\end{equation*}
The projective dual of $\oQ$ is then the dual Grassmannian
\begin{equation*}
 \oQ^\vee = \G(2,S^\vee) \subset \bP(\wedge^2S^\vee) = \bP((W/K)^\vee) = \bP(K^\perp/W^\perp).
\end{equation*}
It follows that the projective dual of 
\begin{equation}\label{eq:q:cone}
 Q = \Cone_{\bP(K)}\oQ \subset \bP(\wedge^2V_5)
\end{equation}
is given by
\begin{equation}\label{eq:q:dual:cone}
 Q^\vee = \Cone_{\bP(W^\perp)}\oQ^\vee \subset \bP(\wedge^2V_5^\vee).
\end{equation}

Projective 3-space $\bP(S)$ is (a connected component of) the space of maximal 
linear subspaces of the quadric $\oQ = \G(2,S)$. 
The universal family is the flag variety $\Fl(1,2;S) \to \bP(S)$, with 
fiber over a point $s \in \bP(S)$ the plane $\bP(s\wedge S) \subset \bP(\wedge^2S)$.
Analogously, the same flag variety $\Fl(2,3;S^\vee) \cong \Fl(1,2;S)$ is (a connected component of) 
the space of maximal linear subspaces of $\oQ^\vee = \G(2,S^{\vee})$, 
this time with fiber over a point $s \in \bP(S)$
being the plane $\bP(\wedge^2s^\perp) \subset \bP(\wedge^2S^\vee)$. Note that the fibers of these two 
correspondences over a point $s \in \bP(S)$ are mutually orthogonal with respect to the pairing 
between $\wedge^2S$ and $\wedge^2S^\vee$.
We summarize this discussion by the diagram
\begin{equation}\label{eq:diagram:oq}
\vcenter{\xymatrix{
& \Fl(1,2;S) \ar[dl]_{p_{\oQ}} \ar[dr]^{\pi_{\oQ}} && \Fl(2,3;S^{\vee}) \ar[dl]_{\pi_{\oQ^\vee}} \ar[dr]^{p_{\oQ^\vee}} \\
\hbox to 0pt{\hss$\bP(\wedge^2S) \supset {}$} \oQ && \bP(S) && \oQ^\vee \hbox to 0pt{${} \subset \bP(\wedge^2S^\vee)$\hss}
}}
\end{equation}
with the inner arrows being $\bP^2$-bundles with mutually orthogonal fibers (as linear subspaces of~$\bP(\wedge^2S)$ and $\bP(\wedge^2S^\vee)$), 
and the outer arrows being $\bP^1$-bundles.

By~\eqref{eq:q:cone} every maximal isotropic subspace of $\oQ$ gives a maximal isotropic subspace of $Q$
by taking its preimage under the projection $W \to W/K = \wedge^2S$. Analogously, by~\eqref{eq:q:dual:cone} 
every maximal isotropic subspace of $\oQ^\vee$ gives a maximal isotropic subspace of $Q^\vee$ by taking 
its preimage under the projection $K^\perp \to K^\perp/W^\perp = \wedge^2S^\vee$. Note that for the pairing
between~$W$ and $K^\perp$ induced by the pairing between $\wedge^2V_5$ and $\wedge^2V_5^\vee$, 
the subspace $K \subset W$ is the left kernel, and the subspace $W^\perp \subset K^\perp$ is the right kernel. 
Hence any $s \in \bP(S)$ gives mutually orthogonal maximal isotropic spaces $\cI_s$ and $\cI_s^{\perp}$ 
of $Q$ and $Q^{\vee}$ respectively. 
These spaces form the fibers of vector bundles $\cI$ and $\cI^{\perp}$ over $\bP(S)$ of 
ranks $6$ and $4$, which are mutually orthogonal subbundles of $\wedge^2V_5 \otimes \cO_{\bP(S)}$ and 
$\wedge^2V_5^{\vee} \otimes \cO_{\bP(S)}$.
We can summarize this discussion by the following diagram
\begin{equation}
\label{eq:diagram:q}
\vcenter{\xymatrix{
& \bP_{\bP(S)}(\cI) \ar[dl]_{p_Q} \ar[dr]^{\pi_Q} && \bP_{\bP(S)}(\cI^\perp) \ar[dl]_{\pi_{Q^\vee}} \ar[dr]^{p_{Q^\vee}} 
\\
\hbox to 0pt{\hss{$\bP(\wedge^2V_5) \supset {}$}} Q && \bP(S) && Q^\vee \hbox to 0pt{{${} \subset \bP(\wedge^2V_5^\vee)$}\hss}
}}
\end{equation}
Here the inner arrows are $\bP^5$- and $\bP^3$-bundles with mutually orthogonal fibers, 
and the outer arrows are $\bP^1$-bundles (induced by the $\bP^1$-bundles
of diagram~\eqref{eq:diagram:oq}) away from the vertices $\bP(K)$ and $\bP(W^\perp)$ of the quadrics 
(over which the fibers are isomorphic to $\bP(S) \cong \bP^3$).

\subsection{Families of linear sections of the Grassmannians} 
Now define 
\begin{equation} 
\label{hX-hY}
\hX \coloneqq \bG \times_{\bP(\wedge^2V_5)} \bP_{\bP(S)}(\cI) \qquad \text{and} \qquad 
\hY \coloneqq \bGv \times_{\bP(\wedge^2V_5^{\vee})} \bP_{\bP(S)}(\cI^\perp)
\end{equation} 
to be the induced families of linear sections of $\bG$ and $\bGv$. 
They fit into a diagram 
\begin{equation}\label{eq:diagram:xy}
\vcenter{\xymatrix{
& \hX \ar[dl]_{p_X} \ar[dr]^{\pi_X} & & \hY \ar[dl]_{\pi_Y} \ar[dr]^{p_Y} \\
X && \bP(S) && Y
}}
\end{equation}
with the maps induced by those in~\eqref{eq:diagram:q}
(remember that $X = \bG \cap Q$ and $Y = \bG^\vee \cap Q^\vee$).

We will denote by $H,H'$, and $h$ the ample generators of 
$\Pic(\bG)$, $\Pic(\bGv)$, and $\bP(S)$.

\begin{lemma}
\label{lemma-hX-hY-P1}
There are rank $2$ vector bundles $\cS_X$ and $\cS_Y$ on $X$ and $Y$ with $\operatorname{c}_1(\cS_X) = -H$ and $\operatorname{c}_1(\cS_Y) = -H'$,
and isomorphisms 
\begin{equation*}
\hX \cong \bP_{X}(\cS_X) \qquad \text{and} \qquad 
\hY \cong \bP_{Y}(\cS_Y), 
\end{equation*}
such that $\cO_{\bP_{X}(\cS_X)}(1) = \pi_X^*\cO_{\bP(S)}(h)$ 
and $\cO_{\bP_{Y}(\cS_Y)}(1) = \pi_Y^*\cO_{\bP(S)}(h)$.
In particular, $\hX$ is a smooth fivefold, $\hY$ is a smooth threefold, and
\begin{equation}\label{eq:canclass:hx}
K_\hX = -H - 2h
\qquad\text{and}\qquad
K_\hY = H' -2h.
\end{equation}
\end{lemma}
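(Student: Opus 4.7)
The plan is to extract the $\bP^1$-bundle structures directly from diagram~\eqref{eq:diagram:q}, identify the bundles $\cS_X$ and $\cS_Y$ with (pullbacks of) tautological bundles on the smooth quadrics $\oQ$ and $\oQv$, and then apply the relative canonical bundle formula. The first step is to show that $X$ avoids the vertex $\bP(K)$ of $Q$, and dually that $Y$ avoids the vertex $\bP(W^\perp)$ of $\Qv$. If $v \in X \cap \bP(K)$, then the defining quadratic $q$ of $Q$ vanishes to order $\ge 2$ at $v$, so the Zariski tangent space $T_v X$ inside $T_v \bP(\wedge^2 V_5)$ equals $T_v \bG \cap T_v \bP(W) = T_v M'_X$, which has dimension $5$ by smoothness of $M'_X$ (Lemma~\ref{lemma:gr-hull}); this contradicts $\dim X = 4$. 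The same argument, using smoothness of $Y$ and $\dim Y = 2$, gives $Y \cap \bP(W^\perp) = \varnothing$. By the observation following~\eqref{eq:diagram:q} that $p_Q$ and $p_{\Qv}$ are $\bP^1$-bundles away from the vertices, the base changes $p_X$ and $p_Y$ are $\bP^1$-bundles; in particular $\hX$ is a smooth fivefold and $\hY$ is a smooth threefold.

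Next I would identify $\cS_X$ and $\cS_Y$ using the linear projections from the vertices of $Q$ and $\Qv$, which restrict to morphisms $X \to \oQ = \G(2,S)$ and $Y \to \oQv = \G(2,S^\vee)$. Unwinding the definition of $\cI$, for $x \in X$ with image $\bar x = [a \wedge b] \in \G(2,S)$, the fiber $p_X^{-1}(x)$ projects isomorphically under $\pi_X$ onto the line $\bP(\langle a, b\rangle) \subset \bP(S)$; hence $\pi_X^* \cO_{\bP(S)}(h)$ restricts to $\cO(1)$ on each $p_X$-fiber. Setting $\cS_X := (p_{X,*}\pi_X^*\cO(h))^\vee$ gives a presentation $\hX \cong \bP_X(\cS_X)$ with $\cO_{\bP_X(\cS_X)}(1) = \pi_X^*\cO(h)$ and pointwise fiber $\cS_X|_x = \langle a, b\rangle$, identifying $\cS_X$ with the pullback of the tautological rank $2$ subbundle $\cU_{\oQ} \subset S \otimes \cO_{\oQ}$ along $X \to \oQ$. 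An entirely parallel analysis, using that $\cI_s^\perp$ is the preimage of $\wedge^2 s^\perp$ under $K^\perp \to K^\perp/W^\perp = \wedge^2 S^\vee$, identifies $\cS_Y$ with the pullback to $Y$ of the rank $2$ annihilator bundle $\cU_{\oQv}^\perp \cong \cQ_{\oQv}^\vee \subset S \otimes \cO_{\oQv}$, and gives $\hY \cong \bP_Y(\cS_Y)$ with $\cO_{\bP_Y(\cS_Y)}(1) = \pi_Y^*\cO(h)$.

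The Chern class computation is then immediate: since the linear projections preserve hyperplane classes, the Pl\"ucker hyperplane classes $h_{\oQ}$ and $h_{\oQv}$ pull back to $H|_X$ and $H'|_Y$, and the standard identities $c_1(\cU_{\oQ}) = -h_{\oQ}$ and $c_1(\cQ_{\oQv}^\vee) = -h_{\oQv}$ yield $c_1(\cS_X) = -H$ and $c_1(\cS_Y) = -H'$. Finally, in the paper's convention $\pi_*\cO_{\bP(\cE)}(1) = \cE^\vee$ the canonical bundle formula for a projective bundle reads $K_{\bP(\cE)} = \pi^*K_Z - \pi^* c_1(\cE) - r\xi$ with $\xi = c_1(\cO_{\bP(\cE)}(1))$. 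Substituting $r = 2$, $\xi = h$, $K_X = -2H$ (from~\eqref{eq:omega-x} with $n=4$), and $c_1(\cS_X) = -H$ gives $K_{\hX} = -2H + H - 2h = -H - 2h$; substituting $K_Y = 0$ (as $Y$ is a K3 surface) and $c_1(\cS_Y) = -H'$ gives $K_{\hY} = H' - 2h$. The main obstacle is the vertex-avoidance step, which is what promotes the fibrations of diagram~\eqref{eq:diagram:q} to honest $\bP^1$-bundles over $X$ and $Y$; once this is in place, the identification with tautological bundles on the auxiliary Grassmannian-quadrics $\oQ$ and $\oQv$ is essentially bookkeeping.
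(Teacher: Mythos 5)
Your proposal is correct and follows essentially the same route as the paper: identify $p_X$ and $p_Y$ as base changes of the $\bP^1$-bundles $p_{\oQ}$ and $p_{\oQ^\vee}$ (the projectivizations of the tautological subbundle on $\G(2,S)$ and of its annihilator on $\G(2,S^\vee)$), take $\cS_X,\cS_Y$ to be the pullbacks of these bundles, and conclude via $\det = \cO(-1)$ and the standard canonical class formula; your tangent-space argument correctly supplies the vertex-avoidance step that the paper only asserts. One negligible caveat: for the surface $Y$ the hull $M'_Y$ may have rational double points (Lemma~\ref{lemma:gr-hull}), so "smoothness of $M'_Y$" is not literally available there, but since $\dim T_y M'_Y \ge 3 > 2 = \dim Y$ at any point the contradiction persists.
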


\begin{proof}
Since $X$ and $Y$ are smooth, they do not intersect the vertices $\bP(K)$ and $\bP(W^\perp)$ of 
the quadrics $Q$ and $Q^{\vee}$,
hence the maps $p_X$ and $p_Y$ are $\bP^1$-fibrations induced by those in diagram~{\eqref{eq:diagram:q}}.
In other words, we have fiber product squares 
\begin{equation*}
\vcenter{
\xymatrix{
\hX \ar[r] \ar[d]_{p_X} & \Fl(1,2;S) \ar[d]^{p_{\oQ}} \\
X \ar[r] & \oQ
}
}
\qquad \text{and} \qquad
\vcenter{
\xymatrix{
\hY \ar[r] \ar[d]_{p_Y} & \Fl(2,3;S^{\vee}) \ar[d]^{p_{\oQ^\vee}} \\
Y \ar[r] & \oQ^\vee
}
}.
\end{equation*}
The map $p_{\oQ}$ is the projectivization of the tautological subbundle of $S \otimes \cO$
on $\oQ = \G(2,S)$,
and $p_{\oQ^\vee}$ is the projectivization of 
the annihilator of 
the tautological subbundle of $S^\vee \otimes \cO$ on~$\oQ^\vee = \G(2,S^\vee)$.
So we can take $\cS_X$ and $\cS_Y$ to be the pullbacks to $X$ and $Y$ of these bundles. 

To compute the canonical classes, note that the determinant of the tautological bundle 
(and of its annihilator) on $\G(2,S)$ is $\cO_{\G(2,S)}(-1)$, hence
$\operatorname{c}_1(\cS_X) = -H$ and $\operatorname{c}_1(\cS_Y) = -H'$. 
Now apply the standard formula for the canonical bundle of the projectivization of a vector bundle, 
taking into account that $K_X = -2H$ and $K_Y = 0$ by~\eqref{eq:omega-x}.
\end{proof}

\begin{lemma}\label{lemma:pix:flat}
The map $\pi_X\colon \hX \to \bP(S)$ is flat with general fiber a smooth quintic del Pezzo surface.
The map $\pi_Y\colon \hY \to \bP(S)$ is generically finite of degree $5$.
\end{lemma}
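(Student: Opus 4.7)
The plan is to apply Matsumura's miracle flatness criterion to $\pi_X$: since $\hX$ is smooth of dimension~$5$ (Lemma~\ref{lemma-hX-hY-P1}) and $\bP(S)$ is regular of dimension~$3$, flatness will follow once every fiber is shown to have dimension~$2$. Once this is done, identifying the generic fiber of $\pi_X$ and the properties of $\pi_Y$ reduces to a direct dimension and degree count.

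By construction, the fibers are
\[
\hX_s \;=\; \bG \cap \bP(\cI_s) \qquad \text{and} \qquad \hY_s \;=\; \bGv \cap \bP(\cI_s^\perp),
\]
linear sections in $\bP^9$ of the codimension-$3$ subvarieties $\bG$ and $\bGv$ by subspaces of dimensions $5$ and $3$ respectively. Thus every $\hX_s$ has dimension $\ge 2$ and every nonempty $\hY_s$ has dimension $\ge 0$; combined with $\dim \hX = 5$ and $\dim \hY = 3$, the generic fibers have dimensions exactly $2$ and $0$.

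The main obstacle is excluding jumping fibers of $\pi_X$. I would proceed via the $\bP^1$-bundle structure $\hX = \bP_X(\cS_X)$ of Lemma~\ref{lemma-hX-hY-P1}: under $p_X$, the fiber $\pi_X^{-1}(s)$ is identified with the zero locus $Z_s \subset X$ of the section of the rank-$2$ quotient bundle $(S \otimes \cO_X)/\cS_X$ induced by $s \hookrightarrow S$, which has expected dimension $\dim X - 2 = 2$. A divisorial component $D \subset Z_s$ would force $\cS_X|_D$ to contain $s \otimes \cO_D$ as a subbundle, so that the image of $D$ under the canonical map $X \to \oQ = \G(2,S)$ (the restriction to $X \subset Q$ of the projection from the vertex $\bP(K)$) would lie in the Schubert $\bP^2 = \{U : s \subset U\}$. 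Since $X \to \oQ$ is dominant and generically finite, a careful analysis of its fibers (which amounts to controlling lines contained in $X$ through the vertex~$\bP(K)$) should yield the bound $\dim D \le 2$, the desired contradiction; the component of dimension~$4$ is impossible since $X$ spans $\bP(W) = \bP^8 \supsetneq \bP(\cI_s)$.

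With equidimensionality established, miracle flatness gives flatness of $\pi_X$. The generic fiber, being a transverse linear intersection of $\bG$ with a $\bP^5$, is a smooth quintic del Pezzo surface, whose existence in the family is guaranteed by Lemma~\ref{lemma-GM-dP5}. For $\pi_Y$: since $\dim \hY = 3 = \dim \bP(S)$ and $\pi_Y$ is proper with generic fiber of expected dimension $0$, it is generically finite; transversality of a generic $\bP(\cI_s^\perp)$ with $\bGv$ gives $\deg \pi_Y = \deg \bGv = 5$.
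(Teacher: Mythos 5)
Your overall strategy (miracle flatness over the regular base $\bP(S)$, reduced to showing every fiber of $\pi_X$ has dimension $2$) is the same as the paper's, and your dimension-$4$ exclusion and the identification of the general fiber are fine. But the crux — ruling out a $3$-dimensional component $D$ of some fiber $\bG \cap \bP(\cI_s)$ — is exactly the step you leave as ``a careful analysis \dots should yield the bound,'' and the route you sketch does not close it. The preimage in $X$ of the Schubert plane $\{U : s \subset U\}$ under $X \to \oQ$ \emph{is} the fiber $Z_s$ you are trying to bound, so mapping $D$ into that $\bP^2$ gains nothing unless you know $X \to \oQ$ is finite (not merely generically finite) over that plane; a priori this projection, whose fibers are $\bG \cap \bP^3_{\bar q}$ for the ruling $\bP^3$'s of the cone $Q$, could have positive-dimensional fibers there. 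The paper's argument is much more direct: a $3$-dimensional fiber component would be a divisor in $X$ contained in a linear section of $\bG$, hence of degree at most $\deg \bG = 5$; but $\Pic(X) = \bZ H$ with $H^4 = 10$ by \eqref{eq:pic-x} and \eqref{eq:omega-x}, so every divisor in $X$ has degree divisible by $10$ — a contradiction. This is the missing idea.

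There is a second gap in your treatment of $\pi_Y$. You invoke ``transversality of a generic $\bP(\cI_s^\perp)$ with $\bGv$,'' but the $\bP(\cI_s^\perp)$ form a special $3$-dimensional family of $\bP^3$'s (orthogonals to maximal isotropic subspaces of $Q$), not generic linear subspaces, so neither non-emptiness nor transversality of $\bGv \cap \bP(\cI_s^\perp)$ is automatic; without non-emptiness you cannot even conclude $\pi_Y$ is dominant from $\dim \hY = 3$. The paper supplies this via \cite[Proposition~2.24]{debarre-kuznetsov}: once the general fiber of $\pi_X$ is known to be a smooth dimensionally transverse linear section of $\bG$ (which follows from smoothness of $\hX$ and generic smoothness), the orthogonal section $\bGv \cap \bP(\cI_s^\perp)$ is also dimensionally transverse and smooth, hence consists of exactly $5$ reduced points. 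You need this duality input, or an equivalent argument, to finish the second statement.
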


\begin{proof}
The fiber of $\pi_X$ over a point $s \in \bP(S)$ is the intersection $\bG \cap \bP(\cI_s)$, 
where the subspace $\bP(\cI_s) \subset \bP(\wedge^2V_5)$ has codimension $4$. 
Thus the dimension of $\pi_X^{-1}(s)$ is at least 2. 
If the dimension were greater than 2, this fiber would give a divisor in 
$X$ of degree at most $5$, but by~\eqref{eq:pic-x} and~\eqref{eq:omega-x}
every divisor in $X$ has degree divisible by $10$.
Thus every fiber is a dimensionally transverse intersection, and flatness follows.

Furthermore, since $\hX$ is smooth,
the general fiber of $\pi_X$ is a smooth quintic del Pezzo surface. 
Then by~\cite[Proposition~2.24]{debarre-kuznetsov}
the general fiber of $\pi_Y$ is a dimensionally transverse and 
smooth linear section of $\bG^\vee$ of codimension 6,
hence is just 5 reduced points.
\end{proof}

As a byproduct of the above, we obtain: 
\begin{lemma}
\label{lemma-rational-GM}
The variety $X$ is rational.
\end{lemma}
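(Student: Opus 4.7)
My plan is to exploit the two fibration structures on $\hX$ established in Lemmas~\ref{lemma-hX-hY-P1} and~\ref{lemma:pix:flat}. On one hand, $p_X \colon \hX \to X$ is the projectivization of the rank $2$ vector bundle $\cS_X$, hence Zariski-locally trivial as a $\bP^1$-bundle, so $\hX \simeq_{\mathrm{bir}} X \times \bP^1$. On the other hand, $\pi_X \colon \hX \to \bP(S) \cong \bP^3$ is a flat fibration whose general fiber is a smooth quintic del Pezzo surface.

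The key input is the classical theorem of Enriques: a smooth del Pezzo surface of degree $5$ over a field $k$ of characteristic zero is $k$-rational (it has a $k$-rational point by a result of Swinnerton-Dyer, and is then a $k$-blowup of $\bP^2_k$ at four points in general position, which can be contracted over $k$). Applied to the generic fiber of $\pi_X$, viewed as a smooth quintic del Pezzo surface over $K = \bC(\bP(S))$, this shows the generic fiber is $K$-birational to $\bP^2_K$, whence $\hX$ itself is birational to $\bP^3 \times \bP^2 \cong \bP^5$.

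Combining the two, $X \times \bP^1 \simeq_{\mathrm{bir}} \bP^5$, so $X$ is stably rational. The final step --- upgrading this to actual rationality of $X$ --- is the delicate point, since cancellation of a $\bP^1$-factor is not automatic for $4$-folds. To handle it, I would make the cancellation explicit using the structure of the diagram: the birational equivalence $\hX \simeq \bP^5$ provided by Enriques' theorem can be arranged so that the $\bP^1$-fibers of $p_X$ form a family of rational curves in $\bP^5$ that share a common transverse hyperplane $\bP^4$. Projection onto this hyperplane would then realize $X$ as a linear projection of $\bP^5$, giving a birational map $X \dashrightarrow \bP^4$. I expect this cancellation step to be the main obstacle; a careful choice of a rational section of $p_X$ (coming from a birational line subbundle of $\cS_X$) together with a compatible $K$-rational parametrization of the generic dP$5$ fiber should suffice to carry it out.
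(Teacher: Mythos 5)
There is a genuine gap at the final step. Your argument correctly shows that $\hX$ is rational (Enriques applied to the generic fiber of $\pi_X$ over $K=\bC(\bP(S))$) and that $\hX$ is birational to $X\times\bP^1$, but this only yields \emph{stable} rationality of $X$. Cancelling the $\bP^1$-factor is not a formality that can be deferred: stable rationality does not imply rationality in general (there exist stably rational, non-rational threefolds by Beauville--Colliot-Th\'el\`ene--Sansuc--Swinnerton-Dyer), and the strategy you sketch for the cancellation --- arranging the images of the $p_X$-fibers in $\bP^5$ to admit a common transverse hyperplane and then projecting --- is not justified and is not how one would carry this out. As written, the proof establishes a strictly weaker statement.

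The fix is to intersect with a hyperplane \emph{upstairs in $\bP(S)$, before} applying Enriques, rather than trying to cancel afterwards. Let $\wtilde{X}=\pi_X^{-1}(\bP^2)\subset\hX$ for a general hyperplane $\bP^2\subset\bP(S)$. Since $\cO_{\bP_X(\cS_X)}(1)=\pi_X^*\cO_{\bP(S)}(h)$ (Lemma~\ref{lemma-hX-hY-P1}), each fiber of $p_X$ is embedded by $\pi_X$ as a line in $\bP(S)$, so a general such line meets the hyperplane in a single reduced point; hence $p_X$ restricts to a birational morphism $\wtilde{X}\to X$ (in fact the blowup of a quintic del Pezzo surface). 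On the other hand, $\wtilde{X}\to\bP^2$ has general fiber a smooth quintic del Pezzo surface by Lemma~\ref{lemma:pix:flat}, so by Enriques' theorem $\wtilde{X}$ is rational over $\bP^2$, hence over $\bC$. Therefore $X$ itself is rational, with no cancellation needed. This is exactly the route the paper takes (following \cite[Proposition~4.2]{debarre-kuznetsov}); your input (the two fibrations plus Enriques) is the right one, but the hyperplane restriction is the missing idea that turns stable rationality into rationality.
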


\begin{proof}
The same argument as in~\cite[Proposition~4.2]{debarre-kuznetsov} works. 
Let $\wtilde{X} \subset \hX$ be the preimage under the map~$\pi_X$ of a general hyperplane 
$\bP^2 \subset \bP(S)$. 
By Lemma~\ref{lemma:pix:flat}, the general fiber of~$\wtilde{X} \to \bP^2$ is a smooth quintic del Pezzo surface.
Hence by a theorem of Enriques~\cite{dP5-rational}, $\wtilde{X}$ is rational over~$\bP^2$, 
and so over~$\bC$ as well. On the other hand, the map $\wtilde{X} \to X$ is birational 
(in fact, it is a blowup of a quintic del Pezzo surface), so $X$ is rational too.
\end{proof}

\subsection{Homological projective duality}
\label{subsection:hpd}

Homological projective duality (HPD) is a key tool in the proof of 
Theorem~\ref{theorem-associated-K3}. 
Very roughly, HPD relates the derived categories of linear sections of a 
given variety to those of orthogonal linear sections of an ``HPD variety''. 
We refer to~\cite{kuznetsov2007HPD} for the details of this theory, 
and to~\cite{kuznetsov2014semiorthogonal} or~\cite{thomas2015notes} for an overview. 
For us, the relevant point is that the dual Grassmannian $\bGv$ 
is HPD to $\bG$. 
We spell out the precise consequence of this that we need below. 
 
Recall that by Lemma~\ref{lemma-sod-linear-section} 
there is a semiorthogonal decomposition 
\begin{equation*}
\label{ld-G}
\Db(\bG) = \langle \cB, \cB(H), \cB(2H), \cB(3H), \cB(4H) \rangle.  
\end{equation*}
Let
\begin{equation*}
i \colon \bH(\bG, \bG^{\vee}) \hookrightarrow \bG \times \bGv 
 \subset \bP(\wedge^2V_5) \times \bP(\wedge^2V_5^\vee) 
\end{equation*} 
be the incidence divisor defined by the canonical section of $\cO(H + H')$. 
Recall that $\cU$ denotes the tautological rank $2$ bundle on $\bG$, 
and let $\cV$ denote the tautological rank $2$ bundle on $\bG^{\vee}$. 
The following was proved in \cite[\S6.1]{kuznetsov2006hyperplane}. 
See~\cite[Definition 6.1]{kuznetsov2007HPD} for the definition of~HPD. 

\begin{theorem}
\label{theorem-G-HPD}
The Grassmannian $\bG^{\vee} \to \bP(\wedge^2V_5^{\vee})$ 
is HPD to $\bG \to \bP(\wedge^2V_5)$ with respect to the 
above semiorthogonal decomposition. 
Moreover, the duality 
is implemented by a sheaf~$\cE$ on~$\bH(\bG, \bGv)$ 
whose pushforward to $\bG \times \bGv$ fits into an exact sequence
\begin{equation*}
0 \to \cO_{\bG} \boxtimes \cV \to \cU^{\vee} \boxtimes \cO_{\bGv} 
\to i_* \cE \to 0. 
\end{equation*}
\end{theorem}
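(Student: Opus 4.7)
The plan is to construct the kernel sheaf $\cE$ explicitly and verify the axioms of HPD directly. Let $\cV$ denote the tautological rank $2$ subbundle on $\bGv = \G(2,V_5^\vee)$, so that there is a tautological embedding $\cV \hookrightarrow V_5^\vee \otimes \cO_{\bGv}$. Dualizing the analogous sequence on $\bG$ gives a surjection $V_5^\vee \otimes \cO_\bG \twoheadrightarrow \cU^\vee$. Pulling both to $\bG \times \bGv$ and composing, one obtains the natural morphism
\[
\alpha \colon \cO_\bG \boxtimes \cV \longrightarrow \cU^\vee \boxtimes \cO_{\bGv}.
\]
First I would check that the degeneracy locus where $\alpha$ fails to be injective on fibres is exactly the incidence divisor $\bH(\bG,\bGv)$ (points $(U,V)$ with $U \cap V^\perp \ne 0$) and that the cokernel of $\alpha$ is the pushforward of a line bundle $\cE$ on $\bH(\bG,\bGv)$, which will be the HPD kernel.

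Next I would invoke the HPD machinery of~\cite{kuznetsov2007HPD}. Let $\bH(\bG) = \bG \times_{\bP(\wedge^2V_5)} \bH(\bP(\wedge^2V_5),\bP(\wedge^2V_5^\vee))$ be the universal hyperplane section of $\bG$, sitting in $\bG \times \bP(\wedge^2V_5^\vee)$. The rectangular Lefschetz decomposition of $\Db(\bG)$ with starting block $\cB$ yields, via the relative version of the staircase construction, a semiorthogonal decomposition of $\Db(\bH(\bG))$ with $\cB(H), \cB(2H), \cB(3H), \cB(4H)$ as components, and an orthogonal ``primitive'' component~$\cC$ whose identification with $\Db(\bGv)$ is what must be established. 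The incidence divisor $\bH(\bG,\bGv)$ maps to $\bH(\bG)$ via its natural projection, and the kernel $\cE$ pushed forward to $\bH(\bG)\times \bGv$ defines a Fourier--Mukai functor
\[
\Phi_\cE \colon \Db(\bGv) \longrightarrow \Db(\bH(\bG)).
\]
The two things to verify are: (i) $\Phi_\cE$ is fully faithful; and (ii) its image is right orthogonal to the blocks $\cB(H),\dots,\cB(4H)$ and together with them generates~$\Db(\bH(\bG))$.

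For (i), I would reduce full faithfulness to a computation of $\Phi_\cE^! \circ \Phi_\cE$ by base change along the diagonal, using the explicit resolution of $i_*\cE$ by the two-term complex displayed in the theorem; the relevant $\Ext$-vanishings reduce to Borel--Bott--Weil on $\bG$, $\bGv$, and their product, applied to combinations of the tautological bundles $\cU,\cU^\vee,\cV,\cV^\vee$. For (ii), the semiorthogonality with $\cB(jH)$ for $j=1,\dots,4$ reduces by the projection formula to vanishing of cohomologies on $\bG \times \bGv$ of tensor products built from $\cU,\cU^\vee,\cV$ and $\cO(-jH)$, again via Borel--Bott--Weil; generation then follows once the ranks of $\Phi_\cE$ and of the Lefschetz blocks are shown to add up to the full rank of $\rK_0(\bH(\bG))$, which is a Chern class bookkeeping. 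Together (i) and (ii) identify $\cC \simeq \Db(\bGv)$ via $\Phi_\cE$, which is the HPD statement.

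The main obstacle is the full faithfulness computation in step~(i): one must track the four contributions to $\Phi_\cE^! \Phi_\cE(\cO_{\bGv})$ coming from the two-term resolution, and show after cancellation that only the identity survives. This is a nontrivial Borel--Bott--Weil calculation on $\bG \times \bGv$, and it is precisely the content of the analysis in~\cite[\S6.1]{kuznetsov2006hyperplane}, which I would follow.
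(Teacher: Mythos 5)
The paper does not actually prove this statement: Theorem~\ref{theorem-G-HPD} is quoted verbatim from \cite[\S6.1]{kuznetsov2006hyperplane}, and the only ``proof'' in the text is that citation. Your sketch is a fair outline of the strategy of that reference --- define the kernel as the cokernel of the tautological map $\cO_{\bG}\boxtimes\cV\to\cU^\vee\boxtimes\cO_{\bGv}$, then verify full faithfulness of the induced Fourier--Mukai functor into the universal hyperplane section and check that its image, together with the ambient Lefschetz blocks, exhausts $\Db(\bH(\bG))$ --- so at the level of architecture you are following the same route the paper implicitly relies on.

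Two points need repair. First, a small one: $\cE$ is \emph{not} a line bundle on $\bH(\bG,\bGv)$. The map $V\to U^\vee$ vanishes identically precisely when $V\subset U^\perp$, a nonempty locus of codimension $3$ in the incidence divisor, and there the cokernel has rank $2$; so $\cE$ is only a rank-one sheaf (this is why the theorem says ``sheaf''), and the fibrewise analysis in your step (i) must account for this locus rather than treat $\cE$ as invertible. Second, and more seriously, your argument for generation in step (ii) is not valid: knowing that the $\rK_0$-ranks of $\Phi_\cE(\Db(\bGv))$ and of the Lefschetz blocks sum to $\operatorname{rank}\rK_0(\bH(\bG))$ does \emph{not} imply that these subcategories generate --- an admissible complement could a priori have trivial $\rK_0$ (this is exactly the phantom phenomenon). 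In \cite{kuznetsov2006hyperplane} generation is proved directly, by showing that any object right orthogonal to all the listed components vanishes, via an explicit geometric/resolution argument on the incidence variety; some such argument must replace the rank count.
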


In fact, we shall only need a consequence of HPD, which we formulate below as 
Corollary~\ref{corollary-HPD}. 
Note that the natural map 
\begin{equation*}
\hX \times_{\bP(S)} \hY \to X \times Y \to \bG \times \bG^\vee
\end{equation*}
factors through $\bH(\bG, \bGv)$. Indeed, the fiber of 
$\hX \times_{\bP(S)} \hY$ over any point $s \in \bP(S)$
is 
\begin{equation*}
(\bP(\cI_s) \times \bP(\cI_s^\perp)) \cap (\bG \times \bG^\vee) \subset \bH(\bG,\bG^\vee). 
\end{equation*}
Note also that 
\begin{equation}\label{eq:dim:hxhy}
\dim(\hX \times_{\bP(S)} \hY) = 5, 
\end{equation} 
since the map $\hX \times_{\bP(S)} \hY \to \hY$ is flat of relative dimension 2 by Lemma~\ref{lemma:pix:flat}, 
and $\dim(\hY) = 3$ by Lemma~\ref{lemma-hX-hY-P1}.

Denote by $\hcE$ the pullback of 
the HPD object $\cE$ to $\hX \times_{\bP(S)} \hY$ and by $\hPhi\colon\Db(\hY) \to \Db(\hX)$ 
the corresponding Fourier--Mukai functor. 
Note that $\hPhi$ is $\bP(S)$-linear (since $\hcE$ is supported on the fiber product $\hX \times_{\bP(S)} \hY$), i.e. 
\begin{equation*}
\hPhi(\cF \otimes \pi_Y^*\cG) \cong \hPhi(\cF) \otimes \pi_X^*\cG
\end{equation*}
for all $\cF \in \Db(\hY)$ and $\cG \in \Db(\bP(S))$.
By Lemma~\ref{lemma-hX-hY-P1} and~\eqref{eq:dim:hxhy}, the families 
$\hX$ and $\hY$ of linear sections of $\bG$ and $\bGv$ 
satisfy the dimension assumptions of~\cite[Theorem 6.27]{kuznetsov2007HPD}. 
Hence we obtain: 

\begin{corollary}
\label{corollary-HPD}
The functor $\hPhi\colon\Db(\hY) \to \Db(\hX)$ is fully faithful, and there is a 
semiorthogonal decomposition
\begin{equation}
\label{eq:dbhx:f}
\Db(\hX) = \langle \hPhi(\Db(\hY)), \cB_X(H) \boxtimes \Db(\bP(S)) \rangle,
\end{equation}
where $\cB_X(H) \boxtimes \Db(\bP(S))$ denotes the triangulated subcategory 
generated by objects of the form $p_X^*(\cF) \otimes \pi_X^*(\cG)$
for $\cF \in \cB_X(H)$ and $\cG \in \Db(\bP(S))$.
\end{corollary}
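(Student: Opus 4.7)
The plan is to deduce the corollary as a direct application of the family version of homological projective duality, \cite[Theorem~6.27]{kuznetsov2007HPD}, to the HPD pair $(\bG, \bGv)$ supplied by Theorem~\ref{theorem-G-HPD}. First I would observe that the defining diagram~\eqref{eq:diagram:xy} exhibits $\hX \to \bP(S)$ as the universal family of linear sections of $\bG$ of codimension $\operatorname{rank}(\cI^\perp) = 4$, and $\hY \to \bP(S)$ as the universal family of complementary (orthogonal) linear sections of $\bGv$ of codimension $\operatorname{rank}(\cI) = 6$; the mutual orthogonality of $\cI$ and $\cI^\perp$ under the pairing between $\wedge^2 V_5$ and $\wedge^2 V_5^\vee$ is precisely the input format required by the theorem. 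By construction, the kernel $\hcE$ is the pullback of the universal HPD kernel $\cE$ of Theorem~\ref{theorem-G-HPD} to $\hX \times_{\bP(S)} \hY$, and the associated integral functor is $\hPhi$; the full faithfulness part of the corollary is built into the output of Theorem~6.27.

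Next I would verify the dimensional hypotheses of Theorem~6.27, which ask that $\hX$ and $\hY$ be smooth of the expected dimensions over the base $\bP(S)$. The expected dimensions are
\begin{equation*}
\dim \bG + \dim \bP(S) - \operatorname{rank}(\cI^\perp) = 6 + 3 - 4 = 5
\qquad\text{and}\qquad
\dim \bGv + \dim \bP(S) - \operatorname{rank}(\cI) = 6 + 3 - 6 = 3,
\end{equation*}
and these agree with Lemma~\ref{lemma-hX-hY-P1}, which also gives smoothness of $\hX$ and $\hY$. Fiberwise dimensional transversality of $\pi_X$ is Lemma~\ref{lemma:pix:flat}, and the analogous statement for $\pi_Y$ is an immediate consequence (or a direct check on the $5$-point fibers).

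Finally, I would read off the shape of the decomposition from Theorem~6.27. The rectangular Lefschetz decomposition of $\Db(\bG)$ in Lemma~\ref{lemma-sod-linear-section} has length $m = 5$, while the codimension of the fibers of $\hX \to \bP(S)$ in $\bG$ is $s = 4$. The theorem then produces a semiorthogonal decomposition of $\Db(\hX)$ whose ``HPD part'' is the fully faithful image $\hPhi(\Db(\hY))$ and whose ``Lefschetz part'' consists of $m - s = 1$ surviving component pulled up along $p_X$ and base-changed along $\pi_X$, namely $\cB_X(H) \boxtimes \Db(\bP(S))$ in the notation of the corollary. This is exactly~\eqref{eq:dbhx:f}. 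The only genuine obstacle is a bookkeeping one: reconciling the indexing conventions of~\cite[Theorem~6.27]{kuznetsov2007HPD} with the rectangular decomposition of Lemma~\ref{lemma-sod-linear-section} so as to confirm both the count $m - s = 1$ and the specific twist by $H$ of the surviving Lefschetz block; everything else reduces to the dimension and smoothness checks already made.
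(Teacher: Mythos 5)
Your proposal is correct and follows essentially the same route as the paper: both deduce the corollary from \cite[Theorem~6.27]{kuznetsov2007HPD} applied to the HPD pair of Theorem~\ref{theorem-G-HPD}, with the dimension hypotheses verified via Lemma~\ref{lemma-hX-hY-P1} and Lemma~\ref{lemma:pix:flat} (the paper phrases this as the computation $\dim(\hX \times_{\bP(S)} \hY) = 5$, which is equivalent to your expected-dimension checks). Your count of the surviving Lefschetz block ($m - s = 1$, twisted by $H$) matches the paper's statement.
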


\subsection{Mutations}
\label{section:duality-mutations}

Since $p_X\colon \hX \to X$ is a $\bP^1$-bundle (Lemma~\ref{lemma-hX-hY-P1}), 
we also have a semiorthogonal decomposition 
\begin{equation*}
\Db(\hX) = \langle p_X^* \Db(X), p_X^* \Db(X)(h) \rangle . 
\end{equation*}
Inserting the decomposition~\eqref{dbx} of $\Db(X)$, we obtain 
\begin{equation}
\label{eq:dbhx:s} 
\Db(X) = \langle \cA_{\hX}, \cB, \cB(H), \cA_{\hX}(h), \cB(h), \cB(H+h) \rangle, 
\end{equation}
where to ease notation we write $\cA_{\hX}$ for $p_X^*\cA_{X}$ and simply $\cB$ for $p_X^*\cB_X$. 
We find a sequence of mutations bringing this decomposition into the form of~\eqref{eq:dbhx:f}. 
In doing so we will use several times $K_X = -2H$, which holds by~\eqref{eq:omega-x}, and $K_\hX = -H - 2h$, which holds by~\eqref{eq:canclass:hx}. 
For a brief review of mutation functors and references, see the discussion in~\S\ref{subsection:self-duality}.

\medskip \noindent
\textbf{Step 1.} 
Mutate $\cB(H)$ to the left of $\cA_{\hX}$ in~\eqref{eq:dbhx:s}. Since this is a mutation in $p_X^*\Db(X)$ and $K_X = -2H$, 
by~\eqref{eq:mutation-serre-functor} we get
\begin{equation*}\label{eq:dbhx:1}
\Db(\hX) = \langle \cB(-H), \cA_{\hX}, \cB, \cA_{\hX}(h), \cB(h), \cB(H+h) \rangle.
\end{equation*}

\medskip \noindent
\textbf{Step 2.} 
Mutate $\cB(H+h)$ to the far left. Since $K_\hX = -H -2h$, 
by~\eqref{eq:mutation-serre-functor} we get
\begin{equation*}\label{eq:dbhx:2}
\Db(\hX) = \langle \cB(-h), \cB(-H), \cA_{\hX}, \cB, \cA_{\hX}(h), \cB(h) \rangle.
\end{equation*}

\medskip \noindent
\textbf{Step 3.} 
Mutate $\cB(-H)$ to the left of $\cB(-h)$. Since these two subcategories are completely orthogonal (see the lemma below), 
we get
\begin{equation*}\label{eq:dbhx:3}
\Db(\hX) = \langle \cB(-H), \cB(-h), \cA_{\hX}, \cB, \cA_{\hX}(h), \cB(h) \rangle.
\end{equation*}

\begin{lemma}
The categories $\cB(-H)$ and $\cB(-h)$ in $\Db(\hX)$ are completely orthogonal.
\end{lemma}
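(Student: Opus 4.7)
The plan is to establish the two required Hom-vanishings separately, using the two fibrations of $\hX$. The vanishing of $\Hom_\hX^\bullet(\cB(-H), \cB(-h))$ follows at once from the $\bP^1$-bundle semiorthogonal decomposition
\[
\Db(\hX) = \langle p_X^*\Db(X) \otimes \cO_\hX(-h),\ p_X^*\Db(X) \rangle
\]
coming from $p_X\colon \hX \to X$ (valid because $p_{X*}\cO_\hX(-h) = 0$): since $\cB(-H) \subset p_X^*\Db(X)$ sits in the right component and $\cB(-h) \subset p_X^*\Db(X) \otimes \cO_\hX(-h)$ in the left, the orthogonality is automatic.

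For the reverse direction $\Hom_\hX^\bullet(\cB(-h), \cB(-H)) = 0$, by $(p_X^*, p_{X*})$-adjunction, the projection formula, and $p_{X*}\cO_\hX(h) = \cS_X^\vee$ from Lemma~\ref{lemma-hX-hY-P1}, the problem reduces to showing
\[
\RHom_X(\cF, \cG \otimes \cS_X) = 0 \qquad \text{for all } \cF, \cG \in \cB_X = \{\cO_X, \cU_X^\vee\},
\]
where I use the rank-two identity $\cS_X^\vee(-H) \cong \cS_X$ (valid since $\det \cS_X = \cO_X(-H)$). Equivalently, setting $\cE = \cF^\vee \otimes \cG$, it suffices to show the four vanishings $\rH^\bullet(X, \cE \otimes \cS_X) = 0$ for $\cE \in \{\cO_X,\ \cU_X,\ \cU_X^\vee,\ \cU_X \otimes \cU_X^\vee\}$.

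To verify each vanishing, I pull back to $\hX$ and use the tautological short exact sequence
\[
0 \to \cO_\hX(-h) \to p_X^*\cS_X \to \cO_\hX(-H+h) \to 0,
\]
inherited via the natural morphism $\hX \to \Fl(1,2;S)$ from the universal flag sequence. The $\cO_\hX(-h)$-factor contributes zero after $p_{X*}$, so only the $\cO_\hX(-H+h)$-factor remains; for this I push down by $\pi_X\colon \hX \to \bP(S)$ and invoke relative Serre duality with $\omega_{\pi_X} = \cO_\hX(-H+2h)$ (from~\eqref{eq:canclass:hx}) to obtain
\[
R^2\pi_{X*}\bigl(p_X^*\cE \otimes \cO_\hX(-H+h)\bigr) \ \cong\ (\pi_{X*}p_X^*\cE^\vee)^\vee \otimes \cO_{\bP(S)}(-h),
\]
with vanishing $R^i$ for $i \ne 2$. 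A Borel--Weil--Bott computation through the Koszul resolution of a smooth quintic del Pezzo fiber inside $\bG$ shows that $\pi_{X*}p_X^*\cE^\vee$ is a trivial vector bundle on $\bP(S)$, being zero when $\cE^\vee$ involves $\cU_X$ or $\mathfrak{sl}(\cU_X)$ and equal to $\cO_{\bP(S)}$ or $V_5^\vee \otimes \cO_{\bP(S)}$ respectively when $\cE^\vee = \cO_X$ or $\cU_X^\vee$. The resulting sheaf on $\bP(S) = \bP^3$ is therefore a multiple of $\cO_{\bP^3}(-1)$ placed in cohomological degree $2$, whose cohomology vanishes.

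The main delicate point is justifying relative Serre duality together with cohomology and base change for $\pi_X$ in the presence of possibly non-smooth fibers; this is handled via the flatness of $\pi_X$ (Lemma~\ref{lemma:pix:flat}) together with the Cohen--Macaulay property of its fibers.
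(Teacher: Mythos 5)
Your proof is correct, but for the harder half it takes a genuinely different route from the paper's. For the vanishing of $\Hom(\cB(-H),\cB(-h))$ both arguments ultimately rest on the $\bP^1$-bundle decomposition of $\Db(\hX)$ over $X$ (the paper reads it off from the decomposition produced in Step~2, which was obtained by mutations from that $\bP^1$-bundle decomposition). For the reverse direction the paper has a two-line argument: by Serre duality on $\hX$ and $K_{\hX}=-H-2h$, the vanishing of $\Hom(\cB(-h),\cB(-H))$ is equivalent to that of $\Hom(\cB(2h),\cB(-h))$, which is immediate from the HPD decomposition~\eqref{eq:dbhx:f} because $(\cO(-h),\cO(2h))$ is semiorthogonal in $\Db(\bP(S))$. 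You instead compute directly: adjunction along $p_X$, the identity $\cS_X^\vee(-H)\cong\cS_X$, and the tautological sequence reduce the claim to $\rH^\bullet(\hX, p_X^*\cE\otimes\cO(-H+h))=0$ for $\cE\in\{\cO_X,\cU_X,\cU_X^\vee,\cU_X\otimes\cU_X^\vee\}$, which you establish via relative Serre duality along $\pi_X$ and the acyclicity statements on $\bG$ underlying Lemma~\ref{lemma-sod-linear-section}. The computation checks out ($\omega_{\pi_X}=\cO(-H+2h)$ is right, $\pi_X$ is flat with lci fibers so Grothendieck duality applies, and $R\pi_{X*}p_X^*\cE^\vee$ is indeed a trivial bundle in degree $0$), and what it buys is independence from the HPD input of Corollary~\ref{corollary-HPD}, at the cost of being much longer. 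One caveat: your appeal to ``a smooth quintic del Pezzo fiber'' is not quite what is needed, since the fibers of $\pi_X$ are only guaranteed to be dimensionally transverse; the uniform answer over all of $\bP(S)$ is best extracted from the relative Koszul resolution of $\hX$ in $\bG\times\bP(S)$ (only the $k=0$ term survives, giving $\rH^\bullet(\bG,\cE^\vee)\otimes\cO_{\bP(S)}$) rather than from fiberwise base change.
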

\begin{proof}
By Step 2, the pair $(\cB(-h), \cB(-H))$ is semiorthogonal. 
On the other hand, by Serre duality and~\eqref{eq:canclass:hx}, semiorthogonality of $(\cB(-H), \cB(-h))$ is equivalent 
to that of $(\cB(-h), \cB(2h))$, which follows from~\eqref{eq:dbhx:f} as $(\cO(-h), \cO(2h))$ 
is semiorthogonal in $\Db(\bP(S))$. 
\end{proof}

\medskip \noindent
\textbf{Step 4.} 
Mutate $\cB(-H)$ to the far right. Again by~\eqref{eq:mutation-serre-functor}, we get
\begin{equation*}\label{eq:dbhx:4}
\Db(\hX) = \langle \cB(-h), \cA_{\hX}, \cB, \cA_{\hX}(h), \cB(h), \cB(2h) \rangle.
\end{equation*}

\medskip \noindent
\textbf{Step 5.} 
Mutate $\cA_{\hX}$ and $\cA_{\hX}(h)$ to the far left. We get
\begin{align*}\label{eq:dbhx:5}
\nonumber \Db(\hX) & = \langle \rL_{\cB(-h)}(\cA_{\hX}), \rL_{\langle \cB(-h),\cB \rangle}(\cA_{\hX}(h)), \cB(-h), \cB, \cB(h), \cB(2h) \rangle \\ 
& = \langle \rL_{\cB(-h)}(\cA_{\hX}), \rL_{\langle \cB(-h),\cB \rangle}(\cA_{\hX}(h)), \cB_X \boxtimes \Db(\bP(S)) \rangle,  
\end{align*}
where we used the standard decomposition $\Db(\bP(S)) = \langle \cO(-h), \cO, \cO(h), \cO(2h) \rangle$. 

\medskip \noindent
\textbf{Step 6.} 
Twist the decomposition by $\cO(H)$. We get
\begin{equation}\label{eq:dbhx:6}
\Db(\hX) = \langle \rL_{\cB(H-h)}(\cA_{\hX}(H)), 
\rL_{\langle\cB(H-h),\cB(H)\rangle}(\cA_{\hX}(H+h)), \cB_X(H) \boxtimes \Db(\bP(S))  \rangle.
\end{equation}
To rewrite the first two components here, we used the following general fact: 
If $\cA \subset \cT$ is an admissible subcategory of a triangulated category 
and $F$ is an autoequivalence of $\cT$ (in our case $F$ is the  
autoequivalence of $\Db(\hX)$ given by tensoring with $\cO(H)$), 
then there is an isomorphism of functors 
\begin{equation}
\label{eq:mutation-equivalence}
F \circ \rL_\cA \cong \rL_{F(\cA)} \circ F. 
\end{equation}

Finally, we obtain: 

\begin{proposition}\label{proposition:hphis:ax:axh}
The functor $\hPhi^* \circ \Tnsr{\cO(H)} \colon\Db(\hX) \to \Db(\hY)$ induces an equivalence
\begin{equation*}
\langle \cA_{\hX}, \cA_{\hX}(h) \rangle \simeq \Db(\hY),
\end{equation*}
where $\hPhi^*\colon \Db(\hX) \to \Db(\hY)$ denotes the left adjoint of $\hPhi$. 
\end{proposition}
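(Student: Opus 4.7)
The plan is to compare the two semiorthogonal decompositions of $\Db(\hX)$ at hand: the HPD decomposition~\eqref{eq:dbhx:f} and the decomposition~\eqref{eq:dbhx:6} produced by the mutation sequence of Steps~1--6. Since both have the same right-hand component $\cB_X(H) \boxtimes \Db(\bP(S))$, the left-hand components must also agree, yielding the identification
\begin{equation*}
\hPhi(\Db(\hY)) = \langle \rL_{\cB(H-h)}(\cA_{\hX}(H)),\ \rL_{\langle\cB(H-h),\cB(H)\rangle}(\cA_{\hX}(H+h)) \rangle.
\end{equation*}

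Next, I will invoke the general fact recalled in~\S\ref{subsection:self-duality} that a mutation functor restricts to an equivalence on the appropriate orthogonal subcategory. Twisting the SOD obtained after Step~4 by $\cO(H)$ gives
\begin{equation*}
\Db(\hX) = \langle \cB(H-h),\ \cA_{\hX}(H),\ \cB(H),\ \cA_{\hX}(H+h),\ \cB(H+h),\ \cB(H+2h) \rangle,
\end{equation*}
from which we read off $\cA_{\hX}(H) \subset {}^\perp \cB(H-h)$ and $\cA_{\hX}(H+h) \subset {}^\perp \langle\cB(H-h),\cB(H)\rangle$. Therefore the two mutations assemble into an equivalence
\begin{equation*}
\Psi \colon \langle \cA_{\hX}(H),\cA_{\hX}(H+h) \rangle \xrightarrow{\sim} \hPhi(\Db(\hY)).
\end{equation*}
Since $\hPhi$ is fully faithful by Corollary~\ref{corollary-HPD}, the restriction of $\hPhi^*$ to $\hPhi(\Db(\hY))$ is the inverse equivalence onto $\Db(\hY)$, and composing with $\Tnsr{\cO(H)} \colon \langle \cA_{\hX}, \cA_{\hX}(h) \rangle \xrightarrow{\sim} \langle \cA_{\hX}(H), \cA_{\hX}(H+h) \rangle$ produces an equivalence $\langle \cA_{\hX}, \cA_{\hX}(h) \rangle \simeq \Db(\hY)$.

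The remaining point is to verify that this composite equivalence coincides with the functor $\hPhi^* \circ \Tnsr{\cO(H)}$, which amounts to showing $\hPhi\hPhi^*(\cG) \cong \Psi(\cG)$ for $\cG \in \langle \cA_{\hX}(H), \cA_{\hX}(H+h) \rangle$, i.e., that the left-adjoint projection onto $\hPhi(\Db(\hY))$ agrees with the mutation-based projection. I expect this to be the main technical step: both triangles realize $\cG$ as an extension involving $\hPhi(\Db(\hY))$ and $\cB_X(H) \boxtimes \Db(\bP(S))$, and their agreement should follow by unwinding the triangle defining the unit of the adjunction $\hPhi^* \dashv \hPhi$ and matching it with the iterated mutation triangle, exploiting the semiorthogonalities displayed in~\eqref{eq:dbhx:6} and~\eqref{eq:dbhx:f} to pin down the intermediate terms.
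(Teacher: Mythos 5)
Your proposal follows the same route as the paper: compare the decompositions \eqref{eq:dbhx:6} and \eqref{eq:dbhx:f} to identify $\hPhi(\Db(\hY))$ with the two mutated copies of $\cA_{\hX}(H)$ and $\cA_{\hX}(H+h)$, and then undo the mutations. The only place you stop short is the final identification, which you flag as ``the main technical step''; in fact it is immediate, and the paper disposes of it without comparing the unit-of-adjunction triangle to the mutation triangle. Since $\hPhi^*$ is left adjoint to $\hPhi$ and $\cB_X(H)\boxtimes\Db(\bP(S))$ is the right-hand component of \eqref{eq:dbhx:f}, adjunction plus semiorthogonality forces $\hPhi^*$ to vanish on all of $\cB_X(H)\boxtimes\Db(\bP(S))$, in particular on $\cB(H-h)$ and $\cB(H)$. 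Applying $\hPhi^*$ to the defining triangle of a left mutation through either of these subcategories (whose first term lies in the subcategory being mutated through) yields functorial isomorphisms $\hPhi^*\circ\rL_{\cB(H-h)}\cong\hPhi^*$ and $\hPhi^*\circ\rL_{\langle\cB(H-h),\cB(H)\rangle}\cong\hPhi^*$. Hence the restriction of $\hPhi^*$ to $\langle\cA_{\hX}(H),\cA_{\hX}(H+h)\rangle$ coincides with the composition of your mutation equivalence $\Psi$ with the inverse equivalence $\hPhi^*|_{\hPhi(\Db(\hY))}$, and is therefore itself an equivalence. This replaces the ``unwinding'' you anticipate with a two-line argument; your outline is otherwise complete and correct.
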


\begin{proof}
Comparing the decompositions~\eqref{eq:dbhx:6} and~\eqref{eq:dbhx:f}, we see that 
$\hPhi$ induces an equivalence 
\begin{equation*}
\hPhi\colon \Db(\hY) \xrightarrow{\ \sim\ } \langle \rL_{\cB(H-h)}(\cA_\hX(H)), \rL_{\langle \cB(H-h),\cB(H) \rangle}(\cA_\hX(H+h)) \rangle . 
\end{equation*}
Therefore its left adjoint $\hPhi^*$ gives an inverse equivalence. On the other hand, 
by semiorthogonality of~\eqref{eq:dbhx:f}
the functor $\hPhi^*$ vanishes on $\cB(H-h)$ and $\cB(H)$, hence its composition with the mutation functors 
through these categories is isomorphic to $\hPhi^*$. Thus $\hPhi^*$ induces an equivalence between 
$\langle \cA_\hX(H), \cA_\hX(H+h) \rangle \subset \Db(\hX)$
and~$\Db(\hY)$. This is equivalent to the claim.
\end{proof}

\subsection{Proof of the theorem}
Since $p_Y\colon \hY \to Y$ is a $\bP^1$-bundle (Lemma~\ref{lemma-hX-hY-P1}), we have 
\begin{equation}
\label{y:sod:p1}
\Db(\hY) = \langle p_Y^*\Db(Y), p_Y^*\Db(Y)(h) \rangle.
\end{equation}
We aim to prove that this semiorthogonal decomposition coincides with the one 
obtained by applying the fully faithful functor $ \Tnsr{\cO(-h)} \circ \hPhi^*\circ \Tnsr{\cO(H)}$ to $\langle \cA_{\hX}, \cA_{\hX}(h) \rangle$. 
For this, we consider the composition of functors
\begin{equation}
\label{definition:F}
\rF \coloneqq p_{Y*} \circ \Tnsr{\cO(-2h)} \circ \hPhi^* \circ \Tnsr{\cO(H)} \circ p_X^* \colon \Db(X) \to \Db(Y). 
\end{equation}

\begin{proposition}\label{proposition:rf:ax}
The functor $\rF$ vanishes on the subcategory $\cA_X \subset \Db(X)$.
\end{proposition}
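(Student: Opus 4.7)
The plan is to translate the vanishing $\rF|_{\cA_X}=0$ into an orthogonality statement inside $\Db(\hX)$ via adjunction, and then verify this using the HPD structure established in Section~\ref{section:duality-mutations} together with the explicit form of the kernel $\hcE$.

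First, since $\hcE$ is supported on $\hX \times_{\bP(S)} \hY$, both $\hPhi$ and its left adjoint $\hPhi^*$ are $\bP(S)$-linear. For $\cF \in \cA_X$, $\cG \in \Db(Y)$ and $n \in \bZ$, combining the adjunctions $(p_{Y*}, p_Y^!)$ and $(\hPhi^*, \hPhi)$ with
$p_Y^!\cG = p_Y^*\cG \otimes \cO(H'-2h)[1]$
(from $K_\hY = H' - 2h$ in~\eqref{eq:canclass:hx} and $K_Y=0$) and absorbing $\cO(H')$ through $p_Y^*$, one computes
\begin{equation*}
\Hom_Y(\rF(\cF), \cG[n]) \cong \Hom_\hX\bigl(p_X^*\cF \otimes \cO(H),\, \hPhi(p_Y^*(\cG \otimes \cO(H')))[n+1]\bigr).
\end{equation*}
Letting $\cG \otimes \cO(H')$ range over $\Db(Y)$, the proposition reduces to the containment
\begin{equation*}
\hPhi(p_Y^*\Db(Y)) \subset \cA_\hX(H)^\perp.
\end{equation*}

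Second, by the mutation analysis of Section~\ref{section:duality-mutations} we have the SOD
\begin{equation*}
\hPhi(\Db(\hY)) = \bigl\langle \rL_{\cB(H-h)}(\cA_\hX(H)),\, \rL_{\langle \cB(H-h), \cB(H)\rangle}(\cA_\hX(H+h)) \bigr\rangle.
\end{equation*}
From~\eqref{eq:dbhx:s} twisted by $\cO(H)$, it is immediate that $\cA_\hX(H+h)$ and $\cB(H)$ lie in $\cA_\hX(H)^\perp$, and a short Serre duality computation on $\hX$ using $\omega_\hX = \cO(-H-2h)$ shows that $\cB(H-h)$ does as well (it reduces to $\Hom(\cB(H+h),\cA_\hX) = 0$, which is built into~\eqref{eq:dbhx:s}). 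Since $\cA_\hX(H)^\perp$ is triangulated, the second summand above — being generated by cones of morphisms between these objects — is contained in $\cA_\hX(H)^\perp$. Therefore the proposition reduces further to showing that $\hPhi \circ p_Y^*$ factors through the second summand, i.e. that $\hPhi(p_Y^*\Db(Y))$ is orthogonal to $\rL_{\cB(H-h)}(\cA_\hX(H))$.

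For this final step, I would compute $\hPhi(p_Y^*\cG)$ explicitly using the HPD kernel. Pulling back the short exact sequence of Theorem~\ref{theorem-G-HPD} to $\hX \times_{\bP(S)} \hY$ gives
\begin{equation*}
0 \to \cO \boxtimes \cV_\hY \to \cU_\hX^\vee \boxtimes \cO \to \hcE \to 0.
\end{equation*}
Tensoring with $\pi_\hY^* p_Y^*\cG$ and pushing forward via $\pi_{\hX *}$, together with flat base change along $\bP(S)$ (justified by Lemma~\ref{lemma:pix:flat}) and the projection formula, presents $\hPhi(p_Y^*\cG)$ as the cone of a morphism whose right-hand term carries a $\cU_\hX^\vee$-twist. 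The main obstacle will be the careful tracking of twists needed to verify that both terms of this cone land in the second summand $\rL_{\langle \cB(H-h), \cB(H)\rangle}(\cA_\hX(H+h))$ and not in $\rL_{\cB(H-h)}(\cA_\hX(H))$; the presence of the $\cU_\hX^\vee$ factor is exactly what rules out the latter component. Once this bookkeeping is done, the desired orthogonality $\hPhi(p_Y^*\Db(Y)) \subset \cA_\hX(H)^\perp$ follows, completing the proof.
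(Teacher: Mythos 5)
Your first step is correct: by the adjunctions $(p_{Y*},p_Y^!)$ and $(\hPhi^*,\hPhi)$ the proposition is indeed equivalent to the containment $\hPhi(p_Y^*\Db(Y)) \subset \cA_\hX(H)^\perp$, where $\perp$ denotes the right orthogonal, i.e.\ the objects $T$ with $\Hom(\cA_\hX(H),T)=0$. The gap is in your second step: every orthogonality you invoke there points in the wrong direction. The decomposition \eqref{eq:dbhx:s} twisted by $\cO(H)$ gives $\Hom(\cA_{\hX}(H+h),\cA_{\hX}(H))=0$ and $\Hom(\cB(H),\cA_{\hX}(H))=0$, i.e.\ these subcategories lie in the \emph{left} orthogonal ${}^\perp(\cA_\hX(H))$, not in $\cA_\hX(H)^\perp$. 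The containments you actually need are false: $\Hom(\cA_\hX(H),\cA_\hX(H+h)) \cong \Hom_X(\cF,\cF'\otimes\cS_X^\vee)$, which is nonzero (multiply $\id_\cF$ by a global section of the globally generated bundle $\cS_X^\vee$), and $\Hom(\cA_\hX(H),\cB(H)) \cong \Hom_X(\cA_X,\cB_X)$, which contains $\Ext^\bullet(\cF,\cO_X)\cong \rH^\bullet(X,\cF(-2H))^\vee$ and is nonzero in general. Hence the second summand $\rL_{\langle\cB(H-h),\cB(H)\rangle}(\cA_\hX(H+h))$ is \emph{not} contained in $\cA_\hX(H)^\perp$, and the reduction collapses. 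A related problem: $\hPhi(p_Y^*\Db(Y))$ is not equal to that second summand, but to its right mutation through the first summand (equivalently, to ${}^\perp$ of the first summand inside $\hPhi(\Db(\hY))$); the two differ because $p_Y^*\Db(Y)$ and $p_Y^*\Db(Y)(h)$ are not completely orthogonal in $\Db(\hY)$. So even the target of your final step is misidentified.

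The paper's proof sidesteps all of this by working on $X\times Y$ rather than on $\hX$. Using $p_*\cO_{\hX\times_{\bP(S)}\hY}\cong\cO_{\bH(X,Y)}$, Grothendieck duality for $j$, and the resolution of $i_*\cE$ from Theorem~\ref{theorem-G-HPD}, it computes the Fourier--Mukai kernel of $\rF[-2]$ explicitly as the cone of a map $\cU_X(-H)\boxtimes\cO_Y(-H')\to\cO_X(-H)\boxtimes\cV_Y^\vee(-H')$. The vanishing on $\cA_X$ is then immediate, since the functor attached to a box-product kernel $E\boxtimes E'$ sends $\cF$ to $\rH^\bullet(X,E\otimes\cF)\otimes E'$, and both $\rH^\bullet(X,\cU_X(-H)\otimes\cF)=\Hom^\bullet(\cU_X^\vee(H),\cF)$ and $\rH^\bullet(X,\cO_X(-H)\otimes\cF)=\Hom^\bullet(\cO_X(H),\cF)$ vanish because $\cA_X$ is right orthogonal to $\cB_X(H)$. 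Your third step gestures toward a kernel computation of this kind, but it is aimed at the wrong target (membership in the wrong summand), and the computation itself --- which is the entire content of the proof --- is deferred.
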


Before proving the proposition, let us show how it implies the 
equivalence~$\cA_X \simeq \Db(Y)$.

\begin{proof}[Proof of Theorem~\textup{\ref{theorem-associated-K3}}]
We claim that 
\begin{equation}
\label{eq:equivalence:ax:y}
p_{Y*} \circ \Tnsr{\cO(-h)} \circ \hPhi^* \circ \Tnsr{\cO(H)} \circ p_X^* \colon \Db(X) \to \Db(Y)
\end{equation}
induces an equivalence $\cA_X \simeq \Db(Y)$. 
Note that the functor $p_X^*$ is fully faithful on $\cA_X$. 
So by Proposition~\ref{proposition:hphis:ax:axh} the functor 
\mbox{$\Tnsr{\cO(-h)} \circ \hPhi^* \circ \Tnsr{\cO(H)} \circ p_X^*$} gives a 
fully faithful embedding $\cA_X \hookrightarrow \Db(\hY)$, whose 
image $\cA$ satisfies 
\begin{equation}
\label{y:sod:a:ah}
\Db(Y) = \langle \cA, \cA(h) \rangle. 
\end{equation} 
On the other hand, by Proposition~\ref{proposition:rf:ax}  
the functor $p_{Y*}$ annihilates $\cA(-h)$. 
But the kernel of the functor $p_{Y*}$ is $p_Y^*\Db(Y)(-h)$, so 
$\cA(-h) \subset p_Y^*\Db(Y)(-h)$, and thus 
\begin{equation*}
\cA \subset p_Y^*\Db(Y) \qquad \text{and} \qquad 
\cA(h) \subset p_Y^*\Db(Y)(h). 
\end{equation*} 
In view of the decompositions~\eqref{y:sod:a:ah} and~\eqref{y:sod:p1}, 
we see that equality holds in the above inclusions. 
Since $p_{Y*}$ induces an equivalence $p_Y^*\Db(Y) \simeq \Db(Y)$, 
this finishes the proof. 
\end{proof}

Now we turn to the proof of Proposition~\ref{proposition:rf:ax}, which takes the rest of the section. 
Let $f_X\colon X \to \bG$ and $f_Y\colon Y \to \bGv$ be the Gushel maps,  
and let $p_{XY} \colon \hX \times_{\bP(S)} \hY \to X \times Y$ be the natural morphism.
Recall from \S\ref{subsection:hpd} that the composition 
\begin{equation*}
\hX \times_{\bP(S)} \hY \xrightarrow{\ p_{XY}\  }
X \times Y \xrightarrow{\, f_X \times f_Y\,} \bG \times \bG^{\vee}
\end{equation*}
factors through the incidence divisor $\bH(\bG, \bGv)$. 
Hence there is a commutative diagram 
\begin{equation}
\label{diagram:vanishing}
\vcenter{\xymatrix{
\hX \times_{\bP(S)} \hY \ar[r]^-{p} \ar[dr]_{\hg} & \bH(X,Y) \ar[d]^g \ar[r]^j & X \times Y \ar[d]^{f_X \times f_Y} \\
& \bH(\bG,\bGv) \ar[r]^{i} & \bG \times \bG^\vee
}}
\end{equation}
where $\bH(X,Y)$ is by definition the pullback of $\bH(\bG, \bGv)$ along $f_X \times f_Y$, 
and $p_{XY} = j \circ p$. 
We will need the following two lemmas.

\begin{lemma}\label{lemma:pf:o:hx:hy}
There is an isomorphism $p_* \cO_{\hX \times_{\bP(S)} \hY} \cong \cO_{\bH(X,Y)}$.
\end{lemma}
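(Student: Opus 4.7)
The plan is to check that $p$ is a proper birational morphism with geometrically connected fibers onto a normal variety, and then conclude via Stein factorization.

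\emph{Fiber description.} By Lemma~\ref{lemma-hX-hY-P1}, $\hX \cong \bP_X(\cS_X)$ and $\hY \cong \bP_Y(\cS_Y)$; tracing through its proof, the projections $\pi_X$ and $\pi_Y$ to $\bP(S)$ come from the inclusions $\cS_X \hookrightarrow S \otimes \cO_X$ and $\cS_Y \hookrightarrow S \otimes \cO_Y$ (the latter via the annihilator identification for the tautological bundle on $\G(2,S^\vee)$). Consequently, for any $(x,y) \in \bH(X,Y)$, the fiber of $p$ is identified with $\bP(\cS_X|_x \cap \cS_Y|_y)$, where the intersection is formed inside $S \cong \bC^4$.

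\emph{Image of $p$ and birationality.} If $0 \ne s \in \cS_X|_x \cap \cS_Y|_y$, then $x \in \cI_s$ and $y \in \cI_s^\perp$, and since $\cI_s$ and $\cI_s^\perp$ are mutually orthogonal, $\langle x, y \rangle = 0$, i.e., $(x,y) \in \bH(X,Y)$. Thus the image of $p$ is contained in $\bH(X,Y)$. On the other hand, two general $2$-planes in $\bC^4$ intersect trivially, so the locus $\{(x,y) : \cS_X|_x \cap \cS_Y|_y \ne 0\} \subseteq X \times Y$ has codimension at most $1$; since it is nonempty (as $\dim Z = 5$ by~\eqref{eq:dim:hxhy}) and contained in the irreducible divisor $\bH(X,Y)$, it coincides with $\bH(X,Y)$. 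This proves $p$ is surjective, and since $\dim Z = \dim \bH(X,Y)$ and two generic $2$-planes constrained to meet do so in exactly a line, $p$ is birational with generic fiber a single reduced point.

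\emph{Normality and conclusion.} $\bH(X,Y)$ is a Cartier divisor on the smooth $6$-fold $X \times Y$, hence Cohen--Macaulay; regularity in codimension~$1$ follows from the fact that $\bH(\bG,\bGv)$ is smooth and its pullback to $X \times Y$ is transverse away from a locus of codimension at least~$2$ in $\bH(X,Y)$, which can be checked using the homogeneity of $\bG \times \bGv$ and smoothness of $X$ and $Y$. Being proper (as a base change of $p_X \times p_Y$), birational, and surjective with geometrically connected fibers (namely, projective spaces of dimension $\ge 0$) onto the normal variety $\bH(X,Y)$, Stein factorization shows that the natural map $\cO_{\bH(X,Y)} \to p_*\cO_{\hX \times_{\bP(S)} \hY}$ is an isomorphism.

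The main obstacle I anticipate is the geometric control of the fibers of $p$, in particular confirming surjectivity and bounding the dimension of the locus where the fibers are positive-dimensional: the generic dimension count is clean, but handling the boundary behavior likely requires using the explicit structure of $Q$, its projective dual, and the interaction between the subbundles $\cS_X, \cS_Y \subset S$.
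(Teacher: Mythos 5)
Your strategy is genuinely different from the paper's: the paper never mentions normality or Zariski's main theorem, but instead realizes $\hX\times_{\bP(S)}\hY$ as the ($\Tor$-independent) pullback of the diagonal of $\bP(S)\times\bP(S)$ to $\hX\times\hY$, pulls back the Koszul resolution of $\Delta_*\cO_{\bP(S)}$, and pushes forward along the two $\bP^1$-bundles $p_X\times p_Y$; the spectral sequence collapses to the short exact sequence $0\to\cO_{X\times Y}(-H-H')\to\cO_{X\times Y}\to p_*\cO\to 0$, which \emph{is} the statement. That route requires no knowledge of the singularities of $\bH(X,Y)$ at all, which is exactly where your argument has a gap.

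The gap is the normality of $\bH(X,Y)$. Your justification rests on the assertion that $\bH(\bG,\bGv)$ is smooth, which is false: the differential of the pairing section vanishes precisely along the locus $\{(U,U') : U\perp U'\}$, which is $8$-dimensional, so $\bH(\bG,\bGv)$ is singular in codimension $3$. Moreover, the subsequent transversality claim cannot be checked ``using the homogeneity of $\bG\times\bGv$'': Kleiman-type arguments apply to a \emph{general} translate, and here $X\times Y$ is fixed. Finally, even the natural route to $R1$ is not automatic. Viewing $\bH(X,Y)$ as the determinantal hypersurface of $\cS_X\boxplus\cS_Y\to S\otimes\cO$, its singular locus a priori contains (and need not equal) the corank-$2$ locus $J=\{\cS_X|_x=\cS_Y|_y\}$, i.e.\ exactly the locus where your fibers jump to $\bP^1$; the only bound your setup provides is $\dim p^{-1}(J)\le\dim Z=5$, hence $\dim J\le 4$, which is codimension $\ge 1$ in the $5$-fold $\bH(X,Y)$, not $\ge 2$. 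So $R1$ is not established, and without it Zariski's main theorem does not apply. (Two smaller points: the irreducibility of $\bH(X,Y)$, which you invoke to get surjectivity, is asserted rather than proved --- though here one can argue directly that $(x,y)\in\bH(X,Y)$ iff $\langle\wedge^2\cS_X|_x,\wedge^2(\cS_Y|_y)^\perp\rangle=0$ iff $\cS_X|_x\cap\cS_Y|_y\ne 0$, so the image is all of $\bH(X,Y)$ tautologically; and applying Stein factorization also uses that $\hX\times_{\bP(S)}\hY$ is integral, which deserves a word via flatness of $Z\to\hY$ with integral generic fiber.) If you want a proof in this geometric spirit you would need to control $\operatorname{Sing}(\bH(X,Y))$ honestly; the paper's cohomological computation avoids the issue entirely.
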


\begin{proof}
We have a diagram
\begin{equation*}
\xymatrix@C=6em{
\hX \times_{\bP(S)} \hY \ar[r]^-{\deh} \ar[d] & \hX \times \hY \ar[d]^{\pi_X \times \pi_Y} \ar[r]^{p_X \times p_Y} & X \times Y \\
\bP(S) \ar[r]^-{\Delta} & \bP(S) \times \bP(S)
}
\end{equation*}
where the square is cartesian, and also $\Tor$-independent 
as the fiber product has expected dimension by~\eqref{eq:dim:hxhy}.
To prove the lemma, we must show 
$(p_X \times p_Y)_* (\deh_*\cO_{\hX \times_{\bP(S)} \hY}) \cong \cO_{\bH(X,Y)}$. 
By $\Tor$-independence, we have an isomorphism 
\begin{equation*}
\deh_*\cO_{\hX \times_{\bP(S)} \hY} \cong (\pi_X \times \pi_Y)^* \Delta_* \cO_{\bP(S)}. 
\end{equation*}
Pulling back the standard resolution of the diagonal on $\bP(S) \times \bP(S)$, we obtain 
an exact sequence
\begin{multline*}
0 \to 
\pi_X^*\cO_{\bP(S)}(-3h) \boxtimes \pi_Y^*\Omega_{\bP(S)}^3(3h) \to
\pi_X^*\cO_{\bP(S)}(-2h) \boxtimes \pi_Y^*\Omega_{\bP(S)}^2(2h) \to 
\\ \to
\pi_X^*\cO_{\bP(S)}(-h) \boxtimes \pi_Y^*\Omega_{\bP(S)}^1(h) \to
\cO_{\hX \times \hY} \to 
\deh_*\cO_{\hX \times_{\bP(S)} \hY} \to 0
\end{multline*}
on $\hX \times \hY$. 
Using the identifications $p_X\colon \hX = \bP_X(\cS_X) \to X$ and $p_Y\colon\hY = \bP_Y(\cS_Y) \to Y$ 
of Lemma~\ref{lemma-hX-hY-P1}, it is easy to compute: 
\begin{align*}
p_{Y*}\pi_Y^*\Omega_{\bP(S)}^3(3h) & \cong p_{Y*}\pi_Y^*\cO(-h) = 0 , \\ 
p_{X*}\pi_X^*\cO_{\bP(S)}(-2h) & \cong \det(\cS_X)[-1] \cong \cO_X(-H)[-1] , \\
p_{Y*} \pi_Y^*\Omega_{\bP(S)}^2(2h) & \cong 
\det(\cS_Y) \cong \cO_Y(-H'), \\
p_{X*}\pi_X^*\cO_{\bP(S)}(-h) & = 0, \\ 
(p_X \times p_Y)_*(\cO_{\hX \times \hY}) & \cong \cO_{X \times Y}.
\end{align*}
It follows that in the spectral sequence computing 
$(p_X \times p_Y)_*(\deh_*\cO_{\hX \times_{\bP(S)} \hY})$ from the above resolution, 
the only nontrivial terms are 
\begin{align*}
\rR^1(p_X \times p_Y)_*(\pi_X^*\cO(-2h) \boxtimes \pi_Y^*\Omega_{\bP(S)}^2(2h)) & 
\cong \cO_{X \times Y}(-H-H'), \\
\rR^0 (p_X \times p_Y)_*(\cO_{\hX \times \hY}) & 
\cong \cO_{X \times Y},
\end{align*}
and we get an exact sequence
\begin{equation*}
0 \to \cO_{X\times Y}(-H-H') \to \cO_{X \times Y} 
\to (p_X \times p_Y)_* (\deh_*\cO_{\hX \times_{\bP(S)} \hY}) \to 0,
\end{equation*}
which gives the required isomorphism 
$(p_X \times p_Y)_* (\deh_*\cO_{\hX \times_{\bP(S)} \hY}) \cong \cO_{\bH(X,Y)}$. 
\end{proof}

\begin{lemma}\label{lemma:F:FM}
The functor $\rF [-2]$ is given by a Fourier--Mukai
kernel $\cK \in \Db(X \times Y)$, which fits into a distinguished triangle 
\begin{equation*}
\cU_X(-H) \boxtimes \cO_Y(-H') \to \cO_X(-H) \boxtimes \cV_Y^\vee(-H') \to \cK.
\end{equation*}
\end{lemma}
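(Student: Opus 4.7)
The plan is to express $\rF[-2]$ as a Fourier--Mukai transform with an explicit kernel, and then to identify this kernel via the HPD exact sequence of Theorem~\ref{theorem-G-HPD}. The crucial calculation is to make the left adjoint $\hPhi^*$ of $\hPhi$ explicit. Since $\pi_X\colon \hX \to \bP(S)$ is flat of relative dimension~$2$ (Lemma~\ref{lemma:pix:flat}), its base change $\pi_\hY\colon \hX \times_{\bP(S)}\hY \to \hY$ is also flat of relative dimension~$2$, so relative Serre duality yields
\begin{equation*}
\hPhi^*(\cG) = \pi_{\hY *}\bigl(\hcE^\vee \otimes \pi_\hX^*\cG \otimes \omega_{\pi_\hY}[2]\bigr),
\end{equation*}
where $\omega_{\pi_\hY} = \pi_\hX^*\omega_{\pi_X} = \pi_\hX^*\cO_\hX(-H+2h)$ by the base change formula, $K_\hX = -H-2h$ from~\eqref{eq:canclass:hx}, and $K_{\bP(S)} = -4h$.

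Next, I would substitute $\cG = p_X^*\cF \otimes \cO_\hX(H)$ into this formula and apply the projection formula. Since $\cO_\hX(2h)$ is pulled back from $\bP(S)$, its pullback to the fiber product coincides with $\pi_\hY^*\cO_\hY(2h)$, so the twists $\cO_\hX(H)$ and $\cO_\hX(-H+2h)$ coming from $\hPhi^*$, together with the outer $\cO_\hY(-2h)$, cancel completely. This yields
\begin{equation*}
\rF(\cF) = q_{Y*}\bigl(\hcE^\vee \otimes q_X^*\cF\bigr)[2],
\end{equation*}
with $q_X = p_X \circ \pi_\hX$ and $q_Y = p_Y \circ \pi_\hY$. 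Hence $\rF[-2]$ is a Fourier--Mukai transform with kernel $p_{XY *}\hcE^\vee$ on $X \times Y$, where $p_{XY} = j \circ p$ is the map in diagram~\eqref{diagram:vanishing}. Using $\hcE = p^*\cE_{X,Y}$ for $\cE_{X,Y} \coloneqq g^*\cE$, the projection formula, and Lemma~\ref{lemma:pf:o:hx:hy}, this kernel simplifies to $\cK = j_*\cE_{X,Y}^\vee$.

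To exhibit the triangle, I would dualize the HPD exact sequence of Theorem~\ref{theorem-G-HPD} via Grothendieck duality: since $i$ is a Cartier divisor inclusion, $i^!\cO_{\bG \times \bGv} = \cO_{\bH(\bG,\bGv)}(H+H')[-1]$, and dualization produces
\begin{equation*}
0 \to \cU \boxtimes \cO_{\bGv} \to \cO_\bG \boxtimes \cV^\vee \to i_*\cE^\vee(H+H') \to 0
\end{equation*}
on $\bG \times \bGv$. Twisting by $\cO(-H-H')$ and pulling back along $f_X \times f_Y$ produces the desired distinguished triangle, whose cone is identified with $j_*\cE_{X,Y}^\vee = \cK$. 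The main technical issue is justifying the base change $(f_X \times f_Y)^* i_*\cE^\vee \cong j_*\cE_{X,Y}^\vee$, which requires Tor-independence of the Cartesian square in~\eqref{diagram:vanishing}; this holds because $X \times Y$ meets $\bH(\bG,\bGv)$ in the expected codimension. Carefully tracking the $[2]$ shift from relative Serre duality, and distinguishing the left adjoint $\hPhi^*$ from the right adjoint $\hPhi^!$, is the other piece of bookkeeping that requires care.
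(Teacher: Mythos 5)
Your proposal is correct and follows essentially the same route as the paper: compute the left adjoint $\hPhi^*$ via relative Serre duality to identify the kernel of $\rF[-2]$ as $p_{XY*}\hcE^{\vee}$, simplify it using $p_*\cO_{\hX\times_{\bP(S)}\hY}\cong\cO_{\bH(X,Y)}$, and then obtain the triangle from the HPD resolution of $i_*\cE$ via Grothendieck duality for the divisor inclusion together with Tor-independence of the square in~\eqref{diagram:vanishing}. The only (immaterial) difference is the order of operations: you dualize the HPD sequence on $\bG\times\bGv$ and then pull back, whereas the paper pulls back $i_*\cE$ first and dualizes on $X\times Y$.
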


\begin{proof}
The main term in the definition~\eqref{definition:F} of $\rF$ is the left adjoint $\hPhi^*$ 
of $\hPhi$. 
By definition $\hPhi$ is given by the Fourier--Mukai kernel $\hcE \in \Db(\hX \times_{\bP(S)} \hY)$, 
so by Grothendieck duality we find that $\hPhi^*$ is given by the kernel 
\begin{equation*}
\hcE^{\vee} \otimes \omega_{\hX \times_{\bP(S)} \hY/\hY}[2] 
= \hcE^{\vee}(2h-H)[2] \in \Db(\hX \times_{\bP(S)} \hY),
\end{equation*}
where $\hcE^{\vee} = \RCHom(\hcE,\cO)$ is the derived dual of $\hcE$ on $\hX \times_{\bP(S)} \hY$. 
Using this, it follows easily from the definition of $\rF$ that $\rF[-2]$ is given by 
the kernel
\begin{equation*}
\cK \coloneqq p_{XY*}(\hcE^{\vee}) \in \Db(X \times Y). 
\end{equation*}
Using the diagram~\eqref{diagram:vanishing} and the definition of $\hcE$, 
we can rewrite this as
\begin{align*}
\cK & \cong j_*p_* \RCHom(p^*g^*\cE, \cO_{\hX\times_{\bP(S)}\hY}) \\
& \cong j_*\RCHom(g^*\cE, p_*\cO_{\hX\times_{\bP(S)}\hY}) \\ 
& \cong j_*\RCHom(g^*\cE, \cO_{\bH(X,Y)}), 
\end{align*}
where the second line holds by the local adjunction between $p^*$ and $p_*$,  
and the third by Lemma~\ref{lemma:pf:o:hx:hy}. 
Now Grothendieck duality for the inclusion $j\colon \bH(X,Y) \to X \times Y$ of the incidence 
divisor (which has class $H + H'$) gives
\begin{equation*}
\cK \cong 
j_*\RCHom(g^*\cE, j^!\cO_{X \times Y}(-H-H')[1]) \cong
\RCHom(j_*g^*\cE, \cO_{X \times Y}(-H-H')[1]). 
\end{equation*}

On the other hand, the fiber square in diagram~\eqref{diagram:vanishing} is 
$\Tor$-independent because $\bH(X,Y)$ has expected dimension.  
Hence we have an isomorphism 
\begin{equation*}
j_*g^*\cE \cong (f_X \times f_Y)^* i_*\cE,
\end{equation*}
and so, by the explicit resolution of $i_*\cE$ from Theorem~\ref{theorem-G-HPD}, a distinguished triangle 
\begin{equation*}
\cO_X \boxtimes \cV_Y \to \cU_X^{\vee} \boxtimes \cO_Y \to j_*g^*\cE. 
\end{equation*}
Dualizing, twisting by $\cO_{X \times Y}(-H-H')$, and rotating this triangle, 
we obtain a distinguished triangle
\begin{equation*}
\cU_X(-H) \boxtimes \cO_Y(-H') \to \cO_X(-H) \boxtimes \cV_Y^\vee(-H') \to 
\RCHom(j_*g^*\cE, \cO_{X \times Y}(-H-H')[1]),
\end{equation*}
which combined with the above expression for $\cK$ finishes the proof.
\end{proof}

Finally, we prove Proposition~\ref{proposition:rf:ax}. 
\begin{proof}[Proof of Proposition~\textup{\ref{proposition:rf:ax}}]
By Lemma~\ref{lemma:F:FM}, it suffices to show the Fourier--Mukai functors with kernels 
\begin{equation*}
\cU_X(-H) \boxtimes \cO_Y(-H') \qquad \text{and} \qquad 
\cO_X(-H) \boxtimes \cV_Y^\vee(-H') 
\end{equation*}
vanish on $\cA_X$. This is equivalent to the vanishing 
\begin{equation*}
\rH^{\bullet}(X, \cU_X(-H) \otimes \cF) = 0
\quad \text{and} \quad 
\rH^{\bullet}(X, \cO_X(-H) \otimes \cF) = 0
\end{equation*}
for all $\cF \in \cA_X$, which holds since $\cA_X$ is 
right orthogonal to $\cB_X(H) = \langle \cO_X(H), \cU^\vee_X(H) \rangle$ 
by definition (see~\eqref{dbx} and~\eqref{equation-definition-B}). 
\end{proof}


\section{Cubic fourfold derived partners}
\label{section-associated-cubic}

In this section, we show that the K3 categories attached to GM and cubic fourfolds not only behave similarly, 
but sometimes even coincide. For this, we will consider ordinary GM fourfolds satisfying the following 
condition: there is a $3$-dimensional subspace $V_3 \subset V_5(X)$ such that
\begin{equation}
\label{eq:GM:plane}
\G(2,V_3) \subset X.
\end{equation}

\begin{remark}
\label{remak:GM4-plane}
GM fourfolds that satisfy~\eqref{eq:GM:plane} for some $V_3$ form a \mbox{$21$-dimensional} (codimension~$3$ in moduli) family. 
This can be seen using Theorem~\ref{theorem:dk:moduli}, as follows. 
Let $V_6 = V_6(X)$. 
Then by~\cite[Theorem~4.5(c)]{debarre-kuznetsov-periods}, for a $3$-dimensional 
subspace $V_3 \subset V_6$ condition~\eqref{eq:GM:plane} holds if and only if
\begin{equation}
\label{eq:GM-plane-condition}
\dim (\sA \cap ((\wedge^2V_3) \wedge V_6)) \geq 4
\qquad\text{and}\qquad 
\bp_X \in \bP(V_3^\perp) \subset \bP(V_6^\vee). 
\end{equation}
By~\cite[Lemma~3.6]{ikkr} Lagrangian subspaces $\sA \subset \wedge^3V_6$ with no decomposable vectors 
such that the first part of~\eqref{eq:GM-plane-condition} holds for some $V_3 \subset V_6$ 
form a nonempty divisor in the moduli space of all $\sA$, and hence form a $19$-dimensional family. 
Having fixed such an $\sA$ there are finitely many points $V_3 \in \G(3,V_6)$ such that 
the first part of~\eqref{eq:GM-plane-condition} holds \cite{email}. 
By Theorem~\ref{theorem:dk:moduli}, for such a~$V_3$, the ordinary GM fourfolds $X$ such that the second part of~\eqref{eq:GM-plane-condition} 
holds are parameterized by~$\sY_{\sA^\perp}^1 \cap \bP(V_3^\perp)$.  
By~\cite[Lemma~2.3]{debarre-kuznetsov-periods} we have $\bP(V_3^\perp) \subset \sY_{\sA^\perp}$. 
Further, since~$\sY_{\sA^\perp}^{\ge 2}$ is an irreducible surface of degree 40, we have $\bP(V_3^\perp) \not\subset \sY_{\sA^\perp}^{\ge 2}$. 
Thus $\sY_{\sA^\perp}^1 \cap \bP(V_3^\perp)$ is an open subset of the projective plane $\bP(V_3^\perp)$.
\end{remark}

From now on we write $V_5 = V_5(X)$ and fix a $3$-dimensional subspace $V_3 \subset V_5$ such that~\eqref{eq:GM:plane} holds. 
We associate to $X$ a birational cubic fourfold~$X'$ 
following~\cite[\S7.2]{DIM4fold}.  
Generically $X'$ is smooth, and in this case we prove there is an equivalence 
$\cA_X \simeq \cA_{X'}$ where $\cA_{X'}$ is the K3 category of the cubic fourfold 
defined by \eqref{sod-cubic} (Theorem~\ref{theorem-associated-cubic}).
The cubic $X'$ is simply the image of the linear projection 
from the plane $\G(2,V_3)$ in~$X$. 
We begin by studying this projection as a map from the entire Grassmannian~$\bG$.

\subsection{A linear projection of the Grassmannian} 
Set
\begin{equation*} 
P = \bP(\wedge^2V_3) = \G(2,V_3) \subset \bG.
\end{equation*} 
Choose a complement $V_2$ to $V_3$ in $V_5$, and set 
\begin{equation*}
B = \wedge^2V_5 / {\wedge^2V_3} = 
\wedge^2 V_2 \oplus (V_2 \otimes V_3).
\end{equation*}
Then the linear projection from $P$ gives a birational isomorphism from $\bG$ to $\bP(B)$. 
Its structure can be described as follows.

\begin{lemma}
\label{lemma-gr-projection}
Let $p\colon\wtilde\bG \to \bG$ be the blowup with center in $P$. 
Then the linear projection from $P$ induces a regular map $q\colon\wtilde\bG \to \bP(B)$ which identifies $\bG$ 
with the blowup of $\bP(B)$ in~$\bP(V_2) \times \bP(V_3) \subset \bP(V_2 \otimes V_3) \subset \bP(B)$.
In other words, we have a diagram
\begin{equation}
\label{grdiag}
\vcenter{\xymatrix{
&
E \ar[r] \ar[dl] &
\wtilde{\bG} \ar[dl]_{p} \ar[dr]^{q}  &
E' \ar[l] \ar[dr]
\\
P \ar[r] &
\bG &&
\bP(B) &
\bP(V_2) \times \bP(V_3) \ar[l]
}}
\end{equation}
where
\begin{itemize}
\item 
$E$ is the exceptional divisor of the blowup $p$, and is mapped birationally by $q$ 
onto the hyperplane $\bP(V_2 \otimes V_3) \subset \bP(B)$. 
\item 
$E'$ is the exceptional divisor of the blowup $q$, and is mapped birationally by $p$ onto 
the Schubert variety
\begin{equation*}
\Sigma = \left \{ U \in \bG ~ | ~ U \cap V_3 \neq 0 \right \} \subset \bG
\end{equation*}
\end{itemize}
\end{lemma}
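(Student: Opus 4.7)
The plan is to construct $q$ by resolving the linear projection, identify both exceptional divisors explicitly, and conclude via the universal property of blowup combined with Zariski's main theorem.

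First, the rational map $\bG \dashrightarrow \bP(B)$ induced by linear projection from $P \subset \bP(\wedge^2V_5)$ is undefined precisely along $P$, so by general theory it extends to a regular morphism $q\colon \wtilde{\bG} \to \bP(B)$ on the blowup. For the exceptional divisor $E$, the normal bundle satisfies $N_{P/\bG}|_U \cong \Hom(U,V_5/V_3) = U^\vee \otimes V_2$, so $\dim E = 5$. Differentiating the Pl\"ucker formula along the family $U(t) = \langle v_1+t\phi(v_1), v_2+t\phi(v_2)\rangle$ and reducing modulo $\wedge^2V_3$ shows that $q|_E$ sends a normal direction $\phi\colon U \to V_2$ at $U = \langle v_1, v_2\rangle$ to
\begin{equation*}
 [v_1 \otimes \phi(v_2) - v_2 \otimes \phi(v_1)] \in \bP(V_3 \otimes V_2) .
\end{equation*}
A short linear algebra check shows this map surjects onto $\bP(V_3\otimes V_2)$ and is one-to-one off the Segre $\bP(V_3)\times\bP(V_2)$, so $q|_E$ is birational onto the hyperplane $\bP(V_2\otimes V_3) \subset \bP(B)$.

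For $\Sigma$, I would write $U = \langle v, w\rangle \in \Sigma$ with $v \in V_3$ and $w = w_2 + w_3$ where $w_2 \in V_2$, $w_3 \in V_3$. Then $v \wedge w \equiv v \wedge w_2 \pmod{\wedge^2V_3}$, so $q(U) = [v \otimes w_2] \in \bP(V_2) \times \bP(V_3)$. Conversely, solving the decomposability condition $(u + v\wedge w_2) \wedge (u + v\wedge w_2) = 0$ for $u \in \wedge^2V_3$ shows that the fiber of $q$ over $[v\otimes w_2]$ in $\bG \setminus P$ is the affine plane $\{\langle v, u + w_2\rangle : u \in V_3/\bC v\}$, and its closure in $\wtilde{\bG}$ adds a $\bP^1 \subset E$ to form a $\bP^2$. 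Over all other points of $\bP(B)$ an analogous computation yields a unique preimage, so $q$ is bijective off $E' := q^{-1}(\bP(V_2)\times\bP(V_3))$. In particular $E'$ is the proper transform of $\Sigma$, the restriction $p|_{E'}$ maps $E'$ birationally onto $\Sigma$, and $q$ contracts $E'$ onto $\bP(V_2)\times\bP(V_3)$ with $\bP^2$-fibers, matching the expected behavior of a blowup along a codimension-$3$ subvariety.

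Finally, since $E'$ is a Cartier divisor with image $\bP(V_2)\times\bP(V_3)$, the universal property of blowup produces a birational morphism $\psi\colon \wtilde{\bG} \to \Bl_{\bP(V_2)\times\bP(V_3)}\bP(B)$ of smooth projective varieties. Both exceptional divisors are $\bP^2$-bundles over $\bP(V_2)\times\bP(V_3)$, so $\psi$ contracts no divisor, and Zariski's main theorem gives the required isomorphism. The main obstacle is this last step: producing $\psi$ is routine, but verifying that it is genuinely an isomorphism requires the fiber analysis above together with the smoothness of $\wtilde{\bG}$—simply invoking the universal property does not suffice without confirming that no further birational modification intervenes between the two blowups.
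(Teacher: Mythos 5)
The paper's own ``proof'' of this lemma is the single word ``Straightforward,'' so there is nothing to compare against; your write-up supplies the details the authors omitted, and the route you take (resolve the projection on the blowup, compute $q$ on $E$ and on $\Sigma$ by differentiating the Pl\"ucker embedding and by solving the decomposability equation, then identify the two blowups) is the natural one. Your explicit computations check out: the differential of the projection on a normal direction $\phi\colon U\to V_2$ is indeed $[v_1\wedge\phi(v_2)-v_2\wedge\phi(v_1)]$, this gives a birational map $E\to\bP(V_2\otimes V_3)$ with $\bP^1$-fibers exactly over the Segre, and the decomposability condition $(v\wedge w_2+\omega)^{\wedge 2}=0$ for $\omega\in\wedge^2V_3$ does force $\omega=v\wedge u$, giving the affine plane of preimages you describe.

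Two spots deserve one more line each. First, the universal property of the blowup requires that $q^{-1}(\cI_{\bP(V_2)\times\bP(V_3)})\cdot\cO_{\wtilde\bG}$ be an \emph{invertible} ideal, which is strictly stronger than the set-theoretic preimage being the divisor $E'$ (in a regular local ring an ideal with divisorial zero locus need not be principal); you acknowledge this is the delicate step, but it still needs either a local computation of the pulled-back equations along $E'$ or a substitute such as the Fujiki--Nakano criterion or a direct construction of the inverse morphism from $\Bl_{\bP(V_2)\times\bP(V_3)}\bP(B)$. Second, the inference ``both exceptional divisors are $\bP^2$-bundles, so $\psi$ contracts no divisor'' is not quite the right reason (a priori $\psi$ could collapse each fiber); the clean argument is that $\psi$ is surjective and restricts to an isomorphism over the complement of the exceptional divisor $F$ of the target, which forces $\psi(E')=F$ to be $5$-dimensional, after which the fact that a proper birational morphism onto a smooth variety with no contracted divisor has empty exceptional locus finishes the proof. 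Neither point is a fatal gap, and your fiber analysis already contains everything needed to close them.
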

\begin{proof}
Straightforward. 
\end{proof}

We denote by $H$ and $H'$ the ample generators of $\Pic(\bG)$ and 
$\Pic(\bP(B))$. 

\begin{lemma}
On $\wtilde{\bG}$ we have the relations 
\begin{equation}
\label{divisor-relations}
\left\{ \begin{aligned}
H' & = H - E, \\
E' & = H - 2E, 
\end{aligned} \right. 
\qquad \text{or equivalently} \qquad 
\left\{ \begin{aligned}
H & = 2H'-E', \\
E & = H'-E',
\end{aligned} \right.
\end{equation}
as divisors modulo linear equivalence. Moreover, we have
\begin{equation}
\label{eq:canclass:tbg}
K_{\wtilde\bG} = -5H + 3E = -7H' + 2E'.
\end{equation} 
\end{lemma}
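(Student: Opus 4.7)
The plan is to establish the two divisor relations by analyzing $p$ and $q$ as blowup maps, then deduce the canonical class by applying the blowup formula to each of the two descriptions of $\widetilde{\bG}$ and checking consistency.

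First I would prove $H' = H - E$. Since $q \colon \widetilde{\bG} \to \bP(B)$ is induced by linear projection from $P = \bP(\wedge^2 V_3) \subset \bP(\wedge^2 V_5)$, a hyperplane in $\bP(B)$ pulls back via $q$ to the strict transform under $p$ of a general hyperplane section of $\bG$ passing through $P$ with multiplicity one. This gives $q^*\cO_{\bP(B)}(1) = p^*\cO_{\bG}(1) \otimes \cO_{\widetilde{\bG}}(-E)$, which reads as $H' = H - E$.

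Next I would prove $E' = H - 2E$. By the lemma, $p|_{E'} \colon E' \to \Sigma$ is birational, so $E'$ coincides with the strict transform $\widetilde{\Sigma}$ of the Schubert divisor $\Sigma \subset \bG$. Since $\Sigma$ is a hyperplane section of $\bG$ in its Pl\"ucker embedding, $[\Sigma] = H$, and therefore $p^*H = \widetilde{\Sigma} + m \cdot E$ where $m = \mathrm{mult}_P(\Sigma)$. The main computation is the claim $m = 2$. At a point $[U] \in P$, the normal space to $P$ in $\bG$ is $\Hom(U, V_5/V_3)$, a $4$-dimensional space parametrized by $\phi$ with $\dim U = \dim V_5/V_3 = 2$. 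The subvariety $\Sigma = \{U' : U' \cap V_3 \ne 0\}$ is locally cut out by the condition that the induced map $U' \to V_5/V_3$ be non-injective, i.e.\ by $\det(\phi) = 0$. This determinant vanishes to order $2$ at the origin, so $m = 2$, and hence $E' = H - 2E$. The inverted relations $H = 2H' - E'$ and $E = H' - E'$ then follow by solving the resulting $2 \times 2$ linear system.

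Finally, for the canonical class I would invoke the standard blowup formula on both sides of diagram~\eqref{grdiag}. Since $P \subset \bG$ has codimension $4$ and $K_{\bG} = -5H$ (as $\bG = \G(2,5)$ is Fano of index $5$), the formula for $p$ gives $K_{\widetilde{\bG}} = -5H + 3E$. Since $\dim B = 10 - 3 = 7$, we have $\bP(B) = \bP^6$ with $K_{\bP(B)} = -7H'$, and $\bP(V_2) \times \bP(V_3) \cong \bP^1 \times \bP^2$ has codimension~$3$ in $\bP(B)$, so the formula for $q$ gives $K_{\widetilde{\bG}} = -7H' + 2E'$. These two expressions agree under~\eqref{divisor-relations}, which provides a sanity check: $-7H' + 2E' = -7(H-E) + 2(H-2E) = -5H + 3E$.

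The main obstacle is the multiplicity-$2$ assertion for $\Sigma$ along $P$; all the other steps are routine applications of the blowup formula and linear algebra. Identifying the local defining equation of $\Sigma$ at a point of $P$ as the determinant of a $2\times 2$ matrix in the normal coordinates is what makes this clean.
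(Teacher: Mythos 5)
Your proof is correct, and for the relation $E' = H - 2E$ it takes a genuinely different route from the paper. The paper first establishes both expressions for the canonical class via the blowup formula (codimension $4$ center in $\bG$ with $K_{\bG} = -5H$, codimension $3$ center in $\bP(B) \cong \bP^6$ with $K_{\bP(B)} = -7H'$), takes $H' = H - E$ as immediate from the definition of the projection, and then \emph{solves} for $E'$ by equating $-5H + 3E = -7H' + 2E' = -7(H-E) + 2E'$, using that $\Pic(\wtilde{\bG}) \cong \bZ^2$ is torsion free to divide the resulting identity $2E' = 2H - 4E$ by $2$. You instead compute $E' = H - 2E$ directly: identifying $E'$ with the strict transform of the Schubert divisor $\Sigma$ (a hyperplane section of $\bG$ by the Pl\"ucker relation $\wedge^2 U \wedge \wedge^3 V_3 = 0$), and computing $\mathrm{mult}_P(\Sigma) = 2$ from the local description of $\Sigma$ as the vanishing of $\det(\phi_2)$ for $\phi_2 \in \Hom(U, V_5/V_3)$ in normal coordinates along $P$. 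Your determinantal computation is correct, and your version turns the second canonical class formula into an independent consistency check rather than an input. The paper's argument is shorter and avoids the multiplicity computation entirely; yours is more geometric and explains \emph{why} the coefficient of $E$ is $2$, at the cost of the local analysis of $\Sigma$ along $P$.
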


\begin{proof}
The equalities~\eqref{eq:canclass:tbg} follow from the standard formula 
for the canonical class of a blowup, and 
the equality $H' = H - E$ holds by definition of $p$. 
Using these, the other equalities in~\eqref{divisor-relations} follow 
directly (note that $\Pic(\wtilde{\bG}) \cong \bZ^2$ is torsion free). 
\end{proof}

Later in this section we will need an expression for the vector bundle 
$p^*\cU^{\vee}$ on $\wtilde{\bG}$ in terms of the blowup $q$.
For this, we consider the composition 
\begin{multline*}
\phi\colon (V_2^\vee \oplus V_3^\vee) \otimes \cO_{\bP(B)} 
\hookrightarrow
V_2 \otimes V_2^\vee \otimes (V_2^\vee \oplus V_3^\vee) \otimes \cO_{\bP(B)} \to \\ \to
V_2 \otimes (\wedge^2 V_2^\vee  \oplus (V_2^\vee \otimes V_3^\vee)) \otimes \cO_{\bP(B)} 
\to V_2 \otimes \cO_{\bP(B)}(H'),
\end{multline*}
where the first morphism is induced by the map $\bC \to V_2 \otimes V_2^{\vee}$ corresponding 
to the identity of $V_2$, 
the second is induced by the map $V_2^{\vee} \otimes V_2^{\vee} \to \wedge^2 V_2^{\vee}$, 
and the third is induced by the composition
\begin{equation*}
(\wedge^2V_2^\vee \oplus (V_2^\vee \otimes V_3^\vee)) \otimes \cO_{\bP(B)} 
= B^\vee \otimes \cO_{\bP(B)} \to 
\cO_{\bP(B)}(H').
\end{equation*}

\begin{lemma}
\label{lemma-cokernel-phi}
The cokernel of $\phi$ is the sheaf $\cO_{\bP(V_2) \times \bP(V_3)}(2,1)$.
\end{lemma}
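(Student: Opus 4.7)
The plan is to analyze $\phi$ pointwise, identify $\coker(\phi)|_Z$ by a determinant calculation, and then check by a local computation that the cokernel has reduced scheme-theoretic support on $Z$.

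First, I would unwind the definition of $\phi$: at a point $b = (\omega, \beta) \in B$ with $\omega \in \wedge^2 V_2$ and $\beta \in V_2 \otimes V_3$, the restriction $\phi_b|_{V_2^\vee}$ is the contraction $v^* \mapsto \omega \lrcorner v^* \in V_2$ (an isomorphism $V_2^\vee \to V_2$ when $\omega \ne 0$), while $\phi_b|_{V_3^\vee}$ is $v^* \mapsto \beta(v^*)$ under the identification $V_2 \otimes V_3 = \Hom(V_3^\vee, V_2)$. So $\phi_b$ fails to be surjective exactly when $\omega = 0$ and $\rank \beta \le 1$, and since $b \ne 0$ on $\bP(B)$, this is the locus $Z = \bP(V_2) \times \bP(V_3)$ embedded in $\bP(B)$ via the Segre embedding composed with the linear inclusion $\bP(V_2 \otimes V_3) \hookrightarrow \bP(B)$. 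At $(v_2, v_3) \in Z$, one finds $\im(\phi_b) = \bC v_2 \otimes \cO_{\bP(B)}(1)_{[b]}$.

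Using $\cO_{\bP(B)}(1)|_Z \cong \cO_Z(1, 1)$, the sub line bundle $\im(\phi)|_Z \subset V_2 \otimes \cO_Z(1,1)$ is then $\cO_Z(-1,0) \otimes \cO_Z(1,1) = \cO_Z(0, 1)$. Taking determinants in
\begin{equation*}
0 \to \cO_Z(0, 1) \to V_2 \otimes \cO_Z(1,1) \to \coker(\phi)|_Z \to 0
\end{equation*}
gives $\coker(\phi)|_Z \cong \det V_2 \otimes \cO_Z(2, 1) \cong \cO_Z(2, 1)$.

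Finally, to upgrade this to an isomorphism $\coker(\phi) \cong \iota_* \cO_Z(2, 1)$ with $\iota\colon Z \hookrightarrow \bP(B)$, I would do a local computation on the affine chart $\{\beta_{11} = 1\} \subset \bP(B)$ with coordinates $\omega, \beta_{12}, \beta_{13}, \beta_{21}, \beta_{22}, \beta_{23}$. Writing out the $2 \times 5$ matrix of $\phi$ and using the column $(1, \beta_{21})^\top$ to eliminate $e_1 = -\beta_{21} e_2$ in the cokernel shows that $\coker(\phi)$ is locally cut out by the ideal $(\omega,\ \beta_{22} - \beta_{12}\beta_{21},\ \beta_{23} - \beta_{13}\beta_{21})$, which is precisely the ideal of $Z$ in this chart. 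Hence $\coker(\phi)$ is scheme-theoretically supported on $Z$ with reduced structure, which combined with the previous step yields the claim. The main subtlety is this last step: the rank-$\le 1$ locus of $\phi$ has codimension $3$ in $\bP(B)$, one less than the generically expected codimension $4$, so a priori $\coker(\phi)$ might carry an infinitesimal thickening along $Z$, and the explicit local calculation is needed to rule this out.
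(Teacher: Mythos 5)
Your proof is correct, and it takes a more computational route than the paper's. The paper splits $\phi$ into its two components $\phi'\colon V_2^\vee\otimes\cO\to V_2\otimes\cO(H')$ and $\phi''\colon V_3^\vee\otimes\cO\to V_2\otimes\cO(H')$: since $\phi'$ is an isomorphism off the hyperplane $\bP(V_2\otimes V_3)$ and vanishes on it, $\coker(\phi')=V_2\otimes\cO_{\bP(V_2\otimes V_3)}(H')$, so $\coker(\phi)=\coker(\phi''|_{\bP(V_2\otimes V_3)})$; the latter is a map of bundles of ranks $3\to 2$ whose degeneracy locus (the Segre) has the expected codimension $2$, and the factorization $V_3^\vee\otimes\cO\twoheadrightarrow\cO(0,1)\hookrightarrow V_2\otimes\cO(1,1)$ then identifies the cokernel as $\cO(2,1)$. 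You instead work with $\phi$ directly; as you rightly observe, its rank-$\le 1$ locus has codimension $3$ rather than the expected $4$, so the generic determinantal machinery does not apply to $\phi$ itself and the scheme structure of the support must be checked by hand — which your chart computation does correctly (the annihilator ideal $(\omega,\ \beta_{22}-\beta_{12}\beta_{21},\ \beta_{23}-\beta_{13}\beta_{21})$ is indeed the ideal of $Z$ on that chart, and the remaining charts covering $Z$ follow by the $\mathrm{GL}(V_2)\times\mathrm{GL}(V_3)$-symmetry, which you should mention). The paper's two-step reduction buys a cleaner, coordinate-free treatment of exactly this support issue; your version buys an explicit presentation of the cokernel that makes the reducedness completely transparent. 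Your identification of $\coker(\phi)|_Z$ via the subbundle $\cO_Z(0,1)\subset V_2\otimes\cO_Z(1,1)$ and determinants agrees with the paper's factorization argument.
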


\begin{proof}
Write 
\begin{align*}
\phi' & \colon  V_2^\vee \otimes \cO_{\bP(B)} \to V_2 \otimes \cO_{\bP(B)}(H'), \\
\phi'' & \colon V_3^\vee \otimes \cO_{\bP(B)} \to V_2 \otimes \cO_{\bP(B)}(H'),
\end{align*}
for the components of $\phi$. 
The morphism $\phi'$ is an isomorphism away from the hyperplane $\bP(V_2 \otimes V_3) \subset \bP(B)$, 
and zero on it. 
Hence $\coker(\phi') = V_2 \otimes \cO_{\bP(V_2 \otimes V_3)}(H')$. 
It follows that the cokernel of $\phi$ coincides with the cokernel of the morphism   
\begin{equation*}
\phi''_{|\bP(V_2 \otimes V_3)}\colon V_3^\vee \otimes \cO_{\bP(V_2 \otimes V_3)} \to V_2 \otimes \cO_{\bP(V_2 \otimes V_3)}(H').
\end{equation*}
But the morphism $\phi''_{|\bP(V_2 \otimes V_3)}$ is generically surjective with degeneracy 
locus the Segre subvariety $\bP(V_2) \times \bP(V_3) \subset \bP(V_2 \otimes V_3)$, and 
its restriction to this locus factors as the composition 
\begin{equation*}
V_3^\vee  \otimes \cO_{\bP(V_2)\times \bP(V_3)} \twoheadrightarrow \cO_{\bP(V_2)\times \bP(V_3)}(0,1) \hookrightarrow V_2 \otimes \cO_{\bP(V_2)\times \bP(V_3)}(1,1) = V_2 \otimes \cO_{\bP(V_2)\times \bP(V_3)}(H').
\end{equation*}
It follows that the cokernel of $\phi''_{|\bP(V_2 \otimes V_3)}$ is isomorphic to $\cO_{\bP(V_2)\times \bP(V_3)}(2,1)$. 
\end{proof}

Let $F$ denote the class of a fiber of the natural projection $E' \to \bP(V_2) \times \bP(V_3) \to \bP(V_2)$.
\begin{proposition}
On $\wtilde{\bG}$ there is an exact sequence
\begin{equation}
\label{us}
0 \to p^*\cU^\vee \to V_2 \otimes \cO_{\wtilde{\bG}}(H') \to \cO_{E'}(H' + F) \to 0.
\end{equation} 
\end{proposition}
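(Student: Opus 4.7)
The plan is to construct $\psi$ from natural data on $\bG$, verify its injectivity via a determinant computation, and identify the cokernel by comparing with the map $\phi$ of Lemma~\ref{lemma-cokernel-phi}.

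To construct $\psi$: the composition $\alpha\colon V_2^\vee \otimes \cO_\bG \hookrightarrow V_5^\vee \otimes \cO_\bG \twoheadrightarrow \cU^\vee$, using the splitting $V_5 = V_2 \oplus V_3$ and the tautological surjection, vanishes along $P = \G(2,V_3)$, since $V_2^\vee$ annihilates $V_3 \supset U$ for every $U \in P$. Hence $p^*\alpha$ vanishes along $E = p^{-1}(P)$ and factors as $V_2^\vee \otimes \cO_{\wtilde\bG}(E) \to p^*\cU^\vee$. Dualizing, tensoring with $\cO(H)$, and using the canonical isomorphism $p^*\cU(H) \cong p^*\cU^\vee$ (since $\det\cU = \cO(-H)$) yield $\psi\colon p^*\cU^\vee \to V_2 \otimes \cO(H')$, via $H - E = H'$.

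For injectivity, observe that $\det(\psi)$ is a section of $\cO(2H' - H) = \cO(E')$. Since $\alpha$ is an isomorphism at any $U \in \bG$ with $U \cap V_3 = 0$ (a dense open), $\psi$ is a generic isomorphism, so $\det(\psi) \neq 0$. As $E'$ is irreducible, the vanishing divisor of $\det(\psi)$ is exactly $E'$ (with multiplicity one), so $\psi$ is injective with cokernel a line bundle $\cL$ on $E'$.

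To identify $\cL$: let $\mu\colon V_5^\vee \otimes \cO_{\wtilde\bG} \twoheadrightarrow p^*\cU^\vee$ denote the pullback of the tautological surjection. A direct stalkwise computation shows that on the open dense subset $\wtilde\bG \setminus (E \cup E')$, where $p$ and $q$ are both isomorphisms and $\cO(H)$ agrees with $\cO(H')$, the maps $\psi \circ \mu$ and $q^*\phi$ agree up to composition with the linear automorphism of $V_5^\vee$ that is the identity on $V_2^\vee$ and $-1$ on $V_3^\vee$; by density this relationship extends to all of $\wtilde\bG$. Since such source automorphisms preserve images and $\mu$ is surjective, the images of $\psi$, $\psi \circ \mu$, and $q^*\phi$ all coincide, so
\[
\cL = \coker(\psi) = \coker(q^*\phi) = q^*\cO_{\bP(V_2) \times \bP(V_3)}(2,1) = \pi^*\cO_Z(2,1),
\]
using right exactness of $q^*$ and the fact that $q^{-1}(Z) = E'$ scheme-theoretically (with $\pi\colon E' \to Z := \bP(V_2) \times \bP(V_3)$). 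The identifications $H'|_{E'} = \pi^*\cO_Z(1,1)$ (from $E' \hookrightarrow \wtilde\bG \xrightarrow{q} \bP(B)$ restricted to $Z$) and $F = \pi^*\cO_Z(1,0)$ (by the definition of $F$) give $\pi^*\cO_Z(2,1) = \cO_{E'}(H' + F)$, completing the proof.

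The main technical obstacle is the stalkwise comparison of $\psi \circ \mu$ and $q^*\phi$, which requires unwinding the tautological constructions on both $\bG$ and $\bP(B)$ together with the identification $\cO_\bG(H)|_U \cong (\det U)^\vee$. The resulting sign discrepancy on the $V_3^\vee$-summand, though tedious to verify, is absorbed by a source automorphism and hence does not affect the image or the cokernel.
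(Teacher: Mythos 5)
Your argument is correct and rests on the same key input as the paper's --- Lemma~\ref{lemma-cokernel-phi} and the resulting presentation $V_5^\vee\otimes\cO \to V_2\otimes\cO(H') \to \cO_{Z}(2,1)\to 0$ pulled back along $q$ --- but you identify the rank-$2$ kernel differently. The paper works ``top-down'': it pulls back the sequence, observes that the kernel $\cK$ of $V_2\otimes\cO_{\wtilde\bG}(H')\to\cO_{E'}(H'+F)$ is a rank-$2$ locally free quotient of $V_5^\vee\otimes\cO_{\wtilde\bG}$, and invokes the universal property of $\bG$ to see that $\cK$ classifies a morphism $\wtilde\bG\to\bG$, which is then checked to be $p$. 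You work ``bottom-up'': you build $\psi\colon p^*\cU^\vee\to V_2\otimes\cO(H')$ by hand from the vanishing of $V_2^\vee\otimes\cO\to\cU^\vee$ along $P$, prove injectivity via $\det\psi$, and match images with $q^*\phi$. The two proofs defer essentially the same verification: your ``direct stalkwise computation'' that $\psi\circ\mu$ and $q^*\phi$ agree up to a source automorphism is exactly the content of the paper's ``this morphism can be checked to agree with $p$'' (concretely, that $\ker\phi$ at the point of $\bP(B)$ corresponding to $U\in\bG$ is $U^\perp$), so neither route is more rigorous at the crux. The paper's version is slightly leaner because the universal-property argument produces the isomorphism $\cK\cong p^*\cU^\vee$ in one stroke, with no separate injectivity or determinant step.

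Two small remarks. First, your justification that $\operatorname{div}(\det\psi)=E'$ with multiplicity one (``as $E'$ is irreducible'') is not sufficient as stated: the degeneracy locus of $\psi$ is a priori contained in $p^{-1}(\Sigma)=E\cup E'$, and one should note that the only effective divisor supported there in the class $E'=H-2E$ is $E'$ itself, using $\Pic(\wtilde\bG)=\bZ H\oplus\bZ E$. However, this claim (and the a priori assertion that the cokernel is a line bundle on $E'$) is redundant in your argument: once $\det\psi\ne 0$ gives injectivity and the image of $\psi$ is identified with that of $q^*\phi$, the cokernel is computed directly as $q^*\coker(\phi)=\cO_{E'}(H'+F)$. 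Second, your use of right-exactness of $q^*$ together with $q^{-1}(Z)=E'$ scheme-theoretically to get $q^*(\cO_Z(2,1))=\pi^*\cO_Z(2,1)$ is fine, but note that $\cO_Z(2,1)$ does not extend to a line bundle on $\bP(B)$, so this step really does require the base-change identity $q^*i_*\cF\cong i'_*\pi^*\cF$ for the fiber square over the blowup center, not merely a projection formula.
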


\begin{proof}
By Lemma~\ref{lemma-cokernel-phi}, we have an exact sequence
\begin{equation*}
V_5^\vee \otimes \cO_{\bP(B)} \xrightarrow{\, \phi \,} V_2 \otimes \cO_{\bP(B)}(H') \to 
\cO_{\bP(V_2) \times \bP(V_3)}(2,1) \to 0.
\end{equation*}
Pulling back to $\wtilde{\bG}$, we obtain an exact sequence
\begin{equation*}
V_5^\vee \otimes \cO_{{\wtilde{\bG}}} \to V_2 \otimes \cO_{{\wtilde{\bG}}}(H') \to \cO_{E'}(H' + F) \to 0,
\end{equation*} 
Since $E'$ is a divisor on $\wtilde\bG$, the kernel $\cK$ of the epimorphism $V_2 \otimes \cO_{{\wtilde{\bG}}}(H') \to \cO_{E'}(H' + F)$ is 
a rank $2$ vector bundle on~$\wtilde{\bG}$, which by the above exact sequence 
is a quotient of the trivial bundle $V_5^\vee \otimes \cO_{\wtilde{\bG}}$. 
Hence $\cK$ induces a morphism $\wtilde{\bG} \to \bG$. 
This morphism can be checked to agree with the blowdown morphism~$p$, so 
$\cK \cong p^*\cU^\vee$. 
\end{proof}

\subsection{Setup and statement of the result}
Recall that $X$ is an ordinary GM fourfold containing the plane $P = \G(2,V_3)$. 
The following proposition describes the structure of the rational map from 
$X$ to $\bP^5$ given by projection from $P$. 
We slightly abuse notation by using the same symbols for 
the exceptional divisors and blowup morphisms as in the 
above discussion of~$\bG$.

\begin{proposition}
\label{proposition-blowup-X}
Let $p \colon \wtilde{X} \to X$ be the blowup with center in $P$. 
Then the linear projection from $P$ induces a regular map 
$q\colon \wtilde{X} \to X'$ to a cubic fourfold $X'$ containing a smooth cubic surface scroll\/~$T$, 
and identifies $\wtilde{X}$ as the blowup of $X'$ in $T$. 
In other words, we have a diagram 
\begin{equation*}
\vcenter{\xymatrix{
&
E \ar[r]^{i} \ar[dl]_{p_E} &
\wtilde{X} \ar[dl]_{p} \ar[dr]^{q} &
E' \ar[l]_{j} \ar[dr]^{q_{E'}} 
\\
P \ar[r] &
X &&
X' &
T \ar[l]
}}
\end{equation*}
where $p$ and $q$ are blowups with exceptional 
divisors $E$ and $E'$. 
Moreover, the relations~\eqref{divisor-relations} continue to hold on 
$\wtilde{X}$, and 
\begin{equation}\label{eq:canclass:tx}
K_{\wtilde{X}} = -2H + E = -3H' + E'.
\end{equation}
Finally, if $X$ does not contain planes of the form $\bP(V_1 \wedge V_4)$ where $V_1 \subset V_3 \subset V_4 \subset V_5$, then~$X'$ is smooth.
\end{proposition}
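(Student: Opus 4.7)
The plan is to apply Lemma~\ref{lemma-gr-projection} to the ambient Grassmannian and restrict the diagram there to $X$. Let $\wtilde X \subset \wtilde\bG$ denote the strict transform of $X$ under $p$. Since $P \subset X$ is a smooth surface inside the smooth fourfold $X$, this strict transform coincides with $\Bl_P X$, and the restriction of $p$ to $\wtilde X$ realizes this blowup with exceptional divisor $E \cap \wtilde X$. Because $X \subset M' = \bG \cap \bP(W)$ and $P \subset \bP(W)$, the restriction of $q$ to $\wtilde X$ factors through the hyperplane $\bP(B') \subset \bP(B)$, where $B' = W/\wedge^2V_3$, so $\bP(B') \cong \bP^5$. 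Set $X' \coloneqq q(\wtilde X) \subset \bP(B')$.

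First I would show that $X'$ is a cubic hypersurface. Since $q\colon \wtilde\bG \to \bP(B)$ is birational, so is $q|_{\wtilde X}\colon \wtilde X \to X'$, and hence $X'$ is a hypersurface. To compute its degree, note that $\wtilde X$ is cut out in $\wtilde\bG$ by the strict transforms of $M'$ (in divisor class $H-E = H'$) and of the quadric section $Q$ of $M'$ (in class $2H-E = H+H'$, with multiplicity one along $P$ by smoothness of $X$), so $[\wtilde X] = H'(H+H') \in \CH^2(\wtilde\bG)$. Then $\deg X' = (H')^4 \cdot [\wtilde X] = (H')^5 \cdot H + (H')^6$, and using that $q$ is birational onto $\bP(B) = \bP^6$ and contracts $E$ onto the hyperplane $\bP(V_2 \otimes V_3)$ (Lemma~\ref{lemma-gr-projection}), the projection formula gives $(H')^6 = 1$ and $(H')^5 \cdot E = 1$, hence $\deg X' = 3$.

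Next, to realize $q|_{\wtilde X}$ as the blowup of $X'$ along a smooth cubic scroll $T$: by Lemma~\ref{lemma-gr-projection}, $q\colon \wtilde\bG \to \bP(B)$ is itself the blowup of $\bP(B)$ along the Segre variety $\bP(V_2) \times \bP(V_3)$, so restricting to the subvariety $X' \subset \bP(B')$ and verifying transversality identifies $q|_{\wtilde X}$ as the blowup of $X'$ along $T \coloneqq X' \cap (\bP(V_2) \times \bP(V_3))$. One then checks that $T$ coincides with the intersection $\bP(B') \cap (\bP(V_2) \times \bP(V_3))$, a bidegree $(1,1)$ divisor on $\bP^1 \times \bP^2$ spanning a $\bP^4 \subset \bP(B')$, hence a smooth cubic scroll. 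The divisor relations~\eqref{divisor-relations} persist on $\wtilde X$ by pullback from $\wtilde\bG$, and the canonical class formula~\eqref{eq:canclass:tx} follows via adjunction from the two blowup descriptions: $\wtilde X = \Bl_P X$ together with $K_X = -2H$ (from~\eqref{eq:omega-x}) gives $K_{\wtilde X} = -2H + E$, while $\wtilde X = \Bl_T X'$ together with $K_{X'} = -3H'$ gives $K_{\wtilde X} = -3H' + E'$.

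The hardest step is smoothness of $X'$ under the stated hypothesis. Since $q|_{\wtilde X}$ is an isomorphism away from $E' \cap \wtilde X$, any singularity of $X'$ must come from a $2$-plane $P' \subset X$ that is contracted to a point by projection from $P$; this happens precisely when $\dim(P \cap P') \geq 1$, as only then does the linear span $\langle P, P' \rangle$ have dimension $\leq 3$, forcing $P'$ to collapse modulo $P$. Classifying the $2$-planes in $\bG = \G(2,V_5)$ by Schubert type shows that the planes $P'$ with $\dim(P \cap P') \geq 1$ are exactly those of the form $\bP(V_1 \wedge V_4)$ with $V_1 \subset V_3 \subset V_4 \subset V_5$, so the stated hypothesis precisely excludes these configurations and yields the smoothness of $X'$.
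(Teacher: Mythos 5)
Your overall route --- restrict the diagram of Lemma~\ref{lemma-gr-projection} to $X$, compute the degree of the image of the strict transform, and identify $q|_{\wtilde X}$ as the blowup of $X'$ along $T$ --- is essentially the paper's, and the degree computation via $[\wtilde X]=H'(H+H')$ is a legitimate substitute for the paper's cleaner observation that the strict transform of $X$ lies in $|3H'-E'|$ on $\wtilde M\cong \Bl_T\bP(C)$. But two steps are genuinely missing. First, the smoothness of $T=\bP(B')\cap(\bP(V_2)\times\bP(V_3))$ is not automatic: a bidegree $(1,1)$ divisor on $\bP(V_2)\times\bP(V_3)$ is a smooth cubic scroll only when its equation, viewed as an element of $V_2^\vee\otimes V_3^\vee\cong\Hom(V_3,V_2^\vee)$, has rank $2$; if the rank is $\le 1$ the divisor is reducible (or the hyperplane $\bP(B')$ contains the whole Segre variety, so $T$ is not even a surface). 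The paper excludes rank $\le 1$ by showing that the kernel of such a form would be a $\ge 2$-dimensional subspace of $V_3$ contained in the kernel of the skew form $\omega$ defining $W$, forcing $\rank(\omega)=2$ and hence making the Grassmannian hull $M_X=\bG\cap\bP(W)$ singular along $\G(2,\ker\omega)$ and $X$ singular along $\G(2,\ker\omega)\cap Q$ --- contradicting smoothness of $X$. Without this, the entire blowup picture ($T$ smooth of codimension $2$, $E'\cap\wtilde X$ a $\bP^1$-bundle) is unfounded.

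Second, your smoothness argument for $X'$ does not go through as stated. The claim that ``any singularity of $X'$ must come from a $2$-plane contracted to a point'' is unjustified: a birational morphism from a smooth fourfold contracting only one-dimensional fibers can perfectly well have singular image (small contractions and ADE-type contractions do exactly this), so excluding contracted planes does not by itself yield smoothness. The paper's argument supplies the missing bridge: $X'$ is smooth away from $T$ since $\Bl_T X'=\Bl_P X$ is smooth and $q$ is an isomorphism there; along $T$, since $T$ is smooth it suffices to show $T\subset X'$ is a local complete intersection, which follows once the exceptional divisor $E'\cap\wtilde X\to T$ is a $\bP^1$-bundle. Its fibers are linear subspaces of the $\bP^2$-fibers of $E'\to\bP(V_2)\times\bP(V_3)$, so the only failure mode is a $\bP^2$-fiber, which via $p$ is precisely a plane $\bP(V_1\wedge V_4)\subset X$ with $V_1\subset V_3\subset V_4$. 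Your identification of the excluded planes (those meeting $P$ in a line) is correct, but you need this local-complete-intersection step to convert ``no such planes'' into ``$X'$ is smooth''.
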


\begin{proof}
By \S\ref{subsection-classification}  
there is a hyperplane $\bP(W) \subset \bP(\wedge^2V_5)$ and 
a quadric hypersurface $Q \subset \bP(W)$ such that $X = \bG \cap Q$ 
and $P \subset Q$. 
Consider the subspace
\begin{equation*}
C = W/{\wedge^2V_3} \subset \wedge^2V_5/{\wedge^2V_3} = B,
\end{equation*}
so that $\bP(C) \subset \bP(B)$ is a hyperplane. 
We claim that the corresponding hyperplane section 
\begin{equation*}
T = (\bP(V_2) \times \bP(V_3)) \cap \bP(C)
\end{equation*}
of $\bP(V_2) \times \bP(V_3) \subset \bP(B)$ is a smooth cubic surface scroll. 
For this it is enough to show that~$\bP(C) \cap \bP(V_2 \otimes V_3)$ is a 
hyperplane in $\bP(V_2 \otimes V_3)$ whose equation, considered as an element 
in $V_2^\vee \otimes V_3^\vee \cong \Hom(V_3,V_2^\vee)$, has rank 2.
Assume on the contrary that the rank of this equation is at most 1.
Then its kernel is a subspace of $V_3$ of dimension at least~2, which is contained 
in the kernel of the skew form $\omega$ on $V_5$ defining $W$. 
So the rank of $\omega$ is $2$.
But then the Grassmannian hull $M_X = \bG \cap \bP(W)$ of $X$ is singular 
along $\bP^2 = \G(2, \ker(\omega))$, and~$X$ is singular along $\bP^2 \cap Q$.
This contradiction proves the claim.

The proper transform of the Grassmannian hull $M = M_X$ under 
$p \colon \wtilde\bG \to \bG$ coincides with the proper transform 
of $\bP(C)$ under $q \colon \wtilde\bG \to \bP(B)$. 
Thus if $\wtilde{M} = \Bl_P(M) \to M$ is the blowup in $P$, 
then projection from $P$ gives an identification 
$\wtilde{M} \cong \Bl_T(\bP(C)) \to \bP(C)$. 
Further, the proper transform of 
\mbox{$X = M \cap Q$} under $\wtilde{M} \to M$ is cut out by a section of 
the line bundle
\begin{equation*}
\cO_{\wtilde{M}}(2H - E) = \cO_{\wtilde{M}}(3H'-E'), 
\end{equation*}
and therefore coincides with the proper transform under $\wtilde{M} \to \bP(C)$ 
of a cubic fourfold \mbox{$X' \subset \bP(C)$} containing~$T$.  
This proves the first part of the lemma. 

The relations~\eqref{divisor-relations} clearly restrict to $\wtilde{X}$,
and the equalities~\eqref{eq:canclass:tx} follow from the standard 
formula for the canonical class of a blowup. 

It remains to show that $X'$ is smooth if $X$ does not contain planes of the form $\bP(V_1 \wedge V_4)$ where $V_1 \subset V_3 \subset V_4 \subset V_5$. 
For this, first note that the blowup of $X'$ in $T$ is smooth, 
since it coincides with the blowup of $X$ in $P$.
Therefore, $X'$ is smooth away from $T$. On the other hand, $T$ is also smooth, 
so it is enough to check that $T \subset X'$ is a locally complete intersection,
i.e.\ that its conormal sheaf is locally free. Since $E' \to T$ is the exceptional divisor 
of the blowup of $X'$ in $T$, it is enough to check that the map $E' \to T$ is a $\bP^1$-bundle.
Since $E'$ is cut out in the exceptional divisor of~\eqref{grdiag} by fiberwise linear conditions, 
it is enough to show that there are no points in $T \subset \bP(V_2) \times \bP(V_3)$
over which the fiber of $E'$ is isomorphic to $\bP^2$. 
But such a point would correspond to a choice of a $V_1 \subset V_3$ (giving a point in $\bP(V_3)$) 
and $V_3 \subset V_4$ (giving a point of $\bP(V_5/V_3) = \bP(V_2)$), such that the plane 
$\bP(V_1 \wedge V_4)$ is in~$X$. Since we assumed there are no such planes in~$X$, we 
conclude that $X'$ is smooth. 
\end{proof}

The condition guaranteeing smoothness of $X'$ in the final statement of 
Proposition~\ref{proposition-blowup-X} holds generically: 

\begin{lemma}
If $X$ is a general ordinary GM fourfold containing $P = \G(2,V_3)$ 
for some $V_3 \subset V_5$,  
then $X$ does not contain planes of the form $\bP(V_1 \wedge V_4)$ 
where $V_1 \subset V_3 \subset V_4 \subset V_5$.
\end{lemma}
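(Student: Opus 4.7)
The plan is to prove the lemma by a dimension count on a universal family of pairs (GM fourfold containing $P$, candidate plane $\bP(V_1 \wedge V_4)$). Fix $V_3 \subset V_5$ and let
\[
\cW = \{(W,Q) : W \subset \wedge^2 V_5 \text{ a hyperplane containing } \wedge^2V_3,\ Q \subset \bP(W) \text{ a quadric containing } P\}
\]
parameterize realizations $X = \bG \cap \bP(W) \cap Q$ of ordinary GM fourfolds containing $P = \bP(\wedge^2V_3)$. The space $\cW$ is irreducible: its projection to the $\bP^6$ of hyperplanes $W \supset \wedge^2V_3$ is a projective bundle whose fiber is the $\bP^{38}$ of quadrics on $\bP(W) \cong \bP^8$ containing $P$. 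Let $\cN = \bP(V_3) \times \bP(V_5/V_3)$, the $3$-dimensional parameter space for the pairs $(V_1,V_4)$ appearing in the lemma, and form the incidence variety
\[
\cI = \{((W,Q),(V_1,V_4)) \in \cW \times \cN : \bP(V_1 \wedge V_4) \subset \bG \cap \bP(W) \cap Q\}.
\]
The goal is to show the projection $\cI \to \cW$ is not dominant; combined with the openness of the smoothness condition on $\cW$, this yields the lemma.

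For the key dimension count, fix $(V_1,V_4) \in \cN$ and compute the codimension of the corresponding fiber in $\cW$. Since $V_1 \subset V_3$, an elementary check (writing elements in a basis adapted to the flag $V_1 \subset V_3 \subset V_4$) gives $\wedge^2 V_3 \cap (V_1 \wedge V_4) = V_1 \wedge V_3$, which is $2$-dimensional; hence $\wedge^2V_3 + V_1 \wedge V_4$ has dimension $4$. Therefore the condition $V_1 \wedge V_4 \subset W$ cuts the $\bP^6$ of hyperplanes containing $\wedge^2V_3$ down to a $\bP^5$, contributing codimension $1$. Next, $P \cap \bP(V_1\wedge V_4) = \bP(V_1\wedge V_3) \cong \bP^1$, so given $Q|_P = 0$ the further condition $Q|_{\bP(V_1\wedge V_4)} = 0$ is governed by the kernel of the surjective restriction $\Sym^2(V_1\wedge V_4)^\vee \to \Sym^2(V_1\wedge V_3)^\vee$, a kernel of dimension $6 - 3 = 3$. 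The conditions on $W$ and on $Q$ live on independent factors of $\cW$, so they superimpose and the fiber of $\cI \to \cN$ over $(V_1,V_4)$ has codimension exactly $1 + 3 = 4$ in $\cW$.

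Combining these counts yields $\dim \cI \le \dim \cN + (\dim \cW - 4) = \dim \cW - 1$, so the image of $\cI \to \cW$ is a proper closed subset. Since the smooth GM fourfolds form a nonempty open subset of $\cW$, a general smooth ordinary GM fourfold containing $P$ lies outside this image and thus contains no plane of the form $\bP(V_1 \wedge V_4)$ with $V_1 \subset V_3 \subset V_4 \subset V_5$. The main (and essentially only) obstacle is the bookkeeping in the codimension count above; all inputs are elementary, the crux being the identification $\wedge^2 V_3 \cap (V_1 \wedge V_4) = V_1 \wedge V_3$ and the surjectivity of the restriction of symmetric squares, both of which follow from the flag inclusions.
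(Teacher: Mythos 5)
Your proof is correct, and it takes a genuinely different route from the one in the paper. The paper argues entirely on the EPW side: it translates ``$X \supset \G(2,V_3)$'' and ``$X \supset \bP(V_1\wedge V_4)$'' into conditions on the Lagrangian data $(\sA(X),\bp_X)$ via Theorems~4.5(c) and~4.3(c) of \cite{debarre-kuznetsov-periods}, and then invokes \cite[Lemmas~3.6 and~3.7]{ikkr}, which say that the divisor $\Gamma\subset\LGr(10,\wedge^3V_6)$ of Lagrangians admitting a $V_3$ with $\dim(\sA\cap(\wedge^2V_3\wedge V_6))\ge 4$ and the divisor $\Delta$ of Lagrangians with $\sY_\sA^3\ne\varnothing$ have no common component; choosing $\sA\in\Gamma\setminus\Delta$ then kills all planes $\bP(V_1\wedge V_4)$ at once for the whole $2$-dimensional family of fourfolds attached to $\sA$. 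Your argument instead works directly with the defining data $(W,Q)$ and an incidence variety over $\bP(V_3)\times\bP(V_5/V_3)$; the numerology checks out ($\dim\cW=6+38=44$, the fiber over $(V_1,V_4)$ is a $\bP^{35}$-bundle over a $\bP^5$, hence codimension $1+3=4$, and $3+40=43<44$), and the key linear-algebra facts ($\wedge^2V_3\cap(V_1\wedge V_4)=V_1\wedge V_3$ and the independence of the $6+6-3=9$ vanishing conditions on $\Sym^2W^\vee$) are easily verified in adapted coordinates. The trade-off: the paper's proof is shorter and stays within the EPW/moduli framework used throughout \S5 (and yields the extra information that the avoidance can be arranged uniformly in $\sA$), but it leans on nontrivial external inputs; yours is elementary and self-contained, at the cost of the incidence-variety bookkeeping and a routine (and correctly handled) passage from ``general $(W,Q)$'' to ``general $X$''.
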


\begin{proof}
By Theorem~\ref{theorem:dk:moduli}, an ordinary GM fourfold $X$ corresponds to a pair $(\sA, \bp)$ 
such that~$\sA$ has no decomposable vectors and $\bp \in \sY_{\sA^\perp}^1$.
By Remark~\ref{remak:GM4-plane}, $X$ contains the plane $\G(2,V_3)$ if and only if~\eqref{eq:GM-plane-condition} holds.
Similarly, by~\cite[Theorem~4.3(c)]{debarre-kuznetsov-periods}, $X$ contains a plane 
$\bP(V_1 \wedge V_4)$ if and only if $\sY^3_\sA \cap \bP(V_5) \ne \emptyset$.

By~\cite[Lemma~3.6]{ikkr} Lagrangians $\sA \subset \wedge^3V_6$ with no decomposable vectors 
such that there is $V_3 \subset V_6$ for which the first part of~\eqref{eq:GM-plane-condition} holds
are parameterized by an open subset of a divisor $\Gamma \subset \LGr(10,\wedge^3V_6)$, 
and by~\cite[Lemma~3.7]{ikkr} this divisor has no common components with the divisor $\Delta \subset \LGr(10,\wedge^3V_6)$ 
parameterizing~$\sA$ such that $\sY_\sA^3 \ne \emptyset$.
Choose any $\sA$ with no decomposable vectors 
such that there is $V_3 \subset V_6$ for which the first part of~\eqref{eq:GM-plane-condition} holds, 
but $\sY_\sA^3 = \emptyset$.
Then as explained in Remark~\ref{remak:GM4-plane}, there is a 2-dimensional family of ordinary GM fourfolds 
containing $\G(2,V_3)$; none of these contain a plane of the form $\bP(V_1 \wedge V_4)$ since~$\sY_\sA^3 = \emptyset$.
\end{proof}

Our goal is to prove the following result. 

\begin{theorem}
\label{theorem-associated-cubic}
Assume the cubic fourfold $X'$ associated to $X$ by Proposition~\textup{\ref{proposition-blowup-X}} is smooth.
Then there is an equivalence $\cA_{X} \simeq \cA_{X'}$, 
where $\cA_X$ is the GM category defined by~\eqref{eq:dbx-detailed} and~$\cA_{X'}$ is defined by~\eqref{sod-cubic}.
\end{theorem}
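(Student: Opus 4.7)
The plan is to compare two semiorthogonal decompositions of $\Db(\wtilde X)$ coming from the two blowup descriptions of $\wtilde X$ in Proposition~\ref{proposition-blowup-X}, namely $p\colon\wtilde X \to X$ (the blowup of $X$ along $P = \G(2,V_3) \cong \bP^2$) and $q\colon\wtilde X \to X'$ (the blowup of $X'$ along the cubic scroll $T$). Both centers are smooth of codimension $2$, so Orlov's blowup formula gives two SODs of $\Db(\wtilde X)$, one involving $p^*\Db(X)$ together with one copy of $\Db(P)$, and another involving $q^*\Db(X')$ together with one copy of $\Db(T)$. Inserting~\eqref{eq:dbx-detailed} and the exceptional collection $\langle \cO, \cO(1), \cO(2)\rangle$ on $P \cong \bP^2$ refines the first SOD into $p^*\cA_X$ plus seven exceptional objects. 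Inserting~\eqref{sod-cubic} and a full exceptional collection of length~$4$ on the Hirzebruch surface $T$ refines the second SOD into $q^*\cA_{X'}$ plus seven other exceptional objects. The counts match, and since $p$ and $q$ are birational with $p_*\cO_{\wtilde X} = \cO_X$ and $q_*\cO_{\wtilde X} = \cO_{X'}$, the pullbacks $p^*$ and $q^*$ are fully faithful. It therefore suffices to produce a sequence of mutations transforming the refined $p$-side SOD into the refined $q$-side SOD in such a way that $p^*\cA_X$ is sent to $q^*\cA_{X'}$.

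To carry this out I would first pass to a common coordinate system on $\wtilde X$. The relations~\eqref{divisor-relations} give $H = 2H'-E'$ and $E = H'-E'$, expressing all twists on the $p$-side in terms of $H'$ and $E'$. The only $p$-side components that are not line bundles are $p^*\cU_X^\vee$ and $p^*\cU_X^\vee(H)$, and these are controlled by the short exact sequence~\eqref{us}, which on $\wtilde X$ reads
$$
0 \to p^*\cU_X^\vee \to V_2 \otimes \cO_{\wtilde X}(H') \to \cO_{E'}(H'+F) \to 0,
$$
realizing $p^*\cU_X^\vee$ as a modification of $V_2 \otimes \cO_{\wtilde X}(H')$ by a sheaf supported on the cubic-side exceptional divisor $E'$.

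Next, using the mutation identities~\eqref{eq:mutation-components-equivalences}, \eqref{eq:mutation-serre-functor}, and~\eqref{eq:mutation-equivalence}, together with Serre duality on $\wtilde X$ (whose canonical class is $K_{\wtilde X} = -3H'+E'$ by~\eqref{eq:canclass:tx}), I would perform a sequence of left and right mutations to move $p^*\cA_X$ past its seven companion objects and into the position occupied by $q^*\cA_{X'}$. The companion objects themselves, after the mutations are complete, should match (up to further mutations among themselves) the seven exceptional objects on the $q$-side: the three line bundles $\cO, \cO(H'), \cO(2H')$ coming from~\eqref{sod-cubic} and the four pushforwards of an exceptional collection of $\Db(T)$, suitably twisted along $E'$. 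Since two admissible subcategories of $\Db(\wtilde X)$ with the same right orthogonal coincide, this forces $p^*\cA_X = q^*\cA_{X'}$ inside $\Db(\wtilde X)$, yielding $\cA_X \simeq \cA_{X'}$.

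The principal obstacle is the explicit bookkeeping in the mutation sequence. One must arrange for the two occurrences of $p^*\cU_X^\vee$ on the $p$-side to interact, via~\eqref{us}, with the three sheaves $i_*\cO_E(kH-E)$ supported on the $P$-blowup exceptional divisor, so that together they recombine into the four-term exceptional collection on the cubic scroll $T$ realized as pushforwards from $E'$. This requires careful tracking of the twists on both $E$ and $E'$, the relative hyperplane classes on the two $\bP^1$-bundles $E \to P$ and $E' \to T$, and the Koszul-type resolutions that encode $\cO_E$ and $\cO_{E'}$ in terms of $\cO(-E)$ and $\cO(-E')$. Once this combinatorial identification is pinned down, the equivalence $\cA_X \simeq \cA_{X'}$ follows formally from the matching of the two refined SODs.
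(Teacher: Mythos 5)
Your proposal follows essentially the same route as the paper: two blowup decompositions of $\Db(\wtilde X)$, each refined to $\cA_X$ (resp.\ $\cA_{X'}$) plus seven exceptional objects, compared via mutations, with the sequence~\eqref{us} as the bridge between $p^*\cU_X^\vee$ and the objects supported on $E'$. All the key ingredients are correctly identified (the divisor relations, $K_{\wtilde X}$, the role of~\eqref{us}). The one caveat is that what you defer as ``bookkeeping'' is in fact the entire substantive content of the proof: the paper carries out an explicit eleven-step mutation sequence, and the nontrivial verifications occur there (e.g.\ that $\rL_{\cO}(\cU^\vee(-E)) \cong \cO_{E'}(E'-F)$ and $\rR_{\cO(H')}(\cU^\vee) \cong \cO_{E'}(H'+F)[-1]$, both via~\eqref{us}, and the $\Ext$ computations such as $\Ext^\bullet(\cO,\cU^\vee(-E)) = V_2^\vee$); until those steps are done, the claim that the two collections of seven exceptional objects generate the same subcategory is unproven, so the plan as written is a correct blueprint rather than a complete argument.
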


\begin{remark}
\label{remark-not-K3-type}
Theorem~\ref{theorem-associated-cubic} 
is of an essentially different nature than Theorem~\ref{theorem-associated-K3}, in that it does not ``come from'' K3 surfaces. 
More precisely, for a very general GM fourfold $X$ satisfying~\eqref{eq:GM:plane} for some $V_3$, the category $\cA_X$ is not equivalent 
to the derived category of a K3 surface, or even a twisted K3 surface. 
Indeed, the construction of Proposition~\ref{proposition-blowup-X} 
dominates the locus of cubic fourfolds containing a smooth cubic surface scroll, so 
it suffices to prove that given a very general such cubic, its K3 category is not 
equivalent to the twisted derived category of a K3 surface. 
Since cubic fourfolds containing a cubic scroll have discriminant $12$ by \cite[Example 4.1.2]{hassett}, 
this follows from~\cite[Theorem 1.4]{huybrechts-cubic}.
\end{remark}

\subsection{Strategy of the proof}
From now on, we assume the hypothesis of  
Theorem~\ref{theorem-associated-cubic} is satisfied. 
The proof of this theorem occupies the rest of this section. 
Here is our strategy.

By Orlov's decomposition of the derived category of a blowup, we have 
\begin{equation*}
\Db(\wtilde{X}) = \langle p^*\Db(X), i_*p_E^*\Db(P) \rangle.
\end{equation*}
Inserting~\eqref{eq:dbx-detailed} and the standard decomposition of $\Db(P)$ into the above decomposition, we obtain
\begin{equation}
\label{sod-with-plane-1} 
\Db(\wtilde{X}) = \langle p^*\cA_X, \cO, \cU^\vee, \cO(H), \cU^\vee(H),  
\cO_E, \cO_E(H), \cO_E(2H) \rangle.
\end{equation}
Here and below, to ease notation we write $\cU^\vee$ for $p^*\cU^\vee_X$.
This decomposition of $\Db(\wtilde{X})$ consists of a copy of $\cA_X$ and 
$7$ exceptional objects.

On the other hand, from the expression of $\wtilde{X}$ as a blowup of 
$X'$, we have
\begin{equation*}
\Db(\wtilde{X}) = \langle q^*\Db(X'), j_*q_{E'}^*\Db(T) \rangle.
\end{equation*} 
Inserting the decomposition~\eqref{sod-cubic} for $\Db(X')$, we obtain 
\begin{equation}
\label{sod-with-plane-2} 
\Db(\wtilde{X}) = \langle q^*\cA_{X'}, \cO, \cO(H'), \cO(2H'), j_*q_{E'}^*\Db(T) \rangle.
\end{equation} 
Note that $\Db(T)$ has a decomposition consisting of $4$ 
exceptional objects, hence the decomposition~\eqref{sod-with-plane-2} consists 
of one copy of $\cA_{X'}$ and again $7$ exceptional objects. 

To prove the equivalence $\cA_{X} \simeq \cA_{X'}$, we will find a sequence of mutations 
transforming the exceptional objects of~\eqref{sod-with-plane-1} into those 
of~\eqref{sod-with-plane-2}. In doing so, we will explicitly identify a 
functor giving the desired equivalence, see~\eqref{simpler-associated-cubic-equivalence}.

\subsection{Mutations} 

We perform a sequence of mutations, 
starting with~\eqref{sod-with-plane-1}. 
For a brief review of mutation functors and references, see the discussion in~\S\ref{subsection:self-duality}. 

\medskip \noindent
\textbf{Step 1.} Mutate $\cU^\vee(H)$ to the far left in~\eqref{sod-with-plane-1}. 
Since this is a mutation in $\Db(X)$ and we have~$K_X = -2H$, by~\eqref{eq:mutation-serre-functor} the result is
\begin{equation*}
\Db(\wtilde{X}) = \langle \cU^\vee(-H), p^*\cA_X, \cO, \cU^\vee, \cO(H),  
\cO_E, \cO_E(H), \cO_E(2H) \rangle.
\end{equation*}

\medskip \noindent
\textbf{Step 2.} Mutate $\cU^\vee(-H)$ to the far right. Again by~\eqref{eq:mutation-serre-functor} 
and~\eqref{eq:canclass:tx}, the result is 
\begin{equation*}
\Db(\wtilde{X}) = \langle p^*\cA_X, \cO, \cU^\vee, \cO(H),  
\cO_E, \cO_E(H), \cO_E(2H), \cU^\vee(H-E)  \rangle.
\end{equation*}

\medskip \noindent
\textbf{Step 3.} Left mutate $\cO_E$ through $\langle \cO, \cU^\vee, \cO(H) \rangle$. 
We have
\begin{align*}
&\Ext^\bullet(\cO(H),\cO_E)  = \rH^\bullet(P,\cO_P(-H)) = 0, \\
&\Ext^\bullet(\cU^\vee, \cO_E)  = \rH^\bullet(P, \cU_P) = 0, \\
& \Ext^\bullet(\cO, \cO_E)  = \rH^\bullet(P, \cO_P) = \bC,
\end{align*}
where in the second line $\cU_P$ is the tautological rank $2$ bundle on 
$P = \G(2,V_3)$, i.e. the restriction of $\cU$ from $\bG$ to $P$.
Hence by the definition of the mutation functor
\begin{equation*}
\rL_{\langle \cO, \cU^\vee, \cO(H) \rangle}(\cO_E) = \Cone(\cO \to \cO_E) = \cO(-E)[1],
\end{equation*}
and the resulting decomposition is
\begin{equation*}
\Db(\wtilde{X}) = \langle p^*\cA_X, \cO(-E), \cO, \cU^\vee, \cO(H), 
\cO_E(H), \cO_E(2H), \cU^\vee(H-E)  \rangle.
\end{equation*}

\medskip \noindent
\textbf{Step 4.} Left mutate $\cO_E(2H)$ through $\langle \cO, \cU^\vee, \cO(H), \cO_E(H) \rangle$. 

\begin{lemma}
We have $\rL_{\langle \cO, \cU^\vee, \cO(H), \cO_E(H) \rangle}(\cO_E(2H)) \cong \cO_{E'}(E'-F)[2]$.
\end{lemma}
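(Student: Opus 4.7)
The strategy is to compute $\rL_L(\cO_E(2H))$ by iterated mutation through single exceptional objects,
\[
\rL_L = \rL_{\cO_E(H)} \circ \rL_{\cO(H)} \circ \rL_{\cU^\vee} \circ \rL_\cO,
\]
applying at each stage the formula $\rL_A(F) = \Cone(\RHom(A,F) \otimes A \to F)$. Semiorthogonality of $L$ implies that the Ext group used at each stage coincides with the Ext group against the original $\cO_E(2H)$, independent of previous mutations, so these Ext groups can all be computed up front.

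First I would compute the four Ext groups. Since $p_E \colon E \to P = \G(2,V_3) \cong \bP^2$ is a $\bP^1$-bundle on which the relevant divisor $H$ and the bundle $\cU$ are restricted from $P$, each computation descends to cohomology on $\bP^2$. Using the identification $\cU_P \cong \Omega_{\bP^2}(1)$, one obtains
\begin{align*}
\Ext^\bullet(\cO, \cO_E(2H)) &\cong \rH^\bullet(\bP^2, \cO(2)) = \bC^6, \\
\Ext^\bullet(\cU^\vee, \cO_E(2H)) &\cong \rH^\bullet(\bP^2, \Omega_{\bP^2}(3)) = \bC^8, \\
\Ext^\bullet(\cO(H), \cO_E(2H)) &\cong \rH^\bullet(\bP^2, \cO(1)) = \bC^3,
\end{align*}
all concentrated in degree $0$. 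For the fourth, I would apply $\RHom(-,\cO_E(2H))$ to the Koszul-type sequence $0 \to \cO(H-E) \to \cO(H) \to \cO_E(H) \to 0$; the vanishing of $\rH^\bullet(E, \cO_E(H+E))$, which holds because $\cO_E(E)$ has degree $-1$ on the $\bP^1$-fibers of $p_E$, yields $\Ext^\bullet(\cO_E(H), \cO_E(2H)) \cong \bC^3$ in degree $0$.

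Next I would identify the resulting iterated cone with $\cO_{E'}(E'-F)[2]$. As a sanity check, I would first verify that $\cO_{E'}(E'-F)[2] \in L^\perp$, which follows because $\cO_{E'}(E')$ has degree $-1$ on the fibers of $q_{E'}\colon E' \to T$ and $\cO_{E'}(F)$ is pulled back from $T$, so all the relevant pushforwards vanish. To produce an actual isomorphism, my plan is to splice two ingredients into a single complex on $\wtilde X$: the sequence~\eqref{us} restricted to $\wtilde X$,
\[
0 \to \cU^\vee \to V_2 \otimes \cO(H') \to \cO_{E'}(H'+F) \to 0,
\]
twisted appropriately (by $\cO(-E-2F)$, using $H' = H-E$ and $E' = H-2E$) so that the right-hand term becomes $\cO_{E'}(E'-F)$, together with the Koszul sequence $0 \to \cO(2H-E) \to \cO(2H) \to \cO_E(2H) \to 0$. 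Aligning these and matching the differentials with the evaluation maps of the iterated mutation should yield the claimed isomorphism.

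The main obstacle is the bookkeeping: matching the differentials in the spliced resolution with the evaluation maps $\RHom(A, \cO_E(2H)) \otimes A \to \cO_E(2H)$ up to isomorphism, and keeping track of the interplay of twists by $E$, $E'$, and $F$ under the two blowup structures on $\wtilde X$. The $[2]$ shift in the statement is consistent with the homological length of the iterated cone construction, which provides a useful sanity check on the alignment.
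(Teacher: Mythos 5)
There is a genuine gap at the heart of your plan. The claim that ``semiorthogonality of $L$ implies that the Ext group used at each stage coincides with the Ext group against the original $\cO_E(2H)$'' is false. Semiorthogonality only gives vanishing of Homs from right to left among the four exceptional objects; the Homs from left to right (e.g.\ $\Ext^\bullet(\cO(H),\cO_E(H))=\rH^\bullet(P,\cO_P)=\bC$) are nonzero, and it is exactly these that change the relevant Ext groups after each mutation. Concretely, the first mutation gives $\rL_{\cO_E(H)}(\cO_E(2H))\cong \cU_E^\vee[1]$, and then $\Ext^\bullet(\cO(H),\cU_E^\vee)=\rH^\bullet(P,\cU_P^\vee(-H))=0$, whereas your up-front computation gives $\Ext^\bullet(\cO(H),\cO_E(2H))=\bC^3$; similarly $\Ext^\bullet(\cU^\vee,\cU_E^\vee)=\bC$, not your $\bC^8$. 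So the iterated cone you propose to assemble from the groups $\bC^6,\bC^8,\bC^3,\bC^3$ is not the mutation (a correct ``all at once'' formula would require the dual exceptional collection, not the $A_i$ themselves). Relatedly, your composition is written in the wrong order: for a semiorthogonal sequence $\langle A_1,\dots,A_4\rangle$ one has $\rL_{\langle A_1,\dots,A_4\rangle}=\rL_{A_1}\circ\cdots\circ\rL_{A_4}$, i.e.\ one mutates through $\cO_E(H)$ \emph{first} and $\cO$ last; with your order the intermediate objects do not land in the correct orthogonals.

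The correct route is to iterate in the right order and recompute at each step: $\cO_E(2H)\rightsquigarrow \cU_E^\vee[1]\rightsquigarrow\cU_E^\vee[1]\rightsquigarrow\cU^\vee(-E)[2]\rightsquigarrow\cO_{E'}(E'-F)[2]$, where the first step uses the tautological sequence on $P=\G(2,V_3)$ and the last uses $\Ext^\bullet(\cO,\cU^\vee(-E))=V_2^\vee$ together with the restriction of~\eqref{us} to $\wtilde X$, dualized and twisted by $H'$. You correctly identified~\eqref{us} as the key geometric input, but your proposed twist by $\cO(-E-2F)$ is not meaningful on $\wtilde X$ since $F$ is only a divisor class on $E'$; one must dualize the sequence instead. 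Note also that the shift $[2]$ is not explained by ``homological length'': two of the four mutations contribute a shift of $[1]$ (those whose evaluation map is surjective with locally free kernel) and two contribute none, which is only visible in the step-by-step computation.
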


\begin{proof}
There is an isomorphism of functors 
\begin{equation*}
\rL_{\langle \cO, \cU^\vee, \cO(H), \cO_E(H) \rangle} 
\cong 
\rL_{\cO} \circ \rL_{\cU^{\vee}} \circ \rL_{\cO(H)} \circ \rL_{\cO_E(H)}. 
\end{equation*}
Hence to prove the result we successively left mutate $\cO_E(2H)$ through $\cO_E(H), \cO(H), \cU^\vee, \cO$.

To compute $\rL_{\cO_E(H)}(\cO_E(2H))$, we may compute $\rL_{\cO_P(H)}(\cO_P(2H))$ 
and pull back the result. We have $\Ext^\bullet(\cO_P(H), \cO_P(2H)) = \rH^\bullet(P, \cO_P(H)) = V_3$, 
so 
\begin{equation*}
\rL_{\cO_P(H)}(\cO_P(2H)) = \Cone( \cO_P(H) \otimes V_3 \to \cO_P(2H) ).
\end{equation*}
The morphism $\cO_P(H) \otimes V_3 \to \cO_P(2H)$ is the twist by $H$ of the tautological 
morphism, hence it is surjective with kernel $\cU_P(H) \cong \cU^\vee_P$. Thus the above 
cone is $\cU_P^\vee[1]$, and 
\begin{equation*}
\label{step-4-1}
\rL_{\cO_E(H)}(\cO_E(2H)) = \cU_E^\vee[1].
\end{equation*}
Next note $\Ext^\bullet(\cO(H),\cU^\vee_E) = \rH^\bullet(P,\cU^\vee_P(-H)) = 0$, hence 
\begin{equation*}
\label{step-4-2}
\rL_{\cO(H)}(\cU_E^\vee) = \cU_E^\vee.
\end{equation*}
Further, we have $\Ext^\bullet(\cU^\vee,\cU^\vee_E) = \rH^\bullet(P, \cU_P\otimes \cU^\vee_P) = \bC$, hence
\begin{equation*}
\rL_{\cU^\vee}(\cU^\vee_E) = \Cone(\cU^\vee \to \cU^\vee_E) = \cU^\vee(-E)[1].
\end{equation*}
Now we are left with the last and most interesting step --- the mutation of $\cU^\vee(-E)$ through~$\cO$.  
First, using the exact sequence
\begin{equation*}
0 \to \cO(-E) \to \cO \to \cO_E \to 0
\end{equation*}
tensored by $\cU^\vee$, we find
\begin{equation}
\label{cohomology-u-e}
\Ext^\bullet(\cO,\cU^\vee(-E)) = \rH^\bullet(\wtilde{X},\cU^\vee(-E)) = \ker(V_5^\vee \to V_3^\vee) = V_2^\vee.
\end{equation}
Thus we need to understand the cone of the natural morphism $V_2^\vee \otimes \cO \to \cU^\vee(-E)$. 
Restricting~\eqref{us} to $\wtilde{X}$, dualizing, twisting by $H' = H-E$, and using 
the isomorphism $\cU(H) \cong \cU^\vee$, we obtain a distinguished triangle
\begin{equation*}
\label{keyseq}
V_2^\vee \otimes \cO \to \cU^\vee(-E) \to \cO_{E'}(E'-F).
\end{equation*} 
Thus 
\begin{equation}
\label{lousme}
\rL_\cO(\cU^\vee(-E)) = \cO_{E'}(E'-F),
\end{equation} 
which completes the proof of the lemma.
\end{proof}

By the lemma, the result of the above mutation is
\begin{equation*}
\Db(\wtilde{X}) = \langle p^*\cA_X, \cO(-E), \cO_{E'}(E'-F), \cO, \cU^\vee, \cO(H), 
\cO_E(H), \cU^\vee(H-E)  \rangle.
\end{equation*}

\medskip \noindent
\textbf{Step 5.} Left mutate $\cO_E(H)$ through $\cO(H)$. We have
\begin{equation*}
\rL_{\cO(H)}(\cO_E(H)) = \Cone(\cO(H) \to \cO_E(H)) = \cO(H-E)[1] = \cO(H')[1],
\end{equation*}
so the result is
\begin{equation*}
\Db(\wtilde{X}) = \langle p^*\cA_X, \cO(-E), \cO_{E'}(E'-F), \cO, \cU^\vee, \cO(H'),
\cO(H), \cU^\vee(H-E)  \rangle.
\end{equation*}

\medskip \noindent
\textbf{Step 6.} Right mutate $\cU^\vee$ through $\cO(H')$. We have 
\begin{equation*}
\Ext^\bullet(\cU^\vee,\cO(H')) = \Ext^\bullet(\cO,\cU(H-E)) = \Ext^\bullet(\cO,\cU^\vee(-E)) = V_2^\vee,
\end{equation*}
where the last equality holds by~\eqref{cohomology-u-e}. Hence
\begin{equation*}
\rR_{\cO(H')}(\cU^\vee) = \Cone(\cU^\vee \to V_2 \otimes \cO(H'))[-1].
\end{equation*}
Now restricting~\eqref{us} to $\wtilde{X}$ shows 
$\rR_{\cO(H')}(\cU^\vee) = \cO_{E'}(H' + F)[-1]$. Thus under the above mutation 
our decomposition becomes
\begin{equation*}
\Db(\wtilde{X}) = \langle p^*\cA_X, \cO(-E), \cO_{E'}(E'-F), \cO, \cO(H'), \cO_{E'}(H' + F),
\cO(H), \cU^\vee(H-E)  \rangle.
\end{equation*}

\medskip \noindent
\textbf{Step 7.} Left mutate $\cU^\vee(H-E)$ through $\cO(H)$. 
By~\eqref{lousme} and~\eqref{eq:mutation-equivalence} we have
\begin{equation*}
\rL_{\cO(H)}(\cU^\vee(H-E)) = \cO_{E'}(H + E' - F) = \cO_{E'}(2H' - F), 
\end{equation*}
so the result is
\begin{equation*}
\Db(\wtilde{X}) = \langle p^*\cA_X, \cO(-E), \cO_{E'}(E'-F), \cO, \cO(H'), \cO_{E'}(H' + F),
\cO_{E'}(2H' - F), \cO(H) \rangle.
\end{equation*} 

\medskip \noindent
\textbf{Step 8.} Right mutate $p^*\cA_X$ through $\langle \cO(-E), \cO_{E'}(E'-F) \rangle$. 
The result is
\begin{equation*}
\Db(\wtilde{X}) = \langle \cO(-E), \cO_{E'}(E'-F), \Psi p^*\cA_X, \cO, \cO(H'), \cO_{E'}(H' + F),
\cO_{E'}(2H' - F), \cO(H) \rangle,
\end{equation*} 
where $\Psi = \rR_{\langle \cO(-E), \cO_{E'}(E'-F) \rangle}$.

\medskip \noindent
\textbf{Step 9.} Mutate $\langle \cO(-E), \cO_{E'}(E'-F) \rangle$ to the far right. 
By~\eqref{eq:mutation-serre-functor}, the result is
\begin{equation*}
\Db(\wtilde{X}) = \langle \Psi p^*\cA_X, \cO, \cO(H'), \cO_{E'}(H' + F),
\cO_{E'}(2H' - F), \cO(H) , \cO(2H'), \cO_{E'}(3H'-F) \rangle.
\end{equation*} 

\medskip \noindent
\textbf{Step 10.}
Right mutate $\cO(H)$ through $\cO(2H')$. 
We have
\begin{equation*}
\Ext^\bullet(\cO(H),\cO(2H')) = \rH^\bullet(\wtilde{X}, \cO(E')) = \bC
\end{equation*}
and hence
\begin{equation*}
\rR_{\cO(2H')}(\cO(H)) = \Cone(\cO(H) \to \cO(2H'))[-1].
\end{equation*}
The morphism $\cO(H) \to \cO(2H')$ is the twist by $2H'$ of 
$\cO(-E') \to \cO$, hence 
\begin{equation*}
\rR_{\cO(2H')}(\cO(H)) = \cO_{E'}(2H')[-1].
\end{equation*}
Thus the result of the mutation is a decomposition
\begin{equation*}
\Db(\wtilde{X}) = \langle \Psi p^*\cA_X, \cO, \cO(H'), \cO_{E'}(H' + F),
\cO_{E'}(2H' - F), \cO(2H'), \cO_{E'}(2H'), \cO_{E'}(3H'-F) \rangle.
\end{equation*} 

\medskip \noindent
\textbf{Step 11.} 
Left mutate $\cO(2H')$ through $\langle \cO_{E'}(H' + F), \cO_{E'}(2H' - F) \rangle$. 
By the semiorthogonality of $q^*\Db(X')$ and $j_*q^*_{E'}\Db(T)$ in $\Db(\wtilde{X})$, 
this mutation is just a transposition. Thus the result is
\begin{equation*}
\Db(\wtilde{X}) = \langle \Psi p^*\cA_X, \cO, \cO(H'), \cO(2H'), \cO_{E'}(H' + F),
\cO_{E'}(2H' - F), \cO_{E'}(2H'), \cO_{E'}(3H'-F) \rangle.
\end{equation*} 
It is straightforward to check that
\begin{equation*}
\Db(T) = \langle  \cO_{T}(H' + F), \cO_{T}(2H' - F), \cO_{T}(2H'), \cO_{T}(3H'-F) \rangle,
\end{equation*}
so the above decomposition can be written as
\begin{equation}
\label{sod-with-plane-final}
\Db(\wtilde{X}) = \langle \Psi p^*\cA_X, \cO, \cO(H'), \cO(2H'), j_*q_{E'}^*\Db(T) \rangle.
\end{equation} 

This completes the proof of Theorem~\ref{theorem-associated-cubic}. 
Indeed, comparing the decompositions \eqref{sod-with-plane-final} and~\eqref{sod-with-plane-2} 
shows 
\begin{equation}
\label{associated-cubic-equivalence}
q_* \circ \rR_{\langle \cO_{\wtilde{X}}(-E), \cO_{E'}(E'-F) \rangle} \circ p^*\colon \cA_X \to \cA_{X'}
\end{equation}
is an equivalence. \qed 

\begin{remark}
The functor~\eqref{associated-cubic-equivalence} is in fact isomorphic to 
\begin{equation}
\label{simpler-associated-cubic-equivalence}
q_* \circ \rR_{\cO_{\wtilde{X}}(-E)} \circ p^*\colon \cA_X \to \cA_{X'}. 
\end{equation} 
To see this, observe that $q_*$ kills $\cO_{E'}(E'-F)$: 
if $j_0\colon T \hookrightarrow X'$ denotes the inclusion, then 
\begin{equation*}
q_*(\cO_{E'}(E'-F)) = j_{0*}q_{E'*}(\cO_{E'}(E'-F)) = j_{0*}(q_{E'*}(\cO_{E'}(E')) \otimes \cO_T(F)) = 0 
\end{equation*}
since $q_{E'*}(\cO_{E'}(E')) = 0$. 
Thus $q_* \circ \rR_{\cO_{E'}(E'-F)} \cong q_*$, and the claim follows 
since there is an isomorphism of functors 
$\rR_{\langle \cO_{\wtilde{X}}(-E), \cO_{E'}(E'-F) \rangle} \cong \rR_{\cO_{E'}(E'-F)} \circ \rR_{\cO_{\wtilde{X}}(-E)}$.
\end{remark}

\appendix

\section{Moduli of GM varieties}
\label{appendix-moduli}

Let $(\mathrm{Sch}/\bC)$ denote the category of $\bC$-schemes.
\begin{definition}
For $2 \leq n \leq 6$, the moduli stack $\cM_n$ of smooth $n$-dimensional GM varieties is the fibered category over $(\mathrm{Sch}/\bC)$ 
whose fiber over $S \in (\mathrm{Sch}/\bC)$ is the groupoid of pairs \mbox{$(\pi \colon X \to S, \cL)$}, 
where $\pi \colon X \to S$ is a smooth proper morphism of schemes and $\cL \in \Pic_{X/S}(S)$, 
such that for every geometric point $\bar{s} \in S$ the pair $(X_{\bar{s}}, \cL_{\bar{s}})$ is isomorphic 
to a smooth $n$-dimensional GM variety with its natural polarization (equivalently, $(X_{\bar{s}}, \cL_{\bar{s}})$ satisfies 
conditions~\eqref{eq:pic-x} and~\eqref{eq:omega-x} with $H$ the divisor corresponding to $\cL_{\bar{s}}$). 
A morphism from $(\pi' \colon X' \to S', \cL')$ to $(\pi \colon X \to S, \cL)$ is a 
fiber product diagram 
\begin{equation*}
\xymatrix{
X' \ar[d]_{\pi'} \ar[r]^{g'} & X \ar[d]^\pi \\ 
S' \ar[r]^g & S
}
\end{equation*}
such that $(g')^*(\cL) = \cL' \in \Pic_{X'/S'}(S')$.
\end{definition}

The following result gives the basic properties of the moduli stack $\cM_n$. 
An explicit description of $\cM_n$ will be given in~\cite{debarre-kuznetsov-moduli}. 
We follow~\cite{stacks-project} for our conventions on algebraic stacks. 

\begin{proposition}
\label{proposition-moduli}
The moduli stack $\cM_{n}$ is a smooth and irreducible Deligne--Mumford stack of finite type over $\bC$. 
Its dimension is given by $\dim \cM_n = 25 - (6-n)(5-n)/2$, i.e. 
\begin{equation*}
\dim \cM_2 = 19, ~ \dim \cM_{3} = 22, ~ \dim \cM_{4} = 24, ~ \dim \cM_{5} = 25, ~ \dim \cM_{6} = 25.
\end{equation*}
\end{proposition}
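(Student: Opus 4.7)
The plan is to realize $\cM_n$ as an algebraic quotient stack of a smooth irreducible parameter space, and then to compute its dimension using the description of GM varieties in terms of Lagrangian data from Theorem~\ref{theorem:dk:moduli}.

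First, I would construct the parameter space. Let $\mathcal{W}$ be the tautological rank-$(n+5)$ subbundle on the Grassmannian $\G(n+5, \bC \oplus \wedge^2V_5)$, and let $Z = \bP(\Sym^2\mathcal{W}^{\vee})$, so that the points of $Z$ are pairs $(W, [Q])$ consisting of an $(n+5)$-dimensional subspace $W \subset \bC \oplus \wedge^2V_5$ and a quadric $Q \subset \bP(W)$. Let $\cP_n \subset Z$ be the open subscheme of pairs such that the intersection $\Cone(\bG) \cap \bP(W) \cap Q$ is a smooth $n$-dimensional GM variety in the sense of Definition~\ref{definition-GM}. Since smooth GM varieties exist, $\cP_n$ is non-empty, and as an open subscheme of the smooth irreducible projective bundle $Z$, it is smooth, irreducible, and of finite type over $\bC$.

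Next, I would identify $\cM_n$ as a quotient stack of $\cP_n$. The affine algebraic group $G = \Aut(\Cone(\bG)) \subset \PGL(\bC \oplus \wedge^2V_5)$ acts naturally on $\cP_n$, and $\cM_n$ is the quotient of $\cP_n$ by the equivalence relation combining this $G$-action with the ambiguity in the choice of $Q$: by~\cite[Theorem~2.3]{debarre-kuznetsov} the quadrics in $\bP(W)$ containing $X$ form a six-dimensional subspace, with a five-dimensional subspace of Pl\"ucker quadrics giving the same intersection with $\Cone(\bG) \cap \bP(W)$. The identification relies on the fact that the Gushel map, the space $V_5(X) = H^0(X, \cU_X^{\vee})^\vee$, and the space $W = H^0(X, \cO_X(1))^\vee$ are canonical invariants of the polarized variety $(X, H)$. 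For the Deligne--Mumford property, the isotropy group at a point $[X] \in \cM_n$ is the polarized automorphism group $\Aut(X, H)$, which must be finite: for $n = 2$ this is classical for polarized K3 surfaces; for $n \ge 3$, since $\Pic(X) = \bZ H$ one has $\Aut(X, H) = \Aut(X)$, and a Koszul-resolution computation via the embedding $X \subset \Cone(\bG) \cap \bP(W)$ together with Bott vanishing on $\bG$ shows $H^0(X, T_X) = 0$. Smoothness and irreducibility of $\cM_n$ then follow from those of $\cP_n$, since $G$ is smooth and stabilizers are automatically reduced in characteristic zero.

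For the dimension computation, I would invoke the morphism of Remark~\ref{remark:moduli-period},
\begin{equation*}
\cM_n \longrightarrow \LGr(\wedge^3V_6)/\PGL(V_6),
\end{equation*}
sending $X$ to its Lagrangian data $\sA(X)$. The target has dimension $55 - 35 = 20$, since $\dim\LGr(\wedge^3V_6) = 10\cdot 11/2 = 55$ and $\dim\PGL(V_6) = 35$. For $3 \le n \le 5$, Theorem~\ref{theorem:dk:moduli} identifies the fiber over a generic $\sA$ with a finite quotient of the EPW stratum $\sY_{\sA^{\perp}}^{5-n}$, which by O'Grady's computation recalled in~\S\ref{subsection-EPW} has dimension $5, 4, 2$ for $n = 5, 4, 3$ respectively. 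For $n = 6$, only special sixfolds exist, and via Theorem~\ref{theorem:dk:moduli} they correspond to ordinary fivefolds, so the fiber has dimension $5$. This gives $\dim\cM_n = 20 + \dim\sY_{\sA^{\perp}}^{5-n} = 25 - (6-n)(5-n)/2$ for $3 \le n \le 6$. For $n = 2$, the morphism is not dominant: its image is the divisor of $\sA$ with $\sY_{\sA}^3 \ne \varnothing$, of dimension $19$, and generic fibers are finite, yielding $\dim\cM_2 = 19$.

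The main technical obstacle will be the rigorous construction of $\cM_n$ as a quotient stack of $\cP_n$, in particular verifying that every family of smooth GM varieties over a scheme $S$ admits suitable local presentations of the form $\Cone(\bG) \cap \bP(W) \cap Q$ and checking compatibility of the stacky structure with the Pl\"ucker ambiguity; this descent statement is handled systematically in the forthcoming~\cite{debarre-kuznetsov-moduli}.
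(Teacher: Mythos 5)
Your construction of the parameter space and the deduction of irreducibility and finite type coincide with the paper's argument (the open subset $U_n$ of the projective bundle $P_n \to \G(n+5,\bC\oplus\wedge^2V_5)$ surjecting onto $\cM_n$). But the two steps that carry the real content of the proposition --- smoothness and the dimension formula --- both rest, in your write-up, on constructions that you explicitly defer to the forthcoming \cite{debarre-kuznetsov-moduli}, and this is a genuine gap rather than a routine omission. First, you derive smoothness of $\cM_n$ from a presentation of it as a quotient of $\cP_n$ by a smooth groupoid; establishing that presentation requires exactly the descent statement you flag as the ``main technical obstacle'' (that every family of GM varieties \'etale-locally arises from a family of pairs $(W,Q)$, compatibly with the $5$-dimensional Pl\"ucker ambiguity in $Q$), and without it the smoothness claim is unproved. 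Second, your dimension count invokes the morphism $\cM_n \to \LGr(\wedge^3V_6)/\PGL(V_6)$ of Remark~\ref{remark:moduli-period}; but the paper only asserts a bijection on $\bC$-points (Theorem~\ref{theorem:dk:moduli}) and explicitly postpones the construction of this as a morphism of stacks to \cite{debarre-kuznetsov-moduli}. A bijection on points does not by itself compute the dimension of a stack, so as written the dimension formula is also not established. (Your EPW fiber dimensions themselves are correct, matching Remark~\ref{remark:moduli-period}, modulo the typo $\sY_{\sA}^3$ for $\sY_{\sA^\perp}^3$ in the $n=2$ case.)

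Both gaps can be closed without the deferred machinery, and this is what the paper does: for $n\ge 3$ one works with the deformation theory of the pair $(X,\cL)$ directly. The Atiyah extension $0 \to \cO_X \to \fA_{\cL} \to \rT_X \to 0$ gives $\rH^i(X,\fA_\cL)\cong \rH^i(X,\rT_X)$ for $i\ge 1$; the isomorphism $\rT_X\cong\Omega^{n-1}_X(n-2)$ from~\eqref{eq:omega-x} together with Kodaira--Akizuki--Nakano vanishing kills $\rH^{\ge 2}(X,\rT_X)$, so deformations of $(X,\cL)$ are unobstructed and $\cM_n$ is smooth of dimension $\dim\rH^1(X,\rT_X)$, which Riemann--Roch evaluates to $25-(6-n)(5-n)/2$. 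This is self-contained and replaces both of your deferred steps at once. For the Deligne--Mumford property your reversal of the logic (proving $\rH^0(X,\rT_X)=0$ by a Koszul computation and deducing finiteness of $\Aut(X)$) would work in principle, but the Koszul/Bott computation is only sketched; the paper instead quotes finiteness of $\Aut(X)$ from \cite[Proposition~3.21(c)]{debarre-kuznetsov} and reducedness from Cartier's theorem, and then \emph{deduces} $\rH^0(X,\rT_X)=0$. Finally, note that for $n=2$ the cleanest route is simply to observe that $\cM_2$ is the Brill--Noether general (hence open) locus in the moduli stack of polarized K3 surfaces of degree $10$, for which all the asserted properties are classical.
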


We will use the following lemma. 
\begin{lemma}
\label{lemma-moduli}
Let $X$ be a smooth GM variety of dimension $n \geq 3$. Then: 
\begin{enumerate}
\item \label{lemma-moduli-1} The automorphism group scheme $\Aut_{\bC}(X)$ is finite and reduced. 
\item \label{lemma-moduli-2} $\rH^i(X, \rT_X) = 0$ for $i \neq 1$. 
\item \label{lemma-moduli-3} $\dim \rH^1(X, \rT_X) = 25 - (6-n)(5-n)/2$. 
\end{enumerate}
\end{lemma}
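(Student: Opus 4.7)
I would prove the three parts in the order (2), (3), (1), since (2) supplies the main input to both (1) and (3).

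The vanishing in (2) for $i\ge 2$ is a quick consequence of Akizuki--Kodaira--Nakano vanishing. By Serre duality and the formula $\omega_X=\cO_X(-(n-2)H)$ from~\eqref{eq:omega-x},
\begin{equation*}
\rH^i(X,\rT_X)\cong \rH^{n-i}\bigl(X,\Omega^1_X\otimes\cO_X((2-n)H)\bigr)^{\vee}.
\end{equation*}
For $n\ge 3$ the bundle $\cO_X((n-2)H)$ is ample, so AKN gives the right-hand side $=0$ whenever $1+(n-i)<n$, i.e.\ $i\ge 2$. The hard part is $\rH^0(X,\rT_X)=0$. I would treat the ordinary and special cases separately. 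In the ordinary case $X\subset\bG$ is cut out by a regular section of $\cE=\cO_{\bG}(H)^{\oplus(5-n)}\oplus\cO_{\bG}(2H)$; the normal bundle sequence $0\to\rT_X\to\rT_{\bG}|_X\to N_{X/\bG}\to 0$ reduces the vanishing to showing that the natural map $\rH^0(X,\rT_{\bG}|_X)\to\rH^0(X,N_{X/\bG})$ is injective. Using the Koszul resolution of $\cO_X$ in $\bG$ and Borel--Weil--Bott on $\bG$, one identifies $\rH^0(X,\rT_{\bG}|_X)$ with $\mathfrak{sl}(V_5)=\rH^0(\bG,\rT_{\bG})$, and the map in question becomes the derivative at the identity of the orbit map $\PGL(V_5)\to\mathrm{Hilb}(\bG)$ sending $g\mapsto g(X)$; its kernel is the Lie algebra of the stabilizer of $X$ in $\PGL(V_5)$, which because $V_5$ is canonically attached to $X$ coincides with the Lie algebra of $\Aut(X)$. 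Hence one is reduced to showing that this stabilizer is $0$-dimensional, which follows from a direct inspection of the quadric $Q\in\Sym^2 W^{\vee}$ defining $X$ together with the linear form cutting out $W\subset\wedge^2 V_5$. In the special case I would use the Gushel double cover $X\to M'_X$ with branch the ordinary GM variety $X^{\opp}$ of dimension $n-1$ to write $f_*\rT_X$ as the sum of its invariant and anti-invariant parts under the covering involution (the standard formula for a smooth double cover branched along a smooth divisor), reducing the vanishing to analogous statements for $\rT_{M'_X}$ and its twist, which are again handled by the ordinary-case arguments on $M'_X$.

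For (3), once (2) is established we have $\dim\rH^1(X,\rT_X)=-\chi(\rT_X)$, so it suffices to compute $\chi(\rT_X)$. In the ordinary case the two short exact sequences
\begin{equation*}
0\to\rT_X\to\rT_{M_X}|_X\to\cO_X(2H)\to 0,\qquad 0\to\rT_{M_X}\to\rT_{\bG}|_{M_X}\to\cO_{M_X}(H)^{\oplus(5-n)}\to 0
\end{equation*}
give $\chi(\rT_X)$ as an alternating sum of Euler characteristics on $\bG$ of bundles built from $\rT_{\bG}$ and $\cO_{\bG}(kH)$, which are read off the Koszul resolution of $\cO_X$ (resp.\ $\cO_{M_X}$) in $\bG$. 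In the special case the double cover formula $f_*\rT_X=\rT_{M'_X}\oplus(\rT_{M'_X}(-H)\to\cO_{X^{\opp}}(H))$ (where the second summand is a two-term complex reflecting the branch contribution) produces $\chi(\rT_X)$ from analogous data on $M'_X$ and $X^{\opp}$. The arithmetic simplifies uniformly to $\chi(\rT_X)=(6-n)(5-n)/2-25$; one sanity check is that the four values $22,24,25,25$ for $n=3,4,5,6$ agree with the geometric dimension count from Remark~\ref{remark:moduli-period} via the bijection of Theorem~\ref{theorem:dk:moduli}.

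Finally, (1) is a formal consequence of (2). Since $\Pic(X)=\bZ H$ by~\eqref{eq:pic-x}, any element of $\Aut_{\bC}(X)$ fixes the polarization, so $\Aut_{\bC}(X)=\Aut_{\bC}(X,H)$, which by Matsusaka's theorem is a group scheme of finite type over $\bC$ embedded (for $m\gg 0$) into $\mathrm{PGL}(\rH^0(X,mH))$. Its Lie algebra is $\rH^0(X,\rT_X)$, which vanishes by (2). In characteristic $0$ a group scheme of finite type with trivial Lie algebra is \'etale over the base, hence $\Aut_{\bC}(X)$ is finite and reduced.

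The main obstacle is the vanishing $\rH^0(X,\rT_X)=0$; the Koszul/Bott bookkeeping to compute $\rH^0(X,\rT_{\bG}|_X)$ in the ordinary case and the analogous push-forward calculation in the special case are the technical heart of the argument, while everything else reduces to AKN vanishing and standard Euler-characteristic manipulations.
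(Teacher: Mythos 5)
Your overall architecture is sound but runs in the opposite logical direction from the paper, and this is exactly where the gap appears. The paper proves part (1) first: reducedness is Cartier's theorem in characteristic $0$, and finiteness is \emph{cited} from \cite[Proposition~3.21(c)]{debarre-kuznetsov}; it then deduces $\rH^0(X,\rT_X)=0$ for free, since this space is the tangent space at the identity of a finite reduced group scheme. You instead try to prove $\rH^0(X,\rT_X)=0$ directly and then recover (1) from it. Your reduction via the normal bundle sequence and Borel--Weil--Bott to the statement that the stabilizer of $X$ in $\PGL(V_5)$ is $0$-dimensional is reasonable, but at that point you assert that $0$-dimensionality ``follows from a direct inspection of the quadric $Q$ together with the linear form cutting out $W$.'' This is not a proof: the subgroup of $\PGL(V_5)$ preserving the hyperplane $W\subset\bC\oplus\wedge^2V_5$ is positive-dimensional, and showing that additionally preserving $Q$ modulo Pl\"ucker quadrics cuts it down to dimension $0$ is precisely the nontrivial content of \cite[Proposition~3.21(c)]{debarre-kuznetsov} (whose proof goes through the Lagrangian data $\sA(X)$ and the absence of decomposable vectors). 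So the technical heart of your argument, which you yourself identify as the main obstacle, is missing rather than supplied.

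The remaining pieces are fine and match the paper up to presentation. Your Serre-duality-plus-AKN argument for $i\ge 2$ is equivalent to the paper's use of $\rT_X\cong\Omega^{n-1}_X((n-2)H)$. Your deduction of (1) from $\rH^0(X,\rT_X)=0$ (Fano for $n\ge 3$, so $\Aut(X)$ is a linear algebraic group; trivial Lie algebra plus characteristic $0$ gives finite and reduced) is correct \emph{once} the vanishing is established. For (3) the paper simply invokes Riemann--Roch after the vanishing of $\rH^0$ and $\rH^{\ge 2}$; your Koszul/double-cover bookkeeping would also work, though your displayed formula for $f_*\rT_X$ in the special case is imprecise (the invariant summand is the sheaf of vector fields on $M'_X$ tangent to the branch divisor, not all of $\rT_{M'_X}$) --- harmless for an Euler-characteristic computation but worth stating correctly. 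The cleanest fix is to adopt the paper's order: cite (or prove) finiteness of $\Aut(X)$ first, and let the $\rH^0$ vanishing follow formally.
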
 

\begin{proof}
As our base field $\bC$ has characteristic $0$, 
$\Aut_{\bC}(X)$ is automatically reduced by a theorem of Cartier~\cite[Lecture~25]{mumford},
and it is finite by~\cite[Proposition 3.21(c)]{debarre-kuznetsov}. 
Hence $\rH^0(X, \rT_X)$, being the tangent space to $\Aut_{\bC}(X)$ at the identity, vanishes. 
Further, $\rT_X \cong \Omega_X^{n-1}(n-2)$ by~\eqref{eq:omega-x}
and hence $\rH^i(X, \rT_X) = 0$ for $i \geq 2$ by Kodaira--Akizuki--Nakano vanishing. 
Finally, the dimension of $\rH^1(X,\rT_X)$ is straightforward to compute using 
Riemann--Roch. 
\end{proof}

\begin{proof}[Proof of Proposition~\textup{\ref{proposition-moduli}}]
First consider the case $n=2$. 
Then by~\eqref{eq:pic-x},
$\cM_2$ is the Brill--Noether 
general locus (and hence Zariski open) in the moduli stack of polarized K3 surfaces of degree~$10$.
It is well-known that all the properties in the proposition hold for the moduli 
stack of primitively polarized K3 surfaces of a fixed degree (see~\cite[Chapter 5]{huybrechts-K3}), 
so they also hold for~$\cM_{2}$. 

From now on assume $n \geq 3$. 
A standard Hilbert scheme argument shows that $\cM_n$ is an algebraic 
stack of finite type over $\bC$, whose diagonal is affine and of finite type.
To prove $\cM_n$ is \mbox{Deligne--Mumford}, by~\cite[Tag 06N3]{stacks-project} 
it suffices to show its diagonal is unramified. 
As a finite type morphism is unramified if and only if all of its geometric fibers 
are finite and reduced, we are done by Lemma~\ref{lemma-moduli}\eqref{lemma-moduli-1} 
(note that for a GM variety of dimension $n \geq 3$, all automorphisms preserve the natural polarization). 

Next we check smoothness of $\cM_n$. Let $(X, \cL)$ be a point of $\cM_n$, i.e. $X$ is a 
GM $n$-fold and $\cL \in \Pic(X)$ is the ample generator. 
Let $\fA_{\cL}$ be the Atiyah extension of $\cL$, i.e. the extension
\begin{equation*}
0 \to \cO_X \to \fA_{\cL} \to \rT_X \to 0 
\end{equation*}
given by the Atiyah class of $\cL$.
Further, recall that $\rH^1(X, \fA_\cL)$ classifies first order deformations of the 
pair $(X, \cL)$, and $\rH^2(X, \fA_\cL)$ is the obstruction space for such deformations 
(see~\cite[\S3.3.3]{sernesi}). 
Taking cohomology in the above sequence shows that $\rH^i(X, \fA_{\cL}) \cong \rH^i(X, \rT_X)$ 
for $i \geq 1$. 
In particular, $\rH^2(X, \fA_{\cL}) = 0$ by Lemma~\ref{lemma-moduli}\eqref{lemma-moduli-2}, 
so the formal deformation space of $\cM_n$ at $(X,\cL)$ is smooth of dimension 
$\dim \rH^1(X, \fA_\cL) = \dim \rH^1(X, \rT_X)$. 
This implies the smoothness of $\cM_n$ and, using 
Lemma~\ref{lemma-moduli}\eqref{lemma-moduli-3}, the formula for its dimension. 

It remains to show that $\cM_n$ is irreducible. 
This follows from the defining expression~\eqref{x:gm:general} of any GM variety. 
Indeed, let $P_n$ be the space of pairs $(W, Q)$ where $W \subset \bC \oplus \wedge^2V_5$ is an $(n+5)$-dimensional linear 
subspace and $Q \subset \bP(W)$ is a quadric hypersurface, and let $U_n \subset P_n$ be the open subset where $\Cone(\bG) \cap Q$ is smooth of dimension $n$. 
The projection $P_n \to \G(n+5,\bC \oplus \wedge^2V_5)$ is a projective bundle, 
hence $P_n$ and $U_n$ are irreducible. 
On the other hand, by~\eqref{x:gm:general}, $U_n$ maps surjectively onto $\cM_n$. 
Hence $\cM_n$ is irreducible as well. 
\end{proof}


\providecommand{\bysame}{\leavevmode\hbox to3em{\hrulefill}\thinspace}
\providecommand{\MR}{\relax\ifhmode\unskip\space\fi MR }
\providecommand{\MRhref}[2]{%
  \href{http://www.ams.org/mathscinet-getitem?mr=#1}{#2}
}
\providecommand{\href}[2]{#2}

\end{document}